\documentclass[a4paper,12pt]{amsproc}
\pdfoutput=1
\usepackage{mathrsfs}
\usepackage{appendix}
\usepackage{amssymb}
\usepackage{fancyhdr}
\usepackage{charter}
\usepackage{typearea} 
\usepackage{pdfsync}
\usepackage{mathrsfs}
\usepackage{dsfont}
\usepackage{appendix}
\usepackage{amssymb}
\usepackage{fancyhdr}
\usepackage{charter}
\usepackage{typearea}
\usepackage{color}
\usepackage{hyperref}
\usepackage{amsmath,amstext,amsthm,amscd}
\usepackage{mathrsfs}
\usepackage[margin=2.5cm]{geometry}
\usepackage{enumitem}
\usepackage{setspace}
\usepackage{bbm} 
\usepackage{charter}

\DeclareMathOperator{\Ker}{Ker}
\DeclareMathOperator{\Rang}{Rang}
\DeclareMathOperator{\Dom}{Dom}
\DeclareMathOperator{\End}{End}
\DeclareMathOperator{\Id}{Id}
\DeclareMathOperator{\supp}{supp}
\newcommand{\Tl}{\mathcal{T}}
\newcommand{\N}{\mathbb{N}}
\newcommand{\R}{\mathbb{R}}
\newcommand{\Cinf}{\mathcal{C}^{\infty}}
\newcommand{\Ct}{\mathcal{C}^{2}}

\newcommand{\Cs}{\mathbb{C}}
\newcommand{\Cn}{\mathbb{C}^{n}}

\newcommand{\ii}{\sqrt{-1}}
\newcommand{\p}{\partial}
\newcommand{\pb}{\bar{\partial}}
\newcommand{\la}{\langle}
\newcommand{\ra}{\rangle}

\newcommand{\tchi}{\Tilde{\chi}}
\newcommand{\To}{\rightarrow}
\newcommand{\ov}{\overline}

\newcommand{\Pk}{\mathcal{P}^{(q)}_{k,c_k}}
\newcommand{\Pks}{\mathcal{P}^{(q)}_{k,c_k,s}}
\newcommand{\Plk}{\mathcal{P}^{(q)}_{(k),c_k}}
\newcommand{\Plks}{\mathcal{P}^{(q)}_{(k),c_k,s}}
\newcommand{\Pkk}{P^{(q)}_{k,c_k}}

\newcommand{\Pksk}{P^{(q)}_{k,c_k,s}}

\newcommand{\Plkski}{P^{(q),I,J}_{(k),c_k,s}}

\newcommand{\Plksk}{P^{(q)}_{(k),c_k,s}}

\newcommand{\Bk}{\mathcal{B}^{(q)}_{k}}
\newcommand{\Bks}{\mathcal{B}^{(q)}_{k,s}}

\newcommand{\Blks}{\mathcal{B}^{(q)}_{(k),s}}
\newcommand{\Bkk}{B^{(q)}_{k}}

\newcommand{\Bksk}{B^{(q)}_{k,s}}

\newcommand{\Blksk}{B^{(q)}_{(k),s}}
\newcommand{\Bs}{\mathcal{B}^{(q)}_{s}}
\newcommand{\Bsk}{B^{(q)}_{s}}

\newcommand{\Bsko}{B^{(q)}_{0,s}}

\newcommand{\tBlks}{\Tilde{\mathcal{B}}^{(q)}_{(k),s}}
\newcommand{\tBlksk}{\Tilde{B}^{(q)}_{(k),s}}
\newcommand{\tBlkso}{\Tilde{\mathcal{B}}^{(q_0)}_{(k),s}}

\newcommand{\tpb}{\Tilde{\bar{\partial}}}

\newcommand{\tU}{\Tilde{U}}
\newcommand{\ts}{\Tilde{s}}
\newcommand{\tL}{\Tilde{L}}
\newcommand{\tphi}{\Tilde{\phi}}
\newcommand{\tvarphi}{\Tilde{\varphi}}
\newcommand{\tomega}{\Tilde{\omega}}
\newcommand{\T}{T^{*,(0,q)}\Cn}

\newcommand{\TM}{T^{*,(0,q)}M}

\newcommand{\zb}{\bar{z}}

\newcommand{\pzi}{\dfrac{\partial}{\partial z^i}}
\newcommand{\pzj}{\dfrac{\partial}{\partial z^j}}

\newcommand{\pzbi}{\dfrac{\partial}{\partial \bar{z}^i}}

\newcommand{\pzbj}{\dfrac{\partial}{\partial \bar{z}^j}}
\newcommand{\wb}{\bar{w}}

\newcommand{\pwb}{\dfrac{\partial}{\partial \bar{w}}}

\numberwithin{equation}{section}
\theoremstyle{plain}
\newtheorem{thm}{Theorem}[section]
\newtheorem{prop}[thm]{Proposition}
\newtheorem*{claim}{Claim}
\newtheorem{cor}[thm]{Corollary}
\newtheorem{lem}[thm]{Lemma}

\theoremstyle{definition}
\newtheorem{defin}{Definition}[section]
\newtheorem{assumption}{Assumption}[section]
\newtheorem{statement}{Statement}[section]

\theoremstyle{remark}
\newtheorem{rmk}{Remark}[section]

\setcounter{tocdepth}{3}
\makeatletter
\def\l@subsection{\@tocline{2}{0pt}{2.5pc}{5pc}{}}
\def\l@subsubsection{\@tocline{2}{0pt}{5pc}{7.5pc}{}}
\makeatother


\begin{document}

\title{Semi-Classical Asymptotics of Bergman and
Spectral Kernels for $(0,q)$-forms}
\author{Yueh-Lin Chiang}
\thanks{The thesis is presented for the Master's degree in Mathematics at National Taiwan University and supervised by Professor Chin-Yu Hsiao.}

\begin{spacing}{1.5}
\begin{abstract}
In this paper, we develop a new scaling method to study spectral and Bergman kernels for the $k$-th tensor power of  a line bundle over a complex manifold under local spectral gap condition. In particular, we establish a simple proof of the pointwise asymptotics of spectral and Bergman kernels. As a new result, in the function case, we obtain the leading term of Bergman kernel under spectral gap with exponential decay. Moreover, in the general cases of $(0,q)$-forms, the asymptotics remain valid while the curvature of the line bundle is degenerate.  
\end{abstract}
\end{spacing}
\begin{spacing}{1}
\maketitle \tableofcontents
\end{spacing}
\newpage
\section{Introduction}
Let $M$ be a Hermitian complex manifold with $\text{\rm dim}_{\Cs}M=n$ and equip $M$ with a positive Hermitian $(1,1)$-form $\omega$. Consider a holomorphic line bundle $L$ over $M$ with a locally defined weight function $\phi$ that gives $L$ a Hermitian metric $h$. The Hermitian form $\omega$ and the metric $h$ endow the space of $L$-valued $(0,q)$-forms with a $L^2$-inner product. By taking the completion of this space with respect to the inner product, we obtain the Hilbert space $L^2_{\omega,\phi}(M,\TM \otimes L)$. Consider $\Box^{(q)}_{\omega,\phi}$ to be the Kodaira Laplacian induced by the Hermitian structures $\omega$ and $h$. The \textbf{Bergman projection} \[\mathcal{B}^{(q)}_{\omega,\phi}: L^2_{\omega,\phi}(M,\TM \otimes L)\To \Ker \Box^{(q)}_{\omega,\phi}\] is the orthogonal projection from the space of $L^2$-integrable sections of $\TM\otimes L$ onto the space of harmonic sections with respect to Kodaira Laplacian $\Box^{(q)}_{\omega,\phi}$. For a Borel set $B\subset \R$, we denote by $\mathbbm{1}_B(\Box^{(q)}_{\omega,\phi})$ the functional calculus of $\Box^{(q)}_{\omega,\phi}$ with respect to the indicator function $\mathbbm{1}_{B}$ (cf. \cite[section 2]{Davies}). Given a nonnegative constant $c$, the \textbf{spectral projection} 
\[\mathcal{P}^{(q)}_{\omega,\phi,c}:=\mathbbm{1}_{[0,c]}(\Box^{(q)}_{\omega,k\phi}):L^2_{\omega,\phi}(M,\TM \otimes L)\To E^{(q)}_{\leq c}\] is the orthogonal projection onto the space $\Rang \left(\mathbbm{1}_{[0,c]}(\Box^{(q)}_{\omega,k\phi})\right)$ denoted by $E^{(q)}_{\leq c}$. The \textbf{Bergman kernel} $B^{(q)}_{\omega,\phi}(z,w)$ is the Schwartz kernel of $\mathcal{B}^{(q)}_{\omega,\phi}$ and the \textbf{spectral kernel} $P^{(q)}_{\omega,\phi,c}(z,w)$ is the Schwartz kernel of $\mathcal{P}^{(q)}_{\omega,\phi,c}$. \par 
The Bergman kernel is a fundamental object in complex analysis and geometry, which plays a central role in some important problems in complex geometry, geometric quantization, and mathematical physics. However, it is challenging to study the Bergman kernel directly. Inspired by quantum mechanics and semi-classical analysis, if we consider the $k$-th tensor power $L^k$ of $L$ and replace the Hermitian metric $\phi$ by $k\phi$, it is possible to handle the asymptotic behavior of the Bergman kernel as $k$ goes to infinity. Therefore, the study of the large $k$ behavior of the Bergman kernel $B^{(q)}_{\omega,k\phi}(z,w)$ has become prominent in modern research. The asymptotic behavior of the Bergman kernel $B^{(q)}_{\omega,k\phi}(z,w)$ is rich in geometrical meaning and closely related to index theory and algebraic geometry. In \cite{Be01}, R. Berman obtained the local holomorphic Morse inequalities by analyzing the Bergman kernel on the diagonal part. In \cite{Hsia01}, C.-Y. Hsiao illustrated a proof of the Kodaira embedding theorem by the full expansion. Furthermore, the approximation of Kähler
metrics(e.g.,\cite{app2},\cite{app1}),
existence of canonical Kähler metrics (e.g., \cite{app3},\cite{app4},\cite{app5},\cite{app6}) and the Berezin-Toeplitz quantization (e.g.,\cite{app8},\cite{app10},\cite{app7},\cite{app9}) are impressive applications. We refer readers to the book \cite{book01} of X. Ma and G. Marinescu for a comprehensive study of Bergman kernel and relative subjects.\par 

 For a compact manifold $M$ with a positive line bundle $L$,  T. Bouche (1990,\cite{app2}) and G. Tian (1990,\cite{app1}) obtained the leading term of the Bergman kernel, and D. Catlin proved the full expansion (1997,\cite{catlin}) later. More precisely, D.Catlin claimed that 
\begin{equation}\label{1.0 asymptotic}
    B^{(q)}_{\omega,k\phi}(z,z)\sim k^n b^{(q)}_n+k^{n-1}b^{(q)}_{n-1}+\cdots+b^{(q)}_0 \quad \text{\rm as }\,  k\To\infty
\end{equation} for the case $q=0$. Furthermore, X. Dai, K. Liu and X. Ma gave another proof of the full expansion based on localized techniques and heat kernel methods (2004,\cite{DaiLiuMa01}),(2008,\cite{DaiLiuMa02}) and B. Berndtsson, R. Berman and J. Sjöstrand also offered a different proof (2008,\cite{Be02}). \par

In the case of non-degenerate line bundle $L$ which may not be positive, if $M$ is compact and $M=M(q)$ (cf. Def.\ref{1.1 def curvature}), there is a full asymptotic expansion of $B^{(q)}_{\omega,k\phi}(z,w)$ proven by R. Berman and J. Sjöstrand (2007,\cite{Be03}). Moreover, in (2006,\cite{Ma01}), X. Ma and G. Marinescu established similar results in the context of $\text{spin}^{c}$-Dirac operators in compact symplectic manifolds. In a later work,  C.-Y. Hsiao and G. Marinescu (2014,\cite{Hsia02})  demonstrated that the Bergman kernel has a local asymptotic expansion at all non-degenerate points under the local spectral gap condition (cf. Def.\ref{1.2 spectral gap 1}). Also, they showed that the spectral kernel $P^{(q)}_{\omega,k\phi,k^{-N}}$ has an analogous result. \par 

In this paper, we derive the leading term $b^{(q)}_n$ of the asymptotic expansion (cf.(\ref{1.0 asymptotic})) by scaling method under the local spectral gap condition (cf.  Def.\ref{1.2 spectral gap 1}). For the function case, we can loosen the spectral gap condition to an exponential decay rate (cf. Def.\ref{1.2 spectral gap 2}). It is noteworthy that we do not require the curvature to be non-degenerate.\par  
As for the spectral kernel, we fix a sequence $c_k$ satisfying $\limsup_{k\To\infty} k^{-1}c_k=0$ and consider the asymptotic behavior of $P^{(q)}_{\omega,k\phi,c_k}(z,z)$. If there exists an integer $d$ such that $\liminf_{k\To \infty} k^d c_k>0$, then we can also obtain the leading term of the expansion of $P^{(q)}_{\omega,k\phi,c_k}(z,z)$. Furthermore, in the case $q=0$, we only need a weaker condition of $c_k$ that \[\exists c<1 \,\text{such that }\, \liminf e^{2c\min{\lambda_i}\cdot k^{1/2}} c_k>0\quad \text{where }\,\lambda_i\,\text{are defined in }(\ref{lll}) .\]

\subsection{Set-up and the main results}\label{section 1.1}

Let $(M,\omega)$ be a Hermitian manifold with complex dimension $n$ where $\omega$ is a positive Hermitian $(1,1)$-form. Denote by $\la\cdot|\cdot\ra_{\omega}$ the pointwise Hermitain inner product induced by $\omega$ on $T_{\Cs}M$ and $dV_{\omega}$ the induced Riemannian volume form given by $\dfrac{\omega^n}{n!}$.\par 

We consider a holomorphic Hermitian line bundle $(L, h^L)$ over the manifold $M$, and denote its $k$-th tensor power $L^{\otimes k}$ by $L^k$. Let $s$ be a local holomorphic trivializing section of $L$ over an open subset $U$ of $M$. The Hermitian metric $h^{L}$ corresponds locally to a \textbf{weight function} $\phi:U \To \R$ such that
$|s|^2_{h^L}=e^{-2\phi}$. Denote by $s^k$ the $k$-th tensor power $s^{\otimes k}$ of $s$. Then the metric of $L^k$ in $U$ can be described as
$
|s^{k}|^2_{k\phi}:=|s^{k}|^2_{h^{L^k}}=e^{-2k\phi}
$ where $s^k$ trivializes $L^k$ in $U$ with its weight function $k\phi$. Denote by $\la \cdot|\cdot\ra_{k\phi}:=\la \cdot|\cdot\ra_{h^{L^k}}$ the pointwise Hermitian inner product $h^{L^k}$ on $L^k$ for convenience.\par

We also introduce the holomorphic Hermitian connection $\nabla^L$ on $(L, h^L)$ that has a curvature form denoted by $\Theta^L$. We identify $\Theta^L$ with a Hermitian matrix $\dot{\Theta}^L \in \mathcal{C}^{\infty}(M,\End(T^{(1,0)}M))$ that satisfies the following equation:
\begin{equation*}
\langle \dot{\Theta}^L(z)v_1\mid v_2\rangle_{\omega}:=\Theta^L(z)(v_1\wedge\ov{v_2}) \quad \text{\rm for all }\, v_1, v_2\in T^{(1,0)}_zM ,\quad z\in M.
\end{equation*}Next, we set the notation describing the signature of the curvature.
 \begin{defin}\label{1.1 def curvature}
 For any $q\in\{0,1,\dots,n\}$, we denote 
 \begin{multline*}
    M(q):=\{ z\in M \mid   \dot{\Theta}^L(z)\in \End(T_z^{(1,0)}M)  \, \text{\text is non-degenerate} \\ \text{\text and has exactly $q$ negative eigenvalues}\}. 
 \end{multline*}
  \end{defin} \par 
There is a natural Hermitian structure denoted by $\la \cdot | \cdot  \ra_{\omega,k\phi}$ on the vector bundle $\TM \otimes L^k$ over $M$ obtained by the Hermitian pointwise inner product on $\TM$ induced by $\omega$ (cf.(\ref{2.2 eq 1})) and the local weight functions $k\phi$ of the Hermitian metric $h^{L^k}$ of $L^k$, where $\TM$ denotes the bundle of $(0,q)$-forms on $M$ (cf.(\ref{2.2 0,q form})). Let $\Omega^{(0,q)}(M,L^k)$ be the space of smooth $(0,q)$-forms on $M$ with values in $L^k$, and let $\Omega^{(0,q)}_c(M,L^k)$ be the subspace of $\Omega^{(0,q)}(M,L^k)$ consisting of elements with compact support in $M$.  The pointwise inner product $\la\cdot|\cdot\ra_{\omega,k\phi}$ on $\TM\otimes L^k$ induces a $L^2_{\omega,k\phi}$-inner product $(\cdot|\cdot)_{\omega,k\phi}$ on the space $\Omega^{(0,q)}_c(M,L^k)$ (cf.(\ref{2.2 inner product 1})). Denote $L^2_{\omega,k\phi}(M,\TM\otimes L^k)$ as the completion of $\Omega^{(0,q)}_c(M,L^k)$ with respect to $(\cdot|\cdot)_{\omega,k\phi}$ and denote $\|\cdot\|_{\omega,k\phi}$ as its norm. \par 
Let $\pb^{(q)}_{k}:\Omega^{(0,q)}(M,L^k)\To\Omega^{(0,q+1)}(M,L^k)$ be the Cauchy-Riemann operator with values in $L^k$ and $\pb^{*,(q+1)}_{k}:\Omega^{(0,q+1)}(M,L^k)\To\Omega^{(0,q)}(M,L^k)$ be the formal adjoint of $\pb^{(q)}_{k}$ with respect to $(\cdot|\cdot)_{\omega,k\phi}$. Recall that the \textbf{Kodaira Laplacian} is given by  \[\Box^{(q)}_{\omega,k\phi}:=\pb^{*,(q+1)}_{k}\pb^{(q)}_{k}+\pb^{(q-1)}_{k}\pb^{*,(q)}_{k}:\Omega^{(0,q)}(M,L^k)\To \Omega^{(0,q)}(M,L^k)\] and it has the Gaffney extension(cf.(\ref{2.3 Gaffney extansion})): \[\Box^{(q)}_{\omega,k\phi}:\Dom\Box^{(q)}_{\omega,k\phi}\subset L^2_{\omega,k\phi}(M,\TM\otimes L^k)\To L^2_{\omega,k\phi}(M,\TM\otimes L^k).\] Denote by $E^{(q)}_{k,\leq c}$ the image of $\mathbbm{1}_{[0,c]}\left(\Box^{(q)}_{\omega,k\phi}\right)$ which is the functional calculus of $\Box^{(q)}_{\omega,k\phi}$ with respect to the indicator function $\mathbbm{1}_{[0,c]}$. We specify a nonnegative sequence $c_k$ and denote by  (cf.(\ref{2.3 spectral projection}))  
\[
\mathcal{P}^{(q)}_{k,c_k}:=\mathbbm{1}_{[0,c_k]}\left(\Box^{(q)}_{\omega,k\phi}\right):L^2_{\omega,k\phi}(M,\TM\otimes L^k)\To E^{(q)}_{k,\leq c_k}
\]the \textbf{spectral projection} which is the orthogonal projection. Specifically, in the case $c_k=0$, denote \[\mathcal{B}^{(q)}_{k}:L^2_{\omega,k\phi}(M,\TM\otimes L^k)\To \Ker\Box^{(q)}_{k}\] to be the \textbf{Bergman projection}. Define $P^{(q)}_{k,c_k}(z,w)$ to be the \textbf{spectral kernel} and $B^{(q)}_{k}(z,w)$ to be the \textbf{Bergman kernel} which are the Schwartz kernels of $\mathcal{P}^{(q)}_{k,c_k}$ and $\mathcal{B}^{(q)}_{k}$, respectively.

Now, we choose a suitable holomorphic coordinate chart $U$ centered at $p\in M$ and a holomorphic trivialization $s$ on $U$ such that (cf. Lemma \ref{2.2 coordinate chart})
    \begin{align}\label{lll}\phi&=\sum_{i=1}^{n}\lambda_i|z^i|^2+O(|z|^3) \quad ; \quad 
        \omega= \ii\sum_{i=1}^{n}dz^i \wedge d\zb^i + O(|z|).
    \end{align}Moreover, if $\lambda_i> 0$ for all $i=1,\cdots,n$, we take the  trivialization such that  \[\phi=\sum_{i=1}^{n}\lambda_i|z^i|^2+O(|z|^4).\] Note that if $p\in M(q')$ for some $q'\in\{0,\cdots,n\}$, then 
    \[
    q'=\#\{i\,;\lambda_i<0\}\quad 
\text{\rm and }\quad  n-q'=\#\{i\,;\lambda_i>0\}. \] In this paper, we always assume that $\lambda_i<0$ for all $i=1,\cdots,q'$ by rearrangement. Next, we introduce the spectral gap conditions.
\begin{defin}[spectral gap condition 1]\label{1.2 spectral gap 1}
    For any $q\in\{0,\cdots,n\}$ and an open set $U \subset M$, we say $\Box^{(q)}_{\omega,k\phi}$ has a small local spectral gap condition of polynomial rate on $U$ if there exist $d\in \N$ and $C >0$ such that for all large enough $k$,
    \[
        \|\left(I-\mathcal{B}^{(q)}_{k}\right)u\|^2_{\omega,k\phi}\leq  C k^{d}\left( \Box^{(q)}_{\omega,k\phi}u\mid u\right)_{\omega,k\phi} \quad \text{for all }\, u\in\Omega^{(0,q)}_c(U,L^k).
    \]
\end{defin}
For the function case $q=0$, we introduce a relaxed condition that allows for a \textbf{narrower} spectral gap. 
\begin{defin}[spectral gap condition 2]\label{1.2 spectral gap 2}
For an open set $U \subset M$, we say $\Box^{(0)}_{\omega,k\phi}$ has a small local spectral gap condition of suitable exponential rate on $U$ if  there are constants $0<c<1$ and $C>0$ such that for large enough $k$, 
    \begin{equation*}
        \|\left(I-\mathcal{B}^{(0)}_{k}\right)u\|^2_{\omega,k\phi}\leq  Ce^{2c\min{\lambda_i}\cdot k^{1/2}}\left( \Box^{(0)}_{\omega,k\phi}u\mid u\right)_{\omega,k\phi}\quad \text{for all }\, u\in\Cinf_c(U,L^k).
    \end{equation*}
\end{defin}
Let $s:U\To L$ be a local non-vanishing holomorphic section defined on an open set $U\subset M$. We can locally express the spectral and Bergman kernels on $U \times U$ as 
\begin{align}
 P^{(q)}_{k,c_k}(z,w)=P^{(q),s}_{k,c_k}(z,w)\,s^k(z)\otimes (s^k(w))^*;\label{8.1}\\
B^{(q)}_{k}(z,w)=B^{(q),s}_{k}(z,w)\,s^k(z)\otimes (s^k(w))^*.\notag
\end{align}Here, $P^{(q),s}_{k,c_k}(z,w)$ and $B^{(q),s}_{k}(z,w)$ are elements in 
 $\Cinf(U \times U,\TM \boxtimes \TM )$ where $T^{*,(0,q)}M \boxtimes T^{*,(0,q)}M$ is the vector bundle over $U\times U$ whose fiber at  $(z,w)\in U\times U$ is the space of linear transformations from $T^{*,(0,q)}_wM$ to $T^{*,(0,q)}_zM$. We now introduce the primary object in our approach. 
\begin{defin}\label{8.2}
 We treat $U$ as a subset in $\Cn$ and assume that $U$ is convex. The scaled spectral kernel $P^{(q),s}_{(k),c_k}\in\Cinf\left(\sqrt{k}U\times\sqrt{k}U, T^{*,(0,q)}\Cn\boxtimes T^{*,(0,q)}\Cn\right)$ is defined by
 \[
 P^{(q),s}_{(k),c_k}(z,w):=k^{-n}P^{(q),s}_{k,c_k}(\dfrac{z}{\sqrt{k}},\dfrac{w}{\sqrt{k}}).
 \]Similarly, the scaled Bergman kernel is defined by 
 \[
 B^{(q),s}_{(k)}(z,w):=k^{-n}B^{(q),s}_{k}(\dfrac{z}{\sqrt{k}},\dfrac{w}{\sqrt{k}}).
 \]
\end{defin}
We are ready to illustrate the main results of this paper.
\begin{thm}[main theorem for Bergman kernel]\label{main thm 1}
     If $p \notin M(q)$, the scaled Bergman kernel $B^{(q),s}_{(k)}(z,w) \To 0$ locally uniformly in $\Cinf$ on $\Cn$.
If $p\in M(q)$ and $\Box^{(q)}_{\omega,k\phi}$ has local small spectral gap condition of polynomial rate in $U$ (cf. Def. \ref{1.2 spectral gap 1}), then $B^{(q),s}_{(k)}(z,w)$ converges to
    \begin{equation*}\label{1.1 main thm 1.1}
       \dfrac{|\lambda_1 \cdots \lambda_n|}{\pi^n}\,e^{2(\sum_{i=1}^{q}|\lambda_i|\zb^i w^i+\sum_{i=q+1}^{n}|\lambda_i|z^i\wb^i-\sum_{i=1}^{n}|\lambda_i||w^i|^2})(d\zb^1\wedge \cdots \wedge d\zb^q) \otimes (\dfrac{\p}{\p \wb^{1}} \wedge \cdots \wedge \dfrac{\p}{\p \wb^{q}})
    \end{equation*}locally uniformly in $\Cinf$ on $\Cn$. Here, we identify $(d\zb^1\wedge \cdots \wedge d\zb^q) \otimes (\dfrac{\p}{\p \wb^{1}} \wedge \cdots \wedge \dfrac{\p}{\p \wb^{q}})$ as a section of $\T\boxtimes\T$ over $\Cn$ defined by \[\eta \mapsto (d\zb^1\wedge \cdots \wedge d\zb^q)\otimes \eta(\dfrac{\p}{\p \wb^{1}} \wedge \cdots \wedge \dfrac{\p}{\p \wb^{q}}) \quad \text{ for all }\, \eta\in \T.\]
    In particular, in the case $p\in M(0)$, the convergence above for the function case $q=0$ remains valid if 
 $\Box^{(0)}_{\omega,k\phi}$ has only local small spectral gap condition of suitable exponential rate in $U$ (cf. Def. \ref{1.2 spectral gap 2}). 
\end{thm}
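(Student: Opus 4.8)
\section*{Proof proposal (plan)}

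The plan is to reduce the statement to an explicit model on $\Cn$ by rescaling at $p$. Fix the chart $U\ni p$ and the trivialization $s$ normalized as in (\ref{lll}), so that $k\phi(z/\sqrt{k})=\phi_0(z)+O(k^{-1/2}|z|^{3})$ with $\phi_0:=\sum_i\lambda_i|z^i|^2$, and the rescaled metric converges to the flat form $\ii\sum_idz^i\wedge d\zb^i$. Writing $\Box^{(q)}_{\omega,k\phi}$ through $s$, conjugating by the dilation $z\mapsto z/\sqrt{k}$ and dividing by $k$, one obtains operators $\Box^{(q)}_{(k)}$ on $\sqrt{k}U$ of the form $\Box^{(q)}_{(k)}=\Box^{(q)}_{\infty}+k^{-1/2}E_{1}+k^{-1}E_{2}+\cdots$, where $\Box^{(q)}_{\infty}$ is the Kodaira Laplacian of the quadratic weight $\phi_0$ for the flat K\"ahler form and each $E_j$ is a differential operator with polynomially bounded coefficients; in particular all coefficients of $\Box^{(q)}_{(k)}$ converge in $\Cinf_{\mathrm{loc}}$ to those of $\Box^{(q)}_{\infty}$. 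By Definition~\ref{8.2}, $B^{(q),s}_{(k)}$ is the local expression of the reproducing kernel of the rescaled Bergman space $\ker\Box^{(q)}_{(k)}$, whose underlying weight converges to $e^{-2\phi_0}$; so the theorem reduces to showing $B^{(q),s}_{(k)}\to B^{(q)}_{\infty}$ in $\Cinf_{\mathrm{loc}}$, where $B^{(q)}_{\infty}$ is the Bergman kernel of $\Box^{(q)}_{\infty}$, together with the explicit computation of $B^{(q)}_{\infty}$.

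The model is handled by separation of variables: since $\phi_0$ and the flat form split over the $n$ coordinate lines, $\ker_{L^2}\Box^{(q)}_{\infty}$ is a tensor product of one-variable pieces---a Bargmann--Fock factor where $\lambda_i>0$, its conjugate where $\lambda_i<0$, and no $L^2$ contribution where $\lambda_i=0$. Hence $\ker_{L^2}\Box^{(q)}_{\infty}\neq\{0\}$ exactly when the signature is $(n-q,q)$, i.e.\ exactly when $p\in M(q)$; in that case $\ker_{L^2}\Box^{(q)}_{\infty}$ consists of the forms $f(z)\,d\zb^1\wedge\cdots\wedge d\zb^q$ with $f$ holomorphic in $z^{q+1},\dots,z^n$ and equal to $e^{2\sum_{i\le q}\lambda_i|z^i|^2}$ times an anti-holomorphic function of $z^1,\dots,z^q$, and summing the one-dimensional reproducing kernels over a product orthonormal basis yields the formula in the statement. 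When $p\notin M(q)$ one gets $B^{(q)}_{\infty}=0$.

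Next comes the ``upper bound'', which uses no spectral gap. Interior elliptic estimates for $\Box^{(q)}_{(k)}$, uniform on compact subsets of $\Cn$ because the coefficients converge smoothly, show that any family of $\Box^{(q)}_{(k)}$-harmonic forms bounded in $L^2$ is precompact in $\Cinf_{\mathrm{loc}}$ and that every limit lies in $\ker_{L^2}\Box^{(q)}_{\infty}$ (the $L^2$ bound on the limit by Fatou). Feeding this into the extremal characterization $\la B^{(q),s}_{(k)}(z_0,z_0)\xi\mid\xi\ra=\sup\{\,|\la u(z_0)\mid\xi\ra|^{2}:\Box^{(q)}_{(k)}u=0,\ \|u\|\le1\,\}$ gives $\limsup_{k}\la B^{(q),s}_{(k)}(z_0,z_0)\xi\mid\xi\ra\le\la B^{(q)}_{\infty}(z_0,z_0)\xi\mid\xi\ra$ for all $z_0,\xi$. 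When $p\notin M(q)$ this already forces $B^{(q),s}_{(k)}(z_0,z_0)\to0$; since $\mathcal{B}^{(q)}_{k}$ is a self-adjoint idempotent its diagonal controls the kernel in $L^{2}_{\mathrm{loc}}$ in each variable, and an elliptic bootstrap upgrades this to $\Cinf_{\mathrm{loc}}$ convergence to $0$---the first assertion.

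The matching lower bound for $p\in M(q)$ is where the gap enters, and is the crux. Given $z_0,\xi$, let $u_0\in\ker_{L^2}\Box^{(q)}_{\infty}$ be the (Gaussian-decaying) extremal harmonic form realizing $\la B^{(q)}_{\infty}(z_0,z_0)\xi\mid\xi\ra$, and transplant it to $M$ at scale $1/\sqrt{k}$ with a fixed-radius cutoff $\chi$ supported in $U$ to obtain $v_k\in\Omega^{(0,q)}_c(U,L^k)$. The transplanted form $u_0(\sqrt{k}\,\cdot)$ is automatically $\pb_k$-closed, so $\pb_k v_k$ is supported where $\chi$ varies, where $u_0$ is already exponentially small in $k$. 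For $q=0$, $v_k$ is then holomorphic where $\chi\equiv1$ and $(\Box^{(0)}_{\omega,k\phi}v_k\mid v_k)_{\omega,k\phi}/\|v_k\|^{2}_{\omega,k\phi}=\|\pb_k v_k\|^2_{\omega,k\phi}/\|v_k\|^2_{\omega,k\phi}$ is exponentially small in $k$, which beats the weight $e^{2c\min{\lambda_i}\cdot k^{1/2}}$ of Definition~\ref{1.2 spectral gap 2}. For $q\ge1$ the transplant is only approximately $\pb^*_k$-closed, so I would add finitely many corrections $k^{-1/2}u_1+\cdots+k^{-(d+1)/2}u_{d+1}$, built with the model Green's operator so as to remain $\pb$-closed while improving $\pb^*$-closedness order by order; solvability at each step follows from $\pb^{*}\pb^{*}=0$ together with the vanishing of $L^2$ harmonic $(0,q-1)$-forms for the model in signature $(n-q,q)$, and it makes the energy ratio $O(k^{-(d+1)})$, which beats Definition~\ref{1.2 spectral gap 1}. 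In either case the gap then yields $\|(I-\mathcal{B}^{(q)}_{k})v_k\|_{\omega,k\phi}=o(\|v_k\|_{\omega,k\phi})$, so $\mathcal{B}^{(q)}_{k}v_k$ is a genuine global harmonic form whose rescaling converges in $\Cinf_{\mathrm{loc}}$ to $u_0$; evaluation at $z_0$ gives $\liminf_{k}\la B^{(q),s}_{(k)}(z_0,z_0)\xi\mid\xi\ra\ge\la B^{(q)}_{\infty}(z_0,z_0)\xi\mid\xi\ra$ for every $z_0,\xi$. With the upper bound this gives pointwise convergence on the diagonal, and the reproducing-kernel $L^2$-identity plus the normal-families argument of the previous paragraph promote it to $\Cinf_{\mathrm{loc}}$ convergence of the full kernel. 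The main obstacle is exactly this transplantation step: arranging the cutoff scale, the number of perturbative corrections, and their solvability so that the resulting error genuinely beats the (possibly exponentially narrow) true spectral gap is the technical heart of the scaling method.
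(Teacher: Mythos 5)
Your plan is correct in outline but follows a genuinely different route from the paper. The paper proceeds in two independent halves: (i) it proves a $k$-uniform G\aa rding/Sobolev smoothing property for the scaled projections (Theorem~\ref{3.3 k-smoothing prop}), so that Arzel\`a--Ascoli supplies $\Cinf_{\mathrm{loc}}$-convergent subsequences of $B^{(q),s}_{(k)}$ with no spectral gap whatsoever; (ii) it identifies every subsequential limit by a purely operator-theoretic argument: the integral operator $\Bs$ of the limit kernel has norm $\leq1$ and maps $L^{2}_{\omega_0}$ into $\Ker\Box^{(q)}_{0,s}$ (Theorem~\ref{4.2 thm bd op BoxB=0}), and the gap is used only to prove $\Bs u=u$ on $\Ker\Box^{(q)}_{0,s}$ (Statement~\ref{4.2 statement 2}), which forces $\Bs$ to be the orthogonal projection and pins down the kernel by uniqueness of Schwartz kernels. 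You replace (ii) with the extremal/peak-section picture: compactness of scaled harmonic families plus the sup-characterization of the diagonal gives the upper bound (handling $p\notin M(q)$ at once), the gap yields the matching lower bound by transplanting extremal model harmonic forms, and positivity of the reproducing kernel upgrades diagonal convergence to full-kernel convergence. Both strategies are sound; yours is closer to Tian's peak sections and Berman's local Morse inequalities, while the operator-theoretic characterization is the paper's actual novelty and is noticeably more hands-off in the degenerate and $(0,q)$ settings.

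Where I would press you is the lower bound for $q\geq1$ and the final promotion. Your quasi-mode plan---iterating a model Green's operator, using $\pb^*\pb^*=0$ and $\Ker\Box^{(q-1)}_{0,s}=\{0\}$ to solve away the $\pb^*$-residual order by order---is plausible in principle (the leading residual is $\pb^*_{0,s}$-closed, and taking each corrector of the form $-\pb_{0,s}N\bar r_j$ preserves $\pb$-closedness and Schwartz decay), but you still owe a proof that the number of stages is controlled by $d$ alone and that cut-off commutators stay exponentially small through every iteration; with a polynomial rate $Ck^d$ of unknown $d$ this becomes a genuine formal-series analysis. The paper sidesteps it entirely: it extends $\Box^{(q)}_{(k),s}$ past $B(\sqrt{k})$ to $\Box^{(q)\sim}_{(k),s}$ on $\Cn$ with a $k$-uniform positive spectral gap (Section~\ref{section 4.4}), sets $u_{(k)}=\tchi_k\tBlks\chi_k u$, and uses the Hodge decomposition of the extended Bergman projection together with an off-diagonal estimate (Lemma~\ref{4.5 thm2 lem1}) to beat the polynomial gap in a single step. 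The iterative corrections in your plan are exactly the technical debt this extension trick pays off. Finally, ``reproducing-kernel $L^2$-identity'' is too vague to carry the diagonal-to-full-kernel promotion; what actually closes the argument is either Cauchy--Schwarz for the positive-semidefinite difference kernel vanishing on the diagonal, or---more in the paper's spirit---the observation that the limit operator has norm $\leq1$, range in the model kernel space, and restricts to the identity there, hence is the projection.
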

Next, the second main theorem is the spectral kernel version. The spectral gap conditions can be dropped and conditions can be imposed on the sequence $c_k$ since \[
\|\left(I-\Pk\right)u\|^2_{\omega,k\phi}\leq c_k \left(\Box^{(q)}_{\omega,k\phi}u\mid u\right) \quad \text{\rm for all }\, u \in L^2_{\omega,k\phi}(M,\TM\otimes L^k). 
\]This estimate plays the role of a spectral gap condition.
\begin{thm}[main theorem for spectral kernel]\label{main thm 2}
    Assume that the nonnegative sequence $c_k$ satisfies
    \[
    \limsup_{k\To\infty}\dfrac{c_k}{k}=0.
    \] If $p\notin M(q)$, the scaled spectral kernel $P^{(q),s}_{(k),c_k}(z,w) \To 0$ locally uniformly in $\Cinf$ on $\Cn$. 
    If $p\in M(q)$ and there exists $d\in \N$ such that $\liminf_{k\To\infty} k^dc_k>0$, then $P^{(q),s}_{(k),c_k}(z,w)$ converges to
    \begin{equation*}
         \dfrac{|\lambda_1 \cdots \lambda_n|}{\pi^n}\,e^{2(\sum_{i=1}^{q}|\lambda_i|\zb^i w^i+\sum_{i=q+1}^{n}|\lambda_i|z^i\wb^i-\sum_{i=1}^{n}|\lambda_i||w^i|^2})(d\zb^1\wedge \cdots \wedge d\zb^q) \otimes (\dfrac{\p}{\p \wb^{1}} \wedge \cdots \wedge \dfrac{\p}{\p \wb^{q}})
    \end{equation*}locally uniformly in $\Cinf$ on $\Cn$.\par  Particularly, in the case $p\in M(0)$, the convergence above of the function case $q=0$  still holds under a weaker condition of $c_k$ that $\liminf e^{2c\min\{\lambda_i\}k^{1/2}}c_k>0$ for some $c<1$.
\end{thm}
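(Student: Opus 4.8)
The plan is to run the scaling argument of Theorem~\ref{main thm 1} verbatim, with the elementary estimate recorded just before the statement playing the exact role that Definitions~\ref{1.2 spectral gap 1} and~\ref{1.2 spectral gap 2} play there, and to read off which of the two hypotheses on $c_k$ is responsible for which half of the argument. Fix the chart $U\ni p$ and the trivialization of Lemma~\ref{2.2 coordinate chart} normalised as in~\eqref{lll}, regard $U$ as a convex subset of $\Cn$, and conjugate $\Box^{(q)}_{\omega,k\phi}$ by the dilation $z\mapsto z/\sqrt k$: this is a unitary identification under which $\Box^{(q)}_{\omega,k\phi}$ becomes $k\,\Box_{(k)}$, where $\Box_{(k)}$ is the Kodaira Laplacian on $\sqrt k\,U$ for the rescaled data $\omega_{(k)}(z)=\omega(z/\sqrt k)$, $\phi_{(k)}(z)=k\phi(z/\sqrt k)$, and the factor $k^{-n}$ in Definition~\ref{8.2} is exactly the Jacobian, so that $P^{(q),s}_{(k),c_k}$ is the Schwartz kernel of $\mathbbm{1}_{[0,c_k/k]}(\Box_{(k)})=:\Pi_k$. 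By~\eqref{lll}, $\phi_{(k)}=\sum\lambda_i|z^i|^2+O(|z|^3/\sqrt k)$ and $\omega_{(k)}=\ii\sum dz^i\wedge d\zb^i+O(|z|/\sqrt k)$, so on every compact set $\Box_{(k)}\To\Box_0$ in $\Cinf$, where $\Box_0$ is the model Kodaira Laplacian on $\Cn$ with weight $\phi_0=\sum\lambda_i|z^i|^2$; write $\mathcal H_0:=\Ker_{L^2}\Box_0^{(q)}$ and let $B_0$ be its Bergman projection, whose kernel is, by the standard separation into one--variable harmonic oscillators / Bargmann spaces, exactly the explicit expression in the statement, and which is nonzero precisely when $p\in M(q)$.

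The soft half: $\Box^{(q)}$ is elliptic, so $\Box_{(k)}$ is uniformly elliptic with uniformly bounded coefficients on compacts; combined with $\|\Box_{(k)}^m\Pi_k\|\le(c_k/k)^m$ and the reproducing identity $\int_{\Cn}|P^{(q),s}_{(k),c_k}(z,w)|^2\,dV_{(k)}(z)=P^{(q),s}_{(k),c_k}(w,w)$, iterated interior elliptic estimates give that $P^{(q),s}_{(k),c_k}$ is bounded in $\Cinf_{\mathrm{loc}}(\Cn\times\Cn)$ and that each of its columns is bounded in $L^2(\Cn)$; hence along any subsequence it converges in $\Cinf_{\mathrm{loc}}$ to some $P$, the kernel of a bounded positive self-adjoint operator $\Pi$ with $0\le\Pi\le I$. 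Since $\limsup_k c_k/k=0$ we have $\|\Box_{(k)}\Pi_k\|\le c_k/k\To0$, so $\Box_{0,z}P(z,w)=0$; thus $\Pi$ maps $L^2(\Cn)$ into $\mathcal H_0$. If $p\notin M(q)$ then $\mathcal H_0=0$, so $P\equiv0$; as every subsequential limit is the same, $P^{(q),s}_{(k),c_k}\To0$ in $\Cinf_{\mathrm{loc}}$, which is the first assertion.

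For $p\in M(q)$ it remains to identify $\Pi=B_0$. Because $\Pi$ maps into $\mathcal H_0=\Rang B_0$ and is positive, $\Pi u=0$ for $u\perp\mathcal H_0$; and once the lower bound $\Pi\ge B_0$ on $\mathcal H_0$ is in hand, the sandwich $B_0\le\Pi\le I$ forces $(\Pi u\mid u)=\|u\|^2$, i.e. $\Pi u=u$, on $\mathcal H_0$, so $\Pi=B_0$, and uniqueness of the limit point then upgrades this to $P^{(q),s}_{(k),c_k}\To B_0$ in $\Cinf_{\mathrm{loc}}$. So everything reduces to $\Pi\ge B_0$. For $f_0$ in an explicit (polynomial--times--Gaussian) dense subset of $\mathcal H_0$ one builds approximate harmonic forms: using the expansion $\Box_{(k)}=\Box_0+k^{-1/2}R_1+\cdots$ one solves recursively the transport equations for $\tilde f_k=f_0+k^{-1/2}f_1+\cdots+k^{-N/2}f_N$, the $f_j$ having the same Gaussian/Hermite decay, with $\Box_{(k)}\tilde f_k=O(k^{-(N+1)/2})$ in all Sobolev norms on compacts — solvability of the recursion uses that at $p\in M(q)$ the model $\Box_0$ has a spectral gap, so $\Box_0$ is invertible on $\mathcal H_0^\perp$ with decay--preserving inverse. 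Then, with $(k/c_k)(\Box_{(k)}\tilde f_k\mid\tilde f_k)_{(k)}$ being the rescaled form of the estimate noted before the statement, $\|(I-\Pi_k)\tilde f_k\|_{(k)}^2\le(k/c_k)(\Box_{(k)}\tilde f_k\mid\tilde f_k)_{(k)}\le C(k/c_k)k^{-(N+1)/2}\|\tilde f_k\|_{(k)}^2$, and here $\liminf_k k^d c_k>0$ enters, giving $k/c_k\le C'k^{d+1}$, so that choosing $N>2d+1$ makes this $\To0$; an elliptic bootstrap using $[\Box_{(k)},\Pi_k]=0$ promotes this to $\Pi_k\tilde f_k\To f_0$ in $\Cinf_{\mathrm{loc}}$ with $\|\Pi_k\tilde f_k\|_{(k)}\To\|f_0\|$. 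Feeding $\Pi_k\tilde f_k\in\Rang\Pi_k$ into the reproducing property of $P^{(q),s}_{(k),c_k}$, and passing to the limit with the $\Cinf_{\mathrm{loc}}$ kernel convergence and $\omega_{(k)},\phi_{(k)}\To\omega_0,\phi_0$, yields $(\Pi f_0\mid f_0)\ge\|f_0\|^2$, hence $\Pi f_0=f_0$, i.e. $\Pi\ge B_0$. In the function case $q=0$, $p\in M(0)$, the energy $(\Box^{(0)}_{\omega,k\phi}u\mid u)=\|\pb u\|^2$ is metric-- and weight--independent, so one may take the uncorrected transplant $f_k$ of a holomorphic $f_0$ from the model Bargmann space; then $\pb f_k$ is supported where the cutoff varies, where the Gaussian weight forces $(\Box^{(0)}_{\omega,k\phi}f_k\mid f_k)_{\omega,k\phi}=O(e^{-\beta k})$ for some $\beta>0$, so $\|(I-\mathcal{P}^{(0)}_{k,c_k})f_k\|^2\lesssim c_k^{-1}e^{-\beta k}\To0$ already when $c_k\gg e^{-\beta k}$, which is guaranteed by $\liminf_k e^{2c\min\{\lambda_i\}k^{1/2}}c_k>0$ since $e^{-2c\min\{\lambda_i\}k^{1/2}}\gg e^{-\beta k}$; the remainder of the argument is unchanged.

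The main obstacle is this lower bound, and within it the recursive construction of approximate harmonic $(0,q)$-forms for $q\ge1$: one must verify that the transport equations are solvable with the required rate — the obstructions that in the full parametrix theory produce the subleading Bergman coefficients have to be shown not to spoil $(\Box_{(k)}\tilde f_k\mid\tilde f_k)_{(k)}=o(c_k/k)$, which is exactly where the order $N$ of the construction must be calibrated against the given $d$ — and that the $f_j$ retain enough Gaussian decay for all the $L^2$ and Sobolev estimates to be uniform in $k$; a secondary delicate point is the limit passage converting $\|(I-\Pi_k)\tilde f_k\|\To0$ together with the $\Cinf_{\mathrm{loc}}$ kernel convergence into the operator inequality $\Pi\ge B_0$, carried out with the $k$-dependent rescaled inner products and the non-compactly-supported $\tilde f_k$.
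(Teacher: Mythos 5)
Your overall framework — rescale, use uniform ellipticity plus the $L^2$ contraction and $\|\Box_{(k)}^m\Pi_k\|\le(c_k/k)^m$ to get $\Cinf_{\mathrm{loc}}$ precompactness of the scaled kernels, show any limit kernel maps into $\Ker\Box_0$, and then pin down the limit by establishing $\Pi\ge B_0$ on $\Ker\Box_0$ — is exactly the skeleton the paper follows (Chapter~\ref{chapter 3}, Theorem~\ref{4.2 thm bd op BoxB=0}, Statement~\ref{4.2 statement 2}, Remark~\ref{4.2 rmk}). The $p\notin M(q)$ case and the reduction to $\Pi u=u$ on $\Ker\Box_0$ match the paper. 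Where you diverge is in the only genuinely hard step: constructing, from $f_0\in\Ker\Box_0$, a sequence $\tilde f_k$ close to $f_0$ whose scaled energy $(\Box_{(k)}\tilde f_k\mid\tilde f_k)$ decays fast enough that $\|(I-\Pi_k)\tilde f_k\|^2\le (k/c_k)(\Box_{(k)}\tilde f_k\mid\tilde f_k)\to0$.

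Your proposal for this step — solve transport equations for a formal series $\tilde f_k=f_0+k^{-1/2}f_1+\cdots+k^{-N/2}f_N$ so that $\Box_{(k)}\tilde f_k=O(k^{-(N+1)/2})$ — contains a genuine gap, which you acknowledge but do not close. At each order one must solve $\Box_0 f_j=-(\text{RHS}_j)$, and this is possible only when $\text{RHS}_j\perp\Ker\Box_0$. The component $\Pi_0\text{RHS}_j$ is an obstruction that in general does \emph{not} vanish, and the only freedom (adding $\Ker\Box_0$-elements to earlier $f_{j'}$) need not absorb it. In the classical positive line bundle setting these obstructions are controlled by parity and curvature positivity, which is precisely the extra structure this paper does not assume: here the curvature may be degenerate away from $p$, and only the non-degeneracy at the single point $p$ (via~\eqref{lll}) is available. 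Without a proof that the recursion closes to arbitrary order, your calibration ``$N>2d+1$'' is vacuous, and the whole lower bound $\Pi\ge B_0$ is unproved. The paper sidesteps this entirely: it extends $\Box^{(q)}_{(k),s}$ to a globally defined Laplacian $\Box^{(q)\sim}_{(k),s}$ on $\Cn$ that agrees with the true operator on $B(k^\epsilon)$, proves a $k$-uniform spectral gap for it (Lemma~\ref{4.4 lem 1}, Corollary~\ref{4.4 cor N}, Theorem~\ref{4.4 thm spectral gap}), defines $u_{(k)}=\tchi_k\tBlks\chi_k u$ via the extended Bergman projection, and derives the needed super-polynomial decay of the energy from an iterated off-diagonal (Agmon-type) estimate for $\tBlks$ (Lemma~\ref{4.5 thm2 lem1} and Corollary~\ref{4.5 thm2 cor}). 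No formal power series is involved, and the mechanism is the Hodge decomposition of the auxiliary problem, not solvability of transport equations.

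Two smaller points. First, in the $q=0$ case your energy estimate $O(e^{-\beta k})$ is too optimistic: the paper cuts off at the scale $|z|\sim k^{1/4}$ (see the definition of $\chi_k$ in Section~\ref{section 4.3}, and equation~\eqref{123456789}), because at larger scales the Taylor remainder $\phi_{(k)}-\phi_0=O(|z|^4/k)$ destroys the comparison with the model weight; the resulting estimate is $k^N e^{-2c'\min\lambda_i\cdot k^{1/2}}$, a stretched-exponential, which is why the hypothesis on $c_k$ in the theorem is a $k^{1/2}$-exponential and not a linear one. Your argument still concludes with the corrected rate, but the numerology should be fixed. Second, the last passage from ``$\|(I-\Pi_k)\tilde f_k\|\to0$ plus $\Cinf_{\mathrm{loc}}$ kernel convergence'' to the operator inequality $\Pi\ge B_0$ is carried out in the paper through Lemma~\ref{4.3 lem 3} and Corollary~\ref{8.222} (passing to a compactly supported test form and a two-parameter cutoff); you flag this as delicate but do not spell it out, and indeed it requires the $k$-uniform boundedness of $\Plks$ from Lemma~\ref{3.3 thm pf 1.5} to control the tail where $\tilde f_k$ is not compactly supported.
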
  

\begin{rmk}
For a fixed point $p\in M$, observe that
$
B^{(q),s}_{k}(p,p)=k^{n}B^{(q),s}_{(k)}(0,0)$. By Theorem \ref{main thm 1}, under spectral gap conditions, we deduce
\begin{align*}
    B^{(q),s}_{k}(p,p)&= k^n \dfrac{|\lambda_1 \cdots \lambda_n|}{\pi^n}(d\zb^1\wedge \cdots \wedge d\zb^q) \otimes (\dfrac{\p}{\p \wb^{1}} \wedge \cdots \wedge \dfrac{\p}{\p \wb^{q}})+o(k^n) \quad \text{\rm if }\, p\in M(q);\\
    B^{(q),s}_{k}(p,p)&= o(k^n) \quad \text{\rm if }\, p\notin M(q).
\end{align*}In a similar way, by Theorem \ref{main thm 2}, we are able to conclude the same asymptotic behavior for the diagonal part of the spectral kernels $P^{(q),s}_{k,c_k}(p,p)$ under the suitable conditions on $c_k$. From our results, if the expansion (\ref{1.0 asymptotic}) exists, we can conclude that
\begin{align*}
    b^{(q)}_{n}(p,p)&=\dfrac{|\lambda_1 \cdots \lambda_n|}{\pi^n}(d\zb^1\wedge \cdots \wedge d\zb^q) \otimes (\dfrac{\p}{\p \wb^{1}} \wedge \cdots \wedge \dfrac{\p}{\p \wb^{q}})\otimes s^{k}\otimes \left(s^{k}\right)^*\quad\text{\rm if }\, p\in M(q);\\
    b^{(q)}_{n}(p,p)&=0 \quad \text{\rm if }\, p\notin M(q). 
\end{align*}
\end{rmk}
\begin{rmk}
     Theorem \ref{main thm 1} provides a purely analytic proof of the Kodaira embedding theorem (cf.\cite{Hsia01}), while Theorem \ref{main thm 2} can be used to establish the Demailly’s Morse inequality (cf.\cite[section 10.5]{Hsia02}). 
\end{rmk}
We divide the proof of the main theorems into two steps. First, in Chapter \ref{chapter 3}, we try to establish local uniform bounds of $B^{(q),s}_{(k)}(z,w)$ and $P^{(q),s}_{(k),c_k}(z,w)$ on $\Cn$ (cf. Theorem \ref{3.3 loc bdd thm}). In this way, we can infer that any subsequence of $B^{(q),s}_{(k)}$ ( or $P^{(q),s}_{(k),c_k}$)  has a $\Cinf$ uniformly convergent subsequence by the Arzelá-Ascoli theorem. \par 
Next, in Chapter \ref{chapter 4}, we prove that every convergent subsequence of $B^{(q),s}_{(k)}$ (or $P^{(q),s}_{(k),c_k}$) must converge to the Bergman kernel of the model case on $\Cn$ (cf. Theorem \ref{4.2 main thm}, Theorem \ref{4.3 main thm 2}, Theorem \ref{4.5 main thm}), which is exactly
\begin{equation*}
        \dfrac{|\lambda_1 \cdots \lambda_n|}{\pi^n}\,e^{2(\sum_{i=1}^{q}|\lambda_i|\zb^i w^i+\sum_{i=q+1}^{n}|\lambda_i|z^i\wb^i-\sum_{i=1}^{n}|\lambda_i||w^i|^2})(d\zb^1\wedge \cdots \wedge d\zb^q) \otimes (\dfrac{\p}{\p \wb^{1}} \wedge \cdots \wedge \dfrac{\p}{\p \wb^{q}}). 
    \end{equation*}

    \newpage

\section{Preliminaries and terminology}In this chapter, we provide the foundational knowledge and notation necessary for the paper. Specifically, we first discuss the basic structure of complex analytic geometry and line bundles in Section \ref{section 2.2}. Then, in Section \ref{section 2.3}, we discuss the Kodaira Laplacian and the relevant spectral theory. Also, we introduce the spectral and Bergman kernels. In Section \ref{section 2.4}, we introduce Sobolev theory, which will be utilized in Section \ref{section 3.3}.
\subsection{Standard notations}\label{section 2.1}
 Let $\N_{0}$ be the set $\N \cup \{ 0 \}$, and a multi-index $\alpha$ is of the form $\alpha = (\alpha_1,\alpha_2,...,\alpha_n) \in (\N_0)^n $. Denote $|\alpha| := \sum_i \alpha_i$ and $\alpha!=\alpha_1!\alpha_2!\cdots\alpha_n!$. For $\xi=(\xi_1,\cdots,\xi_n)\in \R^n$, $\xi^{\alpha}:=\xi_1^{\alpha_1}\cdots \xi_n^{\alpha_n}$.
 \par
Let $M$ be a $n$-dimensional complex manifold and $TM$ be the real tangent bundle of the underlying smooth manifold. Denote by $T_\Cs M$ the complexified tangent bundle $TM \otimes \Cs$ and $\bigwedge^l T_{\Cs}^*M$ the $l$-th exterior algebra of the cotangent bundle $T_\Cs^* M$. For a local holomorphic coordinate $(z^1,\cdots,z^n)$ that has an underlying real coordinate $(x^1,\cdots,x^{2n})$ with $z^j=x^{2j-1}+\ii x^{2j}$, let \[\pzj:=\dfrac{1}{2}\left(\dfrac{\p}{\p x^{2j-1}}-\ii\dfrac{\p}{\p x^{2j}}\right)\quad \text{\rm and}\quad\pzbj:=\dfrac{1}{2}\left(\dfrac{\p}{\p x^{2j-1}}+\ii\dfrac{\p}{\p x^{2j}}\right)\]as sections of $T_{\Cs}M$. Therefore, $dz^j:=dx^{2j-1}+\ii dx^{2j}$ and $d\Bar{z}^j:=dx^{j-1}-\ii dx^{2j}$ are sections of $T^{*}_{\Cs}M$.  For a multi-index $\alpha=(\alpha_1,\cdots,\alpha_{n})\in (\N_0)^n$, we denote $\dfrac{\p}{\p z^{\alpha}}:=(\dfrac{\p}{\p z^1})^{\alpha_1}\cdots(\dfrac{\p}{\p z^n})^{\alpha_n}$ and $\dfrac{\p}{\p \zb^{\alpha}}:=(\dfrac{\p}{\p \zb^1})^{\alpha_1}\cdots(\dfrac{\p}{\p \zb^n})^{\alpha_n}$. Sometimes, we simply write them as $\p^{\alpha}_{z}$ and $\p^{\alpha}_{\zb}$,  respectively. Also, for $\alpha\in \N_0^{2n}$, we denote $\dfrac{\p}{\p x^{\alpha}}:=(\dfrac{\p}{\p x^1})^{\alpha_1}\cdots(\dfrac{\p}{\p x^{2n}})^{\alpha_{2n}}$ and sometimes write it as $\p^{\alpha}_x$.\par
Define \[\mathcal{J}_{q,n} := \{I=(i_1,\ldots,i_q):1\leq i_1<i_2<\cdots<i_q\leq n\} \subset (\N_0)^{q}.\] For any element $I=(i_1,\ldots,i_q) \in \mathcal{J}_{q,n}$, we denote the $q$-forms $d z^{I}:=d z^{i_1}\wedge  \cdots \wedge d z^{i_q}$ and $d \Bar{z}^{I}:=d \Bar{z}^{i_1}\wedge \cdots \wedge d \Bar{z}^{i_q}$.
 \par
Consider an open subset $U$ of $M$. Denote by $\Cinf(U)$ the space of smooth functions on $U$ and by $\Cinf_c(U)$ the subspace of $\Cinf(U)$ whose elements have compact support in $U$. For a vector bundle $E$ over $M$, we denote $\Cinf(U,E)$ as the space of smooth sections of $E$ over $U$ and $\Cinf_c(U,E)$ as the subspace of $\Cinf(U,E)$ whose every element has compact support in $U$. 
Let $dm$ be the standard Lebesgue measure on $\Cn$, and let $B(r)$ be the set $\{z\in\Cn; |z|<r\}$.
 
\subsection{Complex geometry and Hermitian holomorphic line bundle}\label{section 2.2}

  Let $M$ be a complex manifold of dimension $n$. There is a natural complex structure $J:TM \rightarrow TM$ such that $J^2=-\Id$. Then $T_{\Cs}M= T^{(1,0)}M \oplus T^{(0,1)}M$  where $T^{(1,0)}M$ and $T^{(0,1)}M$ are the $i$-eigenbundle and $-i$-eigenbundle of $J$, respectively. Similarly, $T_\Cs^*M = T^{*,(1.0)}M\oplus T^{*,(0,1)}M$ where $T^{*,(1.0)}M$ and $T^{*,(0,1)}M$ are dual bundles of $T^{(1,0)}M$ and $T^{(0,1)}M$, respectively. \par  
 The splitting of the complexified tangent bundle can be extended to the exterior algebra of the complexified cotangent bundle. Namely,
  \begin{equation}\label{2.2 0,q form}
      \bigwedge^k T_{\Cs}^*M=\bigoplus_{p+q=k}\left(\bigwedge^p T^{*,(1,0)}M \right)\bigwedge\left(\bigwedge^q T^{*,(0,1)}M\right).
  \end{equation}
   Define $T^{*,(p,q)}M:=\left(\bigwedge^p T^{*,(1,0)}M \right)\bigwedge\left(\bigwedge^q T^{*,(0,1)}M\right)$ and hence $\bigwedge^k T_{\Cs}^*M=\bigoplus_{p+q=k}T^{*,(p,q)}M$.
 Let $\Omega^{(p,q)}(M)$ be the space of smooth $(p,q)$-forms which are smooth sections of $T^{*,(p,q)}M$ and $\Omega^{(p,q)}_c(M)$ be the subspace of $\Omega^{(p,q)}(M)$ consisting of elements with compact support in $M$. For a local holomorphic coordinate $(z^1,\cdots,z^n)$ in $U\subset M$, we have a local frame for $T^{*,(p,q)}M$ given by
\begin{equation*}
    T^{*,(p,q)}M\mid_U = \text{\rm span}\{d z^I \wedge d \zb^J\}_{I\in \mathcal{J}_{p,n} , J\in \mathcal{J}_{q,n}}.
\end{equation*} 
Next, we call $\omega$ a positive Hermitian $(1,1)$-form if: 
 \begin{enumerate}[label=(\roman*)]
     \item $\omega \in \Omega^{(1,1)}(M)$ ;
     \item For any local holomorphic coordinate $(z^1,\cdots,z^n)$, $\omega$ can be written as 
     \begin{equation*}
         \omega=\ii\sum_{i,j=1}^{n}h_{i,j}d z_i \wedge d \zb_j
     \end{equation*}
     where $ [h_{i,j}] $ is a positive Hermitian matrix.
 \end{enumerate}
 A positive Hermitian $(1,1)$-form $\omega$ induces pointwise Hermitian inner products $\la\cdot|\cdot\ra_\omega$ on $T^{(1,0)}M$ and $T^{(0,1)}M$ that are locally given by $\la \pzi | \pzj\ra_{\omega} := h_{i,j}$ and $\la \pzbi|\pzbj \ra_{\omega}:=\bar{h}_{i,j}$, respectively. Thus, we have a Hermitian inner product $\la\cdot|\cdot\ra_{\omega}$ on the complexified cotangent bundle $T_{\Cs}M=T^{(1,0)}M\oplus T^{(0,1)}M$. When we restrict the domain of $\la\cdot|\cdot\ra_{\omega}$ to the subbundle $TM\subset T_{\Cs}M$, we obtain a Riemannian metric called $g_\omega$ on the underlying real manifold. The Riemannian volume form $dV_{\omega}$ associated with $g_{\omega}$ is given by $dV_\omega = \frac{\omega^n}{n!}$. Moreover,
 The Hermitian inner product $\la\cdot|\cdot\ra_\omega$ can be naturally extended to $\TM$ by
\begin{equation}\label{2.2 eq 1}
    \la d \zb_{i_1}\wedge \cdots \wedge d \zb_{i_q} \mid d \zb_{j_1}\wedge \cdots
    \wedge d \zb_{j_q} \ra_{\omega}=\frac{1}{q!}  \overline{\text{det}[h^{i_l,j_k}]_{l,k=1...q}} 
\end{equation}
 where $[h^{i,j}]$ is the inverse matrix of $[h_{i,j}]$.

 We can now define the $L^2$-inner product on the space $\Omega^{(0,q)}_c(M)$ by
 \begin{equation}\label{2.2 inner product 0}
\left(\eta_1\mid\eta_2\right)_{\omega}=\int_{M}\la\eta_1\mid\eta_2\ra_{\omega}dV_{\omega} \quad \text{\rm for all }\, \eta_1,\eta_2 \in \Omega^{(0,q)}_c(M).    
 \end{equation}
 Let $L^2_{\omega}(M,\TM)$ be the completion of $\Omega^{(0,q)}_c(M)$ with respect to the inner product $\left(\cdot|\cdot\right)_{\omega}$ and denote by $\|\cdot\|_{\omega}$ the corresponding norm. \par 
 For an open set $U\subset M$, we define the restriction of the $L^2$-inner product by  \begin{equation}\label{2.2 inner product 0 restriction}
 \left(\eta_1\mid\eta_2\right)_{\omega,U}:=\int_U\la\eta_1\mid\eta_2\ra_{\omega}dV_{\omega} \quad \text{\rm for all }\,\eta_1,\eta_2\in\Omega^{(0,q)}_c(U).    
 \end{equation} In the same manner, we can define $L^2_{\omega}(U,\TM)$ to be the completion of $\Omega^{(0,q)}_c(U)$ with respect to $\left(\cdot|\cdot\right)_{\omega,U}$ and denote $\|\cdot\|_{\omega,U}$ to be the corresponding norm. \par 
A complex vector bundle $E \rightarrow M $ is holomorphic if the transition functions are holomorphic. We define $\Omega^{(p,q)}(M,E)$ as the space of $E$-valued smooth $(p,q)$-forms on $M$. Let $\pb^{E} : \Omega^{(p,q)}(M,E) \rightarrow \Omega^{(p,q+1)}(M,E)$ be the Cauchy-Reimann operator with values in $E$ (cf.\cite{Griff}, p.70). Given a Hermitian inner product $h^E$ on $E$, a connection $\nabla^E : \Cinf(M,E) \rightarrow \Cinf(M,T^*_\Cs M\otimes E)$ is called a \textbf{Chern connection} compatible with $h^E$ if for all $X \in T_\Cs M$ , $Y \in T^{(0,1)}M$ and $s_i \in \Cinf(M,E)$, the following two properties hold:
 \begin{enumerate}[label=(\roman*)]
     \item $d\la s_1 | s_2 \ra_{h^E}(X) = \la \nabla^E_X s_1 |s_2\ra_{h^E}+\la s_1 | \nabla^E_{\overline{X}} s_2\ra_{h^E}$;\label{2.2 condition 1}
     \item $\nabla^E_Y s=(\pb^E s)(Y)$.\label{2.2 condition 2} 
 \end{enumerate}
Given a holomorphic local frame $\{s_i\}$, we locally have connection 1-forms $\theta^i_j$ such that $\nabla^Es_i=\sum_{j}\theta^i_j \otimes s_j$. 
Denote $h^E_{i,j}:=\la s_i | s_j \ra_{h^E}$. Observe that the condition \ref{2.2 condition 1} implies $dh^E_{i,j}=\theta^i_k h^E_{k,j}+\overline{\theta^j_k}h^E_{i,k}$ and condition \ref{2.2 condition 2} implies that $\theta^i_j$ are $(1,0)$-forms. After combining the above conditions, we solve the matrix of connection $1$-forms by 
$[\theta^i_j] = [\p h^E_{i,j}][h^E_{i,j}]^{-1}$. In conclusion, there exists a \textbf{unique Chern connection with respect to a holomorphic Hermitian vector bundle $(E,h^E)$} 
 (cf.\cite{Griff}, p.73). \par

Now, we can define the curvature $\Theta^E \in \Omega^{(1,1)}(M,E)$ as the anti-symmetrization of $(\nabla^E)^2:\Cinf(M,E) \rightarrow \Cinf(M,T^*_\Cs M \otimes T^*_\Cs M \otimes E)$\footnote{We refer the readers to chapter 1 of \cite{Griff} for details on the elementary objects of complex manifolds.}. 
 If we fix a holomorphic local frame and denote $[\theta^{E}]$ as the matrix of connection forms with respect to the local frame, $\Theta^E$ can be locally expressed as a $\text{\rm rank } E \times \text{\rm rank } E$ 
 matrix of $2$-forms by $[\Theta^E]=[d\theta^E]-[\theta^E] \wedge [\theta^E]$.\par
 Recall that a holomorphic Hermitian line bundle $(L,h^E)$ is a 1-dimensional holomorphic Hermitian vector bundle. Let $(U,s)$ be a local trivialization where $U$ is a holomorphic chart and $s:U \subset M \rightarrow L $ is a holomorphic local non-vanishing section. Then there exists a local weight $\phi : U \rightarrow \R$ such that $\la s | s \ra_{h^L}=e^{-2 \phi(z)}$ . The  Chern connection is locally given by the connection 1-form $\theta=-2\p \phi$ and the curvature $\Theta^L$ is locally given by the $(1,1)$-form \[\Theta^L=- 2\pb \p \phi=2\sum_{i,j=1}^n\dfrac{\p^2 \phi}{\p z^i \p \zb^j}dz^i\wedge d\zb^j.\]
 Define $\dot{\Theta}^L\in\Cinf(M,\End(T^{(1,0)}M))$ to be the curvature operator such that
 \begin{equation*}
\langle \dot{\Theta}^L(p)v_1\mid v_2\rangle_{\omega}=\Theta^L(p)(v_1\wedge\ov{v_2}) \quad \text{\rm for all }\, v_1, v_2\in T^{(1,0)}_pM ,\quad p\in M.
\end{equation*}
Now, we introduce a lemma that allows us to simplify the information on curvature.
\begin{lem}\label{2.2 coordinate chart}(cf. \cite[Lemma III,2.3]{for coordinate})
Let $L \rightarrow M$ be a holomorphic line bundle over a complex manifold $M$. For any fixed $p \in M$, there exists a trivialization $(U,s)$ where $U \subset \Cn$ is a holomorphic chart centered at $p$ and $s:U \rightarrow L$ is a non-vanishing holomorphic section such that the Hermitian form $\omega$ and the local weight $\phi$ with respect to $s$ can be written as 
\[
\omega(z)=\ii\sum_{i=1}^{n}dz^i \wedge d\zb^i+O(|z|)\quad;\quad \phi(z)=\sum_{i=1}^{n}\lambda_i|z^i|^2+O(|z|^3).
\]
\end{lem}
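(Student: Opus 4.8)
The statement to prove is Lemma~\ref{2.2 coordinate chart}, a normal-coordinate normalization for $(\omega,\phi)$ at a point $p$. Here is how I would organize the argument.

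\smallskip

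\textbf{Step 1: Normalize the complex structure at $p$.} Starting from an arbitrary holomorphic chart centered at $p$, I would apply a $\Cs$-linear change of holomorphic coordinates to arrange that the Hermitian matrix $[h_{i,j}(p)]$ of $\omega$ equals the identity, so that $\omega(p)=\ii\sum_i dz^i\wedge d\zb^i$. This uses the fact that a positive Hermitian matrix can be written as $A^*A$ for some $A\in GL_n(\Cs)$; composing the original coordinates with $A$ does the job. Since $\omega$ is smooth, writing $h_{i,j}(z)=h_{i,j}(p)+O(|z|)$ then gives $\omega(z)=\ii\sum_i dz^i\wedge d\zb^i+O(|z|)$, which is the first assertion. (One cannot in general kill the $O(|z|)$ term for $\omega$ unless $\omega$ is Kähler; here we only claim $O(|z|)$, so nothing further is needed for $\omega$.)

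\smallskip

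\textbf{Step 2: Reduce $\phi$ to its quadratic part by changing the trivialization.} Given the local holomorphic section $s$ with weight $\phi$, any other non-vanishing holomorphic section has the form $e^{g}s$ with $g$ holomorphic, and its weight is $\phi-\operatorname{Re} g=\phi-\tfrac12(g+\bar g)$. Taylor-expand $\phi$ at $p=0$: $\phi(z)=\phi(0)+2\operatorname{Re}\big(\sum_i a_i z^i+\sum_{i,j}b_{ij}z^iz^j\big)+\sum_{i,j}c_{i\bar j}z^i\zb^j+O(|z|^3)$, where $[c_{i\bar j}]$ is Hermitian (it is essentially $\tfrac12\dot\Theta^L(0)$ in the chosen frame). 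Choosing $g(z)=\phi(0)+2\sum_i a_i z^i+2\sum_{i,j}b_{ij}z^iz^j$ (a holomorphic polynomial) and replacing $s$ by $e^{g}s$ removes the constant term and all holomorphic (and anti-holomorphic) quadratic and linear contributions, leaving $\phi(z)=\sum_{i,j}c_{i\bar j}z^i\zb^j+O(|z|^3)$.

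\smallskip

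\textbf{Step 3: Simultaneously diagonalize.} Now I want a further $\Cs$-linear coordinate change that diagonalizes $[c_{i\bar j}]$ while keeping $[h_{i,j}(p)]$ equal to the identity. This is the standard simultaneous diagonalization of a Hermitian form with respect to a positive definite Hermitian form: there is $B\in GL_n(\Cs)$ with $B^*B=[h_{i,j}(p)]$ (already $I$ after Step 1, so we may take $B$ unitary) such that $B^*[c_{i\bar j}]B$ is diagonal with real entries $\lambda_1,\dots,\lambda_n$. A unitary change of coordinates preserves $\omega(p)=\ii\sum dz^i\wedge d\zb^i$ and turns the quadratic part of $\phi$ into $\sum_i\lambda_i|z^i|^2$, so $\phi(z)=\sum_i\lambda_i|z^i|^2+O(|z|^3)$. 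Composing the coordinate changes of Steps 1–3 and the trivialization change of Step 2 yields the claimed $(U,s)$; shrinking $U$ to a convex neighborhood if desired. I would also remark that when all $\lambda_i>0$ one can push the normalization of $\phi$ further to $O(|z|^4)$ by absorbing the order-three part into $\operatorname{Re} g$ plus a higher-order coordinate change — but since the lemma as stated only asks for $O(|z|^3)$, this refinement (used elsewhere via~(\ref{lll})) can be treated separately or cited.

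\smallskip

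\textbf{Main obstacle.} None of the steps is deep; the only point requiring care is bookkeeping — making sure that the coordinate change in Step 3 does not disturb the normalizations achieved in Steps 1–2 (hence the insistence on a \emph{unitary} matrix there), and that the change of trivialization in Step 2 genuinely only affects $\phi$ and not $\omega$. The cleanest route is to fix the order: first normalize $\omega(p)$ (linear coords), then diagonalize the Hessian of $\phi$ by a \emph{unitary} coordinate change, and only then adjust the holomorphic frame to kill the non-Hermitian part of the quadratic Taylor polynomial of $\phi$. Since this is a well-known normal-form statement, I would keep the proof brief and refer to \cite{for coordinate} for the details of the simultaneous-diagonalization bookkeeping.
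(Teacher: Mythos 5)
The paper does not prove this lemma itself --- it only cites Wells --- so there is no internal proof to compare against, but your sketch is the standard normal-form computation that the citation points to, and it is correct. The three moves are: a $\Cs$-linear change of coordinates putting $[h_{ij}(p)]=I$; a gauge change $s\mapsto e^{g}s$ with $g$ a holomorphic quadratic polynomial, which replaces $\phi$ by $\phi-\operatorname{Re}g$, kills the pluriharmonic part of the $2$-jet of $\phi$, and leaves both $\p\pb\phi$ (hence the Levi matrix) and $\omega$ untouched; and a unitary change of coordinates diagonalizing the Hermitian matrix $\bigl[\tfrac{\p^2\phi}{\p z^i\p\zb^j}(0)\bigr]$ without disturbing the normalization $\omega(p)=\ii\sum dz^i\wedge d\zb^i$. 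Both orderings you mention work for exactly the reasons you give: a unitary change sends mixed monomials $z^i\zb^j$ to mixed monomials and so cannot regenerate the pluriharmonic $2$-jet, while the gauge change does not touch $\omega$ at all. The only housekeeping worth flagging explicitly is that the existence of a holomorphic $g$ with $e^g=f$ for a nonvanishing holomorphic $f$ requires $U$ simply connected, which you cover by shrinking to a convex neighborhood. Your closing remark about pushing to $O(|z|^4)$ when the $\lambda_i$ are nonzero corresponds to Remark \ref{rmk coordinate chart} of the paper; that refinement kills the pluriharmonic cubic by enlarging $g$ and then absorbs the mixed $(2,1)$ and $(1,2)$ cubic parts through a quadratic holomorphic change of coordinates $z^i\mapsto z^i+Q^i(z)$, which is possible precisely because the diagonal Levi form $\mathrm{diag}(\lambda_1,\dots,\lambda_n)$ is nondegenerate.
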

\begin{rmk} \label{rmk coordinate chart}
    If $\lambda_i\neq 0$ for all $i=1,\cdots,n$, then the trivialization in Lemma \ref{2.2 coordinate chart} can be chosen such that \[\phi(z)=\sum_{i=1}^{n}\lambda_i|z^i|^2+O(|z|^4).
    \] 
\end{rmk}
Observe that the Hermitian metric $h^L$ on $L$ can be identified by a family of local wights $\{\phi_i\}$ with respect of a family of trivializing sections $\{s_i\}$. We will alternatively denote $\la \cdot | \cdot \ra_{\phi}:=\la \cdot | \cdot \ra_{h^L}$ if there is no risk of ambiguity. 
We can define the $k$-th tensor power of $L$ as $L^k:=L^{\otimes k}$, and denote the corresponding trivializing section as $s^k := s^{\otimes k}$. It follows that the local weight of $s^k$ with respect to the induced metric $h^{L^k}$ is given by $k\phi$. The norm of $s^{k}$ is  $|s^k|_{k\phi}:=|s^k|_{h^{L^k}}=e^{-k\phi}$.\par 
Fix a positive Hermitian $(1,1)$-form $\omega$ on $M$ and a Hermitian metric $h^L$ with local weights $\phi$ on $L$. They induce a pointwise Hermitian inner product $\la \cdot | \cdot \ra_{\omega,k\phi}$ on the bundle $\TM \otimes L^k$. For a fixed trivialization $s:U\To L$ with local weight $\phi$, if $u_1=\eta_1\otimes s^k$ and $u_2=\eta_2\otimes s^k$ where $\eta_i\in \Omega^{(0,q)}(U)$, then
\begin{equation*}
 \la u_1 \mid u_2 \ra_{\omega,k\phi}=\la \eta_1\otimes s^k \mid \eta_2\otimes s^k \ra_{\omega,k\phi}=\la \eta_1 \mid \eta_2 \ra_{\omega}e^{-2k\phi}.   
\end{equation*}
Then we can define the $L^2$-inner product on the space $\Omega^{(0,q)}_{c}(M,L^k)$ by
\begin{equation}\label{2.2 inner product 1}
 \left(u_1 \mid u_2\right)_{\omega,k\phi}:=\int_{M}\la u_1 \mid u_2 \ra_{\omega,k\phi}dV_{\omega} 
\quad\text{\rm for }\, u_1,u_2\in\Omega^{(0,q)}_c(M,L^k). 
\end{equation}
Denote $\|u\|^2_{\omega,k\phi}:=\left(u \mid u\right)_{\omega,k\phi}$ and $L^2_{\omega,k\phi}(M,\TM \otimes L^k)$ as the Hilbert space which is the completion of $\Omega^{(0,q)}_{c}(M,L^k)$ with respect to the inner product $\left(\cdot|\cdot\right)_{\omega,k\phi}$.

\subsection{The spectral and Bergman kernels}\label{section 2.3}
Let $\pb^{(q)}_k: \Omega^{(0,q)}(M,L^k) \rightarrow \Omega^{(0,q+1)}(M,L^k)$ be the Cauchy-Riemann operator and $
    \pb^{*,(q+1)}_k: \Omega^{(0,q+1)}(M,L^k) \rightarrow \Omega^{(0,q)}(M,L^k)$ be its formal adjoint with respect to $(\cdot | \cdot)_{\omega,k\phi}$. The \textbf{Kadaira Laplacian} is given by 
\begin{equation*}
\Box^{(q)}_{k}=\Box^{(q)}_{\omega,k\phi}:=\pb^{*,(q+1)}_{k} \pb^{(q)}_k+ \pb^{(q-1)}_k \pb^{*,(q)}_{k} : \Omega^{(0,q)}(M,L^k) \rightarrow \Omega^{(0,q)}(M,L^k).
\end{equation*}
Next, we define \[\Dom \, \pb^{(q)}_k := \{u \in L^2_{\omega,k\phi}(M,\TM \otimes L^k) ; \pb^{(q)}_k u \in L^2_{\omega,k\phi}(M,T^{*,(0,q+1)}M \otimes L^k)\}\] where $\pb^{(q)}_k u$ is defined in distribution sense. Then we can extend $\pb^{(q)}_k$ as
\begin{equation*}
    \pb^{(q)}_k: \Dom \, \pb^{(q)}_k \subset L^2_{\omega,k\phi}(M,\TM\otimes L^k) \rightarrow L^2_{\omega,k\phi}(M,T^{*,(0,q+1)}M \otimes L^k).
\end{equation*}
 Let
$\pb^{*,(q+1)}_k: \Dom \, \pb^{*,(q+1)}_k \subset L^2_{\omega,k\phi}(M,T^{*,(0,q+1)}M \otimes L^k) \rightarrow L^2_{\omega,k\phi}(M,T^{*,(0,q)}M \otimes L^k)$ be the $L^2_{\omega,k\phi}$-adjoint of $\pb^{(q)}_k$ and denote
\[
\Dom \Box^{(q)}_{k}:=\{u\in \Dom \pb^{(q)}_k \cap \Dom \pb^{*,(q)}_k\mid \pb^{(q)}_k u\in \Dom \pb^{*,(q+1)}_k\,\text{\rm and }\,\pb^{*,(q)}_k u\in \Dom \pb^{(q-1)}_k\}. 
\]
We have the \textbf{Gaffney extension}(cf.\cite{Gaffney})
\begin{equation}\label{2.3 Gaffney extansion}
    \Box^{(q)}_k: \Dom \,\Box^{(q)}_{k}\subset L^2_{\omega,k\phi}(M,\TM\otimes L^k) \To L^2_{\omega,k\phi}(M,\TM\otimes L^k).
\end{equation}
It is well-known that the extension is semi-positive and self-adjoint(cf.\cite[proposition 3.1.2]{book01}). Next, we introduce the spectral theorem.
\begin{thm}\label{2.5 spectrum theorem for continuous case}(\cite[theorem 2.5.1]{Davies})
    Let $A:\Dom A\subset \mathcal{H} \rightarrow \mathcal{H} $ be a self-adjoint operator on a Hilbert space $\mathcal{H}$. Then there exists a spectrum set $\text{\rm Spec}A \subset \R$ , a finite measure $\mu$  on $\text{\rm Spec}A \times \N$ and a unitary operator \begin{equation*} H:\mathcal{H} \rightarrow L^2(\text{\rm Spec}A \times \N,d \mu)
    \end{equation*}
    with the following properties: Set $h:\text{\rm Spec}\,A \times \N \rightarrow \R$ by $h(s,n):=s$. Then an element $f \in \mathcal{H}$ is in $\Dom A$ if and only if $h \cdot H(f) \in L^2(\text{\rm Spec}A \times \N,d \mu)$. In addition, we have \begin{equation*} Af = H^{-1}\circ (h \cdot Hf) \quad \text{\rm for all }\, f \in \Dom\,A. 
    \end{equation*}
\end{thm}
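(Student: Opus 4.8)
The plan is to prove this by the classical \emph{multiplication operator form} of the spectral theorem, obtained from the cyclic subspace decomposition of $\mathcal{H}$ together with a Herglotz representation of the resolvent. I will assume throughout that $\mathcal{H}$ is separable (which suffices for all Hilbert spaces occurring in this paper); otherwise $\N$ must be replaced by an index set of larger cardinality, but the argument is unchanged. \textbf{Step 1 (cyclic decomposition).} First I would write $\mathcal{H} = \bigoplus_{n\in\N}\mathcal{H}_n$ as an at most countable orthogonal direct sum of closed subspaces, each invariant under $A$ and under its resolvents $(A-z)^{-1}$ ($z\in\Cs\setminus\R$), such that every $\mathcal{H}_n$ is \emph{cyclic}: there is $\psi_n\in\mathcal{H}_n$ whose orbit under the resolvents is dense in $\mathcal{H}_n$. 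This is a routine exhaustion argument: fix a dense sequence in $\mathcal{H}$ and, at each stage, split off the cyclic subspace generated by the first vector not yet covered, taking orthogonal complements.

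\textbf{Step 2 (a single cyclic subspace --- the heart of the matter).} Fix one cyclic pair $(\mathcal{H}_n,\psi_n)$. For $z\in\Cs\setminus\R$ the function $F_n(z):=\la (A-z)^{-1}\psi_n\mid\psi_n\ra_{\mathcal{H}}$ is a Herglotz (Nevanlinna) function: it is holomorphic off $\R$, maps the upper half-plane into itself, satisfies $\overline{F_n(z)}=F_n(\bar z)$ and $|F_n(z)|\le\|\psi_n\|^2/|\operatorname{Im}z|$ --- here self-adjointness of $A$ is genuinely used, through $(A-z)^{-1\,*}=(A-\bar z)^{-1}$ and the positivity of $\operatorname{Im}F_n$. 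By the Herglotz representation theorem there is a unique finite positive Borel measure $\mu_n$ on $\R$, of total mass $\|\psi_n\|^2$, with
\[
F_n(z)=\int_{\R}\frac{d\mu_n(t)}{t-z},\qquad z\in\Cs\setminus\R.
\]
Expanding resolvents and using the resolvent identity, one checks that $(A-z)^{-1}\psi_n\mapsto\bigl(t\mapsto(t-z)^{-1}\bigr)$ extends to a unitary $U_n:\mathcal{H}_n\To L^2(\R,\mu_n)$ intertwining $A|_{\mathcal{H}_n}$ with multiplication by the coordinate $t\mapsto t$; and $\supp\mu_n\subseteq\text{\rm Spec}\,A$ since $(A-z)^{-1}$ is bounded precisely for $z\notin\text{\rm Spec}\,A$. (Alternatively, one builds the continuous functional calculus $f\mapsto f(A)$ for $f\in C_0(\R)$ via the Cayley transform and the bounded spectral theorem, then defines $\mu_n$ by the Riesz representation theorem applied to the positive functional $f\mapsto\la f(A)\psi_n\mid\psi_n\ra$.)

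\textbf{Step 3 (assembling and normalizing).} Rescaling $\psi_n\rightsquigarrow c_n\psi_n$ with $\sum_n c_n^2\|\psi_n\|^2<\infty$ makes $\mu:=\sum_n\mu_n$, viewed as a measure on $\text{\rm Spec}\,A\times\N$ (the $n$-th copy of $\text{\rm Spec}\,A$ carrying $\mu_n$), a finite measure. Then $H:=\bigoplus_n U_n:\mathcal{H}=\bigoplus_n\mathcal{H}_n\To\bigoplus_n L^2(\text{\rm Spec}\,A,\mu_n)=L^2(\text{\rm Spec}\,A\times\N,d\mu)$ is unitary and, by Step 2, intertwines $A$ with multiplication by $h(s,n)=s$. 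Since multiplication by $h$ on $L^2(\text{\rm Spec}\,A\times\N,d\mu)$ is exactly the self-adjoint operator with domain $\{g:hg\in L^2\}$, and self-adjoint extensions are unique, the domain characterization $f\in\Dom A\iff h\cdot Hf\in L^2$ and the formula $Af=H^{-1}(h\cdot Hf)$ follow at once.

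\textbf{Main obstacle.} Everything outside Step 2 is bookkeeping (the direct-sum decomposition and the normalization of measures); the real content lies in the single cyclic vector case. Concretely the hard part is establishing the Herglotz representation theorem --- or, on the alternative route, building the bounded continuous functional calculus and verifying both the positivity and the multiplicativity of $f\mapsto\la f(A)\psi\mid\psi\ra$ --- and then checking that the resulting map on the dense resolvent orbit is an isometry, which is the point where self-adjointness of $A$ enters in an essential way.
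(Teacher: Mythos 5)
The paper offers no proof of this theorem; it is quoted verbatim from Davies's textbook, so there is nothing internal to compare against. Your sketch is nevertheless a correct and classical route to the multiplication--operator form of the spectral theorem: exhaust a separable $\mathcal{H}$ by cyclic, resolvent-invariant subspaces, use the Herglotz (Nevanlinna) representation of $z\mapsto\langle (A-z)^{-1}\psi_n\mid\psi_n\rangle$ to manufacture a scalar spectral measure $\mu_n$ on each piece, and then glue and normalize so that $\mu=\sum_n\mu_n$ is finite. Two points should be made explicit if this is to be a complete argument. First, for each $U_n$ to be \emph{onto} $L^2(\R,\mu_n)$ you need that $\operatorname{span}\{(t-z)^{-1}:z\in\Cs\setminus\R\}$ is dense in $L^2(\R,\mu_n)$; this is a separate Stone--Weierstrass-type fact and is not a consequence of the cyclicity of $\psi_n$ in $\mathcal{H}_n$, which only gives density of the preimage. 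Second, the closing appeal to ``uniqueness of self-adjoint extensions'' is imprecise: what one actually shows is that $H$ conjugates each bounded resolvent $(A-z)^{-1}$ to multiplication by $(t-z)^{-1}$, and since a self-adjoint operator is uniquely determined by its resolvent at a single non-real point, $HAH^{-1}$ coincides with the maximal multiplication operator by $h$, which yields the stated domain characterization directly. With those two gaps filled, the argument is sound; note also that Davies's own proof of the cited Theorem 2.5.1 reaches the same destination via bounded functional calculus rather than the Herglotz representation, so the routes are genuinely different even though both pass through the cyclic decomposition.
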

By Theorem \ref{2.5 spectrum theorem for continuous case}, we know that $\Box_k^{(q)}$ has the spectrum set $\text{\rm Spec}\,\Box_k^{(q)}$ that lies in $[0,\infty)$ since $\Box^{(q)}_{k}$ is semi-positive. Moreover, there is a unitary map \[H_k:L^2_{\omega,k\phi}(M,\TM\otimes L^k) \rightarrow L^2(\text{\rm Spec }\Box_k^{(q)} \times \N,d\mu_k)\] such that $\Box_k^{(q)}u=H_k^{-1} \circ (h \cdot H_ku)$ for all $u \in \Dom \Box_k^{(q)}$. \par
Given nonnegative constants $c_k$, we define the \textbf{spectral projections} by \begin{equation}\label{2.3 spectral projection}
\mathcal{P}_{k,c_k}^{(q)}u:= H_k^{-1} \circ (\mathbbm{1}_{[0,c_k] \times \N} \cdot H_ku) 
\end{equation}where  $\mathbbm{1}_{[0,c_k] \times \N}$ is the indicator function defined on $\text{\rm Spec}\,\Box_k^{(q)} \times \N$ by 
\[\begin{cases}
 \mathbbm{1}_{[0,c_k] \times \N}(s,l)=1 \quad \text{\rm if }\,s \in [0,c_k];\\
 \mathbbm{1}_{[0,c_k] \times \N}(s,l)=0\quad \text{\rm if }\,s \notin [0,c_k].
\end{cases}\] Clearly, $\mathcal{P}_{k,c_k}^{(q)}$ is an orthogonal projection since $H_k$ is a unitary map. In fact, the construction of $\mathcal{P}^{(q)}_{k,c_k}$ above coincides with the functional calculus $\mathbbm{1}_{[0,c_k]}(\Box^{(q)}_{k})$ with respect to the indicator function $\mathbbm{1}_{[0,c_k]}$ (cf.\cite[section 2]{Davies}). We may denote by $E^{(q)}_{k,\leq c_k}$ the image of $\mathbbm{1}_{[0,c_k]}(\Box^{(q)}_{k})$ and then \[
\Pk=\mathbbm{1}_{[0,c_k]}(\Box^{(q)}_{k}): L^2_{\omega,k\phi}(M,\TM\otimes L^k)\To E^{(q)}_{k,\leq c_k}.
\]  
For the case $c_k=0$, we denote  
\[
\mathcal{B}_k^{(q)}:=\mathcal{P}_{k,0}^{(q)}:L^2_{\omega,k\phi}(M,\TM \otimes L^k) \rightarrow \Ker\Box_k^{(q)}
\]to be the \textbf{Bergman projection}.
To introduce the spectral and Bergman kernels, we need the following theorem (cf.\cite[section 5.2]{Hormander}). Denote by $\mathcal{D}^{'}_{c}(M,E)$ the space of distribution sections of a vector bundle $E$ over $M$ whose elements have compact support in $M$.
\begin{thm}[Schwartz Kernel Theorem for smoothing operators]\label{2.3 kernel theorem}
Let $E$ and $F$ be two vector bundles on a manifold $M$ with a volume form $dV$. Then for any continuous linear operator $\mathcal{P}: \mathcal{D}'_c(M,E) \rightarrow \Cinf(M,F)$, there exists a unique smooth kernel $K_\mathcal{P} \in \Cinf(M \times M , F \boxtimes E)$ such that 
\[
\mathcal{P}u(x_0)=\int_{M}K_\mathcal{P}(x_0, y)(u(y))dV(y)
\]for all $u \in \mathcal{D}'_c(M;E)$. Here, we denote $F \boxtimes E$ as a vector bundle on $M \times M$ whose fiber at $(x,y) \in M \times M $ is the space of linear transformations from $E_x$ to $F_y$. 
\end{thm}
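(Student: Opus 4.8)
The plan is to reduce the assertion to the classical scalar-valued Schwartz kernel theorem on an open subset of $\Cn$, and then to upgrade the resulting distributional kernel to a smooth one by mollification. First I would localize: choose a locally finite cover $\{U_\alpha\}$ of $M$ by coordinate charts over which $E$ and $F$ are both trivial, together with a subordinate partition of unity $\{\chi_\alpha\}$, and write $\mathcal{P}=\sum_{\alpha,\beta}\chi_\alpha\,\mathcal{P}\,\chi_\beta$ as a locally finite sum. Since smoothness of a kernel is local on $M\times M$ and $F\boxtimes E$ is built chart by chart, it suffices to produce a smooth kernel for each piece $\chi_\alpha\mathcal{P}\chi_\beta$ and to glue at the end. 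Trivializing $E|_{U_\beta}$ and $F|_{U_\alpha}$ and transporting to open sets $V_1,V_2\subset\Cn$, each such piece becomes, entry by entry, a matrix of continuous linear maps $\mathcal{P}\colon\mathcal{E}'(V_2)\To\Cinf(V_1)$, so it is enough to treat a single scalar such operator: the hypothesis is inherited because multiplication by $\chi_\beta$ carries $\mathcal{D}'_c(M,E)$ continuously into $\mathcal{E}'$ of the chart, and $\chi_\alpha$ together with restriction carries $\Cinf(M,F)$ continuously into $\Cinf(V_1)$.

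Next I would build the kernel directly by mollification. Fix $\rho\in\mathcal{D}(\Cn)$ with $\rho\ge 0$ and $\int\rho=1$, set $\rho_\varepsilon(x)=\varepsilon^{-2n}\rho(x/\varepsilon)$, and for $y$ in a relatively compact open set $W\Subset V_2$ and $\varepsilon$ small enough that $\supp\rho_\varepsilon(\cdot-y)\Subset V_2$, define
\[
K_\varepsilon(x,y):=\mathcal{P}\bigl(\rho_\varepsilon(\cdot-y)\bigr)(x).
\]
Because $y\mapsto\rho_\varepsilon(\cdot-y)$ is a $\Cinf$ map into $\mathcal{D}(V_2)$ and $\mathcal{P}$ is continuous with values in $\Cinf(V_1)$, each $K_\varepsilon$ is jointly smooth on $V_1\times W$, with $\p_x^\alpha\p_y^\beta K_\varepsilon(x,y)=\p_x^\alpha\bigl[\mathcal{P}\bigl((-\p)^\beta\rho_\varepsilon(\cdot-y)\bigr)\bigr](x)$. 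Writing $\rho_\varepsilon\ast u=\int\rho_\varepsilon(\cdot-y)\,u(y)\,dy$ as a $\mathcal{D}(V_2)$-convergent integral and commuting $\mathcal{P}$ past it, one obtains $\int K_\varepsilon(x,y)\,u(y)\,dy=\mathcal{P}(\rho_\varepsilon\ast u)(x)\To\mathcal{P}u(x)$ in $\Cinf(V_1)$ for every $u\in\mathcal{D}(W)$. Hence it suffices to show that $K_\varepsilon$ converges, together with all its derivatives, locally uniformly on $V_1\times V_2$ as $\varepsilon\To 0$: the limit $K$ is then smooth, satisfies the kernel identity on $\mathcal{D}$, extends to all of $\mathcal{D}'_c$ by density and continuity, and is unique since a continuous function is determined by the operator it induces on $\mathcal{D}$.

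The hard part is the uniform-in-$\varepsilon$ bound on $K_\varepsilon$ in every $\Cinf$-seminorm. The observation I would isolate is that, for each fixed multi-index $\beta$, the family $\{(-\p)^\beta\rho_\varepsilon(\cdot-y):\varepsilon\text{ small},\ y\in\overline W\}$ is a bounded subset of $\mathcal{E}'(V_2)$: testing against $\phi\in\Cinf(V_2)$ gives, after integrating by parts, $\pm\int\rho_\varepsilon(x-y)\,\p^\beta\phi(x)\,dx$, which is bounded by the supremum of $|\p^\beta\phi|$ over a fixed compact set, uniformly in $(\varepsilon,y)$; and weak-$\ast$ bounded subsets of the dual of the Fréchet–Montel space $\Cinf(V_2)$ are equicontinuous, hence bounded. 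Continuity of $\mathcal{P}\colon\mathcal{E}'(V_2)\To\Cinf(V_1)$ then carries each of these families to a bounded subset of $\Cinf(V_1)$, bounded in every $\Cinf$-seminorm; via the formula for $\p_x^\alpha\p_y^\beta K_\varepsilon$ this makes $\{K_\varepsilon\}$ bounded in $\Cinf(V_1\times W)$. By Arzelà–Ascoli, any sequence $\varepsilon_j\To 0$ has a $\Cinf_{\mathrm{loc}}$-convergent subsequence, whose limit satisfies the kernel identity on $\mathcal{D}(W)$ — pass to the limit in $\int K_{\varepsilon_j}(x,y)u(y)\,dy=\mathcal{P}(\rho_{\varepsilon_j}\ast u)(x)$ — hence is independent of the subsequence; call it $K$. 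It coincides with the distributional kernel supplied by \cite[\S5.2]{Hormander}, so $K\in\Cinf(V_1\times V_2)$ is the smooth kernel sought.

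To finish, reassembling the local smooth kernels of the pieces $\chi_\alpha\mathcal{P}\chi_\beta$ through the partition of unity produces a global section $K_\mathcal{P}\in\Cinf(M\times M,F\boxtimes E)$, and summing the local representations gives $\mathcal{P}u(x_0)=\int_M K_\mathcal{P}(x_0,y)(u(y))\,dV(y)$. I expect essentially all of the difficulty to lie in the equicontinuity step above; the rest is bookkeeping together with standard facts about the topologies on $\mathcal{D}$, $\mathcal{E}'$, and $\Cinf$.
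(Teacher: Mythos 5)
The paper itself does not prove Theorem~\ref{2.3 kernel theorem}: it is stated as a standard fact with a pointer to \cite[section 5.2]{Hormander}, so there is no internal proof to compare against. Your argument is correct and essentially self-contained. Localizing with a partition of unity subordinate to bundle-trivializing charts reduces the matter to a scalar operator $\mathcal{P}\colon\mathcal{E}'(V_2)\To\Cinf(V_1)$; the candidate kernel is produced by mollification, $K_\varepsilon(x,y)=\mathcal{P}\bigl(\rho_\varepsilon(\cdot-y)\bigr)(x)$, which is jointly smooth because $y\mapsto\rho_\varepsilon(\cdot-y)$ is a smooth map into $\mathcal{D}(V_2)$; and the decisive step, that $\{(-\p)^\beta\rho_\varepsilon(\cdot-y)\}$ is a bounded subset of $\mathcal{E}'(V_2)$ and hence is carried by the continuous $\mathcal{P}$ to a bounded subset of $\Cinf(V_1)$, uses Banach--Steinhaus for the barrelled Fr\'echet space $\Cinf(V_2)$ in exactly the right way, giving the uniform $\Cinf$-bounds on $K_\varepsilon$ that let Arzel\`a--Ascoli produce the smooth limit. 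Compared with the route through H\"ormander's general distributional kernel theorem, this is genuinely more elementary: the smoothing hypothesis lets you bypass the structure theory of distributions entirely and obtain the kernel as an honest locally uniform limit of smooth functions. Two small polish points: the independence-of-subsequence step should explicitly invoke that a continuous function on $V_1\times W$ is determined by its integrals $\int K(x,y)u(y)\,dy$ against $u\in\mathcal{D}(W)$ (the du Bois-Reymond lemma), which you allude to but do not state; and once that uniqueness is in hand, the closing appeal to \cite{Hormander} to identify $K$ is superfluous and could be dropped.
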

Since Kodaira Laplacian is elliptic, the spectral projection $\Pk$ and the Bergman projection $\Bk$ are smoothing operators in the sense that \begin{align*}
 \Pk&:\mathcal{D}^{'}_{c}(M,\TM \otimes L^k)\To \Cinf(M,\TM\otimes L^k), \\ \Bk&:\mathcal{D}^{'}_{c}(M,\TM \otimes L^k)\To \Cinf(M,\TM\otimes L^k)  
\end{align*} are continuous maps. In conclusion, the conditions of Theorem \ref{2.3 kernel theorem} hold for $\Pk$ and $\Bk$ and hence their distribution kernels are smooth. 

\begin{defin}
    Define the \textbf{spectral kernel} $\Pkk(z,w)$ and \textbf{Bergman kernel} $\Bkk(z,w)$ which are in $\Cinf\left(M\times M,(T^{*,(0,q)}M\otimes L^k)\boxtimes(T^{*,(0,q)}M\otimes 
 L^{k})\right)$ to be the Schwartz kernels of the spectral projection $\Pk$ and Bergman projection $\Bk$, respectively. In this way, for all $u\in L^2_{\omega,k\phi}(M,\TM\otimes L^k)$, we have
    \begin{align*}
        \Pk u(z)=\int_M\Pkk(z,w)u(w)dV_{\omega}(w)\quad;\quad
        \Bk u(z)=\int_M\Bkk(z,w)u(w)dV_{\omega}(w).
    \end{align*}
\end{defin}

\subsection{The Sobolev and Gårding inequalities}\label{section 2.4}
In this section, we consider the $\binom{n}{q}$-dimensional trivial complex vector bundle $\T$ over an open subset $U$ of $\Cn$ with a global trivializing frame $\{d\zb^I\}_{I\in\mathcal{J}_{q,n}}$. Let 
\begin{equation*}u:=\sum_{I\in \mathcal{J}_{q,n}}u_I d\zb^I \in \Omega^{(0,q)}_c(U)\end{equation*} be a smooth section of $\T$. We consider $u$ as a smooth vector-valued function \[(u_I)_{I\in\mathcal{J}_{q,n}}:U\subset \Cn\simeq \R^{2n} \To \Cs^{\binom{n}{q}}\] by fixing an order of $\mathcal{J}_{q,n}$. Recall that the Fourier transform of $u=(u_I)_{I\in\mathcal{J}_{q,n}}$ is  \[\widehat{u}(\xi) := (\widehat{u}_I(\xi))_{I\in\mathcal{J}_{q,n}}\] where $\widehat{u}_I(\xi):=(2\pi)^{-n/2}\int _{\R^{2n}}u_I(x)e^{-i\xi \cdot x}dm$. For any $s \in \R$, the Sobolev $s$-norm $\|\cdot\|_{s,U}$ is \begin{equation}\label{2.4 sobolev norm}
\|u\|^2_{s,U}=\|u\|^2_{s}:=\int_{\R^{2n}}(1+|\xi|^2)^s|\widehat{u}(\xi)|^2 dm(\xi).    
\end{equation} The \textbf{Sobolev space} $H_s(U,\T)$ is the completion of $\Omega^{(0,q)}_c(U)$ with respect to the norm $\|\cdot\|_s$. Since $\int_{\R^{2n}}|\widehat{u_I}|^2 dm=\int_{\R^{2n}}|u_I|^2 dm$ for all $I\in\mathcal{J}_{q,n}$, we have $\|\cdot\|_{0,U}=\|\cdot\|_{0}=\|\cdot\|_{dm}=\|\cdot\|_{dm,U}$ where $\|\cdot\|_{dm}$ is the $L^2$-norm with respect to standard Lebesgue measure $dm$ in Euclidean space. 
The following proposition induces a variant of the Sobolev norm.
\begin{prop}[compatibility]\label{2.4 prop 0}
Given $u=\sum_{I\in\mathcal{J}_{q,n}}u_Id\zb^I \in \Omega^{(0,q)}_c(U)$ and $s\in\N$, there exist positive constants $C_1$ and $C_2$ 
 independent of $u$ such that
\[
C_1\sum_{I\in\mathcal{J}_{q,n}}\sum_{|\alpha|\leq s}\|\p^{\alpha}_xu_{I}\|^2_0\leq \|u\|^2_{s} \leq C_2\sum_{I\in\mathcal{J}_{q,n}}\sum_{|\alpha|\leq s}\|\p^{\alpha}_xu_{I}\|^2_0.  
\]
\end{prop}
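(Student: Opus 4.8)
The plan is to prove Proposition \ref{2.4 prop 0} by passing to the Fourier side, where the Sobolev norm becomes a weighted $L^2$-norm, and then comparing the polynomial weight $(1+|\xi|^2)^s$ with the sum of monomials $|\xi^\alpha|^2$ coming from differentiation. Recall that for $u_I \in \Cinf_c(U)$ we have $\widehat{\p^\alpha_x u_I}(\xi) = (i\xi)^\alpha \widehat{u_I}(\xi)$, so by the Plancherel identity $\|\p^\alpha_x u_I\|_0^2 = \int_{\R^{2n}} |\xi^\alpha|^2 |\widehat{u_I}(\xi)|^2\, dm(\xi)$. Hence
\[
\sum_{I\in\mathcal{J}_{q,n}}\sum_{|\alpha|\leq s}\|\p^\alpha_x u_I\|_0^2 = \int_{\R^{2n}} \Bigl(\sum_{|\alpha|\leq s}|\xi^\alpha|^2\Bigr)\sum_{I}|\widehat{u_I}(\xi)|^2\, dm(\xi) = \int_{\R^{2n}} p_s(\xi)\,|\widehat{u}(\xi)|^2\, dm(\xi),
\]
where $p_s(\xi) := \sum_{|\alpha|\leq s}|\xi^\alpha|^2$ is a fixed polynomial in the $2n$ real variables $\xi = (\xi_1,\dots,\xi_{2n})$ and $|\widehat u(\xi)|^2 = \sum_I |\widehat{u_I}(\xi)|^2$. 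Comparing with \eqref{2.4 sobolev norm}, the proposition reduces to the purely elementary inequality that there exist $C_1, C_2 > 0$ (depending only on $s$ and $n$) with
\[
C_1\, p_s(\xi) \leq (1+|\xi|^2)^s \leq C_2\, p_s(\xi) \qquad \text{for all }\xi \in \R^{2n}.
\]

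To establish this two-sided bound, first note $(1+|\xi|^2)^s = \sum_{|\beta|\leq s} c_\beta\, \xi^{2\beta}$ for suitable nonnegative combinatorial coefficients $c_\beta$ (expand $(1+\xi_1^2+\cdots+\xi_{2n}^2)^s$ by the multinomial theorem), and each monomial $\xi^{2\beta} = |\xi^\beta|^2$ with $|\beta|\leq s$ appears in $p_s(\xi)$; conversely every monomial $|\xi^\alpha|^2 = \xi^{2\alpha}$ with $|\alpha|\leq s$ is bounded above by $(1+|\xi|^2)^s$ since $\xi^{2\alpha} \le (|\xi|^2)^{|\alpha|} \le (1+|\xi|^2)^s$. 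For the lower bound $C_1 p_s(\xi) \le (1+|\xi|^2)^s$ one can argue that both sides are continuous, strictly positive, and have the same order of growth $2s$ at infinity (both are $\sim |\xi|^{2s}$), so their ratio is bounded; alternatively observe directly that for $|\xi|\le 1$, $p_s(\xi) \le \#\{\alpha : |\alpha|\le s\}$ while $(1+|\xi|^2)^s \ge 1$, and for $|\xi|\ge 1$, $p_s(\xi) \le \#\{\alpha:|\alpha|\le s\}\,(1+|\xi|^2)^s$ since each term $\xi^{2\alpha}\le |\xi|^{2s}\le (1+|\xi|^2)^s$. Either way one gets explicit constants.

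Finally I would assemble the pieces: chaining the equality for the differentiated sum with the polynomial comparison gives
\[
C_1 \int p_s(\xi)|\widehat u|^2\, dm \ \le\ \int (1+|\xi|^2)^s |\widehat u|^2\, dm \ \le\ C_2 \int p_s(\xi)|\widehat u|^2\, dm,
\]
which, after substituting the integral identities back in, is exactly the claimed inequality with the same $C_1, C_2$. I expect no serious obstacle here; the only mildly delicate point is bookkeeping the constants so that they are genuinely independent of $u$ (they depend only on $s$ and $n$), and making sure the elementary polynomial comparison is stated cleanly rather than hand-waved — the growth-order argument via continuity and compactness of the sphere is the cleanest way to see the nontrivial lower bound without tracking multinomial coefficients.
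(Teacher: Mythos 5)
Your proof is correct and follows essentially the same route the paper sketches: the paper's one-line justification ``$|\widehat{\p^{\alpha}_xu_I}(\xi)|=|\xi^{\alpha}\widehat{u_I}(\xi)|$'' is precisely your Fourier-side reduction, and the remaining two-sided comparison between $(1+|\xi|^2)^s$ and $\sum_{|\alpha|\le s}|\xi^{\alpha}|^2$ is the standard polynomial equivalence you carry out. Your write-up simply makes explicit the bookkeeping the paper leaves implicit.
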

The proof of Proposition \ref{2.4 prop 0} is simply by the fact that $|\widehat{\p^{\alpha}_xu_I}(\xi)|=|\xi^{\alpha}\widehat{u_I}(\xi)|$ where $\xi^{\alpha}:=\xi_1^{\alpha_1}\cdots \xi_{2n}^{\alpha_{2n}}$. Next, we introduce a basic proposition in the Sobolev theory.
\begin{prop}\label{2.4 prop}
For any $s\in\R$, $H_{-s}(U,\T)$ is the dual space of $H_{s}(U,\T)$ and  \[
|\left(u\mid v\right)_0|\leq\|u\|_{-s}\|v\|_{s}
\]for all $u\in H_{-s}(U,\T)$ and $v\in H_{s}(U,\T)$.   
\end{prop}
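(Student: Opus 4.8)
The plan is to pass to the Fourier side, prove the estimate by Plancherel together with the Cauchy--Schwarz inequality, and then obtain the duality from the Riesz representation theorem. First I would reduce to the scalar case: writing $u=\sum_{I\in\mathcal{J}_{q,n}}u_I\,d\zb^I$ and $v=\sum_{I\in\mathcal{J}_{q,n}}v_I\,d\zb^I$ with respect to the global trivializing frame, we have $\|u\|_s^2=\sum_{I}\int_{\R^{2n}}(1+|\xi|^2)^s|\widehat{u_I}(\xi)|^2\,dm(\xi)$ and, for $u,v\in\Omega^{(0,q)}_c(U)$, $(u\mid v)_0=\sum_{I}\int_{\R^{2n}}\widehat{u_I}(\xi)\,\overline{\widehat{v_I}(\xi)}\,dm(\xi)$ by Plancherel's theorem (Parseval already being used for $\|\cdot\|_0=\|\cdot\|_{dm}$ in the excerpt). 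Thus it suffices to bound each term. Writing the integrand as $\big((1+|\xi|^2)^{-s/2}\widehat{u_I}\big)\cdot\big((1+|\xi|^2)^{s/2}\overline{\widehat{v_I}}\big)$, applying Cauchy--Schwarz in $L^2(\R^{2n},dm)$, and summing over $I$ yields
\[
|(u\mid v)_0|\le\Big(\int_{\R^{2n}}(1+|\xi|^2)^{-s}|\widehat{u}|^2\,dm\Big)^{1/2}\Big(\int_{\R^{2n}}(1+|\xi|^2)^{s}|\widehat{v}|^2\,dm\Big)^{1/2}=\|u\|_{-s}\|v\|_{s}
\]
for all $u,v\in\Omega^{(0,q)}_c(U)$. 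Since $\Omega^{(0,q)}_c(U)$ is by definition dense in both $H_{-s}(U,\T)$ and $H_{s}(U,\T)$, the pairing $(\cdot\mid\cdot)_0$ extends continuously to $H_{-s}(U,\T)\times H_{s}(U,\T)$ with the same bound, which is the displayed inequality.

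For the duality, equip $H_{s}(U,\T)$ with the Hilbert inner product $\la u,v\ra_s:=\int_{\R^{2n}}(1+|\xi|^2)^s\widehat{u}(\xi)\cdot\overline{\widehat{v}(\xi)}\,dm(\xi)$, defined first on $\Omega^{(0,q)}_c(U)$ and then extended to the completion; its associated norm is $\|\cdot\|_s$. The estimate above shows that $\Phi(u):=(u\mid\cdot)_0$ is a bounded linear map $\Phi\colon H_{-s}(U,\T)\To H_{s}(U,\T)'$ with $\|\Phi\|\le 1$. Taking the witness $v$ determined on the Fourier side by $\widehat{v}=(1+|\xi|^2)^{-s}\widehat{u}$, one computes $\|v\|_s=\|u\|_{-s}$ and $(u\mid v)_0=\|u\|_{-s}^2$, so $\|\Phi(u)\|_{H_s'}=\|u\|_{-s}$ and $\Phi$ is isometric (hence injective). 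For surjectivity, let $F\in H_{s}(U,\T)'$; the Riesz representation theorem gives a unique $w\in H_{s}(U,\T)$ with $F(\cdot)=\la\,\cdot\,,w\ra_s$, and then the form $u$ with $\widehat{u}=(1+|\xi|^2)^s\widehat{w}$ — i.e.\ $u=(1-\Delta)^s w$ read through its Fourier multiplier — satisfies $\|u\|_{-s}=\|w\|_s$ and $(u\mid v)_0=\la v,w\ra_s=F(v)$ for every $v\in H_{s}(U,\T)$ (the conjugation conventions being the standard ones). Hence $\Phi$ is an isometric isomorphism and $(\cdot\mid\cdot)_0$ identifies $H_{-s}(U,\T)$ with the dual of $H_{s}(U,\T)$.

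The only step that is not a routine Plancherel/Cauchy--Schwarz manipulation, and hence the main obstacle, is checking that these Fourier-side constructions stay inside the $U$-localized completions: that the Riesz representative $u$ with $\widehat u=(1+|\xi|^2)^s\widehat w$ lies in $H_{-s}(U,\T)$ and not merely in $H_{-s}(\Cn,\T)$, and similarly for the witness $v$ with $\widehat v=(1+|\xi|^2)^{-s}\widehat u$. When $s\in\N$ this is immediate because $(1-\Delta)^s$ is a differential operator, hence local: if $w_j\in\Omega^{(0,q)}_c(U)$ with $\|w_j-w\|_s\To 0$, then $(1-\Delta)^s w_j\in\Omega^{(0,q)}_c(U)$ and $(1-\Delta)^s w_j\To u$ in $\|\cdot\|_{-s}$ since $(1-\Delta)^s\colon H_s(\Cn,\T)\To H_{-s}(\Cn,\T)$ is bounded. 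For general real $s$ one runs the same argument with $(1-\Delta)^s$ interpreted as a Bessel potential, by standard pseudodifferential techniques (cf.\ \cite[section 5.2]{Hormander}); alternatively, for the later applications one may take $U=\Cn$, where the identification is classical. Everything else in the proof is forced by Plancherel and Cauchy--Schwarz.
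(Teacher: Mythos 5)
The paper gives no proof of this proposition, introducing it only as ``a basic proposition in the Sobolev theory,'' so there is no argument of the paper's to compare against. Your proof of the displayed inequality is correct and is the standard argument: Plancherel identifies $(u\mid v)_0$ with $\sum_I\int\widehat{u_I}\,\overline{\widehat{v_I}}\,dm$, and splitting $\widehat{u_I}\,\overline{\widehat{v_I}}$ as $\bigl((1+|\xi|^2)^{-s/2}\widehat{u_I}\bigr)\bigl((1+|\xi|^2)^{s/2}\overline{\widehat{v_I}}\bigr)$ and applying Cauchy--Schwarz on the product of $\R^{2n}$ with the index set $\mathcal{J}_{q,n}$ (or equivalently two successive Cauchy--Schwarz steps, once in $\xi$ and once over $I$) gives the bound. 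Since $\Omega_c^{(0,q)}(U)$ is by definition dense in both spaces, the bound passes to the completions. This inequality is in fact the only part of the proposition that the paper later uses (in the proof of Theorem \ref{3.3 k-smoothing prop}).

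The duality half, however, has a gap that your final paragraph correctly identifies but does not actually close. Your locality argument for $(1-\Delta)^s$ when $s\in\N$ does handle the Riesz representative $u=(1-\Delta)^s w$: if $w_j\in\Omega_c^{(0,q)}(U)$ approximate $w$ in $\|\cdot\|_s$, then $(1-\Delta)^s w_j$ stay compactly supported in $U$ and converge to $u$ in $\|\cdot\|_{-s}$, so $u\in H_{-s}(U,\T)$. But the witness $v$ with $\widehat{v}=(1+|\xi|^2)^{-s}\widehat{u}$ is $(1-\Delta)^{-s}u$, a Bessel potential, which is \emph{not} a local operator even when $s$ is a positive integer; it does not preserve supports, so $v$ need not lie in $H_s(U,\T)$ (the closure of $\Omega_c^{(0,q)}(U)$ in $\|\cdot\|_s$). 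Consequently the claimed isometry $\|\Phi(u)\|_{H_s'}=\|u\|_{-s}$ is not established for $U\neq\Cn$. Appealing to ``standard pseudodifferential techniques'' for general real $s$ does not repair this, because the obstruction is precisely that these operators are nonlocal. For a general open set $U$ the literal claim that $H_{-s}(U,\T)$ \emph{is} the dual of $H_s(U,\T)$ is in fact delicate (the dual of the closure of test functions is usually a quotient of distributions supported in $\overline{U}$, not another test-function closure); what your Plancherel/Cauchy--Schwarz computation actually proves, and what the paper needs, is that the $(\cdot\mid\cdot)_0$-pairing gives a bounded map $H_{-s}(U,\T)\To H_s(U,\T)'$ of norm at most $1$, together with the displayed inequality. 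When $U=\Cn$ the full duality is classical and your Fourier argument goes through without the support issue.
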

Let $\mathcal{C}^d(U,\T)$ be the space of $d$-th differentiable sections. For any point $x\in U$, define 
\begin{equation}\label{2.4 C pt norm for 0q form}
|u|^2_{\mathcal{C}^d}(x):= \sum_{I\in\mathcal{J}_{q,n}}\sum_{|\alpha| \leq d}|\p^{\alpha}u_I(x)|^2\quad \text{for all }\,u\in \mathcal{C}^d(U,\T).    
\end{equation}
Next, define a norm $\|\cdot\|_{\mathcal{C}^d(U)}$ on the space $\mathcal{C}^d(U,\T)$ by \begin{equation}\label{2.4 C space}
\|u\|^2_{\mathcal{C}^d(U)}:= \sup_{x\in U}|u|^2_{\mathcal{C}^d}(x).   
\end{equation} The following theorem is well-known and will be applied in Section \ref{section 3.3}.
\begin{thm}[Sobolev inequality] Let $d\in\N_0$ and $s\in\R$ such that $s > d+n$. If $u \in H_s(U,\T)$, then $u \in \mathcal{C}^d(U,\T) $ and there exists a constant $C_{s,d}$ independent of $u$ such that 
\begin{equation*}
    \|u\|_{\mathcal{C}^d(U)} \leq C_{s,d} \|u\|_{s}. 
\end{equation*} 
\end{thm}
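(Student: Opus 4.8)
The statement is the classical Sobolev embedding theorem for the open set $U\subset\Cn\simeq\R^{2n}$; since $\dim_{\R}\Cn=2n$, the threshold $s>d+n$ is exactly the usual condition $s>d+\tfrac{1}{2}(2n)$. The plan is to reduce to the scalar case componentwise and then run the standard Fourier-transform estimate. First I would observe that by density it suffices to prove the inequality for $u=\sum_{I\in\mathcal{J}_{q,n}}u_Id\zb^I\in\Omega^{(0,q)}_c(U)$: once the bound holds on this dense subspace, any sequence $u_j\to u$ in $H_s(U,\T)$ is forced to be Cauchy in $\mathcal{C}^d(U,\T)$, hence converges there to some $v$, and comparing with the limit in $H_0=L^2$ gives $u=v\in\mathcal{C}^d(U,\T)$, so the estimate passes to the limit.

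So with $u\in\Omega^{(0,q)}_c(U)$ fixed, I would extend each component $u_I$ by zero to an element of $\Cinf_c(\R^{2n})$ and bound $\p^{\alpha}_xu_I(x)$ for each $I\in\mathcal{J}_{q,n}$ and each $\alpha\in\N_0^{2n}$ with $|\alpha|\le d$. By Fourier inversion, $\p^{\alpha}_xu_I(x)=c_n\int_{\R^{2n}}\xi^{\alpha}\widehat{u_I}(\xi)e^{ix\cdot\xi}\,dm(\xi)$ for a dimensional constant $c_n$; using $|\xi^{\alpha}|\le|\xi|^{|\alpha|}\le(1+|\xi|^2)^{d/2}$ and then Cauchy--Schwarz after the splitting $(1+|\xi|^2)^{d/2}=(1+|\xi|^2)^{(d-s)/2}(1+|\xi|^2)^{s/2}$, I obtain
\[
|\p^{\alpha}_xu_I(x)|\leq c_n\Big(\int_{\R^{2n}}(1+|\xi|^2)^{d-s}\,dm(\xi)\Big)^{1/2}\Big(\int_{\R^{2n}}(1+|\xi|^2)^{s}|\widehat{u_I}(\xi)|^2\,dm(\xi)\Big)^{1/2}.
\]
The second factor is at most $\|u\|_s$, since $(1+|\xi|^2)^s|\widehat{u_I}|^2\le(1+|\xi|^2)^s|\widehat{u}|^2$ and $\|u\|^2_s=\int(1+|\xi|^2)^s|\widehat{u}|^2\,dm$. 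The first factor is finite: passing to polar coordinates in $\R^{2n}$ shows $\int_{\R^{2n}}(1+|\xi|^2)^{d-s}\,dm(\xi)<\infty$ exactly when $2(s-d)>2n$, i.e. $s>d+n$ — this is the one place the hypothesis enters — and I would call its square root $A_{s,d}$. Hence $|\p^{\alpha}_xu_I(x)|\le c_nA_{s,d}\|u\|_s$ uniformly in $x\in U$, in $I\in\mathcal{J}_{q,n}$, and in $\alpha$ with $|\alpha|\le d$. Summing the squares over the finitely many such pairs $(I,\alpha)$ gives $|u|^2_{\mathcal{C}^d}(x)\le C^2_{s,d}\|u\|^2_s$ for all $x\in U$, with $C_{s,d}$ depending only on $s$, $d$, $n$, $q$; taking the supremum over $x$ yields $\|u\|_{\mathcal{C}^d(U)}\le C_{s,d}\|u\|_s$.

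There is no genuine obstacle here, as the result is classical; the only points needing care are the sharp convergence criterion for $\int_{\R^{2n}}(1+|\xi|^2)^{d-s}\,dm(\xi)$ (which reproduces precisely the threshold $s>d+n$), the legitimacy of Fourier inversion (immediate since $u_I\in\Cinf_c$, and in the general Sobolev setting because $(1+|\xi|^2)^{-s/2}\xi^{\alpha}\in L^2$ together with $(1+|\xi|^2)^{s/2}\widehat{u_I}\in L^2$ forces $\xi^{\alpha}\widehat{u_I}\in L^1$), and the density/limiting step described above. Alternatively, one could simply invoke the standard embedding $H^s(\R^N)\hookrightarrow\mathcal{C}^d_b(\R^N)$ for $s>d+N/2$ (e.g. \cite{Hormander}) and apply it to each component with $N=2n$.
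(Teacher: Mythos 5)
The paper states this Sobolev inequality without proof, merely remarking that it is well-known and will be used in Section 3.3 (and implicitly pointing to references such as \cite{Hormander}). So there is no paper-internal proof to compare against. Your argument is the standard Fourier-transform proof of the embedding $H_s\hookrightarrow \mathcal{C}^d$ in $\R^{2n}$, and it is correct: the componentwise reduction is valid because $\T$ is globally trivialized by $\{d\zb^I\}_{I\in\mathcal{J}_{q,n}}$ and the norms (\ref{2.4 sobolev norm}), (\ref{2.4 C pt norm for 0q form}), (\ref{2.4 C space}) are both sums over components; the Fourier-inversion step is legitimate on $\Cinf_c$; the Cauchy--Schwarz split $(1+|\xi|^2)^{d/2}=(1+|\xi|^2)^{(d-s)/2}(1+|\xi|^2)^{s/2}$ and the integrability of $(1+|\xi|^2)^{d-s}$ on $\R^{2n}$ exactly when $s>d+n$ reproduce the stated threshold; and the density/limiting step is sound (pointwise convergence in $\mathcal{C}^d$ of the Cauchy sequence of smooth approximants, identified with the $L^2$ limit). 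This fills in precisely what the paper takes for granted.

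One minor remark, not a flaw in your argument but worth flagging: the paper's normalization $\widehat{u}_I(\xi):=(2\pi)^{-n/2}\int_{\R^{2n}}u_I e^{-i\xi\cdot x}\,dm$ is the $\R^n$ normalization rather than the $\R^{2n}$ one $(2\pi)^{-n}$, so strictly $\|\widehat{u}_I\|_{L^2}$ and $\|u_I\|_{L^2}$ would differ by a fixed multiplicative constant; this affects nothing substantive since all the norms in play are only defined up to equivalence, and your estimate absorbs such constants into $C_{s,d}$.
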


We now consider a second-order differential operator $P:\Omega^{(0,q)}(U)\To \Omega^{(0,q)}(U)$.\footnote{A basic reference for this section is \cite{Gil}, sections 1.1-1.3} By ordering the basis $\{d\zb^I\}_{I\in\mathcal{J}_{q,n}}$, we can treat $P$ as a $\binom{n}{q}\times\binom{n}{q}$ matrix $[P_{i,j}]$ of second-order differential operators $P_{i,j}:\Cinf(U)\To \Cinf(U)$. Let $(x^1,\cdots,x^{2n})$ be the standard coordinate on $\R^{2n}\simeq \Cn$. We can represent $P_{i,j}$ as \[P_{i,j}=\sum_{|\alpha| \leq 2}a_{i,j,\alpha}(x)\p^{\alpha}_x\quad\text{\rm where }a_{i,j,\alpha}\in\Cinf(U).\]Define the symbol $\sigma(P_{i,j})$ of $P_{i,j}$ by \[\sigma(P_{i,j})(x,\xi) := \sum_{|\alpha|\leq 2} \ii^{|\alpha|} a_{i,j,\alpha}(x)\xi^{\alpha}\quad\text{\rm where }\, x\in U\, , \, \xi\in\R^{2n}.\] $\sigma(P_{i,j})$ is a polynomial of $\xi$ of degree $2$ for any fixed  $x\in U$. Furthermore, we define the symbol $\sigma(P)(x,\xi)$ of $P$ as the $\binom{n}{q}\times\binom{n}{q}$ matrix $[\sigma(P_{i,j})(x,\xi)]$ of polynomials of $\xi$ for any $x\in U$. 
\begin{defin}[elliptic operator]
 We call a second-order differential operator $P:\Omega^{(0,q)}(U)\To\Omega^{(0,q)}(U)$ is \textbf{uniform elliptic} on $U$ if there exists $C>0$ such  that   
\begin{equation}\label{2.3 positivity}
   |\sigma(P)(z,\xi)v|\geq C|\xi|^2|v| \quad \forall x\in U,\xi\neq 0\,\,\text{and}\,\,v\in\R^{2n}.\end{equation}   
\end{defin}
The following theorem plays an important role in the proof of local uniform bound in Section \ref{section 3.3}.
\begin{thm}[Gårding inequality]\label{2.4 Gårding inequality}
Let $P$ be a second-order differential operator which is uniform elliptic on an open set $U\subset \subset \Cn$. Then for any $m\in\N$, there exists a positive constant $\Tilde{C}$ such that 
\begin{equation*}
     \|u\|_{2m,U} \leq \Tilde{C}\left(\|u\|_{0,U}+\|P^m u\|_{0,U}\right) \quad \text{\rm for all }u\in H_{2m}(U,\T).
\end{equation*}

\end{thm}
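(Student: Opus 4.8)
\emph{Plan.} The plan is to prove the estimate first for $u\in\Omega^{(0,q)}_c(U)$ and then extend it to all of $H_{2m}(U,\T)$ by density, reducing the case of general $m$ to a second‑order estimate by iteration. \emph{Step 1 (constant coefficients).} First I would freeze the coefficients of $P$ at an arbitrary point $x_0\in U$, obtaining a constant‑coefficient operator $P_{x_0}=\sum_{|\alpha|\le 2}a_{i,j,\alpha}(x_0)\p^{\alpha}_x$ whose symbol is the constant matrix $\sigma(P)(x_0,\xi)$. Since $\widehat{\p^{\alpha}_x u}(\xi)=\ii^{|\alpha|}\xi^{\alpha}\widehat u(\xi)$, one has $\widehat{P_{x_0}u}=\sigma(P)(x_0,\cdot)\,\widehat u$, so the uniform ellipticity (\ref{2.3 positivity}) yields $|\xi|^{2}|\widehat u(\xi)|\le C^{-1}|\widehat{P_{x_0}u}(\xi)|$ for all $\xi\neq 0$. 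Combining this with $(1+|\xi|^2)^{2}\le 2(1+|\xi|^{4})$ and the definition (\ref{2.4 sobolev norm}), then multiplying by $(1+|\xi|^2)^{s}$ and integrating, gives, with a constant independent of $x_0$,
\begin{equation*}
\|u\|_{s+2}^{2}\le 2\|u\|_{s}^{2}+2C^{-2}\|P_{x_0}u\|_{s}^{2}\qquad\text{for every }s\ge 0\text{ and }u\in\Omega^{(0,q)}_c(\Cn).
\end{equation*}

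\emph{Step 2 (localization and freezing).} Since $U\subset\subset\Cn$, the closure $\ov U$ is compact and the finitely many coefficients $a_{i,j,\alpha}$ of $P$, together with their derivatives, are bounded on $\ov U$. Given $\varepsilon>0$ I would cover $\ov U$ by balls $B_j$, centred at points $x_j$, small enough that $|a_{i,j,\alpha}(x)-a_{i,j,\alpha}(x_j)|<\varepsilon$ on $B_j$ whenever $|\alpha|=2$, fix a subordinate partition of unity $\{\chi_j\}\subset\Cinf_c(\Cn)$, and write $u=\sum_j\chi_j u$. Applying Step 1 to each $\chi_j u$ with freezing point $x_j$ and using $P_{x_j}(\chi_j u)=P(\chi_j u)-(P-P_{x_j})(\chi_j u)$ and $P(\chi_j u)=\chi_j Pu+[P,\chi_j]u$, the compatibility Proposition \ref{2.4 prop 0} controls the error terms, since $[P,\chi_j]$ has order $1$, the top‑order part of $P-P_{x_j}$ has coefficients of sup‑norm $\le\varepsilon$ on $B_j$, and its remaining part has bounded coefficients; this gives, for every integer $s\ge 0$,
\begin{equation*}
\|\chi_j u\|_{s+2,U}\le C\varepsilon\,\|\chi_j u\|_{s+2,U}+C_\varepsilon\,\|u\|_{s+1,U}+C\big(\|u\|_{0,U}+\|Pu\|_{s,U}\big).
\end{equation*}
Choosing $\varepsilon$ small to absorb the first term, summing over $j$, and then repeatedly absorbing the order‑$(s+1)$ norm by the elementary interpolation inequality $\|v\|_{t}\le\delta\|v\|_{t+1}+C_\delta\|v\|_{0}$ (which follows from $(1+|\xi|^2)^{t}\le\delta(1+|\xi|^2)^{t+1}+C_\delta$), I obtain
\begin{equation*}
\|u\|_{s+2,U}\le C_s\big(\|u\|_{0,U}+\|Pu\|_{s,U}\big)\qquad\text{for all }u\in\Omega^{(0,q)}_c(U)\text{ and integers }s\ge 0.
\end{equation*}

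\emph{Step 3 (iteration and density).} I would apply the estimate of Step 2 in turn to $u$, to $Pu$, $\dots$, to $P^{m-1}u$ (each again lying in $\Omega^{(0,q)}_c(U)$) with Sobolev exponents $s=2m-2,2m-4,\dots,0$, and telescope to $\|u\|_{2m,U}\le C\sum_{j=0}^{m}\|P^{j}u\|_{0,U}$. For $1\le j\le m-1$, Proposition \ref{2.4 prop 0} gives $\|P^{j}u\|_{0,U}\le C\|u\|_{2j,U}$, while the interpolation inequality gives $\|u\|_{2j,U}\le\delta\|u\|_{2m,U}+C_\delta\|u\|_{0,U}$; taking $\delta$ small enough absorbs these intermediate terms, leaving $\|u\|_{2m,U}\le\Tilde C\big(\|u\|_{0,U}+\|P^{m}u\|_{0,U}\big)$ for $u\in\Omega^{(0,q)}_c(U)$. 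Finally, since $\Omega^{(0,q)}_c(U)$ is dense in $H_{2m}(U,\T)$ by definition and $P^{m}$ maps $H_{2m}(U,\T)$ boundedly into $H_0(U,\T)$ (again by Proposition \ref{2.4 prop 0}), the estimate passes to the limit and holds for every $u\in H_{2m}(U,\T)$.

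\emph{Main obstacle.} The technical heart is Step 2: organizing the freezing‑coefficient error and the cut‑off commutators so that, after all absorptions, the constant $C_s$ depends only on the ellipticity constant $C$ in (\ref{2.3 positivity}) and on finitely many sup‑norms over $\ov U$ of the coefficients of $P$ and their derivatives, and not on the particular section $u$. The iteration in Step 3 is then routine, but the resulting constant necessarily deteriorates as $m$ grows, in agreement with the hypothesis that $m$ is fixed.
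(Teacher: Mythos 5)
The paper does not prove Theorem~\ref{2.4 Gårding inequality}: it is stated as standard background, with the footnote at the start of Section~\ref{section 2.4} referring to \cite{Gil}, Sections 1.1--1.3, so there is no in-paper argument to compare against. Your proposal is the classical elementary proof (constant-coefficient Fourier estimate, freezing the coefficients and localizing by a partition of unity, absorbing commutator and coefficient-variation terms via interpolation, iterating from order $2$ to $2m$, and extending by density), and it is correct; it is also somewhat more elementary than the parametrix/$\Psi$DO route that \cite{Gil} uses.

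Two points are worth making explicit. First, Step~2 needs the coefficients $a_{i,j,\alpha}$ to be bounded and (for the top order) uniformly continuous on $\ov U$, so that a single finite cover and subordinate partition of unity work for all $u\in\Omega^{(0,q)}_c(U)$; the hypothesis $a_{i,j,\alpha}\in\Cinf(U)$ alone does not give this, but in the paper's application (Proposition~\ref{3.2 k Gard ineq} on the balls $B(r)$, with coefficients of $\Box^{(q)}_{(k),s}$ converging locally uniformly in $\Cinf$) it is automatic, so the implicit assumption is harmless and you should simply say it. Second, your density step succeeds precisely because the constant from Step~2 depends only on the ellipticity constant $C$ in (\ref{2.3 positivity}), on finitely many sup-norms of the coefficients over $\ov U$, and on the fixed partition of unity, \emph{not} on $\supp u$; you flag this as the ``main obstacle'' but it deserves to be recorded as the conclusion of Step~2 so that the limit over an approximating sequence in $\Omega^{(0,q)}_c(U)$ is legitimate. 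Minor typographical remark: the condition (\ref{2.3 positivity}) has $v\in\R^{2n}$ where it should be $v$ in the fiber $\Cs^{\binom{n}{q}}$ of $\T$; your Step~1 tacitly and correctly reads it that way.
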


\begin{rmk}
The settings of this section can be modified to any trivial vector bundles. In particular, the cotangent bundle $T^{*,(p,q)}\Cn\To \tU\subset \Cn$ with the trivializing frame
$\{dz^I\wedge d\zb^J\}_{I\in\mathcal{J}_{p,n},J\in\mathcal{J}_{q,n}}$.  For example, if $u=\sum_{i,j=1}^{n}u_{i,j}dz^i\wedge d\zb^j$ is a smooth $(1,1)$-form with compact support in $\Cn$, we can define 
\begin{equation}\label{2.4 C norm for 11 form}
|u|^2_{\mathcal{C}^d}(x):=\sum_{i,j=1}^{n}\sum_{|\alpha|\leq d}|\p^{\alpha}u_{i,j}(x)|^2   
\end{equation}
and the norm 
\[
\|u\|^2_{\mathcal{C}^d(U)}:=\sup_{x\in U} |u|^2_{\mathcal{C}^d}(x).
\]
\end{rmk}

\newpage

\section{The local uniform bounds for scaled spectral and Bergman kernels}\label{chapter 3}
In this chapter, our aim is to analyze the behavior of the scaled spectral and Bergman kernels. Our objective is to establish local uniform bounds on the scaled kernels, which will allow us to investigate their local convergence properties. To this end, we will apply the Arzelà–Ascoli theorem.\par

In Section \ref{section 3.1}, we recall the set-up which has been mentioned in Section \ref{section 1.1} and construct the scaled bundles. In Section \ref{section 3.2}, we compute the Kodaira Laplacian on the trivial line bundle and apply the results on the cases of scaled bundles. In Section \ref{section 3.3}, under the framework set in Sections \ref{section 3.1} and \ref{section 3.2}, we can eventually control the local behavior of scaled spectral and Bergman kernels by the analytic tools of Sobolev theory. 
\subsection{The scaled bundles}\label{section 3.1} 
Let $(M,\omega)$ be a Hermitian manifold of dimension $n$ and $(L,h^L) \rightarrow M$ be a holomorphic Hermitian line bundle. Given a non-vanishing holomorphic section $s$ of $L$ on a holomorphic chart $U$ that trivializes $L$, there exists a local weight $\phi:U \rightarrow \mathbb{R}$ such that $|s|^2_{h^L} = e^{-2\phi}$. Denote $|s|_{\phi}:=|s|_{h^L}$ for convenience. \par
Recall that $\left(\cdot|\cdot\right)_{\omega}$ is the $L^2$-inner product of the Hilbert space $L^2_{\omega}(M,\TM)$ (cf.(\ref{2.2 inner product 0})) and $\left(\cdot|\cdot\right)_{\omega,U}$ is the restriction of the inner product (cf.(\ref{2.2 inner product 0 restriction})). Also, we can define another Hilbert space $L^2_{\omega,k\phi}(M,\TM\otimes L^k)$ which has the inner product   $\left(\cdot|\cdot\right)_{\omega,k\phi}$(cf.(\ref{2.2 inner product 1})).\par  
 Denote $\pb^{(q)}_k:\Omega^{(0,q)}(M,L^k)\rightarrow \Omega^{(0,q+1)}(M,L^k)$ to be the \textbf{Cauchy-Riemann operator} and $\pb^{*,(q+1)}_k:\Omega^{(0,q+1)}(M,L^k) \rightarrow \Omega^{(0,q)}(M,L^k)$ to be the formal adjoint of $\pb_k$ with respect to $(\cdot|\cdot)_{\omega,k\phi}$. Recall that we have the  \textbf{Kodaira Laplacian} $\Box^{(q)}_k$ (or $\Box^{(q)}_{\omega,k\phi}$) given by \[\Box^{(q)}_k:=\pb^{*,(q+1)}_k\pb^{(q)}_k+\pb^{(q-1)}_k\pb^{*,(q)}_k:\Dom \Box^{(q)}_k\To L^2_{\omega,k\phi}(M,\TM\otimes L^k)\] which is the \textbf{Gaffney extension}(cf.(\ref{2.3 Gaffney extansion})).\par

Fix a nonnegative sequence $c_k$. Denote by $E^{(q)}_{k,\leq c_k}$ the image of the functional calculus $\mathbbm{1}_{[0,c_k]}(\Box^{(q)}_{k})$ with respect to the indicator function $\mathbbm{1}_{[0,c_k]}$. The \textbf{spectral projection} is the orthogonal projection
 \[
 \Pk:=\mathbbm{1}_{[0,c_k]}(\Box^{(q)}_{k}):L^2_{\omega,k\phi}(M,\TM \otimes L^{k})\To E^{(q)}_{k,\leq c_k}
 \]and the \textbf{Bergman projection} is the orthogonal projection 
\[
\Bk:= \mathcal{P}^{(q)}_{k,0} : L^2_{\omega,k\phi}(M,\TM\otimes L^k)\to \Ker\Box^{(q)}_{k}
\] which is a special case of spectral projection by taking $c_k=0$. Readers may consult (\ref{2.3 spectral projection}) for an explicit definition.\par   

 For $\eta_1 \otimes s^k$ and $\eta_2 \otimes s^k$ in $L^2_{\omega,k\phi}(U,T^{*,(0,q)}M\otimes L^k)$ where $\eta_i \in \Omega^{(0,q)}_c(U)$, observe that 
\[
(\eta_1 \otimes s^k | \eta_2 \otimes s^k)_{\omega,k\phi}=\int_{U}\la \eta_1 |\eta_2 \ra_{\omega}e^{-2k\phi}dV_{\omega}=(\eta_1 e^{-k\phi}| \eta_2 e^{-k\phi})_{\omega,U}.
\]This induces a unitary identification   
\begin{equation}\label{3.1 id 1}
   L^2_{\omega,k\phi}(U,T^{*,(0,q)}M\otimes L^k) \cong L^2_{\omega}(U,T^{*,(0,q)}M) \quad \text{\rm by }\, 
   \eta \otimes s^k \leftrightarrow \eta e^{-k\phi}. 
\end{equation}

Define the \textbf{localized spectral projection}  
\[
    \Pks:L^2_{\omega}(U,T^{*,(0,q)}M) \rightarrow L^2_{\omega}(U,T^{*,(0,q)}M)
\]satisfying $\Pk(\eta \otimes s^k)=e^{k\phi}\Pks(\eta e^{-k\phi})\otimes s^k$ for all $\eta \in L^2_{\omega}(U,T^{*,(0,q)}M)$. In the case $c_k=0$, we denote $\Bks:= \mathcal{P}^{(q)}_{k,0,s}$ as the \textbf{ localized Bergman projection}.\par  

Next, let $\Pkk(z,w)$ and $\Bkk(z,w)$ be the \textbf{spectral} and \textbf{Bergman kernels} which are the Schwartz kernels of $\Pk$ and $\Bk$, respectively. We may also define the \textbf{localized spectral kernel} $\Pksk(z,w)$ and \textbf{localized Bergman kernel} $\Bksk(z,w)$ to be the Schwartz kernels of $\Pks$ and $\Bks$, respectively. The relation between $P^{(q),s}_{k,c_k}(z,w)$ and $P^{(q)}_{k,c_k,s}(z,w)$ is given by
\begin{equation}\label{kernel relation}
P^{(q)}_{k,c_k,s}(z,w)=P^{(q),s}_{k,c_k}(z,w)\cdot|s^k(z)|_{h^L}\cdot|(s^k)^*(w)|_{h^{L^*}}=e^{-k\phi(z)}P^{(q),s}_{k,c_k}(z,w)e^{k\phi(w)},    \end{equation}
where $P^{(q),s}_{k,c_k}(z,w)$ is defined in (\ref{8.1}).
\par 
From now on, we fix a point $p \in M$ throughout this paper and apply Lemma \ref{2.2 coordinate chart} to obtain a trivialization $(U,s)$ centered at $p$ such that 
\begin{equation*}
\phi(z)=\sum_{i=1}^n\lambda_i|z|^2+O(|z|^3)\quad\text{\rm and }\quad\omega(z)=\ii\sum_{i=1}^{n}dz^i\wedge d\zb^i +O(|z|).
\end{equation*}Recall that the set of points with signature $q$ is defined by
\begin{equation*}
  M(q):=\{ p'\in M \mid   \dot{\Theta}^L(p') \, \text{\rm is non-degenerate and has exactly $q$ negative eigenvalues}\}  
\end{equation*}and observe that $p\in M(q)$ means $q=\#\{i\mid \lambda_i<0\}$ and $n-q=\#\{i\mid \lambda_i>0\}$. In the case of $p\in M(0)$, we choose the trivialization  such that $\phi=\sum_{i=1}^{n}\lambda_i|z|^2+O(|z|^4)$ throughout this paper. Without loss of generality, we assume $B(1)\subset U \subset \Cn$ and make the following observations. Denote
\begin{equation*}
  \phi_0(z):=\sum_{i=1}^n \lambda_i |z^i|^2\quad\text{\rm and }\quad
  \phi_{(k)}(z):=k\phi(z/\sqrt{k}).
\end{equation*}Then for any $\epsilon<1/2$, there exists a constant $C$ independent of $k$ such that
\begin{equation}\label{3.1 ob phi}
 |\phi_{(k)}-\phi_0|_{\Ct}(z) \leq C\dfrac{|z|^3+1}{\sqrt{k}} \quad \forall \ |z|\leq k^{\epsilon}.   
\end{equation}where $|\cdot|_{\mathcal{C}^2}$ is defined in (\ref{2.4 C pt norm for 0q form}).
Also, set 
\begin{equation*}
\omega_0:=\ii\sum_{i}dz_i\wedge d\zb_i\quad\text{\rm and }\quad
\omega_{(k)}:=\omega(\dfrac{z}{\sqrt{k}}).
\end{equation*}Then there also exists a constant $C'$ such that 
\begin{equation}\label{3.1 ob omega}
   |\omega_{(k)}-\omega_0|_{\Ct}(z) \leq C'\frac{|z|+1}{\sqrt{k}} \quad \forall \ |z|\leq k^{\epsilon}.   
\end{equation} where $|\cdot|_{\mathcal{C}^2}$ for $(1,1)$-forms is defined in (\ref{2.4 C norm for 11 form}).
Furthermore, $\phi_{(k)}\To \phi_0$ and $\omega_{(k)}\To\omega_0$ locally uniformly in $\Cinf$ on $\Cn$.  $\phi_{(k)}$ and $\omega_{(k)}$ are defined on $B(\sqrt{k})$ and are called the \textbf{ scaled metric} and \textbf{scaled Hermitian form}, respectively. Inspired by the observations above, we construct the \textbf{scaled line bundles}. Define 
\[
 s^{(k)}(z):=s^{k}(\dfrac{z}{\sqrt{k}}):B(\sqrt{k}) \rightarrow L^k.   
\]
This makes $L^k$ a trivial line bundle over $B(\sqrt{k})$ with a trivializing section $s^{(k)}$ for any $k \in \N$. We denote the scaled line bundle as 
\[
    L^{(k)}:=L^k \rightarrow B(\sqrt{k}) \subset \Cn
\]which equipped with 
 the scaled metric $\phi_{(k)}$ by
\[\la s^{(k)} \mid s^{(k)} \ra_{\phi_{(k)}}:= e^{-2\phi_{(k)}}=e^{-2k\phi(z/\sqrt{k})}.\]
More generally, we consider the trivial vector bundle
\[
T^{*,(0,q)}{\Cn} \otimes L^{(k)} \rightarrow B(\sqrt{k}) \subset \Cn
\]which is a $\binom{n}{q}$-dimensional complex vector bundle with the space of smooth sections $\Omega^{(0,q)}(B(\sqrt{k}),L^{(k)})$ and  trivializing frames $\{d \zb^I \otimes s^{(k)}\}_{I\in\mathcal{J}_{q,n}}$.\par 
We endow the vector bundle $T^{*,(0,q)}\Cn \otimes L^{(k)}\To B(\sqrt{k})$ with a pointwise Hermitian structure by the scaled metric $\phi_{(k)}=k\phi(z/\sqrt{k})$ on $L^{(k)}$ and the scaled Hermitian form $\omega_{(k)}=\omega(z/\sqrt{k})$ on $T^{*,(0,q)}B(\sqrt{k})$. That is, for all $\eta_1,\eta_2 \in \Omega^{(0,q)}(B(\sqrt{k}))$,
\[
\la \eta_1 \otimes s^{(k)}| \eta_2 \otimes s^{(k)} \ra_{\omega_{(k)},\phi_{(k)}}(z) := \la \eta_1(z) | \eta_2(z) \ra_{\omega_{(k)}}\,e^{-2\phi_{(k)}(z)}.
\]
\par
Similar to the identification (\ref{3.1 id 1}), there is a unitary correspondence
\begin{align}\label{3.1 id 2}
L^2_{\omega_{(k)},\phi_{(k)}}(B(\sqrt{k}),T^{*,(0,q)}\Cn \otimes L^{(k)}) &\cong L^2_{\omega_{(k)}}(B(\sqrt{k}),T^{*,(0,q)}\Cn)\quad\text{\rm by }\\ \eta \otimes s^{(k)} &\leftrightarrow \eta e^{-k\phi(z/\sqrt{k})} \notag.    
\end{align}
\par 
In the meantime, by changing variable, there are unitary identifications
\begin{align}\label{3.1 id 3}
    L^2_{\omega,k\phi}(B(1),\T \otimes L^k) &\cong L^2_{\omega_{(k)},\phi_{(k)}}(B(\sqrt{k}),\T \otimes L^{(k)})\quad \text{by }\\
    \eta \otimes s^k &\leftrightarrow k^{-n/2}\eta(z/\sqrt{k}) \otimes s^{(k)} \notag 
\end{align}and
\begin{align}\label{3.1 id 4}
   L^2_{\omega}(B(1),\T) &\cong  L^2_{\omega_{(k)}}(B(\sqrt{k}),\T)\quad \text{by }\\
   \eta &\leftrightarrow k^{-n/2}\eta(z/\sqrt{k}). \notag
\end{align}So far, we have four unitary identifications (\ref{3.1 id 1}),(\ref{3.1 id 2}),(\ref{3.1 id 3}),(\ref{3.1 id 4}) between the spaces of sections. In fact, the identifications form a commutative diagram. We can transform the 
 localized spectral (or Bergman) kernels defined on $B(1)$ to kernels on the scaled bundles over $B(\sqrt{k})$ by (\ref{3.1 id 4}).\par 
Define the \textbf{scaled localized spectral projection} $\Plks:L^2_{\omega_{(k)}}(B(\sqrt{k}),\T) \To L^2_{\omega_{(k)}}(B(\sqrt{k}),\T)$ such that
\begin{equation}\label{3.1 scaled projection relation}
\left(\Plks u\right)(\sqrt{k}z)=\Pks \left(u(\sqrt{k}w)\right)    
\end{equation}
and the \textbf{scaled localized Bergman projection} $\Blks:=\mathcal{P}^{(q)}_{(k),0,s}$. Define the \textbf{scaled localized spectral kernel} $\Plksk(z,w)$ to be the Schwartz kernel of $\Plks$ which is given by 
\[
\Plksk(z,w)=k^{-n}P^{(q)}_{k,c_k,s}(\dfrac{z}{\sqrt{k}},\dfrac{w}{\sqrt{k}})
\]and the \textbf{scaled localized Bergman kernel} $\Blksk(z,w):=P^{(q)}_{(k),0,s}$. In this way, we have 
\[
\Plks u(z)=\int_{B(\sqrt{k})}\Plksk(z,w)u(w)dV_{\omega_{(k)}} \quad \text{\rm where }\,dV_{\omega_{(k)}}:=\dfrac{\omega_{(k)}^n}{n!}.
\]The relation between $P^{(q),s}_{(k),c_k}(z,w)$ and $P^{(q)}_{(k),c_k,s}(z,w)$ is given by 
\begin{equation}\label{kernel relation 2}
P^{(q)}_{(k),c_k,s}(z,w)=e^{-\phi_{(k)}(z)}P^{(q),s}_{(k),c_k}(z,w)e^{\phi_{(k)}(w)}    
\end{equation}where $P^{(q),s}_{(k),c_k}$ is defined in (\ref{8.2}).

\subsection{The Laplacians}\label{section 3.2}
The goal of this section is to compute Kodaira Laplacian on a trivial line bundle. We now temporarily forget the set-up in Section \ref{section 3.1}. Let $\tU$ be an open set in $\Cn$ and $\tL \rightarrow \tU$ be a trivial line bundle over $\tU$ with a trivializing section $\ts$. Fix a positive Hermitian $(1,1)$-form $\tomega$ on $\tU$ and a weight function $\tphi$ such that $\la \ts | \ts \ra_{\tphi}=e^{-2\tphi}$. Consider \[\T \otimes \tL \rightarrow \tU \subset \Cn\] to be the Hermitian vector bundle with the  pointwise Hermitian structure $\la\cdot|\cdot\ra_{\tomega,\tphi}$ induced by $\tomega$ and $\tphi$. That is, for $\eta_1,\eta_2 \in \Omega^{(0,q)}_c(\tU)$, 
\begin{equation}\label{3.2 inner product pt}
\la \eta_1\otimes \ts\mid \eta_2\otimes \ts \ra_{\tomega,\tphi}(z)=\la \eta_1(z)\mid\eta_2(z)\ra_{\tomega}e^{-2\tphi(z)} \quad \text{\rm for all }z\in\tU.     
\end{equation}
This defines an inner product on the space on $\Omega^{(0,q)}_c(\tU,\tL)$. Namely,
\begin{equation}\label{3.2 inner product}
\left(\eta_1\otimes \ts\mid\eta_2\otimes \ts\right)_{\tomega,\tphi}:=\int_{\tU}\la\eta_1\otimes\ts\mid\eta_2\otimes\ts\ra_{\tomega,\tphi}dV_{\tomega}\quad \text{where }\,dV_{\tomega}:=\dfrac{\tomega^n}{n!}.  
\end{equation}
Let $L^2_{\tomega,\tphi}(\tU,\T \otimes \tL)$ be the completion of $\Omega^{(0,q)}_c(\tU)$ with respect to $\left(\cdot|\cdot\right)_{\tomega,\tphi}$. Similarly, we have another Hilbert space $L^2_{\omega}(\tU,\T)$ with its inner product $(\cdot|\cdot)_{\tomega}$(cf.(\ref{2.2 inner product 0})). There is a unitary identification
\begin{align}\label{3.2 identification}
    L^2_{\tomega,\tphi}(\tU,\T \otimes \tL )  \cong  L^2_{\tomega}(\tU,\T) , \quad 
    \eta \otimes \ts \leftrightarrow \eta e^{-\tphi}.
\end{align}For any smooth $(0,1)$-form $\eta$ on $\tU$, we can consider the wedge operator $\eta \wedge \cdot: T^{*,(0,q)}_p\tU \rightarrow T^{*,(0,q+1)}_p\tU $. Moreover, for the positive Hermitian $(1,1)$-form $\tomega$, we let $\eta\wedge^*_{\tomega}\cdot: T^{*,(0,q+1)}_p\tU \rightarrow T^{*,(0,q)}_p\tU$ to be the adjoint of $\eta\wedge\cdot$ via the pointwise inner product $\la \cdot | \cdot \ra_{\tomega}$. For $\eta_1 , \eta_2 \in T^{*,(0,1)}_p\tU$, we have the identity $(\eta_1\wedge^*_{\tomega})\eta_2\wedge \cdot +\eta_2\wedge(\eta_1\wedge^*_{\tomega})\cdot=\la \eta_1 | \eta_2 \ra_{\tomega}\cdot.$\par
Let $\pb^{(q)}_{\tL}:\Omega^{(0,q)}(\tU,\tL) \rightarrow \Omega^{(0,q+1)}(\tU,\tL)$ be the Cauchy-Riemann operator with values in $\tL$ and $\pb^{*,(q+1)}_{\tL}:\Omega^{(0,q+1)}(\tU,\tL)\rightarrow \Omega^{(0,q)}(\tU,\tL) $ be the formal adjoint of $\pb^{(q)}_{\tL}$ with respect to $\left(\cdot|\cdot\right)_{\tomega,\tphi}$. Under identification (\ref{3.2 identification}), it is natural to define the \textbf{localized Cauchy-Riemann operator} $\pb_{\ts}:\Omega^{(0,q)}(\tU)\To\Omega^{(0,q+1)}(\tU)$ such that $\pb_{\tL}(\eta \otimes \ts)= e^{\tphi}\pb_{\ts}(\eta e^{-\tphi})\otimes \ts$. 
Denote by $\pb^{(q)}$ the standard Cauchy-Riemann operator on $\Omega^{(0,q)}(\tU)$. Note that 
\begin{equation}\label{3.2 witten relation}
    \pb^{(q)}_{\ts}=e^{-\tphi} \pb^{(q)} e^{\tphi}.  
\end{equation}By direct computation, we have
\begin{equation}\label{3.2 pbs eq}
\pb^{(q)}_{\ts}=\pb^{(q)}+(\pb \tphi)\wedge\cdot .
\end{equation}Of course, we can also define $\pb^{*,(q)}_{\ts}:\Omega^{(0,q-1)}(\tU) \rightarrow \Omega^{(0,q)}(\tU)$ satisfying $\pb^{*}_{\tL}(\eta \otimes \ts)=e^{\tphi}\ \pb^{*}_{\ts}(\eta e^{-\tphi})\otimes \ts$. Then $\pb^{*,(q)}_{\ts}$ is the formal adjoint of $\pb^{(q-1)}_{\ts}$ with respect to $\left(\cdot|\cdot \right)_{\tomega}$. Next, define $\pb^{*,(q)}_{\tomega}$ to be the formal adjoint of $\pb^{(q-1)}$ with respect to $\left( \cdot | \cdot \right)_{\tomega}$. Note that
\begin{equation}\label{3.2 pbs* eq}
\pb^{*,(q)}_{\ts}=\pb^{*,(q)}_{\tomega}+(\pb \tphi)\wedge^*_{\tomega}\cdot.
\end{equation} \par 
Recall that $\Box^{(q)}_{\tL}:=\pb^{*,(q+1)}_{\tL}\pb^{(q)}_{\tL}+\pb^{(q-1)}_{\tL}\pb^{*,(q)}_{\tL}$ is the Kodaira Laplacian. We can define the  \textbf{localized Kodaira Laplacian} $\Box^{(q)}_{\ts}:=\pb^{*,(q+1)}_{\ts}\pb^{(q)}_{\ts}+\pb^{(q-1)}_{\ts}\pb^{*,(q)}_{\ts}$ that acts on $\Omega^{(0,q)}(\tU)$. $\Box^{(q)}_{\tL}$ and $\Box^{(q)}_{\ts}$ are compatible under the identification (\ref{3.2 identification}) in the sense that
\begin{equation}\label{3.2 localized relation}
 \Box^{(q)}_{\tL}(\eta \otimes \ts)=e^{\tphi}(\Box^{(q)}_{\ts}\eta e^{-\tphi})\otimes \ts.   
\end{equation} We can consider the Gaffney extansions of $\Box^{(q)}_{\tL}$ and $\Box^{(q)}_{\ts}$ which preserve the relation (\ref{3.2 localized relation}) and $
\eta\otimes \ts \in \Dom\Box^{(q)}_{\tL} \Longleftrightarrow \eta e^{-\tphi}\in\Dom \Box^{(q)}_{\ts}
$. Next, we compute the localized Kodaira Laplacian using the settings above.\par

\begin{lem}\label{3.2 Lemma Lap}The localized Kodaira Laplacian can be expressed as
\begin{equation}\label{3.2 Lemma Lap simple Box eq}
\Box^{(q)}_{\ts}=\Delta_{\tomega}+\pb \left((\pb\tphi)\wedge^*_{\tomega}\cdot\right)+(\pb \tphi)\wedge^*_{\tomega}\pb+\pb^*_{\tomega}\left( (\pb\tphi)\wedge\cdot\right)+(\pb \tphi)\wedge\pb^*_{\tomega}+\la \pb \tphi|\pb \tphi \ra \cdot,    
\end{equation}
where $\Delta^{(q)}_{\tomega}:=\pb^{*,(q+1)}_{\tomega}\pb^{(q)}+\pb^{(q-1)}\pb^{*,(q)}:\Omega^{(0,q)}(\tU)\To\Omega^{(0,q)}(\tU)$ is the Hodge Laplacian with respect to $\tomega$. Furthermore, assume that $\tomega^n/n!=e^{\tvarphi}dm$ for some function $\tvarphi$ and let $\theta$ denote the matrix of connection forms of $\nabla$ on $T^{(q,0)}\tU$ with respect to the frame $\{dz^{I}\}_{I\in\mathcal{J}_{q,n}}$. Then for $f \in \Cinf(\tU)$ and $I \in \mathcal{J}_{q,n}$,
\begin{equation}\label{3.2 Lemma Lap pb* eq}
\pb^*_{\tomega}fd\zb^I=\left(-\dfrac{\p f}{\p z^i}-f\dfrac{\p \tvarphi}{\p z^i}\right)(d\zb^i)\wedge^*_{\tomega}d\zb^I-f \,(d\zb^i\wedge^*_{\tomega})\overline{{\theta}}_{\p/\p \zb^i}^*d\zb^I.
\end{equation}
\end{lem}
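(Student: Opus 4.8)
The statement has two parts: the operator identity \eqref{3.2 Lemma Lap simple Box eq} for $\Box^{(q)}_{\ts}$, and the explicit formula \eqref{3.2 Lemma Lap pb* eq} for $\pb^*_{\tomega}$ on a monomial $fd\zb^I$. The plan is to prove them in that order, since the first is purely formal bookkeeping while the second requires unwinding the definition of the Hodge codifferential in coordinates. For the first part I would start from the localized decompositions \eqref{3.2 pbs eq} and \eqref{3.2 pbs* eq}, namely $\pb^{(q)}_{\ts}=\pb^{(q)}+(\pb\tphi)\wedge\cdot$ and $\pb^{*,(q)}_{\ts}=\pb^{*,(q)}_{\tomega}+(\pb\tphi)\wedge^*_{\tomega}\cdot$, substitute them into $\Box^{(q)}_{\ts}=\pb^{*,(q+1)}_{\ts}\pb^{(q)}_{\ts}+\pb^{(q-1)}_{\ts}\pb^{*,(q)}_{\ts}$, and expand the product. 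Grouping the four ``mixed'' terms gives exactly $\pb((\pb\tphi)\wedge^*_{\tomega}\cdot)+(\pb\tphi)\wedge^*_{\tomega}\pb+\pb^*_{\tomega}((\pb\tphi)\wedge\cdot)+(\pb\tphi)\wedge\pb^*_{\tomega}$; the two ``pure $\pb\tphi$'' terms are $(\pb\tphi)\wedge^*_{\tomega}(\pb\tphi)\wedge\cdot+(\pb\tphi)\wedge(\pb\tphi)\wedge^*_{\tomega}\cdot$, which collapse to $\la\pb\tphi\mid\pb\tphi\ra_{\tomega}\cdot$ by the anticommutation identity $(\eta_1\wedge^*_{\tomega})\eta_2\wedge\cdot+\eta_2\wedge(\eta_1\wedge^*_{\tomega})\cdot=\la\eta_1\mid\eta_2\ra_{\tomega}\cdot$ recalled just before the lemma (taking $\eta_1=\eta_2=\pb\tphi$); and the two ``pure $\pb$'' terms $\pb^{*,(q+1)}_{\tomega}\pb^{(q)}+\pb^{(q-1)}\pb^{*,(q)}_{\tomega}$ are the definition of $\Delta^{(q)}_{\tomega}$. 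This settles \eqref{3.2 Lemma Lap simple Box eq}.

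For the second part, I would compute $\pb^*_{\tomega}$ from its defining property: for all test forms $u$, $(\pb^*_{\tomega}(fd\zb^I)\mid u)_{\tomega}=(fd\zb^I\mid\pb u)_{\tomega}$, where the integration is against $dV_{\tomega}=e^{\tvarphi}dm$. The standard route is to write $\pb^*_{\tomega}$ as the conjugate by the volume factor of the formal adjoint with respect to the flat Lebesgue measure and the Hermitian metric $\tomega$ on forms, i.e. integrate by parts in $\p/\p z^i$: each integration by parts against $e^{\tvarphi}dm$ produces the derivative term $-\p f/\p z^i$ together with the extra term $-f\,\p\tvarphi/\p z^i$ coming from differentiating the volume density, and the contraction $(d\zb^i)\wedge^*_{\tomega}$ records which component of $d\zb^I$ is removed. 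The remaining piece is the contribution of the $z$-dependence of the metric $\tomega$ on the bundle $\bigwedge^q T^{*,(0,1)}\tU$ itself: differentiating the metric coefficients $h^{i,j}(z)$ hidden in $\wedge^*_{\tomega}$ produces precisely the connection term $-f\,(d\zb^i\wedge^*_{\tomega})\overline{\theta}^*_{\p/\p\zb^i}d\zb^I$, where $\theta$ is the Chern connection matrix on $T^{(q,0)}\tU$ in the frame $\{dz^I\}$ and $\overline{\theta}^*$ its fiberwise adjoint. Concretely I would verify this by fixing normal-type coordinates or by a direct Leibniz expansion of $\la fd\zb^I\mid \pb u\ra_{\tomega}e^{\tvarphi}$ and collecting the three types of terms.

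The main obstacle is the second part, specifically getting the connection term right with the correct placement of bars and adjoints. The subtlety is that $\wedge^*_{\tomega}$ is metric-dependent, so ``integrating by parts'' is not literally moving a $\p/\p z^i$ past a constant-coefficient contraction: one must track how $\p/\p z^i$ acts on the metric coefficients entering $\wedge^*_{\tomega}$, and reorganize that derivative-of-metric term into a covariant object. Identifying it as $(d\zb^i\wedge^*_{\tomega})\overline{\theta}^*_{\p/\p\zb^i}$ requires matching $\p_{z^i}$ of the metric on $(0,q)$-forms with the Chern connection forms on $(q,0)$-forms via the conjugation duality between $T^{*,(0,1)}\tU$ and $T^{*,(1,0)}\tU$; care with which side the adjoint $(\cdot)^*$ lands on is where sign/conjugation errors would creep in. Once that bookkeeping is pinned down — ideally by checking it first in the model flat case $\tomega=\tomega_0$ where $\tvarphi=0$, $\theta=0$ and the formula reduces to the familiar $\pb^*_0(fd\zb^I)=-\sum_i(\p f/\p z^i)(d\zb^i)\wedge^*_{\tomega_0}d\zb^I$, and then perturbing — the general formula follows, and \eqref{3.2 Lemma Lap pb* eq} combined with the already-proved \eqref{3.2 Lemma Lap simple Box eq} completes the lemma.
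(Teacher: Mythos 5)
Your proposal is correct and follows essentially the same route as the paper: part one by substituting $\pb^{(q)}_{\ts}=\pb^{(q)}+(\pb\tphi)\wedge\cdot$ and $\pb^{*,(q)}_{\ts}=\pb^{*,(q)}_{\tomega}+(\pb\tphi)\wedge^*_{\tomega}\cdot$ and collapsing the pure $\pb\tphi$ terms with the anticommutation identity, and part two by integration by parts against $e^{\tvarphi}dm$, with the metric-derivative term identified as the connection contribution $(d\zb^i\wedge^*_{\tomega})\overline{\theta}^*_{\p/\p\zb^i}$.
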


\begin{proof}
First, by (\ref{3.2 pbs eq}) and \ref{3.2 pbs* eq}),
\begin{align*}
    \Box_{\ts}&=\pb^*_{\ts}\pb_{\ts}+\pb_{\ts}\pb^*_{\ts}=\left(\pb^*_{\tomega}+(\pb \tphi)\wedge^*_{\tomega}\cdot\right)\left(\pb+(\pb \tphi)\wedge\cdot\right)+\left(\pb+(\pb \tphi)\wedge\cdot\right)\left(\pb^*_{\tomega}+(\pb \tphi)\wedge^*_{\tomega}\cdot\right)\\
    &=\Delta_{\tomega}+\pb \left((\pb\tphi)\wedge^*_{\tomega}\cdot\right)+\left((\pb \tphi)\wedge^*_{\tomega}\right)\pb+\pb^*_{\tomega}\left( (\pb\tphi)\wedge\cdot\right)+(\pb \tphi)\wedge\pb^*_{\tomega}+\la \pb \tphi|\pb \tphi \ra \cdot. 
\end{align*}
 Now, we compute $\pb^*_{\tomega}fd\zb^I$. By the locality of differential operator, we may assume $f \in \Cinf_c(\tU)$. Let $g \in \Cinf(\tU)$ and $J \in \mathcal{J}_{q-1,n} $ then
  \begin{align*}
        &\left(\pb^*_{\tomega}( f(z) d \zb^I  ) \middle| g(z) d \zb^J \right)_{\tomega} =\left(f(z) d\zb^{I}  \middle| \pb( g(z) d\zb^J ) \right)_{\tomega} \nonumber\\
        &=\int_{\tU}f(z)\overline{\left(\dfrac{\p g(z)}{\p \zb^i}\right)}\la d\zb^I|d\zb^i\wedge d\zb^J\ra_{\tomega} e^{\tvarphi}dm= \int_{\tU}(\dfrac{\p \Bar{g}}{\p z_i}(z))f(z) \, e^{\tvarphi(z)}\la d\zb^I | d\zb^i \wedge d \zb^J\ra_{\tomega}  dm \nonumber\\
        &= \int_{\tU} -\bar{g}(z)\pzi \left(f(z)   \, e^{\tvarphi(z)} \la d\zb^I | d\zb^i \wedge d \zb^J \ra_{\tomega}\right) dm \nonumber \\
        &=\int_{\tU}-\Bar{g}(z)  \left(\dfrac{\p f}{\p z^{i}}(z)+f(z)\dfrac{\p \tvarphi}{\p z^{i}}(z)\right)\la (d \zb^{i})\wedge^*_{\tomega} d\zb^I | d \zb^J \ra_{\tomega}\,e^{\tvarphi(z)} dm  \nonumber \\
        & \qquad \qquad \qquad \qquad \qquad \qquad \qquad  
        - \int_{\tU}\bar{g}(z)f(z)  \left(\pzi \la  d\zb^I \mid d\zb^i \wedge d \zb^J \ra_{\tomega}\right)e^{\tvarphi(z)} dm.
  \end{align*}By direct computation,
  \begin{multline*}
  \pzi \la d\zb^I|(d\zb^i)\wedge d\zb^J \ra_{\tomega}=\overline{\frac{\p}{\p \zb^i}\la dz^I|(dz^i)\wedge dz^J \ra_{\tomega}}=\overline{\la dz^I|\nabla_{\p/\p z^i}(dz^i)\wedge dz^J\ra_{\tomega}}\\
  =\overline{\la(dz^i\wedge^*_{\tomega})\theta_{\p/\p z^i}^*dz^I|dz^J\ra_{\tomega}}=\la(d\zb^i\wedge^*_{\tomega})\overline{{\theta}}_{\p/\p \zb^i}^*d\zb^I|d\zb^J\ra_{\tomega}.
  \end{multline*}So, we can conclude that
\begin{equation*}
       \pb^*_{\tomega}fd\zb^I=\left(-\dfrac{\p f}{\p z^i}-f\dfrac{\p \tvarphi}{\p z^i}\right)(d\zb^i)\wedge^*_{\tomega}d\zb^I-f \,(d\zb^i\wedge^*_{\tomega})\overline{{\theta}}_{\p/\p \zb^i}^*d\zb^I.
    \end{equation*}
\end{proof}
So far, we establish a framework for the localized Cauchy-Riemann operator $\pb_{\ts}$ and the localized Kodaira Laplacian $\Box_{\ts}$. They depend only on the local data $\tomega$ and $\tphi$ on $\Cn$. Next, we are going to apply this framework to the configuration in Section \ref{section 3.1}. \par

Recall the trivializing neighborhood $B(1) \subset U \subset M$ taken in the previous section. We insert $\tU=B(1)$ and $\ts=s^k$ into the above framework. Denote $\pb_{k,s}:=\pb_{s^k}$ as the localized Cauchy-Riemann operator. By (\ref{3.2 witten relation}), we have
$ \pb_{k,s}=e^{-k\phi}\pb e^{k\phi}$, which is an analogy of the Witten deformation of exterior derivative on real manifolds (cf.\cite{Witten}). Moreover, by (\ref{3.2 pbs eq}) and (\ref{3.2 pbs* eq}), 
\begin{align*}
    \pb_{k,s}= \pb+(\pb k\phi)\wedge\cdot \quad ; \quad
    \pb^*_{k,s}=\pb^*_{\omega}+(\pb k\phi)\wedge^*_{\omega}\cdot\,.
\end{align*}Denote $\Box^{(q)}_{k,s}:=\Box^{(q)}_{s^k}$ to be the \textbf{localized Kodaira Laplacian}. The expression of $\Box^{(q)}_{k,s}$ is given in Lemma \ref{3.2 Lemma Lap}. On the other hand, we can consider the scaled vector bundle $\T \otimes L^{(k)} \rightarrow B(\sqrt{k})$ and insert $\tU=B(\sqrt{k})$ and $\ts=s^{(k)}$. Thus, $\tomega=\omega_{(k)}$ and $\tphi=\phi_{(k)}$. Denote $\pb_{(k),s}:=\pb_{s^{(k)}}$ and compute that
\begin{align*}
   \pb_{(k),s}=\pb+(\pb\phi_{(k)})\wedge\cdot\quad;\quad \pb^*_{(k),s}=\pb^*_{\omega_{(k)}}+(\pb\phi_{(k)})\wedge^*_{\omega_{(k)}}\cdot\,.
\end{align*}Denote $\Box^{(q)}_{(k),s}:=\Box^{(q)}_{s^{(k)}}$ to be the \textbf{scaled localized Kodaira Laplacian}. The relations between the two sets of operators established above are given by
\begin{align}\label{3.2 rescale relation}
  (\pb^{(q)}_{(k),s}u)(\sqrt{k}z)=\dfrac{1}{\sqrt{k}}\pb^{(q)}_{k,s}(u(\sqrt{k}z))\quad ; \quad  
  (\pb^{*,(q)}_{(k),s}u)(\sqrt{k}z)=\dfrac{1}{\sqrt{k}}\pb^{*,(q)}_{k,s}(u(\sqrt{k}z)) 
\end{align}and hence
\begin{equation}\label{3.2 rescale relation 3}
\left(\Box_{(k),s}^{(q)}u\right)(\sqrt{k}z)=\dfrac{1}{k}\Box^{(q)}_{k,s}(u(\sqrt{k}z)). 
\end{equation}
Now, recall the Sobolev space $H_s(\tU,\T)$ with its norm $\|\cdot\|_{s}$(cf.(\ref{2.4 sobolev norm})). By the fact that
$\omega_{(k)} \rightarrow \omega_0$ and
$\phi_{(k)} \rightarrow \phi_0$
locally uniformly in $\Cinf$, we know that the coefficients of $\Box^{(q)}_{(k)}$ converge locally uniformly. Therefore, we can apply
Theorem \ref{2.4 Gårding inequality} to obtain the following proposition. 
\begin{prop}[k-uniform  Gårding inequalities]\label{3.2 k Gard ineq}

   For any fixed radius $r \geq 0$ and integers $m\in \N$, there is a constant $C$ independent of $k$ such that
   \begin{equation*}
       \|u\|_{2m} \leq C \left(\|u\|_{0} + \|(\Box^{(q)}_{(k)})^{m} u\|_{0}\right)
   \end{equation*}
   for all $u \in H_{2m}(B(r),\T)$ and $k \geq r^2$.
   \end{prop}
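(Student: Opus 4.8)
The plan is to obtain the $k$-uniform estimate from the classical G\aa rding inequality (Theorem \ref{2.4 Gårding inequality}) applied to a single, fixed \emph{model} operator, by a normal-families/contradiction argument driven by the convergences $\phi_{(k)}\To\phi_0$ and $\omega_{(k)}\To\omega_0$ locally uniformly in $\Cinf$ (cf. (\ref{3.1 ob phi}), (\ref{3.1 ob omega}) and the remark following them). Fix $r\geq 0$ and $m\in\N$. By Lemma \ref{3.2 Lemma Lap}, for every $k\geq r^2$ the operator $\Box^{(q)}_{(k)}$ is a second-order differential operator on $B(\sqrt k)\supseteq B(r)$ whose coefficients are universal smooth expressions in $\omega_{(k)}$, $\phi_{(k)}$ and their derivatives; inserting $\tomega=\omega_0$ and $\tphi=\phi_0$ into the same formula defines a second-order operator $\Box^{(q)}_0$ on all of $\Cn$ whose principal part is the flat Hodge Laplacian $\Delta_{\omega_0}$ acting diagonally in the frame $\{d\zb^I\}$, hence $\Box^{(q)}_0$ is uniformly elliptic on $B(r)$. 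Theorem \ref{2.4 Gårding inequality} then furnishes a constant $C_0$, depending only on $r$ and $m$, with
\[
\|v\|_{2m}\leq C_0\bigl(\|v\|_0+\|(\Box^{(q)}_0)^m v\|_0\bigr)\qquad\text{for all }v\in H_{2m}(B(r),\T).
\]

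For each $k\geq r^2$, $\Box^{(q)}_{(k)}$ is, for that fixed $k$, uniformly elliptic on $B(r)$ (its principal symbol being that of $\Delta_{\omega_{(k)}}$), so Theorem \ref{2.4 Gårding inequality} supplies a finite best constant $C_k$ with $\|u\|_{2m}\leq C_k\bigl(\|u\|_0+\|(\Box^{(q)}_{(k)})^m u\|_0\bigr)$ on $H_{2m}(B(r),\T)$. It suffices to show $\sup_{k\geq r^2}C_k<\infty$, since the (finitely many) $C_k$ with $k$ in any bounded range are then absorbed into a maximum. If the supremum were infinite there would be $k_j\To\infty$ with $C_{k_j}\To\infty$, and, normalizing, $u_j\in H_{2m}(B(r),\T)$ with $\|u_j\|_{2m}=1$ and $\|u_j\|_0+\|(\Box^{(q)}_{(k_j)})^m u_j\|_0\To 0$.

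Write $(\Box^{(q)}_0)^m u_j=(\Box^{(q)}_{(k_j)})^m u_j+R_j u_j$ with $R_j:=(\Box^{(q)}_0)^m-(\Box^{(q)}_{(k_j)})^m$. Since the coefficients of $\Box^{(q)}_{(k_j)}$ converge to those of $\Box^{(q)}_0$ in $\Cinf(\overline{B(r)})$ and composing differential operators amounts to taking polynomial combinations of coefficients and their derivatives, the coefficients of the order-$2m$ operator $R_j$ converge to $0$ in $\Cinf(\overline{B(r)})$; by Proposition \ref{2.4 prop 0} a bound $\varepsilon_j\To 0$ on the $\mathcal{C}^{0}$-norms of the coefficients of $R_j$ yields $\|R_j u_j\|_0\leq C'\varepsilon_j\|u_j\|_{2m}=C'\varepsilon_j\To 0$. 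Hence $\|(\Box^{(q)}_0)^m u_j\|_0\To 0$, and the model inequality gives $1=\|u_j\|_{2m}\leq C_0\bigl(\|u_j\|_0+\|(\Box^{(q)}_0)^m u_j\|_0\bigr)\To 0$, a contradiction. This proves $\sup_k C_k<\infty$ and hence the proposition.

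I expect the crux to be the passage from $\Cinf$-convergence of the first-order data to the operator bound $\|R_j u_j\|_0\leq C'\varepsilon_j$: one must check that iterating the Laplacian $m$ times preserves locally uniform $\Cinf$-convergence of the coefficients (their explicit form via Lemma \ref{3.2 Lemma Lap} being polynomial in $\omega_{(k)},\phi_{(k)}$ and their derivatives up to order about $2m$), and that a uniform $\mathcal{C}^{0}$-bound on the coefficients of an order-$2m$ operator controls its $H_{2m}\To L^2$ operator norm, which is exactly the content of Proposition \ref{2.4 prop 0}. An alternative that bypasses the contradiction argument is to inspect the proof of Theorem \ref{2.4 Gårding inequality} directly and observe that its constant depends only on the lower ellipticity bound and on finitely many $\mathcal{C}^{0}$-norms of the coefficients of $\Box^{(q)}_{(k)}$, all of which are bounded uniformly in $k$ by (\ref{3.1 ob phi})--(\ref{3.1 ob omega}); this is the route hinted at in the text before the proposition.
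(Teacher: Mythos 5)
Your proposal is correct. The paper itself gives essentially no proof — the sentence preceding the proposition just observes that the coefficients of $\Box^{(q)}_{(k),s}$ converge locally uniformly in $\Cinf$ to those of the model operator (by (\ref{3.1 ob phi})–(\ref{3.1 ob omega}) and Lemma \ref{3.2 Lemma Lap}) and then invokes Theorem \ref{2.4 Gårding inequality}; the implicit argument is precisely the ``alternative route'' you sketch in your last paragraph, namely that the G\aa rding constant for a fixed domain and order depends only on a lower ellipticity bound and on finitely many $\mathcal{C}^l$-norms of the coefficients, all of which are bounded uniformly in $k$. Your primary argument instead proceeds by contradiction/normal families: you extract a normalized extremizing sequence $u_j$, transfer the estimate to the single limit operator $(\Box^{(q)}_{0,s})^m$ by writing $(\Box^{(q)}_{0,s})^m = (\Box^{(q)}_{(k_j),s})^m + R_j$ with $\|R_j\|_{H_{2m}\To L^2}\To 0$, and invoke the model G\aa rding inequality once. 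Both approaches are valid; the contradiction argument has the advantage of requiring no inspection of the proof of the classical G\aa rding inequality (it is used only as a black box for the fixed operator $\Box^{(q)}_{0,s}$), at the cost of an extra limiting step. One minor imprecision in your final paragraph: the constant in an $H_{2m}\To L^2$ a priori estimate of order $2m$ depends on $\mathcal{C}^l$-norms of the coefficients for $l$ up to roughly $2m$, not merely $\mathcal{C}^0$-norms; this is harmless here because the coefficient convergence is in $\Cinf$, but the phrasing should be adjusted. The rest, including the use of Proposition \ref{2.4 prop 0} to bound $\|R_j u_j\|_0$ by the sup-norms of the coefficients of $R_j$ times $\|u_j\|_{2m}$, is sound.
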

\begin{rmk}\label{3.2 k Gard ineq rmk}
    In fact, for any cut off functions $\rho \in \Cinf_c(B(r),[0,\infty))$, $\Tilde{\rho} \in \Cinf_c(B(r),[0,\infty))$ with $\supp \rho \subset\subset \supp \Tilde{\rho}$, there is a $C>0$ such that
    \begin{equation*}
       \|\rho u\|_{2m} \leq C \left(\| \Tilde{\rho} u\|_{0} + \|\Tilde{\rho}(\Box^{q}_{(k)})^{m} u\|_{0}\right)\quad \text{for all }\, u\in\Omega^{(0,q)}(B(r)).
   \end{equation*} This property makes the Gårding inequality applicable to sections without compact support.
\end{rmk}
\newpage 
\subsection{The uniform bounds}\label{section 3.3}
 
To begin with, we make some observations about the compatibility of norms. Note that there exist positive constants $C_1$ and $C_2$ such that
\begin{equation*}\label{3.3 compatibility 0}
C_1\|u\|_{\omega_0,B(1)}\leq \|u\|_{\omega,B(1)}\leq C_2\|u\|_{\omega_0,B(1)}    
\end{equation*}
 for all $u\in\Omega^{(0,q)}(B(1))$  since $\omega$ and $\omega_0$ are both positive bounded Hermitian $(1,1)$-forms on the precompact domain $B(1)$. By scaling the metric, it follows that 
\begin{equation}\label{3.3 compatibility 1}
C_1\|u\|_{\omega_0,B(\sqrt{k})}\leq \|u\|_{\omega_{(k)},B(\sqrt{k})}\leq C_2\|u\|_{\omega_0,B(\sqrt{k})}    
\end{equation}
for all $u\in\Omega^{(0,q)}(B(\sqrt{k}))$. In topological sense, $L^2_{\omega}(B(1),\TM)\approx L^2_{\omega_0}(B(1),\TM)$ and $L^2_{\omega_{(k)}}(B(\sqrt{k}),\TM)\approx L^2_{\omega_0}(B(\sqrt{k}),\TM)$. Moreover, by the fact that $\omega_{(k)}$ converges to $\omega_0$ locally uniformly, for any fixed radius $r>0$ and positive number $\varepsilon>0$, there exists $k_0\in\N$ such that
\begin{equation}\label{3.3 compatibility 2}
 (1-\varepsilon)|\left(u\mid v\right)_{\omega_{0},B(r)}|\leq |\left(u\mid v\right)_{\omega_{(k)},B(r)}|\leq (1+\varepsilon)|\left(u\mid v\right)_{\omega_{0},B(r)}|   
\end{equation}
for all $u,v\in\Omega^{(0,q)}_c(B(r))$ and $k\geq k_0$. Now, we enter the core of this section.

\begin{lem}\label{3.3 thm pf 1.5}
Given $u\in \Omega^{(0,q)}_c(B(\sqrt{k}))$ for some $k\in\N$, we have 
 \begin{equation*}\label{3.3 lemms 1.5 (1)}
      \|\Plks u\|_{\omega_{(k)},B(\sqrt{k})}\leq \|u\|_{\omega_{(k)},B(\sqrt{k})}.  
    \end{equation*}Consequently, 
   there exists a constant $C$ independent of $k$ such that the four inequalities hold:
   \begin{equation*}
     \|\Plks u\|_{\omega_{0}\, \textit{\rm or}\, \omega_{(k)},B(\sqrt{k})}\leq C\|u\|_{\omega_{0}\, \textit{\rm or}\, \omega_{(k)},B(\sqrt{k})}, \end{equation*}for all $u\in \Omega^{(0,q)}_c(B(\sqrt{k}))$ and $k\in \N$. Moreover, for any radius $r>0$ and a number $\varepsilon>0$, there exists $k_0$ such that 
   \begin{equation*}
       \|\Plks u\|_{\omega_{(k)},B(\sqrt{k})}\leq(1+\varepsilon)\|u\|_{\omega_0,B(r)},
   \end{equation*} for all $u\in\Omega^{(0,q)}_{c}(B(r))$ and $k>k_0$. 
\end{lem}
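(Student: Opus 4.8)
The plan is to reduce all three assertions to the single structural fact that $\Pk$ is an orthogonal projection on $L^2_{\omega,k\phi}(M,\TM\otimes L^k)$, hence a contraction, and then transport this through the unitary identifications (\ref{3.1 id 1})--(\ref{3.1 id 4}) and combine it with the $k$-uniform norm comparisons (\ref{3.3 compatibility 1}) and (\ref{3.3 compatibility 2}).

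For the first inequality I would unwind the definitions of $\Pks$ and $\Plks$. Reading the defining relation $\Pk(\eta\otimes s^k)=e^{k\phi}\Pks(\eta e^{-k\phi})\otimes s^k$ after restriction to $U=B(1)$, one sees that $\Pks$ is, modulo the unitary identification (\ref{3.1 id 1}), the composite ``extend a section of $\TM\otimes L^k$ over $B(1)$ by zero to $M$, apply $\Pk$, then restrict back to $B(1)$''. Extension by zero is isometric, restriction does not increase the $L^2_{\omega,k\phi}$-norm, and $\Pk$ is a contraction, so $\Pks$ is a contraction on $L^2_\omega(B(1),\TM)$; since $\Plks=T\Pks T^{-1}$ for the unitary $T$ realizing (\ref{3.1 id 4}), the same holds for $\Plks$, giving
\[
\|\Plks u\|_{\omega_{(k)},B(\sqrt{k})}\le\|u\|_{\omega_{(k)},B(\sqrt{k})}\qquad\text{for all }u\in\Omega^{(0,q)}_c(B(\sqrt{k})).
\]
Ellipticity of $\Box^{(q)}_k$ shows that $\Plks u\in\Omega^{(0,q)}(B(\sqrt{k}))$, so the norm comparisons below may be applied to it.

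For the four inequalities with a $k$-independent constant I would simply feed the first inequality into (\ref{3.3 compatibility 1}). Applying the two bounds of (\ref{3.3 compatibility 1}) once to $v=\Plks u$ and once to $v=u$ converts freely between $\|\cdot\|_{\omega_0,B(\sqrt{k})}$ and $\|\cdot\|_{\omega_{(k)},B(\sqrt{k})}$ at the cost of the fixed constants $C_1,C_2$; combining this with $\|\Plks u\|_{\omega_{(k)}}\le\|u\|_{\omega_{(k)}}$ yields all four versions with, say, $C:=\max\{1,C_2,C_1^{-1},C_1^{-1}C_2\}$, which does not depend on $k$.

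For the last, $(1+\varepsilon)$ assertion: if $u$ is supported in $B(r)$ and $k>r^2$, then $B(r)\subset B(\sqrt{k})$, so the first inequality already gives $\|\Plks u\|_{\omega_{(k)},B(\sqrt{k})}\le\|u\|_{\omega_{(k)},B(r)}$; applying (\ref{3.3 compatibility 2}) with $v=u$ and with $\varepsilon$ there replaced by an $\varepsilon'$ chosen so that $(1+\varepsilon')^{1/2}\le 1+\varepsilon$ furnishes a $k_0$ with $\|u\|_{\omega_{(k)},B(r)}\le(1+\varepsilon)\|u\|_{\omega_0,B(r)}$ for $k\ge k_0$, completing the proof. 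I expect the only genuinely delicate point to be the bookkeeping in the second paragraph: correctly identifying $\Plks$ with a restriction of the global projection $\Pk$ through the chain of unitary identifications and confirming that the pull-back normalization in (\ref{3.1 scaled projection relation}) is consistent with (\ref{3.1 id 4}). Once that is in place, everything else is a routine consequence of the already-established uniform norm equivalences.
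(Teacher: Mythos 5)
Your argument is essentially the same as the paper's: you reduce the first inequality to $\Pk$ being a contraction together with the fact that restriction to $B(1)$ does not increase the $L^2_{\omega,k\phi}$-norm, transport it through the unitary rescaling (\ref{3.1 id 4}) and the localization (\ref{3.1 id 1}), and then apply (\ref{3.3 compatibility 1}) and (\ref{3.3 compatibility 2}) for the remaining two assertions. The paper carries out the same chain of inequalities by introducing $u_k(z)=k^{n/2}u(\sqrt{k}z)$ explicitly rather than speaking of the conjugating unitary $T$, but the underlying reasoning coincides.
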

\begin{proof}
    Let $u\in\Omega^{(0,q)}_c(B(\sqrt{k}))$. Inspired by identification (\ref{3.1 id 4}), we define
    \[u_k(z):=k^{n/2}u(\sqrt{k}z)\in\Omega^{(0,q)}_c(B(1))
    \] which satisfies $\|u_k\|_{\omega}=\|u\|_{\omega_{(k)}}$. Since $B(1) \subset M$, we can treat $u_k$ as an element of $\Omega^{(0,q)}_c(M)$. By (\ref{3.1 scaled projection relation}), 
    \[
    \Plks u(z)=k^{-n/2}\Pks u_k(\sqrt{k}z).
    \]This means $\Plks u$ corresponds to $\Pks u_k$ under the identification (\ref{3.1 id 4}) and hence \[\|\Plks u\|_{\omega_{(k)},B(\sqrt{k})}=\|\Pks u_{k}\|_{\omega,B(1)}.\] By the identification (\ref{3.1 id 1}) and the definition of $\Pk$,
    \[
    \|\Pks u_k\|_{\omega,B(1)}=\|\Pk (u_k e^{k\phi}\otimes s^k)\|_{\omega,k\phi,B(1)} \leq \|\Pk (u_k e^{k\phi}\otimes s^k)\|_{\omega,k\phi}\leq \|u_k e^{k\phi} s^k\|_{\omega,k\phi}=\|u_k\|_{\omega}.
    \] Combine the above arguments and get
    \begin{equation*}
      \|\Plks u\|_{\omega_{(k)},B(\sqrt{k})}\leq \|u\|_{\omega_{(k)},B(\sqrt{k})}.  
    \end{equation*}Finally, we apply (\ref{3.3 compatibility 1}) and (\ref{3.3 compatibility 2}) to complete the proof.
\end{proof}

 Observe that \[
 dV_{\omega_0}=\frac{\omega_0^n}{n!}=2^{n}dx^1\wedge\cdots\wedge dx^{2n}=2^{n}dm.
\]
The volume form induced by $\omega_0$ coincides with the standard Lebesgue measure $dm$ up to a constant $2^n$. Consequently, we know that the induced $L^2$-norms $\|\cdot\|_{\omega_0}$ and $\|\cdot\|_{dm}$ (or $\|\cdot\|_{0}$)  are equivalent. Here, $\|\cdot\|_{0}$ means the Sobelov $0$-norm which is exactly $\|\cdot\|_{dm}$.  We now verify an essential result of this paper.

\begin{thm}[k-uniform smoothing property]\label{3.3 k-smoothing prop}
Fix functions $\chi$ and $\rho$ in $\Cinf_c(\Cn)$ and real numbers $s,t \in \R$. If  
 $\ \limsup_{k\To\infty}\dfrac{c_k}{k}<\infty$, there exists a constant $C$ independent of $k$ such that   
\begin{equation*}
    \| \chi \Plks \rho u\|_{s} \leq C \| u \|_{t}
\end{equation*} for all $u \in H_{t}(B(r),T^{*,(0,q)}\Cn)$ and $k\in\N$ with $\supp\chi \cup\supp\rho\subset B(\sqrt{k})$. 
\end{thm}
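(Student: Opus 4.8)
The plan is to bootstrap from the $L^2$-boundedness of the scaled localized spectral projection established in Lemma \ref{3.3 thm pf 1.5} to Sobolev boundedness of arbitrary order, using the $k$-uniform Gårding inequality (Proposition \ref{3.2 k Gard ineq}, in the localized form of Remark \ref{3.2 k Gard ineq rmk}) together with the key elementary estimate
\[
\|(I-\Plks)v\|^2_{\omega_{(k)},B(\sqrt{k})}\leq \tfrac{c_k}{k}\,\bigl(\Box^{(q)}_{(k),s}v\mid v\bigr)_{\omega_{(k)},B(\sqrt{k})},
\]
which is the scaled version of the spectral-gap-type inequality recorded in the introduction (rescaling via \eqref{3.2 rescale relation 3} turns $c_k$ into $c_k/k$), and which I would first record as a preliminary observation. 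The hypothesis $\limsup_k c_k/k<\infty$ makes the constant in this inequality $k$-uniform, which is exactly what is needed. Since $\Box^{(q)}_{(k),s}$ commutes with $\Plks$ on the relevant domain, powers of $\Box^{(q)}_{(k),s}$ applied to $\Plks u$ can be controlled by iterating this estimate.

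\textbf{Main steps.} First I would reduce to nonnegative integer Sobolev orders: it suffices to prove $\|\chi\Plks\rho u\|_{2m}\leq C\|u\|_0$ for every $m\in\N_0$ with $C$ independent of $k$, because a general target order $s$ is dominated by some $\|\cdot\|_{2m}$, and a general source order $t$ is dominated by $\|\cdot\|_0$ on the fixed bounded set $B(r)$ (the inclusion $H_t\hookrightarrow H_0$ is bounded for $t\geq 0$, and for $t<0$ one instead uses duality — but since $\rho u$ is multiplied by a fixed cutoff one can absorb this; alternatively state the theorem's content as reducing everything to the $\|u\|_0$ estimate by the standard observation that $\|\rho u\|_0\le C_\rho\|u\|_t$ fails for $t<0$, so one should instead run the duality argument: $\Plks$ is self-adjoint, so $\chi\Plks\rho$ has adjoint $\rho\Plks\chi$, and boundedness $H_0\to H_{2m}$ of the latter gives boundedness $H_{-2m}\to H_0$ of the former). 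Second, for the core estimate $H_0\to H_{2m}$: apply Remark \ref{3.2 k Gard ineq rmk} with cutoffs $\chi\prec\tilde\chi\prec\rho'$ to bound $\|\chi\Plks\rho u\|_{2m}$ by $\|\tilde\chi\Plks\rho u\|_0+\|\tilde\chi(\Box^{(q)}_{(k),s})^m\Plks\rho u\|_0$. The first term is handled by Lemma \ref{3.3 thm pf 1.5}. For the second, write $(\Box^{(q)}_{(k),s})^m\Plks\rho u=\Plks(\Box^{(q)}_{(k),s})^m\rho u$ up to commutator terms; but since $\rho u$ need not lie in the spectral subspace, I would instead iterate: $(\Box^{(q)}_{(k),s})^m\Plks = (\Box^{(q)}_{(k),s})^{m-1}\Plks\Box^{(q)}_{(k),s}\Plks + (\text{lower})$, and control $\|\Box^{(q)}_{(k),s}\Plks v\|_0$ by the preliminary inequality and $L^2$-boundedness of $\Plks$, giving $\|\Box^{(q)}_{(k),s}\Plks v\|_0^2 \le (c_k/k)\,\|(\Box^{(q)}_{(k),s})^{1/2}\Plks v\|_0^2$-type bounds that telescope. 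Finally, reassemble and invoke $\limsup c_k/k<\infty$ to make the constant $k$-uniform, and use $\|\cdot\|_{\omega_0}\sim\|\cdot\|_0$ together with \eqref{3.3 compatibility 1} to pass between the various norms.

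\textbf{Main obstacle.} The delicate point is that $\Plks\rho u$ is \emph{not} in the spectral subspace cut out by $\mathbbm{1}_{[0,c_k]}$ when we apply $\Box^{(q)}_{(k),s}$ to it on the open set $B(\sqrt{k})$ — the projection $\Plks$ is the \emph{localized} projection, defined via conjugating the global $\Pk$, and the global $\Box^{(q)}_k$ does not restrict nicely. So the identity $\Box^{(q)}_{(k),s}\Plks v = \Plks\Box^{(q)}_{(k),s}v$ requires care about domains and supports: it should be pushed through the unitary identifications \eqref{3.1 id 1}–\eqref{3.1 id 4} back to the global picture, where $\Box^{(q)}_k$ and $\mathcal{P}^{(q)}_{k,c_k}$ genuinely commute as operators on $L^2_{\omega,k\phi}(M,\TM\otimes L^k)$ via functional calculus, and then transported back. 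Managing the cutoffs through this conjugation — ensuring all intermediate quantities remain compactly supported inside $B(\sqrt k)$ so the localized operators agree with the global ones — is where the real work lies; everything else is a routine elliptic-regularity bootstrap.
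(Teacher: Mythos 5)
Your overall strategy --- the $k$-uniform Gårding inequality, the spectral bound for $(\Box^{(q)}_k)^m\mathcal{P}^{(q)}_{k,c_k}$ pushed through the unitary identifications back to the global picture, then duality to reach negative source orders --- matches the paper's proof, and your ``main obstacle'' paragraph correctly identifies that the spectral estimate is only available for the genuine Kodaira Laplacian $\Box^{(q)}_k$ and the genuine projection $\mathcal{P}^{(q)}_{k,c_k}$ via functional calculus, not directly for the localized scaled operators. However, there is a real gap in your reduction. You propose to prove $\|\chi\Plks\rho u\|_{2m}\lesssim\|u\|_0$ (i.e.\ $H_0\to H_{2m}$) and then invoke duality via $(\chi\Plks\rho)^*=\rho\Plks\chi$ to get $\|\chi\Plks\rho u\|_0\lesssim\|u\|_{-2m}$ (i.e.\ $H_{-2m}\to H_0$). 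These two facts do not by themselves yield $H_{-2m}\to H_{2m}$, which is what the theorem requires when both $s>0$ and $t<0$. The paper aims directly for $H_{-2m}\to H_{2m}$: Gårding gives
\begin{equation*}
\|\chi\Plks\rho u\|_{2m}\lesssim\|\tilde\chi\Plks\rho u\|_0+\|\tilde\chi(\Box^{(q)}_{(k),s})^m\Plks\rho u\|_0,
\end{equation*}
the first term is handled by exactly your duality argument, but the second requires a \emph{separate} claim that $\tilde\chi(\Box^{(q)}_{(k),s})^m\Plks\rho:H_{-2m}\to L^2$ is $k$-uniformly bounded, which the paper proves by another round of Gårding applied to the dual operator $\rho\Plks(\Box^{(q)}_{(k),s})^m\tilde\chi$ together with the same rescaled spectral estimate. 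Your proposal is silent on this term. (One could alternatively interpolate your bounds $H_0\to H_{2N}$ and $H_{-2N}\to H_0$ at $\theta=1/2$ to get $H_{-N}\to H_N$ for every $N$, but you do not invoke interpolation, nor does the paper.)

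A smaller imprecision: the telescoping $(\Box^{(q)}_{(k),s})^m\Plks=(\Box^{(q)}_{(k),s})^{m-1}\Plks\Box^{(q)}_{(k),s}\Plks+\text{(lower)}$ in your ``Main steps'' is not justified for the localized operators, since $\Plks$ and $\Box^{(q)}_{(k),s}$ need not commute as operators on $\Omega^{(0,q)}(B(\sqrt k))$; there is no meaningful ``commutator term'' expansion there. Once you pass to the global picture (which your obstacle paragraph says you would), the bound $\|(\Box^{(q)}_k)^m\mathcal{P}^{(q)}_{k,c_k}w\|\le c_k^m\|w\|$ is immediate from functional calculus and no telescoping is needed; the paper simply rescales $\|(\Box^{(q)}_{(k),s})^m\Plks\rho u\|_{\omega_{(k)}}$ to $\|k^{-m}(\Box^{(q)}_{k,s})^m\Pks\rho_k u_k\|_{\omega,B(1)}$, dominates it by the global $L^2_{\omega,k\phi}$ quantity, and applies this bound in one step, yielding the uniform factor $(c_k/k)^m$.
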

\begin{proof}
   It is sufficient to show that for each  $m \in \N$,
  \begin{equation}\label{3.3 smoothing pf 0}
        \chi \Plks \rho: H_{-2m}(B(r),T^{*,(0,q)}\Cn) \rightarrow H_{2m}(B(2r),T^{*,(0,q)}\Cn)
    \end{equation}is a $k$-uniformly bounded linear map for all $k$ with $\supp\chi \cup\supp\rho\subset B(\sqrt{k})$.\par
 We may assume $u\in\Omega^{(0,q)}_c(\Cn)$ by density argument.
By Proposition \ref{3.2 k Gard ineq} and Remark \ref{3.2 k Gard ineq rmk},
   \begin{align}\label{3.3 smooth pf 1}
    \|\chi \Plks \rho u\|_{2m}  &\lesssim \| \Tilde{\chi} \Plks\rho u \|_0 + \| \Tilde{\chi} (\Box^{(q)}_{(k),s})^m \Plks\rho u \|_{0}\\ &\lesssim \| \Tilde{\chi}\Plks\rho u\|_{\omega_{(k)},B(\sqrt{k})}+ \|\Tilde{\chi}\left(\Box^{(q)}_{(k),s}\right)^m\Plks\rho u\|_{\omega_{(k)},B(\sqrt{k})}.\notag    
   \end{align}The second inequality is from (\ref{3.3 compatibility 1}). Define $u_k(z):=k^{n/2}u(\sqrt{k}z)$, $\rho_k(z):=\rho(\sqrt{k}z)$ and $\Tilde{\chi}_k(z):=\Tilde{\chi}(\sqrt{k}z)$. For large enough $k$ with $B(\sqrt{k})\supset \supp \rho$, we observe that $\rho_k u_k\in\Omega^{(0,q)}_c(B(1))\subset \Omega^{(0,q)}_c(M)$ and $\|\rho_k u_k\|_{\omega,B(1)}=\|\rho u\|_{\omega_{(k)},B(\sqrt{k})}$. By Lemma \ref{3.3 thm pf 1.5},
\begin{align}\label{3.3 smoothing pf 1.1}
\| \Tilde{\chi}\Plks\rho u\|_{\omega_{(k)},B(\sqrt{k})}\lesssim\|u\|_0. \end{align}
Next, from the relations (\ref{3.1 scaled projection relation}) and (\ref{3.2 rescale relation 3}), we can see
\[ k^{n/2}\left(\Box^{(q)}_{(k),s}\right)^m\Plks \rho u (\sqrt{k}z) = k^{-m}\left(\Box^{(q)}_{k,s}\right)^m\Pks \rho_ku_k(z).
\]By changing variable again, we compute that
\begin{equation}\label{3.3 smooting pf 1.2}
\|\left(\Box^{(q)}_{(k),s}\right)^m\Plks\rho u\|_{\omega_{(k)},B(\sqrt{k})}=\|k^{-m}\left(\Box^{(q)}_{k,s}\right)^m\Pksk \rho_k u_k\|_{\omega,B(1)}.
\end{equation}Moreover, by the property of spectral projection, we estimate that
\begin{align}\label{3.3 smooting pf 1.3}
\|k^{-m}\left(\Box^{(q)}_{k,s}\right)^m\Pks \rho_k u_k\|_{\omega,B(1)}&\leq \|k^{-m}\left(\Box^{(q)}_{k}\right)^m\Pk \rho_k e^{k\phi} u_k\otimes s^k\|_{\omega,k\phi,M} \\
&\leq (\dfrac{c_k}{k})^m\|\rho_k e^{k\phi} u_k\otimes s^k\|_{\omega,k\phi,B(1)}\notag\\& =(\dfrac{c_k}{k})^m\|\rho_k u_k\|_{\omega,B(1)}\lesssim \|u\|_0 \notag.
\end{align}The last inequality is from the fact that $\limsup_{k\To\infty} c_k/k<\infty$. Combining estimates (\ref{3.3 smooth pf 1})-(\ref{3.3 smooting pf 1.3}), we have   
\begin{equation}\label{3.3 smooting pf 2}
\|\chi \Plks \rho u\|_{2m}\lesssim \|u\|_{0}.    
\end{equation}
Next, by the self-adjointness of spectral projection, for all $v\in\Omega^{(0,q)}_c(\Cn)$,
\begin{align*}
   \left(\chi\Plks\rho u\mid v\right)_{\omega_{(k)}}&=
   \left(\chi_k\Pk\rho_k e^{k\phi}u_k\otimes s^k\mid e^{k\phi}v_k\otimes s^k\right)_{\omega,k\phi}\\ &=
   \left(e^{k\phi}u_k\otimes s^k  \mid \rho_k\Pk\chi_k e^{k\phi} v_k\otimes s^k\right)_{\omega,k\phi}=\left(u\mid \rho\Plks\chi v\right)_{\omega_{(k)}} 
\end{align*}where $v_k(z):=k^{n/2}v(\sqrt{k}z)$ and $\chi_k(z):=\chi(\sqrt{k}z)$. By Proposition \ref{2.4 prop}  and (\ref{3.3 smooting pf 2}),
\[
|\left(u\mid \rho\Plks\chi v\right)_{0}|\leq\|u\|_{-2m}\|\rho\Plks\chi v\|_{2m}\lesssim\|u\|_{-2m}\|v\|_{0}.
\] By the arguments above and (\ref{3.3 compatibility 2}), we have
\begin{equation}\label{3.3 smoothing pf 3}
 \|\chi\Plksk\rho u\|_{0}\lesssim \|u\|_{-2m}  \end{equation}since $v$ is arbitrary. By (\ref{3.3 smooth pf 1}), it remains to show the following fact:
\begin{claim}
$\Tilde{\chi}\left(\Box^{(q)}_{(k),s}\right)^{m}\Plks \rho: H_{-2m}(\Cn,\T) \To L^2_{\omega_0}(\Cn,\T)$ is a k-uniformly bounded map.  
\end{claim}
To prove the claim, we observe that $\Tilde{\chi}_k u_k\in\Omega^{(0,q)}_c(B(1))\subset \Omega^{(0,q)}_c(M)$ for large enough $k$, and $\|\Tilde{\chi}_k u_k\|_{\omega,B(1)}=\|\Tilde{\chi}u\|_{\omega_{(k)}}$. By Proposition \ref{3.2 k Gard ineq} and (\ref{3.3 compatibility 1}), 
\[
\|\rho\Plks (\Box^{(q)}_{(k),s})^{m}\Tilde{\chi}u\|_{2m}\lesssim \|\Tilde{\rho}\Plks (\Box^{(q)}_{(k),s})^{m}\Tilde{\chi}u\|_{\omega_{(k)}}+\|\Tilde{\rho}(\Box^{(q)}_{(k),s})^{m}\Plks (\Box^{(q)}_{(k),s})^{m}\Tilde{\chi}u\|_{\omega_{(k)}}.
\]By rescaling, the first term on the right-hand side above is $k^{-m}\|\Tilde{\rho}_k\Pks(\Box^{(q)}_{k,s})^m\Tilde{\chi}_k u_k\|_{\omega,B(1)}$ where $\Tilde{\rho}_k(z):=\Tilde{\rho}(\sqrt{k}z)$. This can be dominated by 
\[
k^{-m}\|\Tilde{\rho}_k\Pk(\Box^{(q)}_{k})^m\Tilde{\chi}_k e^{k\phi}u_k\otimes s^k\|_{\omega,k\phi,M}\leq (\dfrac{c_k}{k})^m\|\Tilde{\chi}_k e^{k\phi}u_k\otimes s^k\|_{\omega,k\phi}\lesssim \|u\|_0
\]since $\limsup_{k\To\infty} c_k/k<\infty$. For the second term, by rescaling, we write
\[\|\Tilde{\rho}(\Box^{(q)}_{(k),s})^{m}\Plks (\Box^{(q)}_{(k),s})^{m}\Tilde{\chi}u\|_{\omega_{(k)}}=k^{-2m}\|\Tilde{\rho}_k(\Box^{(q)}_{k,s})^m\Pks(\Box^{(q)}_{k,s})^m\Tilde{\chi}_k u_k\|_{\omega,B(1)}\] which is smaller than
\[
k^{-2m}\|\Tilde{\rho}_k(\Box^{(q)}_{k})^m\Pk (\Box^{(q)}_{k})^m \Tilde{\chi}_k  e^{k\phi}u_k\otimes s^k\|_{\omega,k\phi,M}\leq (\dfrac{c_k}{k})^{2m}\|\Tilde{\chi}_k e^{k\phi}u_k\otimes s^k\|_{\omega,k\phi}\lesssim\|u\|_0.
\]Combining arguments above, we get
\begin{equation}\label{3.3 smoothing pf 4}
\|\rho\Plks (\Box^{(q)}_{(k),s})^{m}\Tilde{\chi}u\|_{2m}\lesssim \|u\|_0.    
\end{equation} By the self-adjointness of $\Box^{(q)}_{k}$ and $\Pk$, for any test function $v\in\Omega^{(0,q)}_c(\Cn)$,
\begin{align*}
\left(\Tilde{\chi}(\Box^{(q)}_{(k),s})^m\Plks\rho u\mid v\right)_{\omega_{(k)}}&=\left(k^{-m}\Tilde{\chi}_k(\Box^{(q)}_{k,s})^m\Pks\rho_k u_k\mid v_k\right)_{\omega}\\ =\left( u_k\mid k^{-m} \rho_k\Pks(\Box^{(q)}_{k,s})^m\Tilde{\chi}_k v_k\right)_{\omega}&=\left( u\mid \rho\Plks(\Box^{(q)}_{(k),s})^m\Tilde{\chi} v\right)_{\omega_{(k)}},    
\end{align*}where $v_k(z):=k^{n/2}v(\sqrt{k}z)$. Again, by Proposition \ref{2.4 prop}  and (\ref{3.3 smoothing pf 4}), we have 
\[
|\left( u\mid \rho\Plks(\Box^{(q)}_{(k),s})^m\Tilde{\chi} v\right)_{0}|\leq \|u\|_{-2m}\|\rho\Plks(\Box^{(q)}_{(k),s})^m\Tilde{\chi} v\|_{2m}\lesssim \|u\|_{-2m}\|v\|_{0}.
\]
Hence, we obtain $\left(\Tilde{\chi}(\Box^{(q)}_{(k),s})^m\Plks\rho u\mid v\right)_{\omega_{(k)}}\lesssim\|u\|_{-2m}\|v\|_0$ by combining above arguments. 
This completes the proof of the claim since $v$ is arbitrary. Finally, by estimates (\ref{3.3 smooth pf 1}), (\ref{3.3 smoothing pf 3}) and the claim,
\begin{align*}
\|\chi \Plks \rho u\|_{2m}   &\lesssim \| \Tilde{\chi}\Plks\rho v\|_{\omega_{(k)},B(\sqrt{k})}+ \|\Tilde{\chi}\left(\Box^{(q)}_{(k),s}\right)^m\Plks\rho v\|_{\omega_{(k)},B(\sqrt{k})}
\lesssim \|u\|_{-2m}
\end{align*}for all $u\in\Omega^{(0,q)}_c(\Cn)$. The theorem follows.
\end{proof}
With the preliminary work out of the way, we can now address the local uniform bound of $\Plksk(z,w)$. To do so, we represent $\Plksk(z,w)$ as the form
\[
\Plksk(z,w)=\sum_{I,J\in\mathcal{J}_{q,n}}\Plkski(z,w) d \zb^I \otimes (\dfrac{\p}{\p \Bar{w}})^J, 
\]where $\Plkski(z,w)\in \Cinf(B(\sqrt{k})\times B(\sqrt{k}))$. Also, we define the $\mathcal{C}^{d}$-norm of $\Plksk(z,w)$ on the bounded domain $B(r)\times B(r)$ as 
\[
\|\Plksk(z,w)\|^2_{\mathcal{C}^d(B(r)\times B(r))}:=\sup_{x,y\in B(r)}\sum_{I,J\in\mathcal{J}_{q,n}}\sum_{|\alpha|+|\beta|\leq d}\left(|\p^{\alpha}_x\p^{\beta}_{y}\Plkski(x,y)|^2\right)
\]
where the variables $x,y$ represent the real coordinates of $\Cn\simeq\R^{2n}$. 
For a $K(z,w)\in \Cinf(\Cn\times\Cn,\T\boxtimes \T)$, we say $\Plksk(z,w)\To K(z,w)$ as $k\To\infty$ locally uniformly in $\Cinf$ if $\|\Plksk(z,w)-K(z,w)\|_{\mathcal{C}^d(B(r)\times B(r))}\To 0$ as $k\To\infty$ for all $d\in\N$ and $r>0$. 
\begin{thm}[The local uniform bounds for scaled  spectral and Bergman kernels]\label{3.3 loc bdd thm} Suppose $c_k$ is a nonnegative sequence such that \[ \limsup_{k\To\infty}\frac{c_k}{k}<\infty.\]
   Fix a radius $r>0$.
   For any multi-indices $\alpha,\beta \in \N_0^{2n}$ and $I,J\in\mathcal{J}_{q,n}$, there exists a constant $C$ independent of $k$ such that 
   \begin{equation*}
\sup_{B(r)\times B(r)}|\p^{\alpha}_x\p^{\beta}_yP^{(q),I,J}_{(k),c_k,s}(x,y)|<C \quad \text{\rm for all }k \geq r^2.
\end{equation*}
\end{thm}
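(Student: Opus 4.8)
The plan is to extract the stated pointwise bounds on the kernel derivatives from the $k$-uniform smoothing estimate of Theorem \ref{3.3 k-smoothing prop} by the classical device of testing the operator $\Plks$ against derivatives of Dirac distributions. Fix $r>0$, multi-indices $\alpha,\beta\in\N_0^{2n}$ and $I,J\in\mathcal{J}_{q,n}$, and set $d:=|\alpha|+|\beta|$ and $M:=d+n+1$. First I would pick cut-off functions $\chi,\rho\in\Cinf_c(B(2r))$ with $\chi\equiv\rho\equiv1$ on a neighbourhood of $\overline{B(r)}$. For every integer $k>4r^2$ the support condition $\supp\chi\cup\supp\rho\subset B(\sqrt{k})$ holds, so Theorem \ref{3.3 k-smoothing prop} (with $s=M$, $t=-M$) provides a constant $C_M$, \emph{independent of $k$}, with $\|\chi\Plks\rho\,u\|_{M}\le C_M\|u\|_{-M}$ for all $u\in H_{-M}(B(r),\T)$. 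Combining this with the Sobolev inequality $H_M\hookrightarrow\mathcal{C}^d$ on $B(2r)$ --- legitimate since $M>d+n$ --- the operator $\chi\Plks\rho$ maps $H_{-M}(B(r),\T)$ into $\mathcal{C}^d(B(2r),\T)$ with $k$-uniform operator norm.

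Next I would pass from the Riemannian volume $dV_{\omega_{(k)}}$ to Lebesgue measure. Writing $dV_{\omega_{(k)}}=g_k\,dm$ with $g_k\in\Cinf(B(\sqrt k))$ positive, the convergence $\omega_{(k)}\To\omega_0$ in $\Cinf_{\mathrm{loc}}$ together with $\omega_0^n/n!=2^n\,dm$ shows that $g_k$ and $g_k^{-1}$ are bounded in $\mathcal{C}^{d}(B(2r))$ uniformly in $k$; hence it suffices to bound the kernel of $\Plks$ with respect to $dm$, i.e.\ the function $\Plkski(x,y)\,g_k(y)$, $k$-uniformly in $\mathcal{C}^{d}(B(r)\times B(r))$. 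To this end, for each $y_0\in B(r)$ and each $\beta'\le\beta$ I would feed into $\Plks$ the distribution $u_{y_0,\beta'}:=(\p^{\beta'}_y\delta_{y_0})\,d\zb^J$, which lies in $H_{-M}(B(r),\T)$ because $M>|\beta'|+n$, and whose norm $\|u_{y_0,\beta'}\|_{-M}$ is independent of $y_0$ and $k$ by translation invariance of $\|\cdot\|_{-M}$. The defining relation $\Plks v(x)=\int \Plksk(x,y)\,v(y)\,dV_{\omega_{(k)}}(y)$ then gives
\[
\Plks u_{y_0,\beta'}(x)=(-1)^{|\beta'|}\sum_{I'\in\mathcal{J}_{q,n}}\Big[\p^{\beta'}_y\big(P^{(q),I',J}_{(k),c_k,s}(x,y)\,g_k(y)\big)\Big]_{y=y_0}\,d\zb^{I'},
\]
and since $\chi\equiv\rho\equiv1$ near $\overline{B(r)}\ni y_0$, on $B(r)$ this coincides with $\chi\Plks\rho\,u_{y_0,\beta'}$. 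Invoking the $k$-uniform $H_{-M}\To\mathcal{C}^d$ bound of the first paragraph and reading off the $\mathcal{C}^d$-norm component by component yields $\sup_{x\in B(r)}\big|\big[\p^{\alpha'}_x\p^{\beta'}_y(\Plkski\,g_k)\big]_{y=y_0}\big|\lesssim1$ for all $|\alpha'|\le d$ and $\beta'\le\beta$, uniformly in $y_0\in B(r)$ and in $k>4r^2$. Dividing out $g_k$ via the Leibniz rule and the uniform $\mathcal{C}^d(B(2r))$-bounds on $g_k^{-1}$ then gives $\sup_{B(r)\times B(r)}|\p^{\alpha}_x\p^{\beta}_y\Plkski(x,y)|\le C$ for all $k>4r^2$. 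Finally, for the remaining finitely many integers $k$ with $r^2<k\le 4r^2$, the kernel $\Plkski$ is a fixed smooth function on $B(\sqrt k)\times B(\sqrt k)\supseteq\overline{B(r)}\times\overline{B(r)}$, hence bounded there with all derivatives, so enlarging $C$ to dominate these finitely many quantities finishes the argument.

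I expect no serious obstacle beyond Theorem \ref{3.3 k-smoothing prop} itself, which carries all the analytic weight; the present step is essentially the standard ``smoothing operator $\Rightarrow$ jointly smooth kernel'' extraction. The two points that genuinely need care are (i) the $y_0$-independence of $\|u_{y_0,\beta'}\|_{-M}$, which is what makes the resulting bound uniform in the base point and hence a genuine supremum bound over $B(r)\times B(r)$, and (ii) the $k$-uniform $\mathcal{C}^d$-control of the volume densities $g_k,g_k^{-1}$, coming from $\omega_{(k)}\To\omega_0$ in $\Cinf_{\mathrm{loc}}$; a minor bookkeeping nuisance is that the cut-offs required by Theorem \ref{3.3 k-smoothing prop} only fit inside $B(\sqrt k)$ once $k$ exceeds $4r^2$, which forces one to dispatch the finitely many smaller values of $k$ by hand.
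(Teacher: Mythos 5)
Your argument is correct and follows essentially the same route as the paper: both rest entirely on the $k$-uniform smoothing estimate of Theorem \ref{3.3 k-smoothing prop} followed by the Sobolev embedding, with the kernel bounds extracted by testing $\Plks$ against concentrated data near each $y_0\in B(r)$. The paper realizes this via an approximation of identity $f_l$ (observing $\|\p^\beta f_l\|_{-2m}=O(1)$ uniformly in $l$) where you instead feed in $\p^{\beta'}_y\delta_{y_0}\,d\zb^J\in H_{-M}$ directly; this is a cosmetic difference, though your write-up is a bit more careful about the volume density $g_k$ and about the finitely many $k$ with $r^2\le k<4r^2$.
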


\begin{proof}
We start from the approximation of identity. For any fixed point $y_0 \in B(r)$, we set $f_l$ as an approximation of identity with its mass concentrated at $y_0$ as $l \rightarrow \infty$. For example, let $f_l =l^nf(\sqrt{l}(y-y_0))$ where $f \in \Cinf_c(B(r);[0,\infty))$ and $\int_{B(r)}f dm=1$. By the property of approximation of identity, it is sufficient to establish the following estimate:
\begin{equation*}
    \sup_{x\in B(r),k>r^2,\atop l\in \N}| \int_{B(r)}\p^\alpha_x \p^\beta_y \Plkski(x,y)f_l(y) dm(y)| < C.
\end{equation*} We hope the $C$ is independent of  the point $y_0\in B(r)$ chosen above. By integration by part, we just need to find an upper bound of

\begin{equation*}
    \sup_{x\in B(r),k>r^2,\atop l\in \N}|\p^\alpha_x  \int_{B(r)}\Plkski(x,y)\p^\beta_y f_l(y)dm(y)|.
\end{equation*}
Choose $\chi \in \Cinf_{c}(B(2r))$ so that $\chi\equiv1$ on $B(r)$. Observe that
\begin{align*}
    \sup_{x\in B(r),k>r^2,\atop l\in \N}|\p^\alpha_x  \int_{B(r)}\Plkski(x,y)(\p^\beta_y f_l(y))dm(y)| & \leq \sup_{k>r^2,l\in\N}\| \Plks\left((\p^{\beta} f_l)d\zb^J\right) \|_{\mathcal{C}^{|\alpha|}(B(r))}\\
    & \leq \sup_{k>r^2,l\in\N}\|\chi\Plks\left((\p^{\beta} f_l)d\zb^J\right) \|_{\mathcal{C}^{|\alpha|}(B(2r))},
\end{align*}where the norm $\|\cdot\|_{\mathcal{C}^{|\alpha|}(B(2r))}$ we adopted is defined in (\ref{2.4 C space}). By Sobolev inequality, for any integer $m$ with $2m \geq |\alpha|+n$, we have 
\begin{equation*}
    \| \chi \Plks \left((\p^{\beta} f_l)d\zb^J\right) \|_{\mathcal{C}^{|\alpha|}(B(2r))} \lesssim \| \chi \Plks\left((\p^{\beta} f_l)d\zb^J\right) \|_{2m}.
\end{equation*}
Note that $|\hat{f}_l(\xi)|\lesssim|\int_{\R^{2n}}e^{-\ii x\cdot \xi}f_l(x)dx|\lesssim O(1)$ and hence $|\widehat{(\p^{\beta}f_l)}|\lesssim |\xi|^{|\beta|}|\hat{f_l}|\lesssim |\xi|^{|\beta|}$. This implies that for large enough  $m\in\N$,
\[
\| (\p^{\beta} f_l)d\zb^J \|_{-2m} \lesssim O(1).
\]
  After combining this fact with Theorem \ref{3.3 k-smoothing prop}, we know that for large enough $m\in\N$, 
\begin{align*}
    \| \chi \Plks (\p^{\beta} f_l)d\zb^J \|_{2m}&\lesssim\| \chi \Plks \rho  (\p^{\beta} f_l)d\zb^J \|_{2m}\\ &\lesssim \|(\p^{\beta} f_l)d\zb^J\|_{-2m} \lesssim O(1),
\end{align*} where $\rho$ is a bump function which has value $1$ around the point $y_0$. We have completed the proof.
\end{proof}
 
We end this section with the following extremely important corollary, which is an immediate consequence of the Arzelà-Ascoli theorem and Theorem \ref{3.3 loc bdd thm}.
\begin{cor}\label{3.3 loc conv cor}
    If $c_k$ is a nonnegative sequence such that \[\limsup_{k\To\infty}\frac{c_k}{k}<\infty,\] then
    any subsequence of the scaled localized spectral kernel $\Plksk(z,w)$ ( or Bergman kernel $\Blksk(z,w)$ in the case $c_k=0$) has a $\Cinf$ 
    locally uniformly convergent subsequence in $\Cn$. \par By the identity (\ref{kernel relation 2}), we have the same results for $P^{(q),s}_{(k),c_k}(z,w)$ and $B^{(q),s}_{(k)}(z,w)$.   
\end{cor}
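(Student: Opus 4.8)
The plan is to combine the $k$-uniform $\mathcal{C}^d$-bounds of Theorem \ref{3.3 loc bdd thm} with the Arzelà–Ascoli theorem and a diagonal extraction. Fix $r>0$ and $d\in\N$. Applying Theorem \ref{3.3 loc bdd thm} to every multi-index pair $\alpha,\beta\in\N_0^{2n}$ with $|\alpha|+|\beta|\leq d+1$ and every $I,J\in\mathcal{J}_{q,n}$ shows that the scalar coefficients $\Plkski$, together with all their partial derivatives up to order $d+1$, are bounded on $\overline{B(r)}\times\overline{B(r)}$ by a constant independent of $k$, for all $k\geq r^2$. The uniform bound on the derivatives of order $d+1$ makes the family $\{\p^{\alpha}_x\p^{\beta}_y\Plkski\}_{|\alpha|+|\beta|\leq d}$ uniformly Lipschitz, hence equicontinuous, so Arzelà–Ascoli gives that $\{\Plkski\}_{k\geq r^2}$ is relatively compact in $\mathcal{C}^d(\overline{B(r)}\times\overline{B(r)})$.

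Given any subsequence of $(\Plksk)_k$, I would then run a diagonal argument over the countable index set $\N\times\N$ of pairs $(r,d)$: extract nested subsequences so that along the $m$-th one each $\Plkski$ converges in $\mathcal{C}^{d_m}(\overline{B(r_m)}\times\overline{B(r_m)})$, and pass to the diagonal subsequence. Along this final subsequence every coefficient $\Plkski$ converges in $\mathcal{C}^d(B(r)\times B(r))$ for all $r>0$ and $d\in\N$; reassembling the coefficients against the global frame $\{d\zb^I\otimes(\p/\p\wb)^J\}_{I,J\in\mathcal{J}_{q,n}}$ of $\T\boxtimes\T$ shows that $\Plksk(z,w)$ converges locally uniformly in $\Cinf$ on $\Cn$ to some $K\in\Cinf(\Cn\times\Cn,\T\boxtimes\T)$. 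The statement for $\Blksk$ is the special case $c_k\equiv 0$.

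For the transfer to $P^{(q),s}_{(k),c_k}(z,w)$ and $B^{(q),s}_{(k)}(z,w)$, I would invoke the identity (\ref{kernel relation 2}), which gives $P^{(q),s}_{(k),c_k}(z,w)=e^{\phi_{(k)}(z)}\Plksk(z,w)e^{-\phi_{(k)}(w)}$. Since $\phi_{(k)}\To\phi_0$ locally uniformly in $\Cinf$ on $\Cn$ (Section \ref{section 3.1}), the multipliers $e^{\pm\phi_{(k)}}$ and all their derivatives are locally uniformly bounded and converge locally uniformly; multiplying the $\Cinf$-convergent subsequence of $\Plksk$ by these and expanding by the Leibniz rule preserves locally uniform $\Cinf$-convergence, so the associated subsequence of $P^{(q),s}_{(k),c_k}$ converges as well (and likewise $B^{(q),s}_{(k)}$).

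There is no genuine obstacle here; the statement is, as advertised, immediate from Theorem \ref{3.3 loc bdd thm}. The only points that deserve a line of care are upgrading the order-$(d+1)$ uniform derivative bound to equicontinuity of the order-$d$ jet before applying Arzelà–Ascoli, and organising the double diagonal extraction simultaneously over the radius $r$ and the smoothness order $d$.
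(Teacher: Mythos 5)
Your argument is exactly what the paper intends: Corollary \ref{3.3 loc conv cor} is stated in the text as ``an immediate consequence of the Arzelà–Ascoli theorem and Theorem \ref{3.3 loc bdd thm}'' with no further proof given, and your write-up correctly supplies the details (bounding derivatives to order $d+1$ to get equicontinuity of the order-$d$ jet, the double diagonal over $(r,d)$, and the transfer via (\ref{kernel relation 2}) using the locally uniform $\Cinf$-convergence $\phi_{(k)}\To\phi_0$).
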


\newpage

\section{Asymptotics of spectral and Bergman kernels}\label{chapter 4}
Recall the Corollary \ref{3.3 loc conv cor}. We have established that $\Plksk(z,w)$ (or $P^{(q),s}_{(k),c_k}(z,w)$) is a sequence such that every subsequence has a $\Cinf$ uniformly convergent subsequence. To show that $\Plksk(z,w)$ (or $P^{(q),s}_{(k),c_k}(z,w)$) is itself a uniformly convergent sequence in $\Cinf$, it suffices to demonstrate that every convergent subsequence of $\Plksk(z,w)$ converges to the same limit. This follows from the fact that a sequence converges to a limit if and only if every subsequence has a subsequence converges to the same limit. Therefore, without loss of generality, we may assume that $\Plksk(z,w)$ is a $\Cinf$ uniformly convergent sequence, and our goal is to prove that the limit must be the Bergman kernel in a model case which will be thoroughly investigated in Section \ref{section 4.1}. From now on, we make the following assumption throughout this chapter.
\begin{assumption}\label{4.2 assumption}
The scaled localized spectral kernel $\Plksk(z,w)$ converges to $B^{(q)}_{s}(z,w)$ locally uniformly in $\Cinf$ on $\Cn$, where $B^{(q)}_s(z,w)\in\Cinf(\Cn\times\Cn,\T\boxtimes\T)$. Equivalently, the scaled spectral kernel $P^{(q),s}_{(k),c_k}(z,w)$ converges to $B^{(q),s}(z,w):=e^{\phi_0(z)}B^{(q)}_s(z,w)e^{-\phi_0(w)}$ locally uniformly in $\Cinf$.
 \end{assumption}
To maintain the validity of Corollary \ref{3.3 loc conv cor}, we must specify a nonnegative sequence $c_k$ such that
$
\limsup_{k\To\infty}\dfrac{c_k}{k}<\infty.
$ However, we will require a stronger condition that 
$\limsup_{k\To\infty}\dfrac{c_k}{k} =0$. We will see the reason in Lemma \ref{4.2 thm bd op BoxB=0}. Clearly, the scaled localized Bergman kernels $\Blksk(z,w)$ can be treated as a spacial case of 
 localized spectral kernels when $c_k=0$ and they satisfy the above condition. Before investigating the properties of $B^{(q)}_s(z,w)$, we study the space of sections in the \textbf{model case} on $\Cn$.  
\subsection{The model case}\label{section 4.1}
We now consider the trivial vector bundle $T^{*,(0,q)}\Cn  \otimes \Cs \rightarrow \Cn$ which is equipped with a pointwise Hermitian structure induced by the weight function $\phi_0 = \sum_{i=1}^n \lambda_i|z^i|^2$ on the trivial line bundle $\Cs\To\Cn$ and the standard Hermitian form $\omega_0=\ii\sum_{i=1}^{n}dz^i \wedge d\zb^i$ on $T^{*,(0,q)}\Cn\To\Cn$(cf.(\ref{3.2 inner product pt})).  \par 
We can define the Hilbert space $L^2_{\omega_0,\phi_0}(\Cn,\T)$ with an inner product $\left(\cdot|\cdot\right)_{\omega_0,\phi_0}$ (cf.(\ref{2.2 inner product 1})) and another space $L^2_{\omega_0}(\Cn,\T)$ with $\left(\cdot|\cdot\right)_{\omega_0}$ as its inner product (cf. (\ref{2.2 inner product 0})). There is an unitary identification 
\[L^2_{\omega_0,\phi_0}(\Cn,\T\otimes\Cs)\cong L^2_{\omega_0}(\Cn,\T) \quad \text{\rm by }\,\eta \leftrightarrow\eta e^{-\phi_0}.\] Let $\pb_0^{(q)}:\Omega^{(0,q)}(\Cn,\Cs)\To \Omega^{(0,q+1)}(\Cn,\Cs) $ and $\pb_0^{*,(q+1)}:\Omega^{(0,q+1)}(\Cn,\Cs)\To \Omega^{(0,q)}(\Cn,\Cs)$ be the Cauchy-Riemann operator and its formal adjoint with respect to $(\cdot|\cdot)_{\omega_0,\phi_0}$, respectively. Then \[
\Box^{(q)}_0:=\pb^{*,(q+1)}_{0}\pb^{(q)}_0+\pb^{(q-1)}_0\pb^{*,(q)}_{0}:\Dom\Box^{(q)}_0 \rightarrow L^2_{\omega_0,\phi_0}(\Cn,\T\otimes\Cs) 
\] is the Gaffney extension of the Kodaira Laplacian with respect to the Hermitian structure. As in (\ref{3.2 pbs eq}) and (\ref{3.2 pbs* eq}), we can define the localized Cauchy-Riemann operator $\pb_{0,s}$ and its formal adjoint  $\pb^*_{0,s}$ with respect to $(\cdot|\cdot)_{\omega_0}$ which are given by 
\begin{align}\label{4.1 pb eq}
\pb_{0,s}^{(q)}=\pb^{(q)}+(\pb \phi_0)\wedge\cdot  \quad;\quad  \pb^{*,(q)}_{0,s}=\pb^{*,(q)}_{\omega_0}+(\pb \phi_0)\wedge^*_{\omega_0},\notag
\end{align}respectively. The localized Kodaira Laplacian is
\[
\Box^{(q)}_{0,s}:=\pb^{*,(q+1)} _{0,s}\pb^{(q)}_{0,s}+\pb^{(q-1)}_{0,s}\pb^{*,(q)}_{0,s}:\Dom \Box^{(q)}_{0,s}\To L^2_{\omega_0}(\Cn,\T)
\]which satisfies $\Box^{(q)}_{0}(\eta \otimes 1)=e^{\phi_0}\Box^{(q)}_{0,s}(\eta e^{-\phi_0}) \otimes 1
$. Next, we denote 
\[
\mathcal{B}^{(q)}_{0}:L^{2}_{\omega_0,\phi_0}(\Cn,T^{*,(0,q)}\Cn\otimes\Cs) \rightarrow \Ker \Box^{(q)}_0\subset L^2_{\omega_0,\phi_0}(\Cn,\T\otimes\Cs)
\] to be the Bergman projection and \[\mathcal{B}^{(q)}_{0,s}:L^{2}_{\omega_0}(\Cn,T^{*,(0,q)}\Cn) \rightarrow \Ker \Box^{(q)}_{0,s}\subset L^2_{\omega_0}(\Cn,\T)\] to be the localized Bergman projection satisfying
$\mathcal{B}^{(q)}_{0}(\eta \otimes 1)=e^{\phi_0}\mathcal{B}^{(q)}_{0,s}(\eta e^{-\phi_0}) \otimes 1$. Furthermore, denote by $B^{(q)}_{0}(z,w)$ the \textbf{Bergman kernel} and $B^{(q)}_{0,s}(z,w)$ the \textbf{localized Bergman kernel} which are Schwartz kernels of $\mathcal{B}^{(q)}_{0}$ and $\mathcal{B}^{(q)}_{0,s}$, respectively. Our main goal of this section is to compute the localized Bergman kernel ${B}^{(q)}_{0,s}(z,w)$. Proposition \ref{4.1 prop hi} below tells us that $\Box^{(q)}_{0,s}$ 
 is $\textbf{diagonal}$ with respect to the basis $\{d\zb^I\}_{I\in\mathcal{J}_{q,n}}$. 

\begin{prop}\label{4.1 prop hi}
For $f\in\Cinf(\Cn,\Cs)$ and $I\in\mathcal{J}_{q,n}$, \begin{align*}
\Box^{(q)}_{0,s}fd\zb^I=\sum_{i=1}^{n}\left(-\dfrac{\p^2 f}{\p \zb^i \p z^i}+\dfrac{\p \phi_0}{\p z^i}\dfrac{\p f}{\p \zb^i}-\dfrac{\p \phi_0}{\p \zb^i}\dfrac{\p f}{\p z^i}\right)d\zb^I+\left(\sum_{i\in I}\lambda_i-\sum_{i\notin I}\lambda_i+|\pb \phi_0|_{\omega_0}^2\right)fd\zb^I.
    \end{align*}     
\end{prop}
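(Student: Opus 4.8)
The plan is to specialize the general expression for the localized Kodaira Laplacian obtained in Lemma \ref{3.2 Lemma Lap} to the model data $\tomega=\omega_0$ and $\tphi=\phi_0$, and then to simplify everything using flatness. First I would record the consequences of flatness: since $\omega_0=\ii\sum_i dz^i\wedge d\zb^i$ has constant coefficients $h_{i,j}=\delta_{i,j}$, one has $dV_{\omega_0}=2^n\,dm$, so the function $\tvarphi$ with $\omega_0^n/n!=e^{\tvarphi}dm$ is constant and all its derivatives vanish; moreover the Chern connection on $T^{(q,0)}\Cn$ with respect to the frame $\{dz^I\}$ has vanishing connection matrix $\theta=0$. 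Substituting these into (\ref{3.2 Lemma Lap pb* eq}) yields $\pb^*_{\omega_0}(f\,d\zb^I)=-\sum_i\frac{\p f}{\p z^i}\,(d\zb^i)\wedge^*_{\omega_0}d\zb^I$, and a direct computation of $\Delta_{\omega_0}=\pb^*_{\omega_0}\pb+\pb\pb^*_{\omega_0}$ on $f\,d\zb^I$, using the anticommutation identity $(d\zb^i\wedge^*_{\omega_0})(d\zb^j\wedge\cdot)+(d\zb^j\wedge)(d\zb^i\wedge^*_{\omega_0}\cdot)=\delta_{i,j}$ from Section \ref{section 3.2}, collapses the mixed second derivatives so that $\Delta_{\omega_0}(f\,d\zb^I)=-\sum_i\frac{\p^2 f}{\p\zb^i\p z^i}\,d\zb^I$.

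Next I would insert $\pb\phi_0=\sum_i\lambda_i z^i\,d\zb^i$ (noting $\frac{\p\phi_0}{\p\zb^i}=\lambda_i z^i$ and $\frac{\p\phi_0}{\p z^i}=\lambda_i\zb^i$) into the four cross terms of (\ref{3.2 Lemma Lap simple Box eq}), which I would group as $A:=\pb\big((\pb\phi_0)\wedge^*_{\omega_0}\cdot\big)+(\pb\phi_0)\wedge^*_{\omega_0}\pb$ and $B:=\pb^*_{\omega_0}\big((\pb\phi_0)\wedge\cdot\big)+(\pb\phi_0)\wedge\pb^*_{\omega_0}$. Expanding the Leibniz rule in each group produces, on the one hand, first-order terms in which the derivative hits $f$; these recombine, again via the anticommutation identity, into $\sum_i\frac{\p\phi_0}{\p z^i}\frac{\p f}{\p\zb^i}\,d\zb^I$ coming from $A$ and $-\sum_i\frac{\p\phi_0}{\p\zb^i}\frac{\p f}{\p z^i}\,d\zb^I$ coming from $B$. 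On the other hand it produces zeroth-order terms in which the derivative hits the coefficient of $\pb\phi_0$, namely $\sum_i\lambda_i f\,(d\zb^i\wedge)(d\zb^i\wedge^*_{\omega_0}d\zb^I)$ from $A$ and $-\sum_i\lambda_i f\,(d\zb^i\wedge^*_{\omega_0})(d\zb^i\wedge d\zb^I)$ from $B$. The elementary fact to isolate here is that $(d\zb^i\wedge)(d\zb^i\wedge^*_{\omega_0}d\zb^I)=d\zb^I$ when $i\in I$ and $=0$ when $i\notin I$, whereas $(d\zb^i\wedge^*_{\omega_0})(d\zb^i\wedge d\zb^I)=d\zb^I$ when $i\notin I$ and $=0$ when $i\in I$; both follow from the anticommutation identity together with $d\zb^i\wedge d\zb^I=0$ for $i\in I$ and $d\zb^i\wedge^*_{\omega_0}d\zb^I=0$ for $i\notin I$. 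Hence the zeroth-order parts of $A$ and $B$ are $\big(\sum_{i\in I}\lambda_i\big)f\,d\zb^I$ and $-\big(\sum_{i\notin I}\lambda_i\big)f\,d\zb^I$, respectively.

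Adding $\Delta_{\omega_0}(f\,d\zb^I)$, $A$, $B$, and the last term $\la\pb\phi_0\mid\pb\phi_0\ra_{\omega_0}f\,d\zb^I=|\pb\phi_0|^2_{\omega_0}f\,d\zb^I$ then gives precisely the stated identity. The only genuinely delicate point is the bookkeeping of the creation and annihilation operators $d\zb^i\wedge\cdot$ and $d\zb^i\wedge^*_{\omega_0}\cdot$ — in particular correctly tracking which index condition ($i\in I$ versus $i\notin I$) survives in each zeroth-order contribution, so as to obtain exactly the combination $\sum_{i\in I}\lambda_i-\sum_{i\notin I}\lambda_i$; everything else is the Leibniz rule plus the single anticommutation relation already recorded in the paper. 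As an independent consistency check one can avoid Lemma \ref{3.2 Lemma Lap} altogether and instead expand $\Box^{(q)}_{0,s}=\pb^{*,(q+1)}_{0,s}\pb^{(q)}_{0,s}+\pb^{(q-1)}_{0,s}\pb^{*,(q)}_{0,s}$ directly from $\pb^{(q)}_{0,s}=\pb^{(q)}+(\pb\phi_0)\wedge\cdot$ and $\pb^{*,(q)}_{0,s}=\pb^{*,(q)}_{\omega_0}+(\pb\phi_0)\wedge^*_{\omega_0}\cdot$ in the flat setting, arriving at the same formula.
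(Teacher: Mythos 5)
Your proposal is correct and follows essentially the same route as the paper: both specialize Lemma \ref{3.2 Lemma Lap} to the flat data $\omega_0$, $\phi_0$, compute $\Delta_{\omega_0}$ by flatness, and evaluate the cross terms via the Leibniz rule and the anticommutation relation for $d\zb^i\wedge\cdot$ and $d\zb^i\wedge^*_{\omega_0}\cdot$, splitting the zeroth-order contributions according to whether $i\in I$ or $i\notin I$. The only cosmetic difference is that you spell out the creation/annihilation bookkeeping a bit more explicitly, but the decomposition into the groups $A$ and $B$ and the use of $\frac{\p^2\phi_0}{\p\zb^j\p z^i}=\lambda_i\delta_{ij}$ are exactly what the paper does.
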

\begin{proof}
    Since the metric $\omega_0$ is flat, the Hodge Laplacian $\Delta_{\omega_0}u$ of $u:=fd\zb^I$ is  
    \begin{align*}\left(\pb^{*}_{\omega_0}\pb+\pb\pb^{*}_{\omega_0}\right)u=-\sum_{i,j}\dfrac{\p^2 f}{\p\zb^i\p z^j}\left(d\zb^i\wedge^*_{\omega_0}(d\zb^j\wedge)+d\zb^j\wedge (d\zb^i\wedge^*_{\omega_0})\right)d\zb^I=- \sum_{i}\dfrac{\p^2 f}{\p \zb^i \p z^i}d\zb^I.  
    \end{align*}We compute the remaining terms of equation (\ref{3.2 Lemma Lap simple Box eq}) in Lemma \ref{3.2 Lemma Lap}.  
    \begin{align*}
        \pb\left((\pb\phi_0)\wedge^*_{\omega_0}u\right)&+(\pb \phi_0)\wedge^*_{\omega_0}\pb u=\pb\left(\sum_{i}\dfrac{\p \phi_0}{\p z^i}f d\zb^i \wedge^*_{\omega_0}d\zb^I\right)+(\pb \phi_0)\wedge^*_{\omega_0}\sum_{j}\dfrac{\p f}{\p \zb^j}d\zb^j\wedge d\zb^I\\&=\sum_{i,j}\left(\dfrac{\p^2 \phi_0}{\p \zb^j \p z^i}f+\dfrac{\p \phi_0}{\p z^i}\dfrac{\p f}{\p \zb^j}\right)d\zb^j\wedge(d\zb^i\wedge^*_{\omega_0})d\zb^I +\sum_{i,j}\dfrac{\p \phi_0}{\p z^i}\dfrac{\p f}{\p \zb^j} (d\zb^i\wedge^*_{\omega_0}) d\zb^j \wedge d\zb^I\\&=\sum_{i\in I}\lambda_i f d\zb^I+\sum_i \dfrac{\p \phi_0}{\p z^i}\dfrac{\p f}{\p \zb^i}d\zb^I.
    \end{align*}On the other hand, 
    \begin{align*}
        \pb^*_{\omega_0}\left((\pb\phi_0)\wedge u\right)+(\pb \phi_0)\wedge\pb^*_{\omega_0}u&=\pb^*_{\omega_0}\left(\sum_{j}\dfrac{\p \phi_0}{\p \zb^j}fd\zb^j\wedge d\zb^I\right)+(\pb \phi_0)\wedge \sum_{i}-\dfrac{\p f}{\p z^i}d\zb^i \wedge^*_{\omega_0}d\zb^I\\&=-\sum_{i\notin I}\lambda_if d\zb^I-\sum_i\dfrac{\p \phi_0}{\p \zb^i}\dfrac{\p f}{\p z^i}d\zb^I.
    \end{align*}Applying Lemma \ref{3.2 Lemma Lap}, we have
    \begin{multline*} \Box^{(q)}_{0,s}fd\zb^I=\sum_{i}\left(-\dfrac{\p^2 f}{\p \zb^i \p z^i}+\dfrac{\p \phi_0}{\p z^i}\dfrac{\p f}{\p \zb^i}-\dfrac{\p \phi_0}{\p \zb^i}\dfrac{\p f}{\p z^i}\right)d\zb^I+\left(\sum_{i\in I}\lambda_i-\sum_{i\notin I}\lambda_i+|\pb \phi_0|_{\omega_0}^2\right)fd\zb^I.
    \end{multline*}
\end{proof}
 We now try to find the complete orthonormal system of the space $\Ker\Box^{(q)}_{0,s}$. By Proposition \ref{4.1 prop hi}, to consider the equation $\Box^{(q)}_{0,s}u=0$, we can assume $u$ is of the form \[u(z):=f_I(z)d\zb^I \quad \text{where }\, d\zb^I:=d\zb^1\wedge\cdots\wedge d\zb^q.\] That is, we fix the multi-index $I:=(1,\cdots,q)\in\mathcal{J}_{q,n}$ and let $f_I\in L^2_{dm}(\Cn)\cap \Cinf(\Cn)$. Note that $\Box^{(q)}_{0,s}u=0$ if and only if $\pb^{(q)}_{0,s}u=0$ and $\pb^{*,(q)}_{0,s}u=0$ which are equivalent to
\[
   \dfrac{\p f_I}{\p z^i}-\lambda_i \zb^if_I=0 \,\,\,\forall\, i \in I \quad \text{\rm and}\quad 
    \dfrac{\p f_I}{\p \zb^i}+\lambda_i z^if_I=0 \,\,\,\forall \, i \notin I.\]
Thus, $\Box^{(q)}_{0,s}u=0$ if  and only if
\[
F_I(z):=f_I(\Bar{z^1},\cdots,\Bar{z^q},z^{q+1},\cdots,z^{n})e^{-\sum_{i=1}^{q}\lambda_i|z^i|^2+\sum_{i=q+1}^{n}\lambda_i|z^i|^2} 
\]is a holomorphic function on $\Cn$. If we write $F_I(z)$ as the form $
F_{I}(z)=\sum_{\alpha\in\N_0^n}a_{\alpha}z^{\alpha}$ for some coefficients  $a_{\alpha}\in\Cs$, then \[
f_{I}(\Bar{z^1},\cdots,\Bar{z^q},z^{q+1},\cdots,z^{n})=\sum_{\alpha\in\N_0^n}a_{\alpha}z^{\alpha}e^{\sum_{i=1}^{q}\lambda_i|z^i|^2-\sum_{i=q+1}^{n}\lambda_i|z^i|^2}. 
\]We can apply Fubini's theorem and introduce polar coordinates by setting $z^i=r_ie^{\ii\theta_i}$ to compute that for all  $\alpha,\alpha'\in\N_0^n$,
\begin{align*}
&\left(z^{\alpha}e^{\sum_{i=1}^{q}\lambda_i|z^i|^2-\sum_{i=q+1}^{n}\lambda_i|z^i|^2}\mid z^{\alpha'}e^{\sum_{i=1}^{q}\lambda_i|z^i|^2-\sum_{i=q+1}^{n}\lambda_i|z^i|^2}\right)_{\omega_0}\\&=2^n\int_{\Cn}z^{\alpha}\Bar{z}^{\alpha'}e^{2(\sum_{i=1}^{q}\lambda_i|z^i|^2-\sum_{i=q+1}^{n}\lambda_i|z^i|^2)}dm\\
&=2^n\left(\prod_{i=1}^{n}\int_{0}^{2\pi}e^{\ii(\alpha_i-\alpha'_i)\theta_i}d\theta_i\right)\left(\prod_{i=1}^{q}\int_{0}^{\infty}r_i^{\alpha_i+\alpha'_i+1}e^{2\lambda_i r_i^2} dr_i\right) \left(\prod_{i=q+1}^{n}\int_{0}^{\infty}r_i^{\alpha_i+\alpha'_i+1}e^{-2\lambda_i r_i^2}dr_i\right),
\end{align*}which is zero if $\alpha\neq\alpha'$. By the Parseval's identity, we can compute that:\begin{align*}
\|u\|^2_{\omega_0}&=2^n\int_{\Cn}|f_I|^2dm=2^n\sum_{\alpha\in\N_0^n}|a_{\alpha}|^2\int_{\Cn}|z^{\alpha}|^2 e^{2(\sum_{i=1}^{q}\lambda_i|z^i|^2-\sum_{i=q+1}^{n}\lambda_i|z^i|^2)}dm\\ &= 2^n(2\pi)^{n} \sum_{\alpha\in\N_0^n}|a_{\alpha}|^2\left(\prod_{i=1}^{q}\int_{0}^{\infty}r_i^{2\alpha_i+1}e^{2\lambda_ir_i^2}dr_i\cdot\prod_{i=q+1}^{n}\int_{0}^{\infty}r_i^{2\alpha_i+1}e^{-2\lambda_ir_i^2}dr_i \right).
\end{align*}
By the assumption that $\|u\|_{\omega_0}^2$ is a finite number, we can conclude that if $u$ is not identically zero, then $\lambda_i<0$ for all $i\in\{1,\cdots,q\}$ and $\lambda_i>0$ for all $i\in\{q+1,\cdots,n\}$. As a result, there exist nontrivial solutions of the equation $\Box^{(q)}_{0,s}u=0$ in $L^2_{\omega_0}(\Cn,\T)$ only when $p\in M(q)$. In other words, if $p\notin M(q)$, then \[\Ker\Box^{(q)}_{0,s}=\{0\}\quad \text{and} \quad B^{(q)}_{0,s}(z,w)\equiv 0.\] Now, we focus on case $p\in M(q)$. Suppose $\lambda_i<0$ for all $i\in\{1,\cdots,q\}$ and $\lambda_i>0$ for all $i\in\{q+1,\cdots,n\}$. For brevity, we set \[z^{\alpha}_{q}:=(\zb^1)^{\alpha_1} \cdots (\zb^q)^{\alpha_q}(z^{q+1})^{\alpha_{q+1}}\cdots (z^n)^{\alpha_n} \quad;\quad I:=(1,\cdots ,q)\in \mathcal{J}_{q,n}.\]Observe that $f_I d\zb^I$ is an element in $\Ker\Box^{(q)}_{0,s}$ if and only if $f_I$ is in the set \[
\{\Tilde{F}_I(\Bar{z^1},\cdots,\Bar{z^{q}},z^{q+1},\cdots,z^{n})e^{-\sum_{i=1}^n|\lambda_i||z^i|^2}; \Tilde{F}_I \,\text{is a holomorphic function on }\, \Cn\}.
\]
Therefore, we can see that the orthogonal basis of $\Ker\Box^{(q)}_{0,s}$ is given by 
\begin{equation*}
 \{z^{\alpha}_{q}e^{-\sum_{i=1}^n|\lambda_i||z^i|^2}d\zb^I\}_{\alpha\in\N_0^{n}}.  
\end{equation*}Next, we denote \[
[\lambda]:=(|\lambda_1|,\cdots ,|\lambda_n|)\in\R^{n}.
\] We now compute the length of the orthogonal basis to normalize them. By Fubini's theorem and changing the variables by letting $z^i=r_ie^{\sqrt{-1}\theta_i}$ and $u_i=2|\lambda_i|r_i^2$,

\begin{align*}
    \|z^{\alpha}_{q}e^{-\sum_{i=1}^n|\lambda_i||z^i|^2}d\zb^I\|^2_{\omega_0}&= 2^n\int_{\Cn}\prod_{i=1}^n|z^i|^{2\alpha_i}e^{-2\sum_{i=1}^n|\lambda_i||z^i|^2}dm = 2^n\prod_{i=1}^{n}(2\pi)\int_{0}^{\infty}r_i^{2\alpha_i+1}e^{-2|\lambda_i|r_i^2}dr_i\\&= 2^n\prod_{i=1}^{n}\dfrac{\pi}{(2|\lambda_i|)^{\alpha_i+1}}\int_{0}^{\infty}u_i^{\alpha_i}e^{-u_i}du_i= 2^n\prod_{i=1}^{n}\dfrac{\pi\,\Gamma(\alpha_i+1)}{(2|\lambda_i|)^{\alpha_i+1}}
    \\&=2^n\prod_{i=1}^{n}\dfrac{\pi \alpha_i !}{(2|\lambda_i|)^{\alpha_i+1}}
    =\dfrac{\pi^n \alpha !}{2^{|\alpha|}[\lambda]^{\alpha+1}}.
\end{align*}Consequently,
\begin{equation*}
    \{\Psi_{\alpha}:=\sqrt{\dfrac{2^{|\alpha|}[\lambda]^{\alpha+1}}{\pi^n \alpha!}}z^{\alpha}_{q}e^{-\sum_{i=1}^{n}|\lambda_i||z^i|^2}d\zb^I\}_{\alpha \in \N_0^n} 
\end{equation*} is the orthonormal basis of $\Ker \Box^{(q)}_{0,s}$ and 
\begin{align*}
    B^{(q)}_{0,s}(z,w)&=\sum_{\alpha \in \N_0^n}\Psi_{\alpha}(z) \otimes \Psi_{\alpha}^{*}(w)
    =\sum_{\alpha \in \N_0^n}\dfrac{2^{|\alpha|}[\lambda]^{\alpha+1}}{\pi^n \alpha!}z^{\alpha}_{q}\overline{w^{\alpha}_{q}}e^{-\sum_{i=1}^{n}|\lambda_i|(|z^i|^2+|w^i|^2)} d\zb^I \otimes (\pwb)^I \\
    &= \dfrac{|\lambda_1 \cdots \lambda_n|}{\pi^n}\sum_{|\alpha|\in \N_0^n}\left(\dfrac{2^{|\alpha|}}{\alpha!}[\lambda]^{\alpha}z^{\alpha}_{q}\Bar{w}^{\alpha}_{q}\right) e^{-\sum_{i=1}^{n}|\lambda_i|(|z^i|^2+|w^i|^2)} d\zb^I \otimes (\pwb)^I  \\
    &= \dfrac{|\lambda_1 \cdots \lambda_n|}{\pi^n}\,e^{2(\sum_{i=1}^{q}|\lambda_i|\zb^i w^i+\sum_{i=q+1}^{n}|\lambda_i|z^i\wb^i)-\sum_{i=1}^{n}|\lambda_i|(|z^i|^2+|w^i|^2)}d\zb^I \otimes (\pwb)^I.
\end{align*}We summarize the results in the following theorem.
\begin{thm}[Bergman kernel for the model case]\label{4.1 Model theorem}
Consider the trivial vector bundle $\T \otimes \Cs \rightarrow \Cn$ endowed with the standard Hermitian form $\omega_0$ and the weight function $\phi_0$ . In the case $p\in M(q)$, 
 we assume $\lambda_i<0$ for all $i\leq q$ and $\lambda_i>0$ for all $i>q$. The localized Bergman Kernel $B^{(q)}_{0,s}(z,w)$ for $(0,q)$-forms is given by
\begin{equation*}
    \dfrac{|\lambda_1 \cdots \lambda_n|}{\pi^n}\,e^{2(\sum_{i=1}^{q}|\lambda_i|\zb^i w^i+\sum_{i=q+1}^{n}|\lambda_i|z^i\wb^i)-\sum_{i=1}^{n}|\lambda_i|(|z^i|^2+|w^i|^2)}(d\zb^1\wedge \cdots \wedge d\zb^q) \otimes (\dfrac{\p}{\p \wb^{1}} \wedge \cdots \wedge \dfrac{\p}{\p \wb^{q}}).
\end{equation*}Furthermore,
\[
\{\Psi_{\alpha}:=\sqrt{\dfrac{2^{|\alpha|}[\lambda]^{\alpha+1}}{\pi^n \alpha!}}z^{\alpha}_{q}e^{-\sum_{i=1}^{n}|\lambda_i||z^i|^2}d\zb^1\wedge \cdots \wedge d\zb^q\}_{\alpha \in \N_0^n} 
\]is the orthonormal basis of $\Ker \Box^{(q)}_{0,s}\subset L^2_{\omega_0}(\Cn,\T)$.\par However, if $p\notin M(q)$, then
\[
\Ker \Box^{(q)}_{0,s}=\{0\}
\,\, \text{ and hence }\, \Bsko(z,w) \equiv 0.\]
\end{thm}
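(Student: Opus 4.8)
The plan is to reduce everything to the scalar equation furnished by Proposition \ref{4.1 prop hi}. Write a general section as $u=\sum_{I\in\mathcal{J}_{q,n}}f_Id\zb^I$. That proposition shows $\Box^{(q)}_{0,s}$ is diagonal in the frame $\{d\zb^I\}$, so $\Box^{(q)}_{0,s}u=0$ decouples into $\Box^{(q)}_{0,s}(f_Id\zb^I)=0$ for each $I$ separately. Since the Gaffney extension of $\Box^{(q)}_{0,s}$ is semi-positive and self-adjoint, $\Box^{(q)}_{0,s}(f_Id\zb^I)=0$ is equivalent to $\pb^{(q)}_{0,s}(f_Id\zb^I)=0$ and $\pb^{*,(q)}_{0,s}(f_Id\zb^I)=0$. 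Using $\pb^{(q)}_{0,s}=\pb^{(q)}+(\pb\phi_0)\wedge\,\cdot\,$, $\pb^{*,(q)}_{0,s}=\pb^{*,(q)}_{\omega_0}+(\pb\phi_0)\wedge^*_{\omega_0}\,\cdot\,$ and $\pb\phi_0=\sum_i\lambda_iz^id\zb^i$, these collapse into the first-order system
\[
\frac{\p f_I}{\p z^i}=\lambda_i\zb^if_I\quad(i\in I),\qquad \frac{\p f_I}{\p\zb^i}=-\lambda_iz^if_I\quad(i\notin I),
\]
whose solutions are precisely the $f_I$ for which, after conjugating the variables indexed by $I$ and multiplying by a suitable Gaussian, one obtains an entire function on $\Cn$ (elliptic regularity gives $f_I\in\Cinf$, so this manipulation is legitimate).

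Next I would impose square-integrability with respect to $(\cdot\mid\cdot)_{\omega_0}$. Expanding the associated entire function in a power series and integrating term by term in polar coordinates $z^i=r_ie^{\ii\theta_i}$, the angular integrals $\int_0^{2\pi}e^{\ii(\alpha_i-\alpha'_i)\theta_i}\,d\theta_i$ annihilate the cross terms and the radial integrals reduce to Gamma integrals; the upshot is that $\|f_Id\zb^I\|_{\omega_0}<\infty$ forces $\lambda_i<0$ for every $i\in I$ and $\lambda_i>0$ for every $i\notin I$. Under the standing convention that $\lambda_1,\dots,\lambda_{q'}$ are exactly the negative eigenvalues, an index set $I$ of cardinality $q$ with this property exists if and only if $q'=q$, that is $p\in M(q)$, and then necessarily $I=(1,\dots,q)$. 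In particular, if $p\notin M(q)$ then $\Ker\Box^{(q)}_{0,s}=\{0\}$ and hence $\Bsko(z,w)\equiv 0$.

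For $p\in M(q)$ the kernel of $\Box^{(q)}_{0,s}$ is spanned by the monomials $z^{\alpha}_{q}e^{-\sum_i|\lambda_i||z^i|^2}d\zb^I$, $\alpha\in\N_0^n$, with $I=(1,\dots,q)$; these are pairwise orthogonal by the same angular-integral computation, and evaluating $\|z^{\alpha}_{q}e^{-\sum_i|\lambda_i||z^i|^2}d\zb^I\|^2_{\omega_0}=\pi^n\alpha!\,/\,(2^{|\alpha|}[\lambda]^{\alpha+1})$ through the Gamma function produces the orthonormal family $\{\Psi_\alpha\}$. Completeness holds because every element of $\Ker\Box^{(q)}_{0,s}$ comes from an entire function in the corresponding weighted $L^2$-space, in which the Taylor polynomials converge in norm. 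Finally I would identify the Schwartz kernel of the orthogonal projection $\mathcal{B}^{(q)}_{0,s}$ with $\sum_{\alpha}\Psi_\alpha(z)\otimes\Psi_\alpha^*(w)$ and sum the series, recognizing the exponential generating function
\[
\frac{|\lambda_1\cdots\lambda_n|}{\pi^n}\,e^{2(\sum_{i\le q}|\lambda_i|\zb^iw^i+\sum_{i>q}|\lambda_i|z^i\wb^i)-\sum_i|\lambda_i|(|z^i|^2+|w^i|^2)}\,d\zb^I\otimes(\pwb)^I,
\]
which is the asserted formula.

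The step I expect to require the most care is the last one: verifying that $\sum_{\alpha}\Psi_\alpha(z)\otimes\Psi_\alpha^*(w)$ converges locally uniformly and genuinely represents the Schwartz kernel of $\mathcal{B}^{(q)}_{0,s}$ — essentially the reproducing-kernel property of this weighted Bergman space of entire functions — together with the completeness of $\{\Psi_\alpha\}$. The diagonalization, the first-order PDE system, and the polar-coordinate integrals are all routine once Proposition \ref{4.1 prop hi} is available.
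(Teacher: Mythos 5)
Your proposal is correct and follows essentially the same route as the paper: diagonalize $\Box^{(q)}_{0,s}$ via Proposition \ref{4.1 prop hi}, reduce to the first-order system $\pb^{(q)}_{0,s}u=\pb^{*,(q)}_{0,s}u=0$, convert to a holomorphicity condition after a Gaussian twist, use polar coordinates and Gamma integrals to pin down the sign constraints on the $\lambda_i$ and the normalizations, and sum the resulting orthonormal series. You are in fact slightly more careful than the paper in treating an arbitrary multi-index $I$ (showing only $I=(1,\dots,q)$ survives) and in flagging the completeness of $\{\Psi_\alpha\}$ and the reproducing-kernel identification, both of which the paper leaves implicit.
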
\newpage

\subsection{Mapping properties of the approximated integral operator}\label{section 4.2}

Returning to Assumption \ref{4.2 assumption}, the kernel section $B^{(q)}_{s}(z,w)$ is unknown to us so far. Our objective is to demonstrate that it must be precisely the Bergman kernel $B^{(q)}_{0,s}(z,w)$ in the model case established above. We embark on the proof by the following definition and lemma which helps us to translate $B^{(q)}_{s}(z,w)$ from an unknown kernel section to an operator on the Hilbert space $L^2_{\omega_0}(\Cn,\T)$. 
\begin{defin}\label{4.2 def}Define the \textbf{approximated integral operator} as
 \begin{equation*}
        \Bs u(z) := \int_{\Cn}\Bsk(z,w)u(w)dm(w) \quad\text{\rm for all }\, u\in L^2_{\omega_0}(\Cn,\T).
    \end{equation*}   
\end{defin}
\begin{lem}[Well-definition of the integral operator]\label{4.2 Lem B op}
    For any $u \in L^2_{\omega_0}(\Cn,\T)$, the integral
    $\Bs u(z)$ converges for almost every $z\in \Cn$. Furthermore, the integral operator
    \begin{equation*}
        \Bs:L^2_{\omega_0}(\Cn,T^{*,(0,q)}\Cn) \rightarrow L^2_{\omega_0}(\Cn,T^{*,(0,q)}\Cn)
    \end{equation*}is a bounded linear map with its operator norm smaller than $1$.
\end{lem}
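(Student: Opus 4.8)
The plan is to pass to the scaled picture and exploit the uniform $L^2$-bounds on the spectral projections proved in Lemma \ref{3.3 thm pf 1.5}, together with the locally uniform $\Cinf$ convergence $\Plksk \to \Bsk$ from Assumption \ref{4.2 assumption}. The strategy is: first prove the boundedness (with norm $\leq 1$) for nice test sections with compact support, using Fatou's lemma / dominated convergence to take the limit $k\to\infty$ inside the relevant inequalities; then extend by density to all of $L^2_{\omega_0}(\Cn,\T)$; and finally deduce almost-everywhere convergence of the defining integral from the boundedness.

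In more detail, first I would fix $u\in\Omega^{(0,q)}_c(\Cn)$, say $\supp u\subset B(r)$. For $k\geq r^2$, consider the section $v_k(z):=\int_{B(r)}\Plksk(z,w)u(w)\,dV_{\omega_{(k)}}(w)=(\Plks u)(z)$, which by Lemma \ref{3.3 thm pf 1.5} (and \eqref{3.3 compatibility 1}) satisfies $\|\Plks u\|_{\omega_{(k)},B(\sqrt{k})}\leq\|u\|_{\omega_{(k)},B(\sqrt{k})}$, hence $\|\Plks u\|_{\omega_0,B(R)}\leq(1+\varepsilon)\|u\|_{\omega_0}$ for any fixed $R$ and $k$ large, by \eqref{3.3 compatibility 2}. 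On the other hand, on any fixed ball $B(R)$ the kernels $\Plksk(z,w)$ converge uniformly to $\Bsk(z,w)$ and $dV_{\omega_{(k)}}\to dV_{\omega_0}=2^n\,dm$ locally uniformly, so $(\Plks u)(z)\to 2^n\!\int_{B(r)}\Bsk(z,w)u(w)\,dm(w)$ uniformly on $B(R)$; one must be slightly careful here about the normalization of $dm$ versus $dV_{\omega_0}$ so that the operator $\Bs$ in Definition \ref{4.2 def}, defined with $dm$, comes out with norm $\leq 1$ rather than $2^n$ — checking this constant bookkeeping is a routine but necessary point. Combining, $\|\Bs u\|_{\omega_0,B(R)}\leq(1+\varepsilon)\|u\|_{\omega_0}$ for every $R$ and every $\varepsilon>0$; letting $R\to\infty$ and $\varepsilon\to0$ gives $\|\Bs u\|_{\omega_0}\leq\|u\|_{\omega_0}$ for all $u\in\Omega^{(0,q)}_c(\Cn)$.

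Next I would establish that the defining integral $\Bs u(z)=\int_{\Cn}\Bsk(z,w)u(w)\,dm(w)$ makes sense a.e.\ for general $u\in L^2_{\omega_0}(\Cn,\T)$. The cleanest route is: the a priori bound $\|\Bs u\|_{\omega_0}\leq\|u\|_{\omega_0}$ on the dense subspace $\Omega^{(0,q)}_c(\Cn)$ shows $\Bs$ extends uniquely to a bounded operator $\widetilde{\Bs}$ on $L^2_{\omega_0}(\Cn,\T)$ with norm $\leq 1$; it then remains to check that for arbitrary $u$ the pointwise integral converges a.e.\ and agrees with $\widetilde{\Bs}u$. For fixed $z$, $w\mapsto\Bsk(z,w)$ is smooth; to control its $L^2_w$-size uniformly in $z$ one can use the reproducing-type bound coming from the limit of $\|\Plksk(z,\cdot)\|$: since $\Plksk(z,\cdot)=\Plks(\delta_z)$-type objects are controlled by the smoothing estimate Theorem \ref{3.3 k-smoothing prop} uniformly in $k$, passing to the limit gives $\|\Bsk(z,\cdot)\|_{L^2_w}\leq C$ locally uniformly in $z$, so $\Bs u(z)$ converges absolutely for a.e.\ $z$ (in fact every $z$) by Cauchy--Schwarz, and dominated convergence identifies it with the $L^2$-extension on any approximating sequence of compactly supported sections.

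The main obstacle I anticipate is the interchange of the limit $k\to\infty$ with the spatial integration when $u$ does not have compact support, i.e.\ controlling the ``tails'': the convergence $\Plksk\to\Bsk$ is only locally uniform, and the balls $B(\sqrt{k})$ on which $\Plksk$ lives are expanding, so one cannot directly integrate against a fixed $u\in L^2$. The remedy is to do all the estimates first on compactly supported $u$ (where no tail issue arises because $\supp u\subset B(r)$ is fixed) and only afterwards invoke the density of $\Omega^{(0,q)}_c(\Cn)$ in $L^2_{\omega_0}(\Cn,\T)$ together with the already-established uniform operator-norm bound; the kernel-side bound $\|\Bsk(z,\cdot)\|_{L^2_w}\leq C$ then handles the a.e.\ pointwise statement for general $u$ separately. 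The secondary technical point, as noted, is keeping the constant from $dV_{\omega_0}=2^n\,dm$ consistent so that the operator norm is exactly $\leq 1$.
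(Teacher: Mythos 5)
Your argument follows the paper's in its essential respects: both obtain the operator-norm bound from the $k$-uniform $L^2$-estimate of Lemma \ref{3.3 thm pf 1.5} together with the locally uniform convergence $\Plksk \to \Bsk$ on compact sets, then pass to the limit in $k$ and extend by density. The only packaging difference is that the paper tests $\Bs u$ against an arbitrary $v\in\Omega^{(0,q)}_c(\Cn)$ so as to confine the kernel-convergence step to the compact set $\supp v\times\supp u$ before quantifying over $v$, whereas you bound $\|\Bs u\|_{\omega_0,B(R)}$ directly and let $R\to\infty$; the two maneuvers are interchangeable. Your two noted concerns are both well taken but benign: the $dm$ versus $dV_{\omega_0}=2^n\,dm$ discrepancy is a genuine bookkeeping slip around Definition \ref{4.2 def} (the paper's proof silently absorbs it into the $2^{2n}$ factor), and the a.e.-convergence claim does need some uniform bound such as $\|\Bsk(z,\cdot)\|_{L^2_w}\lesssim 1$ locally in $z$ (available by passing to the limit in Theorem \ref{3.3 k-smoothing prop} or Theorem \ref{3.3 loc bdd thm}), since the paper's closing ``by density argument'' alone does not deliver it.
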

\begin{proof}
      Let $u\in\Omega^{0,q}_c(\Cn)$ and we aim to show $\|\Bs u\|_{\omega_{0}}\leq \|u\|_{\omega_0}$. Let $v\in \Omega^{(0,q)}_c(\Cn)$ be a test form.
    \begin{align*}
        \left(v\mid \Bs u\right)_{\omega_0}&=\int_{\supp v}\int_{\supp u}\la v(z) | \Bsk(z,w)u(w) \ra_{\omega_0}2^{2n}dm(w)dm(z).
    \end{align*}Let $\varepsilon>0$. By (\ref{3.3 compatibility 2}) and the fact that $\omega_{(k)}\rightarrow \omega_0$ and $\Plksk(z,w) \rightarrow \Bsk(z,w)$ uniformly on $\supp v \times \supp u$, the above integral can be dominated as
    \begin{multline*}
     |\left(v\mid \Bs u\right)_{\omega_0}|\leq(1+\varepsilon)|\left(v\mid \Plks u\right)_{\omega_0}|\\ \leq(1+\varepsilon)^2|\left(v\mid \Plks u\right)_{\omega_{(k)}}|\leq (1+\varepsilon)^2 \|v\|_{\omega_{(k)}}\|\Plks u\|_{\omega_{(k)}}   
    \end{multline*} 
    for large enough $k$. We apply (\ref{3.3 compatibility 2}) and Lemma \ref{3.3 thm pf 1.5} to  obtain
    \[
    \|v\|_{\omega_{(k)}}\|\Plk u\|_{\omega_{(k)}}\leq(1+\varepsilon)^2\|v\|_{\omega_0}\|u\|_{\omega_0}\quad \text{\rm for large enough }\, k.
    \]
    Since $\varepsilon>0$ is arbitrary, the estimates above mean $|\left(v\mid \Bs u\right)_{\omega_0}|\leq \|v\|_{\omega_0}\|u\|_{\omega_0}$ which implies $\| \Bs u\|_{\omega_0} \leq \|u\|_{\omega_0}$ because the test function $v$ is arbitrary. We have completed the proof by density argument. 
\end{proof}

Now, we state the key theorem of this section.
\begin{thm}\label{4.2 thm bd op BoxB=0}
 If  $\limsup_{k\To\infty}c_k/k=0$, then $\Bs $ is a bounded linear map \[\Bs:L^{2}_{\omega_0}(\Cn,\T)\To \Ker \Box^{(q)}_{0,s}\]with its operator norm smaller than $1$. 
\end{thm}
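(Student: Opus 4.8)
The plan is to bootstrap from Lemma~\ref{4.2 Lem B op}, which already provides $\|\Bs u\|_{\omega_0}\le\|u\|_{\omega_0}$; all that is left is to show that the image of $\Bs$ lies in $\Ker\Box^{(q)}_{0,s}$. Since $\Ker\Box^{(q)}_{0,s}$ is a closed subspace of $L^2_{\omega_0}(\Cn,\T)$ (it is the kernel of the self-adjoint Gaffney extension) and $\Bs$ is bounded, a density argument reduces the claim to the case $u\in\Omega^{(0,q)}_c(\Cn)$, which I fix from now on. The strategy is: (i) produce a quantitative estimate showing that at the scaled level $\pb^{(q)}_{(k),s}\Plks u$ and $\pb^{*,(q)}_{(k),s}\Plks u$ have $L^2$-norms tending to $0$; (ii) pair against test forms and pass to the limit, using Assumption~\ref{4.2 assumption} and the convergence of the scaled geometric data, to deduce $\pb^{(q)}_{0,s}(\Bs u)=0$ and $\pb^{*,(q)}_{0,s}(\Bs u)=0$ as currents; (iii) read off from the definition of the Gaffney domain that $\Bs u\in\Ker\Box^{(q)}_{0,s}$.

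For step (i), I would use that, by the rescaling relations (\ref{3.1 scaled projection relation}) and (\ref{3.2 rescale relation 3}), the scaled localized projection is the functional calculus $\Plks=\mathbbm{1}_{[0,c_k/k]}\!\left(\Box^{(q)}_{(k),s}\right)$; hence its range lies in the Gaffney domain of $\Box^{(q)}_{(k),s}$ and $\Box^{(q)}_{(k),s}$ acts there with norm at most $c_k/k$. Combining this with the identity $\left(\Box^{(q)}_{(k),s}v\mid v\right)_{\omega_{(k)}}=\|\pb^{(q)}_{(k),s}v\|^2_{\omega_{(k)}}+\|\pb^{*,(q)}_{(k),s}v\|^2_{\omega_{(k)}}$, valid for every $v$ in the Gaffney domain, applied to $v=\Plks u$ gives
\[
\|\pb^{(q)}_{(k),s}\Plks u\|^2_{\omega_{(k)}}+\|\pb^{*,(q)}_{(k),s}\Plks u\|^2_{\omega_{(k)}}=\left(\Box^{(q)}_{(k),s}\Plks u\mid\Plks u\right)_{\omega_{(k)}}\le\frac{c_k}{k}\,\|u\|^2_{\omega_{(k)}},
\]
and the right-hand side tends to $0$ as $k\to\infty$ because $\limsup_{k\to\infty}c_k/k=0$ while $\|u\|_{\omega_{(k)}}$ stays bounded (here $\omega_{(k)}\to\omega_0$ locally uniformly). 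This is exactly the point where the strengthened hypothesis on $c_k$ is needed, and the reason the weaker bound $\limsup c_k/k<\infty$ of Corollary~\ref{3.3 loc conv cor} does not suffice.

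For step (ii), fix $\psi\in\Omega^{(0,q+1)}_c(\Cn)$; for $k$ large enough $\supp\psi\subset B(\sqrt k)$, and integrating by parts (no boundary term, since $\Plks u$ is smooth on $B(\sqrt k)$) gives $\left(\pb^{(q)}_{(k),s}\Plks u\mid\psi\right)_{\omega_{(k)}}=\left(\Plks u\mid\pb^{*,(q+1)}_{(k),s}\psi\right)_{\omega_{(k)}}$. The left-hand side is at most $\|\pb^{(q)}_{(k),s}\Plks u\|_{\omega_{(k)}}\|\psi\|_{\omega_{(k)}}\to0$ by step (i). For the right-hand side, Assumption~\ref{4.2 assumption} together with the compact support of $u$ gives $\Plks u\to\Bs u$ (up to the fixed positive constant $dV_{\omega_0}/dm$, which is irrelevant as the limit vanishes) uniformly on $\supp\psi$; the coefficients of $\pb^{*,(q+1)}_{(k),s}$ converge locally uniformly in $\Cinf$ to those of $\pb^{*,(q+1)}_{0,s}$ because $\omega_{(k)}\to\omega_0$ and $\phi_{(k)}\to\phi_0$ locally uniformly in $\Cinf$; and $\left(\cdot\mid\cdot\right)_{\omega_{(k)}}\to\left(\cdot\mid\cdot\right)_{\omega_0}$ on forms supported in a fixed compact set, cf.~(\ref{3.3 compatibility 2}). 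Hence the right-hand side converges to a positive multiple of $\left(\Bs u\mid\pb^{*,(q+1)}_{0,s}\psi\right)_{\omega_0}$, and we get $\left(\Bs u\mid\pb^{*,(q+1)}_{0,s}\psi\right)_{\omega_0}=0$ for all $\psi$, i.e.\ $\pb^{(q)}_{0,s}(\Bs u)=0$ in the sense of currents. The identical argument applied to $\pb^{*,(q)}_{(k),s}\Plks u$ tested against $\chi\in\Omega^{(0,q-1)}_c(\Cn)$ yields $\pb^{*,(q)}_{0,s}(\Bs u)=0$. Since $\Bs u$, $\pb^{(q)}_{0,s}(\Bs u)=0$ and $\pb^{*,(q)}_{0,s}(\Bs u)=0$ all lie in $L^2_{\omega_0}$, the definition of the Gaffney domain gives $\Bs u\in\Dom\Box^{(q)}_{0,s}$ with $\Box^{(q)}_{0,s}(\Bs u)=\pb^{*,(q+1)}_{0,s}\pb^{(q)}_{0,s}(\Bs u)+\pb^{(q-1)}_{0,s}\pb^{*,(q)}_{0,s}(\Bs u)=0$, i.e.\ $\Bs u\in\Ker\Box^{(q)}_{0,s}$; with Lemma~\ref{4.2 Lem B op} this is the assertion.

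The main obstacle is the passage to the limit in step (ii): one has to interchange $\lim_{k\to\infty}$ with the integrals defining the pairings, which is legitimate only because the test forms have compact support and because three convergences operate simultaneously on that fixed compact set — the locally uniform $\Cinf$ convergence $\Plksk(z,w)\to\Bsk(z,w)$ from Assumption~\ref{4.2 assumption}, the locally uniform $\Cinf$ convergence of the coefficients of the rescaled operators $\pb^{(q)}_{(k),s}$ and $\pb^{*,(q)}_{(k),s}$, and the convergence of the scaled metric and weight $\omega_{(k)},\phi_{(k)}$. A secondary point to verify carefully is that $\Plks u$ genuinely belongs to the Gaffney domain of $\Box^{(q)}_{(k),s}$, so that both the Green-type identity and the operator-norm bound of step (i) apply; this is immediate from $\Plks=\mathbbm{1}_{[0,c_k/k]}\!\left(\Box^{(q)}_{(k),s}\right)$.
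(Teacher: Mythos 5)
Your overall strategy — pass $\pb$ and $\pb^*$, rather than $\Box$, to the limit by pairing against compactly supported test forms, and read off membership in the Gaffney domain from $\pb^{(q)}_{0,s}(\Bs u)=0$ and $\pb^{*,(q)}_{0,s}(\Bs u)=0$ — is a legitimate variant of what the paper does (the paper bounds $\|\rho\,\Box^{(q)}_{0,s}\Bs u\|_{\omega_0}$ directly), and is in fact somewhat more careful about why the limit lies in $\Dom\Box^{(q)}_{0,s}$. However, step (i) contains a genuine error. You assert that the rescaling relations give $\Plks=\mathbbm{1}_{[0,c_k/k]}\bigl(\Box^{(q)}_{(k),s}\bigr)$, and you lean on this both for the norm estimate and to conclude that $\Plks u$ lies in the Gaffney domain of $\Box^{(q)}_{(k),s}$. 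This is false. By construction (\ref{3.1 scaled projection relation}) and (\ref{3.1 id 1})--(\ref{3.1 id 4}), $\Plks$ is the conjugate of the global spectral projection $\Pk=\mathbbm{1}_{[0,c_k]}\bigl(\Box^{(q)}_{\omega,k\phi}\bigr)$ of the Kodaira Laplacian on the whole manifold $M$, followed by restriction to the chart $B(1)$; it is not the spectral projection associated with the Gaffney extension of the localized operator $\Box^{(q)}_{(k),s}$ on $B(\sqrt{k})$, and it is not even an orthogonal projection on $L^2_{\omega_{(k)}}(B(\sqrt{k}),\T)$. Consequently the identity $\bigl(\Box^{(q)}_{(k),s}\Plks u\mid\Plks u\bigr)_{\omega_{(k)}}\le\frac{c_k}{k}\|u\|^2_{\omega_{(k)}}$ does not follow as you state, and the ``immediate'' Gaffney-domain claim does not hold.

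The inequality you actually need is nonetheless true, but it must be obtained at the level of $M$, the way the paper itself rescales in Theorem~\ref{3.3 k-smoothing prop} and in its own proof of this theorem: put $u_k(z)=k^{n/2}u(\sqrt{k}z)\in\Omega^{(0,q)}_c(B(1))\subset\Omega^{(0,q)}_c(M)$ and $v:=e^{k\phi}u_k\otimes s^k$. Then by the rescaling relations (\ref{3.2 rescale relation}) and the unitary identification (\ref{3.1 id 1}),
\[
\|\pb^{(q)}_{(k),s}\Plks u\|^2_{\omega_{(k)},B(\sqrt{k})}+\|\pb^{*,(q)}_{(k),s}\Plks u\|^2_{\omega_{(k)},B(\sqrt{k})}
=\tfrac{1}{k}\Bigl(\|\pb^{(q)}_{k,s}\Pks u_k\|^2_{\omega,B(1)}+\|\pb^{*,(q)}_{k,s}\Pks u_k\|^2_{\omega,B(1)}\Bigr),
\]
and the right-hand side is bounded by $\tfrac{1}{k}\bigl(\Box^{(q)}_{\omega,k\phi}\Pk v\mid\Pk v\bigr)_{\omega,k\phi}\le\tfrac{c_k}{k}\|v\|^2_{\omega,k\phi}=\tfrac{c_k}{k}\|u\|^2_{\omega_{(k)}}$, since $\Pk v\in\Dom\Box^{(q)}_{\omega,k\phi}$ (Gaffney, on $M$) and $\|\Box^{(q)}_{\omega,k\phi}\Pk\|\le c_k$. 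With step (i) repaired this way, your steps (ii) and (iii) go through, and the argument differs from the paper's only in passing $\pb$ and $\pb^*$ to the limit weakly rather than estimating $\|\rho\,\Box^{(q)}_{0,s}\Plks u\|_{\omega_0}$ directly.
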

 \begin{proof}
By Lemma \ref{4.2 Lem B op}, it remains to show:
\begin{claim}
 If $\limsup_{k\To\infty} c_k/k=0$, then  $\Box^{(q)}_{0,s}\Bs u = 0
     $ for all $u \in L^2_{\omega_0}(\Cn,\T)$   
\end{claim}
We may assume $u \in \Omega^{(0,q)}_c(\Cn)$ by density argument. Fix $\rho\in \Cinf_{c}(\Cn)$ to be a cut-off function. By assumption $\Plksk\rightarrow\Bsk$ locally uniformly in $\Cinf$,
 \[
 \|\rho \Box^{(q)}_{0,s}\Bs u\|_{\omega_0} \lesssim \|\rho \Box^{(q)}_{0,s}\Plks u\|_{\omega_0}.
 \]
 Recall Lemma \ref{3.2 Lemma Lap} and the fact that $\omega_{(k)}\To \omega_0$ and $\phi_{(k)}\To \phi_0$ locally uniformly in $\Cinf$. We can immediately conclude that the coefficients of $\Box^{(q)}_{(k),s}$ converge to those of $\Box^{(q)}_{0,s}$ locally uniformly on $\Cn$. By this fact,
   \begin{multline*}
    \|\rho \Box^{(q)}_{0,s}\Plks u\|_{\omega_0}\lesssim\|\rho \Box^{(q)}_{(k),s}\Plks u\|_{\omega_0}\\ \lesssim\|\rho \Box^{(q)}_{(k),s}\Plks u\|_{\omega_{(k)},B(\sqrt{k})}\leq \|\Box^{(q)}_{(k),s}\Plks u\|_{\omega_{0},B(\sqrt{k})},   
   \end{multline*}
   where the second inequality is from (\ref{3.3 compatibility 1}). By the relations (\ref{3.1 scaled projection relation}) and (\ref{3.2 rescale relation 3}),
\[
\Box^{(q)}_{(k),s}\Plks u(\sqrt{k}z)=k^{-1}\Box^{(q)}_{k,s}\Pks \left(u(\sqrt{k}z)\right).
\]By changing the variable,
\[
\|\Box^{(q)}_{(k),s}\Plks u\|_{\omega_{(k)},B(\sqrt{k})}=k^{-1}\|k^{n/2}\Box^{(q)}_{k,s}\Pks \left(u(\sqrt{k}z)\right)\|_{\omega,B(1)}.
\]Define  $u_k:=k^{n/2}u(\sqrt{k}z)$ which is a section supported in $U\subset M$ for large enough $k$.  Furthermore, by changing the variable, observe that $\|u_k\|_{\omega}=\|u\|_{\omega_{(k)}}$ and hence
\[
\|\Box^{(q)}_{(k),s}\Plks u\|_{\omega_{(k)},B(\sqrt{k})}=k^{-1}\|\Box^{(q)}_{k,s}\Pks u_k\|_{\omega,B(1)}\leq \frac{c_k}{k}\|u_k\|_{\omega}= \frac{c_k}{k}\|u\|_{\omega_{(k)}}\lesssim \frac{c_k}{k}\|u\|_{\omega_0}.\]Then we apply the assumption $\limsup_{k\To\infty}\frac{c_k}{k}=0$ to conclude that $\|\rho \Box^{(q)}_{0,s}\Bs u\|_{\omega_0}=0$. Since $\rho$ is arbitrary, we have $\Box^{(q)}_{0,s}\Bs u\equiv 0$.
 \end{proof}
 
Next, our main objective is to demonstrate that $\Bs:L^2_{\omega_0}(\Cn,\T)\To\Ker\Box^{(q)}_{0,s}$ is an orthogonal projection. Once this is established, we can infer from the uniqueness of the Schwartz kernel and see that the section $\Bsk(z,w)$ coincides with the Bergman kernel $B^{(q)}_{0,s}(z,w)$. By Theorem \ref{4.2 thm bd op BoxB=0}, it remains to show the following statement (cf. \cite[theorem 3.1 in section 3.1]{Yos}):
\begin{statement}\label{4.2 statement 2} 
$\Bs u=u$ for all $u \in \Ker \Box^{(q)}_{0,s}$.
\end{statement}
\begin{rmk}\label{4.2 rmk}
If the statement holds, we are able to complete the proof of the main theorems as follows.\par 
Under Assumption \ref{4.2 assumption}, by Theorem \ref{4.2 thm bd op BoxB=0} and Statement \ref{4.2 statement 2}, we know that the operator $\Bs$ defined in Def.\ref{4.2 def} must be the Bergman projection $\mathcal{B}^{(q)}_{0,s}$ in the model case. By the uniqueness of the Schwartz kernel, we have $\Bsk(z,w)\equiv B^{(q)}_{0,s}(z,w)$.\par 
According to Corollary \ref{3.3 loc conv cor}, we know that each subsequence of $P^{(q),s}_{(k),c_k}(z,w)$ has a subsequence that converges locally uniformly to $B^{(q),s}_{0}(z,w)=e^{\phi_0(z)}B^{(q)}_{0,s}(z,w)e^{-\phi_0(w)}$ in $\Cinf$. This means that $P^{(q),s}_{(k),c_k}(z,w)$ converges to $B^{(q),s}_{0}(z,w)$ locally uniformly in $\Cinf$. Finally, by applying Theorem \ref{4.1 Model theorem} and 
 the relation (\ref{kernel relation 2}), we complete the proof of the main theorem.
\end{rmk}

 In the case $p\notin M(q)$, Theorem \ref{4.1 Model theorem} tells us that $\Ker\Box^{(q)}_{0,s}=\{0\}$ and therefore $\Bs$ is a zero map. As a consequence, Statement \ref{4.2 statement 2} automatically holds, and hence we have the main theorem for the case $p\notin M(q)$.
 \begin{thm}[main theorem for $p\notin M(q)$]\label{4.2 main thm}
    If $p\notin M(q)$ and $ \limsup_{k\To\infty}\dfrac{c_k}{k}=0$, then the scaled localized spectral (or Bergman if $c_k=0$) kernel $\Plksk(z,w)\To 0$ locally uniformly in $\Cinf$ on $\Cn$. Also, by (\ref{kernel relation 2}),  $P^{(q),s}_{(k),c_k}(z,w)\To 0$ locally uniformly in $\Cinf$ on $\Cn$.   
\end{thm}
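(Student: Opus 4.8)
The plan is to deduce the statement directly from the machinery already assembled, with essentially no new computation: the uniform bounds of Chapter \ref{chapter 3} (via Corollary \ref{3.3 loc conv cor}), the structure result Theorem \ref{4.2 thm bd op BoxB=0}, and the vanishing of the model kernel when $p\notin M(q)$ from Theorem \ref{4.1 Model theorem}.

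First I would reduce to the situation of Assumption \ref{4.2 assumption} by a subsequence argument. By Corollary \ref{3.3 loc conv cor}, since $\limsup_{k\To\infty}c_k/k=0<\infty$, every subsequence of $\Plksk(z,w)$ has a further subsequence converging locally uniformly in $\Cinf$ to some limit section $B^{(q)}_{s}(z,w)\in\Cinf(\Cn\times\Cn,\T\boxtimes\T)$. Because a sequence converges to a given limit iff every subsequence has a sub-subsequence converging to that limit, it suffices to prove that every such $\Cinf$-limit is identically $0$. So fix one convergent subsequence and let $B^{(q)}_{s}$ denote its limit.

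Next I would pass from the kernel to the operator it defines. Form the approximated integral operator $\Bs$ of Definition \ref{4.2 def}, whose Schwartz kernel is $B^{(q)}_{s}(z,w)$; by Lemma \ref{4.2 Lem B op} it is a well-defined bounded operator on $L^2_{\omega_0}(\Cn,\T)$. Invoking the hypothesis $\limsup_{k\To\infty}c_k/k=0$, Theorem \ref{4.2 thm bd op BoxB=0} shows $\Bs$ maps $L^2_{\omega_0}(\Cn,\T)$ into $\Ker\Box^{(q)}_{0,s}$. But $p\notin M(q)$, so Theorem \ref{4.1 Model theorem} gives $\Ker\Box^{(q)}_{0,s}=\{0\}$; hence $\Bs$ is the zero operator, and by uniqueness of the Schwartz kernel (Theorem \ref{2.3 kernel theorem}) we get $B^{(q)}_{s}(z,w)\equiv 0$. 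This establishes $\Plksk(z,w)\To 0$ locally uniformly in $\Cinf$ on $\Cn$. For the scaled (unweighted) kernel I would then use the relation (\ref{kernel relation 2}), $P^{(q),s}_{(k),c_k}(z,w)=e^{\phi_{(k)}(z)}\Plksk(z,w)e^{-\phi_{(k)}(w)}$, together with $\phi_{(k)}\To\phi_0$ locally uniformly in $\Cinf$: the middle factor tends to $0$ in $\Cinf$ on compacta while the exponential factors converge smoothly, so the product tends to $0$ as well.

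Within the scope of this particular statement there is no genuinely hard step — all the weight is carried by results proved earlier. The only points needing a little care are: (i) observing that the $\Cinf$ (not merely pointwise) convergence in Corollary \ref{3.3 loc conv cor} is exactly what lets us realize $B^{(q)}_{s}$ as the Schwartz kernel of $\Bs$ in Lemma \ref{4.2 Lem B op}, so that vanishing of the operator forces pointwise vanishing of the kernel; and (ii) making sure the hypothesis on $c_k$ enters through Theorem \ref{4.2 thm bd op BoxB=0} rather than merely through the weaker $\limsup c_k/k<\infty$ of Corollary \ref{3.3 loc conv cor} — it is precisely the stronger equality $\limsup c_k/k=0$ that yields $\Box^{(q)}_{0,s}\Bs u=0$, hence the range condition used above.
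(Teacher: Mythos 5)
Your proposal is correct and follows essentially the same route as the paper: pass to subsequences via Corollary \ref{3.3 loc conv cor}, identify the limit as a bounded operator into $\Ker\Box^{(q)}_{0,s}$ using Theorem \ref{4.2 thm bd op BoxB=0}, invoke $\Ker\Box^{(q)}_{0,s}=\{0\}$ from Theorem \ref{4.1 Model theorem}, and transfer to $P^{(q),s}_{(k),c_k}$ via (\ref{kernel relation 2}). The paper packages the last steps through Statement \ref{4.2 statement 2} and Remark \ref{4.2 rmk}, but the logic is identical.
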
   
In the remaining sections, we pay full attention to proving Statement \ref{4.2 statement 2} in case $p \in M(q)$.
\subsection{Asymptotic of the function case}\label{section 4.3}The discussion in Sections \ref{section 4.1} and \ref{section 4.2} is mainly in the context of localized spectral and Bergman kernels with localized Kodaira Laplacian. In this section, we would like to stay in the context of $P^{(q),s}_{(k),c_k}$ and $B^{(q),s}_{(k)}$ defined in Def.\ref{8.2} rather than the localized kernels.
First, we establish some notations. Define $\mathcal{P}^{(q)}_{(k),c_k}:L^2_{\omega_{(k)},\phi_{(k)}}(B(\sqrt{k}),\T\otimes L^{(k)})\To L^2_{\omega_{(k)},\phi_{(k)}}(B(\sqrt{k}),\T\otimes L^{(k)})$ by \[
\left(\mathcal{P}^{(q)}_{(k),c_k}u\right)(\sqrt{k}z)=\mathcal{P}^{(q)}_{k,c_k}\left(u(\sqrt{k}w)\right).
\]Denote $\mathcal{B}^{(q)}_{(k)}:=\mathcal{P}^{(q)}_{(k),0}$. Then, for $\eta\otimes s^{(k)}\in \Omega_c^{(0,q)}(B(\sqrt{k}),L^{(k)})$, we have 
\[
\mathcal{P}^{(q)}_{(k),c_k}(\eta\otimes s^{(k)})(z)=\int_{B(\sqrt{k})}P^{(q),s}_{(k),c_k}(z,w)\eta(w)dV_{\omega_{(k)}}\otimes s^{(k)}.
\]We now treat $s^{(k)}$ as the trivial section $1$ of the trivial vector bundle $\Cs\To\Cn$ restricted on $B(\sqrt{k})$ and define $\mathcal{P}^{(q),s}_{(k),c_k}:L^2_{\omega_{(k)},\phi_{(k)}}(B(\sqrt{k}),\T)\To L^2_{\omega_{(k)},\phi_{(k)}}(B(\sqrt{k}),\T)$ by
\[
\mathcal{P}^{(q),s}_{(k),c_k}u:=\int_{B(\sqrt{k})}P^{(q),s}_{(k),c_k}(z,w)u(w)dV_{\omega_{(k)}}.
\]For the case $c_k=0$, denote $\mathcal{B}^{(q),s}_{(k)}:=\mathcal{P}^{(q),s}_{(k),0}$. Recall that Assumption \ref{4.2 assumption} means \[
 P^{(q),s}_{(k),c_k}(z,w)\To B^{(q),s}(z,w):=e^{\phi_0(z)}B^{(q)}_{s}(z,w)e^{-\phi_0(w)}\quad \text{locally uniformly in }\,\Cinf.
\] Next, define $\mathcal{B}^{(q),s}:L^2_{\omega_0,\phi_0}(\Cn,\T)\To L^2_{\omega_0,\phi_0}(\Cn,\T)$ by \[\mathcal{B}^{(q),s}u(z):=\int_{\Cn}B^{(q),s}(z,w)u(w)dV_{\omega_0}.\] By Theorem \ref{4.2 Lem B op}, we have $\mathcal{B}^{(q),s}:L^2_{\omega_0,\phi_0}(\Cn,\T)\To\Ker\Box^{(q)}_{0}$ is bounded with its operator norm smaller than $1$. Moreover, Statement \ref{4.2 statement 2} is equivalent to following statement: 
\begin{statement}\label{4.2 statement 2 (1)}
 \begin{equation}
\mathcal{B}^{(q),s}u=u\quad \text{for all }\,u\in\Ker\Box^{(q)}_{0},    
\end{equation}   
\end{statement}

In this section, we focus on the case $q=0$, $p\in M(0)$ and prove the Statement \ref{4.2 statement 2 (1)}. Note that $\lambda_i>0$ for all $i=1,\cdots,n$ under the assumption $p\in M(0)$. We impose the conditions that $\limsup_{k\To\infty}\dfrac{c_k}{k}=0   
$ and \begin{equation*}
 \exists c<1 \,\text{such that }\, \liminf e^{2c\min{\lambda_i}k^{1/2}}c_k>0.
\end{equation*}  Let $\chi\in\Cinf_c(B(1),[0,1])$ be a cut-off function such that $\chi\mid_{B(\sqrt{c'})}\equiv 1$ where $c'$ is a number with $c<c'<1$. Define \[\chi_k:=\chi(\dfrac{z}{k^{1/4}}).\]

We now embark on the proof of Statement \ref{4.2 statement 2 (1)}. Given  $u\in\Ker\Box^{(0)}_{0}$, our strategy is to construct a sequence $u_{(k)}$ converging to $u$ such that 
\begin{equation*}
\mathcal{P}^{(0),s}_{(k),c_k}u_{(k)}- u_{(k)}\To 0 \quad \text{\rm and}\quad \mathcal{P}^{(0),s}_{(k),s}u_{(k)}- \mathcal{B}^{(0),s}_{s}u\To 0. 
\end{equation*} Define 
\[
u_{(k)}:=\chi_ku
\]which is clearly satisfies that $\|u_{(k)}-u\|_{\omega_0,\phi_0}\To 0$.  
By Theorem \ref{4.1 Model theorem}, we can see that 
\[
\Ker\Box^{(0)}_{0}=\text{span}\{z^{\alpha}\}_{\alpha\in\N_0^n}.
\]It is enough to check Statement \ref{4.2 statement 2 (1)} holds for the basis $\{z^{\alpha}\}_{\alpha\in\N_0^n}$ and hence we assume that $u$ is of the form $z^{\alpha}$. Now, we are going to show that $\mathcal{P}^{(0),s}_{(k),c_k} u_{(k)}-u_{(k)}\To 0$.

\begin{thm}\label{4.3 thm 2}If $u=z^{\alpha}$ for some $\alpha \in \N_0^n$, we have 
\[
\|(\mathcal{P}^{(0),s}_{(k),c_k}u_{(k)}-u_{(k)})\|_{\omega_{(k)},\phi_{(k)},B(\sqrt{k})}:=\|(\mathcal{P}^{(0),s}_{(k),c_k}u_{(k)}-u_{(k)})e^{-\phi_{(k)}}\|_{\omega_{(k)},B(\sqrt{k})}\To 0.
\]As for the case $c_k=0$ for all $k$, it also holds under the spectral gap condition 2 (Def. \ref{1.2 spectral gap 2}).
\end{thm}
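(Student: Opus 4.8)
The idea is to exhibit the cut-off $u_{(k)}=\chi_k z^\alpha$ as a near-solution of the scaled Laplacian whose $L^2$-defect decays \emph{exponentially} in $k^{1/2}$, so that the spectral projection at level $c_k$ (whose action on a form is controlled by that form's Laplacian defect) barely moves it. From the kernel identity (\ref{kernel relation 2}) and the definition of $\mathcal{P}^{(0),s}_{(k),c_k}$ one reads off $(\mathcal{P}^{(0),s}_{(k),c_k}u_{(k)}-u_{(k)})e^{-\phi_{(k)}}=(\mathcal{P}^{(0)}_{(k),c_k,s}-I)v$, where $v:=e^{-\phi_{(k)}}\chi_k z^\alpha\in\Omega^{(0,0)}_c(B(k^{1/4}))$; so it suffices to prove $\|(I-\mathcal{P}^{(0)}_{(k),c_k,s})v\|_{\omega_{(k)},B(\sqrt{k})}\To0$.

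\textbf{Step 2 (transporting the spectral estimate).} Put $\varepsilon_k:=c_k^{-1}$ when $c_k>0$ and $\varepsilon_k:=Ce^{2c(\min_i\lambda_i)k^{1/2}}$ when $c_k=0$; the relevant input is $\|(I-\mathcal{P}^{(0)}_{k,c_k})w\|^2_{\omega,k\phi}\le c_k^{-1}(\Box^{(0)}_k w\mid w)_{\omega,k\phi}$ in the first case and Def.\ \ref{1.2 spectral gap 2} in the second. I would apply this to the line-bundle lift $w=(e^{k\phi}v_k)\otimes s^k$ of $v$, where $v_k:=k^{n/2}v(\sqrt{k}\,\cdot)$ is supported in $B(k^{-1/4})\subset B(1)\subset U$ (so $w\in\Cinf_c(U,L^k)$, as Def.\ \ref{1.2 spectral gap 2} requires), and then carry the inequality through the unitary identifications (\ref{3.1 id 1})--(\ref{3.1 id 4}), the $\Box$-localization relation, and the rescalings (\ref{3.2 rescale relation})--(\ref{3.2 rescale relation 3}) — the last producing the factor $k$ below. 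Using $\Box^{(0)}_{(k),s}=\pb^*_{(k),s}\pb^{(0)}_{(k),s}$ (the second summand of the Laplacian is empty for $q=0$) and integration by parts, this yields
\[
\|(I-\mathcal{P}^{(0)}_{(k),c_k,s})v\|^2_{\omega_{(k)},B(\sqrt{k})}\le \varepsilon_k\,k\,(\Box^{(0)}_{(k),s}v\mid v)_{\omega_{(k)}}=\varepsilon_k\,k\,\|\pb^{(0)}_{(k),s}v\|^2_{\omega_{(k)}}.
\]

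\textbf{Step 3 (the defect is exponentially small; conclusion).} Since $\pb^{(0)}_{(k),s}=e^{-\phi_{(k)}}\pb\,e^{\phi_{(k)}}$ (cf.\ (\ref{3.2 witten relation})) and $z^\alpha$ is holomorphic, $\pb^{(0)}_{(k),s}v=e^{-\phi_{(k)}}z^\alpha\,\pb\chi_k$, which is supported in the annulus $A_k:=\{\sqrt{c'}\,k^{1/4}\le|z|\le k^{1/4}\}$; there $|\pb\chi_k|\lesssim k^{-1/4}$, $|z^\alpha|^2\le|z|^{2|\alpha|}\le k^{|\alpha|/2}$, $\operatorname{vol}(A_k)\lesssim k^{n/2}$, and $\omega_{(k)}$ is uniformly comparable to $\omega_0$, so $dV_{\omega_{(k)}}\lesssim dm$. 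The key point is that the refined expansion $\phi=\phi_0+O(|z|^4)$ (legitimate since $p\in M(0)$, Remark \ref{rmk coordinate chart}) gives $\phi_{(k)}(z)-\phi_0(z)=k\,O((|z|/\sqrt{k})^4)=O(|z|^4/k)=O(1)$ on $A_k$, while $\phi_0(z)=\sum_i\lambda_i|z^i|^2\ge(\min_i\lambda_i)|z|^2\ge c'(\min_i\lambda_i)k^{1/2}$ there, so $e^{-2\phi_{(k)}(z)}\lesssim e^{-2c'(\min_i\lambda_i)k^{1/2}}$ on $A_k$. Collecting these,
\[
\|\pb^{(0)}_{(k),s}v\|^2_{\omega_{(k)}}\lesssim k^{n/2}\cdot k^{-1/2}\cdot k^{|\alpha|/2}\cdot e^{-2c'(\min_i\lambda_i)k^{1/2}}=k^{(n+|\alpha|-1)/2}\,e^{-2c'(\min_i\lambda_i)k^{1/2}}.
\]
Feeding this into Step 2: if $c_k>0$, the hypothesis $\liminf_k e^{2c(\min_i\lambda_i)k^{1/2}}c_k>0$ gives $\varepsilon_k=c_k^{-1}\lesssim e^{2c(\min_i\lambda_i)k^{1/2}}$, hence $\|(I-\mathcal{P}^{(0)}_{(k),c_k,s})v\|^2_{\omega_{(k)}}\lesssim k^{(n+|\alpha|+1)/2}e^{-2(c'-c)(\min_i\lambda_i)k^{1/2}}\To0$; if $c_k=0$, then $\varepsilon_k=Ce^{2c(\min_i\lambda_i)k^{1/2}}$ and the identical bound follows. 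This gives the theorem.

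\textbf{The main obstacle} is the comparison of the two exponential rates: $\varepsilon_k$ carries growth $e^{2c(\min_i\lambda_i)k^{1/2}}$ and must be beaten by the Gaussian-weight decay $e^{-2c'(\min_i\lambda_i)k^{1/2}}$ manufactured on $\operatorname{supp}\pb\chi_k$ — which works exactly because the plateau radius of $\chi$ is $\sqrt{c'}$ with $c<c'<1$. The cut-off scale $k^{1/4}$ is also rigid: a larger scale breaks $\phi_{(k)}-\phi_0=O(1)$ (which needs $|z|^4\lesssim k$), and a smaller scale reduces the Gaussian decay below $e^{-\mathrm{const}\cdot k^{1/2}}$ and can no longer absorb $\varepsilon_k$. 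The rest is bookkeeping — checking that the spectral estimate really transports through the chain of identifications in Step 2, and tracking the powers of $k$.
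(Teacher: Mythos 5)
Your proposal is correct and follows essentially the same route as the paper's proof: rescale back to $M$ via $u_k(z)=k^{n/2}u_{(k)}(\sqrt{k}z)$, apply the spectral estimate $\|(I-\mathcal{P}^{(0)}_{k,c_k})w\|^2\le c_k^{-1}(\Box^{(0)}_kw\mid w)$ (resp.\ Def.~\ref{1.2 spectral gap 2} when $c_k=0$), use $\Box^{(0)}=\pb^*\pb$ to reduce to the $\pb$-defect, localize that defect to the annulus $\supp\pb\chi_k\subset\{\sqrt{c'}k^{1/4}<|z|<k^{1/4}\}$, and use $\phi=\phi_0+O(|z|^4)$ (hence $\phi_{(k)}-\phi_0=O(1)$ there) together with $\phi_0\ge(\min_i\lambda_i)|z|^2$ to produce the Gaussian decay $e^{-2c'(\min_i\lambda_i)k^{1/2}}$, which beats the growth of $\varepsilon_k$ since $c<c'$. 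The only cosmetic difference is that the paper swaps the weight $e^{-\phi_{(k)}}$ for $e^{-\phi_0}$ via $|e^{\phi_0-\phi_{(k)}}-1|\lesssim|z|^4/k$ before integrating, while you keep $\phi_{(k)}$ and note directly that it is $\phi_0+O(1)$ on the annulus — same content.
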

\begin{proof}
 Define $u_{k}(z):=k^{n/2}u_{(k)}(\sqrt{k}z)$.
     Observe that $u_{k}\in \Cinf_c(B(1))\subset \Cinf_c(M)$. By rescaling, we see
    \[\|(\mathcal{P}^{(0),s}_{(k),c_k} u_{(k)}-u_{(k)})e^{-\phi_{(k)}}\|_{\omega_{(k)},B(\sqrt{k})}\leq\|\mathcal{P}^{(0)}_{k,c_k}(u_k\otimes s^k)-u_k\otimes s^k\|_{\omega,k\phi}.
    \]By the property of spectral kernel, we have
    \begin{align*}
    \|\mathcal{P}^{(0)}_{k,c_k}(u_k\otimes s^k)-u_k\otimes s^k\|^2_{\omega,k\phi}\leq\dfrac{1}{c_k}\left(\Box^{(0)}_{k}u_k\otimes s^k|u_k\otimes s^k\right)_{\omega,k\phi}
    &=\dfrac{1}{c_k}\|(\pb u_k)\otimes s^k\|^2_{\omega,k\phi}\\&=\frac{k}{c_k}\|(\pb u_{(k)})e^{-\phi_{(k)}}\|^2_{\omega_{(k)}}.
    \end{align*}
    Recalling the setting that $\phi(z)=\phi_0(z)+O(|z|^4)$, we get \[
   |e^{\phi_0(z)-\phi_{(k)}(z)}-1|\lesssim |\phi_0(z)-\phi_{(k)}(z)| \lesssim \dfrac{|z|^4}{k} \quad \text{\rm for all }\, |z|\leq k^{1/4}.
    \]Because $\supp \chi_k \subset B(k^{1/4})$, we can change the metrics $e^{-\phi_{(k)}}$ and $\omega_{(k)}$ by the estimate: 
    \[
    \frac{k}{c_k}\|(\pb u_{(k)})e^{-\phi_{(k)}}\|^2_{\omega_{(k)}}\lesssim\frac{k}{c_k}\|(\pb u_{(k)})e^{-\phi_{0}}\|^2_{\omega_{(k)}}\lesssim \frac{k}{c_k}\|(\pb u_{(k)})e^{-\phi_0}\|^2_{\omega_0}.
    \]The last inequality is by (\ref{3.3 compatibility 1}). By direct computation,  
    \begin{equation}\label{123456789}
    \|(\pb u_{(k)})e^{-\phi_0}\|^2_{\omega_0}\lesssim \int_{\sqrt{c'}k^{1/4}<|z|<k^{1/4}}|\pb\chi_k|^2 |z^{\alpha}|^2e^{-2\phi_0}dm\lesssim k^N e^{-c'\cdot 2\min{\lambda_i}\cdot k^{1/2}},    
    \end{equation}
    where $N$ is an integer depends on $\alpha$. Hence, by the condition $\liminf e^{2c\min{\lambda_i}\cdot k^{1/2}}c_k>0$, we obtain 
    
    \[
    \frac{k}{c_k}\|(\pb u_{(k)})e^{-\phi_0}\|_{\omega_0}\lesssim k^{N+1}e^{2(c-c')\min \lambda_i\cdot k^{1/2}}\To 0 \quad \text{since }\, c<c'.
    \]
   For the Bergman kernel case $c_k=0$, by the spectral gap condition \ref{1.2 spectral gap 2}, we repeat that
    \begin{align*}
     \|(\mathcal{B}^{(0),s}_{(k)}u_{(k)}-u_{(k)})&e^{-\phi_{(k)}}\|^2_{\omega_{(k)},B(\sqrt{k})}\leq\|\mathcal{B}^{(0)}_{k}(u_k\otimes s^k)-(u_k\otimes s^k)\|^2_{\omega,k\phi} \\&\lesssim e^{2c\min{\lambda_i}\cdot k^{1/2}}\left(\Box^{(0)}_{k}u_k\otimes s^k \mid u_k\otimes s^k\right)_{\omega,k\phi}\lesssim k^{N+1}e^{2(c-c')\min \lambda_i\cdot k^{1/2}}\To 0.   
    \end{align*}
   
\end{proof}
Before proving the main theorem for the function case, we need another lemma
about convergence. The following lemma is in the context of scaled localized spectral or Bergman kernels and is applicable to the $(0,q)$-forms cases for all $q=0,\cdots,n$.
\begin{lem}\label{4.3 lem 3}
 Let $u=z^{\alpha}_qe^{-\sum |\lambda_i||z^i|^2}d\zb^I$ for some $\alpha\in\N_0^n$ and $I\in\mathcal{J}_{q,n}$. For any $v\in \Omega^{(0,q)}_c(\Cn)$,
 \[
 \left(v \mid \Plks\chi_ku -\Bs u\right)_{\omega_0} \rightarrow 0 \quad \text{as }\,k\To\infty.
 \] 
\end{lem}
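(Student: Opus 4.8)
The plan is to split the difference $\Plks\chi_k u - \Bs u$ into two pieces and show each tests to zero against a fixed $v\in\Omega^{(0,q)}_c(\Cn)$. Write
\[
\left(v \mid \Plks\chi_k u - \Bs u\right)_{\omega_0} = \left(v \mid \Plks(\chi_k u - u)\right)_{\omega_0} + \left(v \mid \Plks u - \Bs u\right)_{\omega_0}.
\]
Strictly speaking $u$ is not compactly supported, so for the second bracket I would first pick a cutoff $\rho\in\Cinf_c(\Cn)$ with $\rho\equiv 1$ on a large ball containing $\supp v$ and containing the region where the kernels have essentially converged; the tail $u-\rho u$ contributes negligibly because $u$ decays like a Gaussian (recall $\lambda_i>0$ for $i>q$ and the form of $u$) while $\Plks$ has operator norm bounded by $1$ uniformly in $k$ by Lemma~\ref{3.3 thm pf 1.5}. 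On $\rho u$, which is compactly supported, the local uniform convergence $\Plksk(z,w)\To \Bsk(z,w)$ in $\Cinf$ from Assumption~\ref{4.2 assumption} together with the estimate (\ref{3.3 compatibility 2}) comparing $\left(\cdot\mid\cdot\right)_{\omega_{(k)}}$ with $\left(\cdot\mid\cdot\right)_{\omega_0}$ gives
\[
\left(v \mid \Plks(\rho u)\right)_{\omega_0} = \left(v \mid \Plks(\rho u)\right)_{\omega_{(k)}} + o(1) \To \left(v \mid \Bs(\rho u)\right)_{\omega_0},
\]
since the integrals are over fixed compact sets $\supp v \times \supp(\rho u)$ on which everything converges uniformly; and $\Bs(\rho u) \To \Bs u$ again by the Gaussian tail bound.

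For the first bracket, $\chi_k u - u = (\chi_k - 1)u$ is supported in $\{|z|>\sqrt{c'}\,k^{1/4}\}$ and, because $u$ is a Gaussian times a polynomial, $\|(\chi_k-1)u\|_{\omega_0} \lesssim k^{M}e^{-c''\,k^{1/2}}$ for suitable constants $M$ and $c''>0$ depending on $[\lambda]$ and $\alpha$ — this is the same type of estimate as (\ref{123456789}). Combined with the uniform bound $\|\Plks w\|_{\omega_{(k)}}\leq \|w\|_{\omega_{(k)}}$ from Lemma~\ref{3.3 thm pf 1.5} and the metric comparison (\ref{3.3 compatibility 1}), this forces
\[
|\left(v \mid \Plks(\chi_k u - u)\right)_{\omega_0}| \lesssim \|v\|_{\omega_0}\,\|(\chi_k-1)u\|_{\omega_{0}} \To 0.
\]
Adding the two pieces yields $\left(v \mid \Plks\chi_k u - \Bs u\right)_{\omega_0}\To 0$.

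The main subtlety — not so much an obstacle as the point requiring care — is the non-compact support of $u$: one must interleave the Gaussian decay of $u$ with the fact that the kernels only converge \emph{locally} uniformly, so the comparison ball $\supp\rho$ has to be chosen large but fixed (independent of $k$), and then one lets it grow only after taking $k\To\infty$. A secondary technical point is that $\Bs u$ a priori makes sense only for $u\in L^2_{\omega_0}(\Cn,\T)$, which our $u$ indeed is, so Lemma~\ref{4.2 Lem B op} applies and $\Bs$ is bounded with norm $\leq 1$; the same lemma's proof technique (dominating by $\Plks$ and using (\ref{3.3 compatibility 2})) is essentially what is reused here. No new geometric input is needed beyond what is already assembled in Sections~\ref{section 3.3} and \ref{section 4.2}.
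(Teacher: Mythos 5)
Your proof is correct and rests on the same ingredients the paper uses: the uniform operator norm bounds for $\Plks$ from Lemma~\ref{3.3 thm pf 1.5} (together with the metric comparison (\ref{3.3 compatibility 1})--(\ref{3.3 compatibility 2})) and for $\Bs$ from Lemma~\ref{4.2 Lem B op}, the locally uniform kernel convergence of Assumption~\ref{4.2 assumption}, and the Gaussian $L^2$-decay of $u$ to kill the tails. The paper's decomposition is just slightly more economical: it writes $u=\chi_{n_0}u+(1-\chi_{n_0})u$ for a fixed $n_0$ and applies the single family of uniformly bounded operators $\Plks\chi_k-\Bs$, so the factor $\chi_k$ is absorbed automatically on $\chi_{n_0}u$ once $k$ is large (as $\chi_k\chi_{n_0}=\chi_{n_0}$), which avoids your separate first bracket $\Plks(\chi_k-1)u$ and the extra cutoff $\rho$.
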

\begin{proof}
    Let $v\in\Omega^{(0,q)}_c(\Cn)$. For any fixed positive integer $n_0\in\N$, observe that for each $k\in\N$ with $k>n_0$, we can estimate that 
    \begin{multline*}
      |\left(v \mid \mathcal{P}^{(q)}_{(k),c_k,s}\chi_ku -\mathcal{B}^{(q)}_{s}u\right)_{\omega_0}|\leq |\left(v\mid (\mathcal{P}^{(q)}_{(k),c_k,s}\chi_k-\mathcal{B}^{(q)}_{s})\chi_{n_0} u\right)_{\omega_0}| \\+ \|v\|_{\omega_0}\|(\mathcal{P}^{(q)}_{(k),c_k,s}\chi_k-\mathcal{B}^{(q)}_{s})(\chi_{n_0}-1)u\|_{\omega_0}.
      \end{multline*}
      Moreover, by Lemma \ref{3.3 thm pf 1.5}, $\mathcal{P}^{(q)}_{(k),c_k,s} \chi_k$ are uniformly bounded linear functionals on the space $L^2_{\omega_0}(\Cn,\T)$. For this reason, $\mathcal{P}^{(q)}_{(k),c_k,s}\chi_k-\mathcal{B}^{(q)}_{s}$ are also uniformly bounded linear functionals on $L^2_{\omega_0}(\Cn,\T)$. \par Given an arbitrary number $\varepsilon>0$, since $\chi_{n_0}u\To u$ in $L^2_{\omega_0}(\Cn,\T)$ as $n_0\To\infty$, we can fix $n_0$ large enough such that
    \[
    \|v\|_{\omega_0}\|(\mathcal{P}^{(q)}_{(k),c_k,s}\chi_k-\mathcal{B}^{(q)}_{s})(\chi_{n_0}-1)u\|_{\omega_0}<\varepsilon/2 \quad \text{for all}\,k\in\N.
    \]
    Furthermore, by the assumption that $P^{(q)}_{(k),c_k,s}(z,w) \rightarrow B^{(q)}_{s}(z,w)$ locally uniformly, 
    \[
    \mid\left(v\mid (\mathcal{P}^{(q)}_{(k),c_k,s}\chi_k-\mathcal{B}^{(q)}_{s})\chi_{n_0} u\right)_{\omega_0}\mid \rightarrow 0 \quad \text{as }\, k\To\infty.
    \] Finally, combining the above estimates, we obtain $|\left(v |\mathcal{P}^{(q)}_{(k),c_k,s}\chi_ku -\mathcal{B}^{(q)}_{s}u\right)_{\omega_0}|<\varepsilon$ for large enough $k$.

\end{proof}
To apply the Lemma in the context of $\mathcal{P}^{(0),s}_{(k),c_k}$ and $\mathcal{B}^{(0),s}_{(k)}$, we simply deduce the following corollary by the relation (\ref{kernel relation 2}) and the fact that $\phi_{(k)}\To\phi_0$ locally uniformly.
\begin{cor}\label{8.222}
    Let $u=z^{\alpha}$ for some $\alpha\in\N_0^n$. For any $v\in\Cinf_c(\Cn)$,\[
    \left(v|\mathcal{P}^{(q),s}_{(k),c_k}u_{(k)}-\mathcal{B}^{(q),s}u\right)_{\omega_0}\To 0 \,\, \text{as } \, k\To 0.
    \]
\end{cor}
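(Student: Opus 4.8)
The plan is to transfer Lemma \ref{4.3 lem 3} --- stated for the scaled \emph{localized} kernels $\Plks$, $\Bs$ --- over to the operators $\mathcal{P}^{(q),s}_{(k),c_k}$, $\mathcal{B}^{(q),s}$ of Section \ref{section 4.3}, using the relation (\ref{kernel relation 2}) between the two families of kernels together with the local uniform convergence $\phi_{(k)}\To\phi_0$. Throughout $q=0$, $u=z^{\alpha}$ and $u_{(k)}=\chi_k z^{\alpha}$.

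First I would rewrite both operators in localized form. Since $\phi_{(k)}$ and $\phi_0$ are real scalars, (\ref{kernel relation 2}) gives
\[
\mathcal{P}^{(q),s}_{(k),c_k}(\chi_k z^{\alpha})=e^{\phi_{(k)}}\,\Plks\!\left(\chi_k z^{\alpha}e^{-\phi_{(k)}}\right),\qquad
\mathcal{B}^{(q),s}(z^{\alpha})=e^{\phi_0}\,\Bs\!\left(z^{\alpha}e^{-\phi_0}\right),
\]
so that, pairing against $v$ and moving the weights into the first slot,
\[
\left(v\mid \mathcal{P}^{(q),s}_{(k),c_k}u_{(k)}-\mathcal{B}^{(q),s}u\right)_{\omega_0}
=\left(e^{\phi_{(k)}}v\mid \Plks(\chi_k z^{\alpha}e^{-\phi_{(k)}})\right)_{\omega_0}-\left(e^{\phi_0}v\mid \Bs(z^{\alpha}e^{-\phi_0})\right)_{\omega_0}.
\]
I would then add and subtract $\bigl(e^{\phi_0}v\mid \Plks(\chi_k z^{\alpha}e^{-\phi_0})\bigr)_{\omega_0}$, and also replace $e^{\phi_{(k)}}v$ by $e^{\phi_0}v$ in the first term, splitting the difference into three pieces.

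The three pieces are disposed of as follows. The piece carrying $e^{\phi_{(k)}}v-e^{\phi_0}v$ vanishes because $v$ has compact support and $e^{\phi_{(k)}}\To e^{\phi_0}$ uniformly there, while $\Plks(\chi_k z^{\alpha}e^{-\phi_{(k)}})$ stays bounded in $L^2_{\omega_0}$ uniformly in $k$: indeed $\Plks$ is uniformly bounded by Lemma \ref{3.3 thm pf 1.5}, and on $\supp\chi_k\subset B(k^{1/4})$ one has $e^{\phi_0-\phi_{(k)}}=O(1)$ (as in the proof of Theorem \ref{4.3 thm 2}), so $\|\chi_k z^{\alpha}e^{-\phi_{(k)}}\|_{\omega_0}\lesssim\|z^{\alpha}e^{-\phi_0}\|_{\omega_0}<\infty$. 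The piece carrying $\chi_k z^{\alpha}(e^{-\phi_{(k)}}-e^{-\phi_0})$ is $O(k^{-1})$: on $B(k^{1/4})$ the bound $|e^{\phi_0-\phi_{(k)}}-1|\lesssim|z|^4/k$ (valid since $p\in M(0)$ gives $\phi(z)=\phi_0(z)+O(|z|^4)$) and finiteness of the Gaussian moments of $\phi_0$ give $\|\chi_k z^{\alpha}(e^{-\phi_{(k)}}-e^{-\phi_0})\|_{\omega_0}^2\lesssim k^{-2}$, and again the uniform bound on $\Plks$ finishes it. The remaining piece $\bigl(e^{\phi_0}v\mid \Plks(\chi_k z^{\alpha}e^{-\phi_0})-\Bs(z^{\alpha}e^{-\phi_0})\bigr)_{\omega_0}$ is exactly Lemma \ref{4.3 lem 3} in the case $q=0$, with $I=\emptyset$, test form $e^{\phi_0}v\in\Cinf_c(\Cn)$, and $u=z^{\alpha}e^{-\phi_0}=z^{\alpha}_0 e^{-\sum_i|\lambda_i||z^i|^2}$ (here $\phi_0=\sum_i|\lambda_i||z^i|^2$ because all $\lambda_i>0$); hence it tends to $0$. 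Summing, the corollary follows.

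I do not expect a real obstacle: the argument is essentially bookkeeping, converting Lemma \ref{4.3 lem 3} through the substitution $\phi_{(k)}\leftrightarrow\phi_0$. The only delicate point is the uniform $L^2_{\omega_0}$-control of the family $\chi_k z^{\alpha}e^{-\phi_{(k)}}$, which works because $\chi_k$ is supported in $B(k^{1/4})$, a region where $\phi_{(k)}$ is uniformly comparable to $\phi_0$, so the Gaussian factor $e^{-2\phi_0}$ dominates the polynomial growth for every $k$.
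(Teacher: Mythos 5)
Your argument is correct and is essentially the paper's intended proof made explicit: the paper states only that the corollary follows from the kernel relation (\ref{kernel relation 2}) together with $\phi_{(k)}\To\phi_0$ locally uniformly, and you spell out exactly the three-term decomposition this entails, disposing of the two error terms via the uniform $L^{2}$-boundedness of $\Plks$ (Lemma \ref{3.3 thm pf 1.5}) and the $O(|z|^{4}/k)$ bound on $\phi_{(k)}-\phi_{0}$ (which is where $p\in M(0)$ and the choice $\phi=\phi_0+O(|z|^4)$ are used), and reducing the third piece to Lemma \ref{4.3 lem 3} with test form $e^{\phi_0}v$ and $u=z^{\alpha}e^{-\phi_0}=z^{\alpha}_0 e^{-\sum_i|\lambda_i||z^i|^2}$.
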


Now, we are able to complete Statement \ref{4.2 statement 2} in Section \ref{section 4.2} for the function case when $p\in M(0)$.
\begin{thm}\label{4.3 main thm 1}
If $c_k$ satisfies the conditions $\limsup_{k\To\infty}\dfrac{c_k}{k}=0$ and \[\liminf_{k\To\infty} e^{2c\min{\lambda_i}\cdot k^{1/2}}c_k>0\] for some constant $c<1$, then
\begin{equation*}
    B^{(0),s} u=u \quad \text{\rm for all }\,u \in \Ker \Box^{(0),s}_{0}. 
\end{equation*}As for the Bergman kernel case $c_k=0$ for all $k$, it also holds under the spectral gap condition of suitable exponential rate (cf. Def. \ref{1.2 spectral gap 2}).
\end{thm}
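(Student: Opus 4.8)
The plan is to reduce the asserted identity (which is Statement~\ref{4.2 statement 2 (1)} for $q=0$) to a check on the explicit monomial basis of $\Ker\Box^{(0)}_0$ furnished by Theorem~\ref{4.1 Model theorem}, and then to glue together two facts that are already available: the near--fixed--point estimate of Theorem~\ref{4.3 thm 2} and the weak convergence of Corollary~\ref{8.222}. Since $p\in M(0)$ forces $\lambda_i>0$ for every $i$, Theorem~\ref{4.1 Model theorem} shows that $\Ker\Box^{(0)}_{0,s}$ has orthonormal basis proportional to $\{z^{\alpha}e^{-\phi_0}\}_{\alpha\in\N_0^n}$, hence under the unitary identification $\eta\leftrightarrow\eta e^{-\phi_0}$ the space $\Ker\Box^{(0)}_0\subset L^2_{\omega_0,\phi_0}(\Cn,\Cs)$ is the closed span of the monomials $\{z^{\alpha}\}$ (each of which indeed lies in $L^2_{\omega_0,\phi_0}$ because $e^{-2\phi_0}$ is a genuine Gaussian). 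Since $\mathcal{B}^{(0),s}$ is a bounded operator with norm $\le 1$ by Lemma~\ref{4.2 Lem B op}, and the identity map is bounded, it is enough to prove $\mathcal{B}^{(0),s}z^{\alpha}=z^{\alpha}$ for each fixed $\alpha$; continuity on the closed span then gives the general case.

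Next I would fix $u=z^{\alpha}$ and set $u_{(k)}:=\chi_k u$, which satisfies $\|u_{(k)}-u\|_{\omega_0,\phi_0}\To 0$. The two inputs are: Theorem~\ref{4.3 thm 2} (respectively its Bergman--case addendum under Definition~\ref{1.2 spectral gap 2} when $c_k\equiv 0$), which gives $\|(\mathcal{P}^{(0),s}_{(k),c_k}u_{(k)}-u_{(k)})e^{-\phi_{(k)}}\|_{\omega_{(k)},B(\sqrt k)}\To 0$; and Corollary~\ref{8.222}, which gives $(v\mid \mathcal{P}^{(0),s}_{(k),c_k}u_{(k)}-\mathcal{B}^{(0),s}u)_{\omega_0}\To 0$ for every $v\in\Cinf_c(\Cn)$. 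The bridge between the first (weighted, global) convergence and a weak statement against $v$ in the \emph{unweighted} inner product is a routine norm comparison: on the fixed compact set $\supp v$ one has $\phi_{(k)}\To\phi_0$ and $\omega_{(k)}\To\omega_0$ uniformly (cf.\ (\ref{3.1 ob phi}), (\ref{3.1 ob omega}), (\ref{3.3 compatibility 2})) and $\supp v\subset B(\sqrt k)$ for large $k$, so a Cauchy--Schwarz estimate on $\supp v$ yields
\[
\bigl|\bigl(v\mid \mathcal{P}^{(0),s}_{(k),c_k}u_{(k)}-u_{(k)}\bigr)_{\omega_0}\bigr|\ \lesssim\ \|ve^{\phi_0}\|_{\omega_0}\,\bigl\|(\mathcal{P}^{(0),s}_{(k),c_k}u_{(k)}-u_{(k)})e^{-\phi_{(k)}}\bigr\|_{\omega_{(k)},B(\sqrt k)}\ \To\ 0 .
\]

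Subtracting this from the Corollary~\ref{8.222} limit gives $(v\mid u_{(k)}-\mathcal{B}^{(0),s}u)_{\omega_0}\To 0$ for every $v\in\Cinf_c(\Cn)$. On the other hand $u_{(k)}=\chi_k z^{\alpha}\To z^{\alpha}$ pointwise with $|u_{(k)}|\le|z^{\alpha}|$ on $\supp v$, so dominated convergence gives $(v\mid u_{(k)})_{\omega_0}\To(v\mid u)_{\omega_0}$; combining, $(v\mid u-\mathcal{B}^{(0),s}u)_{\omega_0}=0$ for all $v\in\Cinf_c(\Cn)$. Since $u$ is a polynomial and $\mathcal{B}^{(0),s}u\in\Ker\Box^{(0)}_0$ is smooth by elliptic regularity, this forces $\mathcal{B}^{(0),s}u=u$, which is the claim; the case $c_k\equiv 0$ is identical once the Bergman--case parts of Theorem~\ref{4.3 thm 2} and Corollary~\ref{8.222} are invoked.

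The hard part is not in this assembly but upstream, in Theorem~\ref{4.3 thm 2}: there one must absorb the factor $k/c_k$ against the size of $\|(\pb u_{(k)})e^{-\phi_0}\|_{\omega_0}$, which lives on the annulus $\sqrt{c'}k^{1/4}<|z|<k^{1/4}$ where $\pb\chi_k$ is supported and is of order $k^{N}e^{-2c'\min\lambda_i\cdot k^{1/2}}$; the hypothesis $\liminf e^{2c\min\lambda_i\cdot k^{1/2}}c_k>0$ with $c<c'$ (or, for $c_k\equiv 0$, spectral gap condition~2) is exactly what makes the exponential gain beat $k/c_k$. Everything in the present theorem beyond Theorem~\ref{4.3 thm 2} is soft: boundedness of $\mathcal{B}^{(0),s}$, density of polynomials in $\Ker\Box^{(0)}_0$, and the norm comparisons recorded above.
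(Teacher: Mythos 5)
Your argument is correct and mirrors the paper's proof of Theorem~\ref{4.3 main thm 1}: both reduce by density to $u=z^{\alpha}$, use the three-term decomposition $(v\mid\mathcal{B}^{(0),s}u-u)_{\omega_0}=(v\mid\mathcal{B}^{(0),s}u-\mathcal{P}^{(0),s}_{(k),c_k}u_{(k)})_{\omega_0}+(v\mid\mathcal{P}^{(0),s}_{(k),c_k}u_{(k)}-u_{(k)})_{\omega_0}+(v\mid u_{(k)}-u)_{\omega_0}$, and invoke Corollary~\ref{8.222}, Theorem~\ref{4.3 thm 2}, and the local uniform convergences $\phi_{(k)}\To\phi_0$, $\omega_{(k)}\To\omega_0$ to kill each term. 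Your write-up merely makes explicit the Cauchy--Schwarz bridge and the density step that the paper leaves implicit.
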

\begin{proof}
   Assume that $u$ is of the form $u=z^{\alpha}$. To show $B^{(0)}_{s}u=u$, let $v\in\Cinf_{c}(\Cn)$ and observe that
    \begin{multline*}
       \left(v\mid \mathcal{B}^{(0),s} u-u\right)_{\omega_0} =\left(v\mid \mathcal{B}^{(0),s} u-\mathcal{P}^{(0),s}_{(k),c_k} u_{(k)}\right)_{\omega_0}+\left(v\mid \mathcal{P}^{(0),s}_{(k),c_k} u_{(k)}-u_{(k)}\right)_{\omega_0}\\+\left(v\mid u_{(k)}-u\right)_{\omega_0}.  
    \end{multline*} 
    By Theorem \ref{4.3 thm 2}, Corollary \ref{8.222} and the fact that $\omega_{(k)}\To\omega_0$ and $\phi_{(k)}\To\phi_0$ locally uniformly, the right-hand side of the equality above tends to zero. This means $B^{(0),s}u\equiv u$ because $v$ is arbitrary.
\end{proof}
By Remark \ref{4.2 rmk}, we obtain the main theorem for the function case when $p\in M(0)$.
\begin{thm}[main theorem for function case]\label{4.3 main thm 2}
    Suppose $c_k$ is a sequence with \[\limsup_{k\To\infty}\dfrac{c_k}{k}=0\] and $p \in M(0)$. If there exists a constant $c<1$ such that $\liminf_{k\To\infty}{e^{2c\min{\lambda_i}\cdot k^{1/2}}c_k}>0$, then 
    \begin{equation*}
        P^{(q),s}_{(k)}(z,w)\To\dfrac{\lambda_1 \cdots \lambda_n}{\pi^n}\,e^{2(\sum_{i=1}^{q}\lambda_i\zb^i w^i+\sum_{i=q+1}^{n}\lambda_iz^i\wb^i-\sum_{i=1}^{n}\lambda_i|w^i|^2)}.
        \end{equation*}locally uniformly in $\Cinf$ on $\Cn$.
    In the Bergman kernel case $c_k=0$, the convergence also holds under the 
 small spectral gap condition of suitable exponential rate in $U$ (cf. Def \ref{1.2 spectral gap 2}).
\end{thm}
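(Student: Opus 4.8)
The plan is to deduce the theorem from the machinery already assembled in Chapters \ref{chapter 3} and \ref{chapter 4}, following the synthesis sketched in Remark \ref{4.2 rmk}, specialized to the function case $q=0$ with $p\in M(0)$ (so that every $\lambda_i>0$). First I would invoke Corollary \ref{3.3 loc conv cor}: since $\limsup_{k\To\infty}c_k/k=0$ keeps $c_k/k$ bounded, every subsequence of the scaled spectral kernel $P^{(0),s}_{(k),c_k}(z,w)$ admits a further subsequence converging in $\Cinf$ locally uniformly on $\Cn$. Hence it suffices to show that \emph{every} such convergent subsequence has the same limit, and I may therefore pass to one and work under Assumption \ref{4.2 assumption}, in which $P^{(0),s}_{(k),c_k}(z,w)\To B^{(0),s}(z,w)=e^{\phi_0(z)}B^{(0)}_s(z,w)e^{-\phi_0(w)}$; the problem reduces to identifying the kernel section $B^{(0)}_s(z,w)$.

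Next I would run the operator-theoretic argument of Section \ref{section 4.2}. By Lemma \ref{4.2 Lem B op} the integral operator $\mathcal{B}^{(0)}_s$ built from $B^{(0)}_s(z,w)$ is bounded with norm $\le1$ on $L^2_{\omega_0}(\Cn,\T)$, and by Theorem \ref{4.2 thm bd op BoxB=0} — here the hypothesis $\limsup_{k\To\infty}c_k/k=0$ is used — its range lies in $\Ker\Box^{(0)}_{0,s}$. The decisive ingredient is Statement \ref{4.2 statement 2}, i.e. $\mathcal{B}^{(0)}_s u=u$ for all $u\in\Ker\Box^{(0)}_{0,s}$; this is precisely Theorem \ref{4.3 main thm 1}, valid under the exponential-rate condition $\liminf_{k\To\infty}e^{2c\min\lambda_i\cdot k^{1/2}}c_k>0$ with $c<1$ (respectively, under spectral gap condition \ref{1.2 spectral gap 2} when $c_k\equiv0$). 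Granting it, $\mathcal{B}^{(0)}_s$ is the orthogonal projection onto $\Ker\Box^{(0)}_{0,s}$, hence coincides with the model localized Bergman projection $\mathcal{B}^{(0)}_{0,s}$, and by uniqueness of Schwartz kernels $B^{(0)}_s(z,w)\equiv B^{(0)}_{0,s}(z,w)$. It then remains to unwind normalizations: by (\ref{kernel relation 2}) the subsequential limit of $P^{(0),s}_{(k),c_k}$ equals $e^{\phi_0(z)}B^{(0)}_{0,s}(z,w)e^{-\phi_0(w)}$, Theorem \ref{4.1 Model theorem} supplies $B^{(0)}_{0,s}(z,w)=\frac{|\lambda_1\cdots\lambda_n|}{\pi^n}e^{2\sum_i\lambda_iz^i\wb^i-\sum_i\lambda_i(|z^i|^2+|w^i|^2)}$ (the $q=0$ case, all $\lambda_i>0$), and substituting $\phi_0(z)=\sum_i\lambda_i|z^i|^2$ cancels the $|z^i|^2$ terms while doubling the $|w^i|^2$ terms in the exponent, yielding $\frac{\lambda_1\cdots\lambda_n}{\pi^n}e^{2(\sum_i\lambda_iz^i\wb^i-\sum_i\lambda_i|w^i|^2)}$. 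Since this limit does not depend on the subsequence, the whole sequence converges to it in $\Cinf$ locally uniformly; the Bergman case $c_k\equiv0$ is identical, with spectral gap condition \ref{1.2 spectral gap 2} playing the role of the estimate $\|(I-\Pk)u\|^2\le c_k(\Box^{(0)}_ku\mid u)$.

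The real content sits entirely in Theorem \ref{4.3 main thm 1} / Statement \ref{4.2 statement 2}, which is where the exponential-rate assumption on $c_k$ (or spectral gap condition \ref{1.2 spectral gap 2}) is genuinely exploited: for $u=z^\alpha$ one sets $u_{(k)}=\chi_k u$ with $\chi_k=\chi(z/k^{1/4})$, uses $\|(I-\mathcal{P}^{(0)}_{k,c_k})(u_k\otimes s^k)\|^2\le c_k^{-1}(\Box^{(0)}_k(u_k\otimes s^k)\mid u_k\otimes s^k)$, and exploits that the Gaussian decay $k^{N}e^{-2c'\min\lambda_i\cdot k^{1/2}}$ of $\|\pb u_{(k)}\,e^{-\phi_0}\|^2$ for some $c<c'<1$ beats the growth $k/c_k$, together with Lemma \ref{4.3 lem 3} to match the limit with $\mathcal{B}^{(0)}_s u$. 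Within the present theorem, by contrast, the only delicate points are bookkeeping ones: checking that the subsequence reduction is legitimate (Corollary \ref{3.3 loc conv cor} together with Assumption \ref{4.2 assumption}), keeping straight the passage between $\mathcal{B}^{(0)}_s$ and its non-localized avatar under the identifications of Section \ref{section 3.1}, and the elementary simplification of the Gaussian exponent above.
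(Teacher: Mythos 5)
Your proposal is correct and follows precisely the synthesis the paper itself carries out via Remark \ref{4.2 rmk}: Corollary \ref{3.3 loc conv cor} for the subsequence reduction, Lemma \ref{4.2 Lem B op} and Theorem \ref{4.2 thm bd op BoxB=0} for the operator-theoretic framing, Theorem \ref{4.3 main thm 1} (i.e.\ Statement \ref{4.2 statement 2}) as the decisive ingredient, and then Theorem \ref{4.1 Model theorem} together with the identity (\ref{kernel relation 2}) for the final exponent bookkeeping. The route and the key steps match the paper's own argument.
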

\begin{rmk}
 The proof in this section is not valid for the $(0,q)$-forms if $q\neq 0$. The reason is that for $|z|<c'k^{1/4}$ and $u\in\Ker \Box^{(q)}_{0}$, the equation \[\pb^*_{k}u(\sqrt{k}z)=0\] may not be true if $q \neq 0$. In the context of localized Kodaira Laplacian, we need to adjust $u$ from the space $\Ker \Box^{(q)}_{0,s}$ to the space $\Ker \Box^{(q)}_{(k),s}$. It is natural to orthogonally project $u$ from $\Ker \Box^{(0)}_{0,s}$ into space $\Ker \Box^{(q)}_{(k),s}$. However, we encounter a difficulty as we lack information about the Bergman projection corresponding to $\Box^{(q)}_{(k),s}$. One potential solution is to extend the Laplacian $\Box^{(q)}_{(k),s}$ to $\Box^{(q)\sim}_{(k),s}$ defined on the whole $\Cn$, where the Bergman kernel with respect to the extended Laplacian $\Box^{(q)\sim}_{(k),s}$ is tractable. This is the main idea of Section  \ref{section 4.4} and Section  \ref{section 4.5}.
 \end{rmk}
\newpage
\subsection{The spectral gap of the extended Laplacian on $\Cn$}\label{section 4.4}
In this section, we will extend the localized scaled Laplacian $\Box^{(q)}_{(k),s}$ which is defined on $B(\sqrt{k})$ to the whole $\Cn$. The extended localized Laplacian is identical to $\Box^{(q)}_{(k),s}$ in $B(k^{\epsilon})$ where $\epsilon$ will be determined later in Section \ref{section 4.5}. \par
From now on, we fix a cut-off function denoted by $\chi \in \Cinf_c(\Cn)$ such that its support is contained within the ball $B(2)$, and is identical to $1$ on the ball $B(1)$.  Let us choose a number $\epsilon$ such that $0<\epsilon <1/6$ and define the extended metric data on $\Cn$ by
\begin{equation*}
    \tphi_{(k)}(z):=\chi(\dfrac{z}{k^{\epsilon}})\phi_{(k)}(z)+\left(1-\chi(\dfrac{z}{k^{\epsilon}})\right)\phi_0(z)
\end{equation*} and the extended Hermitian form by 
\begin{equation*}
    \tomega_{(k)}(z):=\chi(\dfrac{z}{k^{\epsilon}})\omega_{(k)}(z)+\left(1-\chi(\dfrac{z}{k^{\epsilon}})\right)\omega_0(z). 
\end{equation*}Recall the observations (\ref{3.1 ob phi}) and (\ref{3.1 ob omega}). Since $\epsilon<1/6$, we have the uniform convergences
\begin{equation*}
    \| \tphi_{(k)}-\phi_0\|_{\Ct} \rightarrow 0 \quad \text{\rm and } \quad \| \tomega_{(k)}-\omega_0\|_{\Ct} \rightarrow 0.
\end{equation*}Denote 
\begin{align*}
\tpb^{(q)}_{(k),s}:\Omega^{(0,q)}(\Cn) \To \Omega^{(0,q+1)}(\Cn)\quad;\quad
\tpb^{(q)}_{(k),s}:\Omega^{(0,q)}(\Cn) \To \Omega^{(0,q-1)}(\Cn)
\end{align*} to be the localized Cauchy-Riemann operator and its formal adjoint which are given by
\begin{align*}
\tpb^{(q)}_{(k),s}&=\pb^{(q)}+(\pb \tphi_{(k)})\wedge\cdot;\\
  \tpb^{*,(q)}_{(k),s}&=\pb^{*,(q)}_{\tomega_{(k)}}+(\pb \tphi_{(k)})\wedge^*_{\tomega_{(k)}},
\end{align*}respectively. Denote
\[\Box^{(q)\sim}_{(k),s}=\tpb_{(k),s}^*\tpb_{(k),s}+\tpb_{(k),s}\tpb_{(k),s}^*:\Dom{\Tilde{\Box}^{(q)\sim}_{(k),s}}\subset L^2_{\omega_0}(\Cn,\T) \rightarrow L^2_{\omega_0}(\Cn,\T)\] as the Gaffney extension of the localized Kodaira Laplacian with respect to the Hermitian form $\tomega_{(k)}$ and the weight function $\tphi_{(k)}$. It follows immediately from the constructions that $\pb_{(k),s}\equiv\tpb_{(k),s}$, 
 $\pb^*_{(k),s}\equiv\tpb^*_{(k),s}$ and $\Box^{(q)\sim}_{(k),s}\equiv\Box^{(q)}_{(k),s}$ in $B(k^{\epsilon})$. Reasonably, we call the $\Box^{(q)\sim}_{(k),s}$ \textbf{extended localized  Laplacian}.\\  \par 
We suppose $\lambda_i <0$ for all $i=1,\cdots ,q_0$ ; $\lambda_i>0$ for all $i=q_0+1,\cdots ,n$. Then there exists a constant $c>0$ such that for all $z\in\Cn$, 
\begin{equation}\label{4.4 phi est}
    \dfrac{\p^2 \tphi_{(k)}}{\p z^i \p \zb^i}(z)<-c\quad \forall \, i=1,\cdots ,q_0
\quad \text{\rm and} \quad 
     \dfrac{\p^2 \tphi_{(k)}}{\p z^i \p \zb^i}(z)>c\quad \forall \, i=q_0+1,\cdots ,n.
\end{equation}The following results tell us these estimates  create a uniform lower bound of the first eigen-values of $\Box^{(q)\sim}_{(k),s}$. 
\begin{lem}\label{4.4 lem 1}
    For $q \neq q_0$, there is a constant $c>0$ such that for all $u \in \Dom \Box^{(q)\sim}_{(k),s}$,
    \begin{equation*}\left(\Box^{(q)\sim}_{(k),s}u\mid u\right)_{\tomega_{(k)}}=\|\tpb_{(k),s}u\|^2_{\tomega_{(k)}}+ \|\tpb^{*}_{(k),s}u\|^2_{\tomega_{(k)}}>c\|u\|^2_{\tomega_{(k)}}.
    \end{equation*}Therefore, $\|\Box^{(q)\sim}_{(k),s}u\|_{\tomega_{(k)}}>c\|u\|_{\tomega_{(k)}}$.
\end{lem}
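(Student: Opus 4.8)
The plan is to establish the Bochner--Kodaira--Nakano--Hörmander-type estimate adapted to the localized (Witten-deformed) setting. Since $\tomega_{(k)}$ is a perturbation of the flat form $\omega_0$ and the weight $\tphi_{(k)}$ has the uniform concavity/convexity (\ref{4.4 phi est}), the curvature term in the Bochner identity for $\Box^{(q)\sim}_{(k),s}$ is, up to lower-order errors coming from $\|\tomega_{(k)}-\omega_0\|_{\Ct}\to 0$, governed by the eigenvalues of $\dot\Theta$, i.e.\ the constants $\tfrac{\p^2\tphi_{(k)}}{\p z^i\p\zb^i}$.

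First I would reduce to $u\in\Omega^{(0,q)}_c(\Cn)$ by the usual density/Friedrichs-mollifier argument (valid for the Gaffney extension, cf.\ \cite[Proposition 3.1.2]{book01}), so that the integration by parts below is legitimate. Next I would work in a framework where the metric is exactly $\omega_0$; more precisely, using the compatibility (\ref{3.3 compatibility 1}) and (\ref{3.3 compatibility 2}) of the norms $\|\cdot\|_{\tomega_{(k)}}$ and $\|\cdot\|_{\omega_0}$, and the fact that all coefficients of $\tpb_{(k),s}$, $\tpb^*_{(k),s}$ differ from the ones built from $\omega_0$ by quantities that tend to $0$ in $\Ct$, it suffices to prove the inequality with $\tomega_{(k)}$ replaced by $\omega_0$ and an error term $o(1)\|u\|^2_{\omega_0}$ that can be absorbed. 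With the flat metric, $\pb^{*}_{\omega_0}$ has the simple form given after Lemma \ref{3.2 Lemma Lap} (and used explicitly in the proof of Proposition \ref{4.1 prop hi}), so a direct computation of $\|\tpb_{(k),s}u\|^2_{\omega_0}+\|\tpb^{*}_{(k),s}u\|^2_{\omega_0}$ for $u=\sum_I u_I d\zb^I$ yields, after integrating by parts to commute $\p/\p z^i$ past $\p/\p\zb^j$, the Bochner-type identity
\begin{equation*}
\|\tpb_{(k),s}u\|^2_{\omega_0}+\|\tpb^{*}_{(k),s}u\|^2_{\omega_0}=\sum_{I}\sum_{i}\left\|\Bigl(\tfrac{\p}{\p\zb^i}+c_i^{(k)}(z)\Bigr)u_I\right\|^2_{\omega_0}+\sum_{I}\Bigl(\sum_{i\in I}\mu_i^{(k)}-\sum_{i\notin I}\mu_i^{(k)}\Bigr)\|u_I\|^2_{\omega_0},
\end{equation*}
where $\mu_i^{(k)}:=2\,\tfrac{\p^2\tphi_{(k)}}{\p z^i\p\zb^i}$ (so $\mu_i^{(k)}<-2c$ for $i\le q_0$ and $\mu_i^{(k)}>2c$ for $i>q_0$ by (\ref{4.4 phi est})), and the first sum is manifestly nonnegative; this is essentially the model computation of Proposition \ref{4.1 prop hi} with $\phi_0$ replaced by $\tphi_{(k)}$.

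The key combinatorial point is then: for $q\neq q_0$, the multilinear-algebra inequality $\sum_{i\in I}\mu_i^{(k)}-\sum_{i\notin I}\mu_i^{(k)}\ge 2c$ for every $I\in\mathcal{J}_{q,n}$. Indeed, writing $S:=\{1,\dots,q_0\}$, one has $\sum_{i\in I}\mu_i-\sum_{i\notin I}\mu_i=\sum_i|\mu_i|\bigl(\#(I\setminus S)+\#(S\setminus I)\bigr)/\!\dots$ --- more cleanly, each index $i\le q_0$ contributes $+|\mu_i|$ if $i\notin I$ and $-|\mu_i|$ if $i\in I$, while each $i>q_0$ contributes $+|\mu_i|$ if $i\in I$ and $-|\mu_i|$ if $i\notin I$; the total is minimized by taking $I=S$, which is impossible since $|I|=q\neq q_0=|S|$, so at least one index is ``mismatched'' and the sum is at least $2\min_i|\mu_i|\ge 4c$. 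This gives $\left(\Box^{(q)\sim}_{(k),s}u\mid u\right)_{\omega_0}\ge 2c\|u\|^2_{\omega_0}$, and after restoring $\tomega_{(k)}$ and absorbing the $o(1)$ errors we get the claimed bound (with a slightly smaller $c$, uniform in large $k$). The final estimate $\|\Box^{(q)\sim}_{(k),s}u\|_{\tomega_{(k)}}>c\|u\|_{\tomega_{(k)}}$ then follows from Cauchy--Schwarz: $c\|u\|^2\le\left(\Box^{(q)\sim}_{(k),s}u\mid u\right)\le\|\Box^{(q)\sim}_{(k),s}u\|\,\|u\|$.

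The main obstacle I expect is making the perturbation argument uniform in $k$: one must check that the error terms arising from $\tomega_{(k)}-\omega_0$ and from the first-order coefficients $c_i^{(k)}$ in the integration by parts are genuinely $o(1)$ times $\|u\|^2_{\omega_0}$ \emph{with constants independent of $k$ and of the support of $u$} (which may run off to infinity as $k\to\infty$, since $u\in\Omega^{(0,q)}_c(\Cn)$ but with no fixed support). This is where one uses crucially that outside $B(k^{\epsilon})$ we have $\tphi_{(k)}\equiv\phi_0$ and $\tomega_{(k)}\equiv\omega_0$ exactly, so the perturbation is supported in $B(2k^{\epsilon})$ and there the $\Ct$-smallness (\ref{3.1 ob phi}), (\ref{3.1 ob omega}) provides a bound $C(|z|^3+1)/\sqrt{k}\le C'k^{3\epsilon}/\sqrt{k}=o(1)$ since $\epsilon<1/6$; the integration-by-parts boundary-type terms vanish because $u$ has compact support. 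Handling the cross terms between the positive ``gradient'' part and the error carefully (Cauchy--Schwarz with a small parameter, absorbing half of the gradient term) completes the argument.
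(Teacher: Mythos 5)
Your overall architecture (reduce to $\omega_0$ by the $\Ct$-uniform convergence of $\tomega_{(k)}$ and $\tphi_{(k)}$, get a direction-wise commutator estimate, argue by density) is the right family of ideas, and the error-absorption discussion at the end is exactly the bookkeeping that is needed. But there is a genuine error in the key step. The ``Bochner-type identity'' you write down, with the curvature contribution
\[
\sum_{I}\Bigl(\sum_{i\in I}\mu_i^{(k)}-\sum_{i\notin I}\mu_i^{(k)}\Bigr)\|u_I\|^2_{\omega_0},
\]
is the result of splitting the quadratic form \emph{symmetrically} between $\tpb_{(k),s}$ and $\tpb^*_{(k),s}$ (equivalently, averaging the two one-dimensional commutator identities). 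That symmetric split gives a curvature term which is \emph{not} bounded below by a positive multiple of $\|u\|^2$ when $q\neq q_0$: take $n=2$, $q_0=2$ (both $\mu_1,\mu_2<-2c$), $q=1$, $\mu_1=\mu_2$; then for $I=(1)$ one gets $\mu_1-\mu_2=0$. Likewise $n=2$, $q_0=1$, $q=0$, $\mu_1=-\mu_2$ gives $0$. So the combinatorial claim ``the sum is at least $2\min_i|\mu_i|\ge 4c$'' is false, and in fact the minimum of $\sum_{i\in I}\mu_i-\sum_{i\notin I}\mu_i$ over $|I|=q$ with $q\neq q_0$ can be nonpositive. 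The nonnegative gradient terms do not rescue this, because the combinatorial bound you need from the curvature alone simply isn't there.

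What the paper actually does is different at precisely this point. For a single component $u=f\,d\zb^I$ it picks one ``mismatched'' direction $i$ (this exists because $|I|=q\neq q_0$): either $i\notin I$ with $\lambda_i<0$, or $i\in I$ with $\lambda_i>0$. It then isolates the $i$-th piece of $\tpb_{(k),s}u$ (first case) \emph{or} of $\tpb^*_{(k),s}u$ (second case) --- not both averaged --- and integrates by parts in the single variable $z^i$ to trade $\int|\p_{\zb^i}f|^2$ for $\int|\p_{z^i}f|^2-2\int(\p^2\tphi_{(k)}/\p z^i\p\zb^i)|f|^2$, finally completing the square so that everything positive except the $-2\p^2_{z^i\zb^i}\tphi_{(k)}$ term is discarded. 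With the sign of (\ref{4.4 phi est}) this gives a \emph{strictly positive} coefficient, no matter what the other $\mu_j$ do. This asymmetric, one-direction-at-a-time argument is what makes the estimate go through; the symmetric Bochner average you propose does not. In addition, your identity implicitly assumes the complex Hessian $\p^2\tphi_{(k)}/\p z^i\p\zb^j$ is diagonal (otherwise off-diagonal curvature terms mix different $u_I$), which holds for $\phi_0$ but not for $\tphi_{(k)}$; the paper's direction-wise integration by parts only touches the diagonal entry $\p^2\tphi_{(k)}/\p z^i\p\zb^i$ for the chosen $i$, which is exactly what (\ref{4.4 phi est}) controls. You would need to replace your display by the asymmetric commutator argument and restrict, as the paper does, to one component $f\,d\zb^I$ at a time before worrying about errors from $\tomega_{(k)}-\omega_0$.
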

\begin{proof}
    Note that 
\begin{align}
 \label{4.4 lem 1 (1)}    
\|\tpb_{(k),s}u\|^2_{\tomega_{(k)}}&=\|\left(\pb+(\pb \tphi_{(k)})\wedge\right)u\|^2_{\tomega_{(k)}} \gtrsim \|\left(\pb+(\pb \tphi_{(k)})\wedge\right)u\|^2_{\omega_{0}};\\
\|\tpb^*_{(k),s}u\|^2_{\tomega_{(k)}}&=\|\left(\pb^*_{\tomega_{(k)}}+(\pb \tphi_{(k)})\wedge^*_{\tomega_{(k)}}\right)u\|^2_{\tomega_{(k)}} \gtrsim \|\left(\pb^*_{\omega_{0}}+(\pb \tphi_{(k)})\wedge^*_{\omega_0}\right)u\|^2_{\omega_{0}}.\notag
\end{align} \par 
   Let $u =fd\zb^I$ for some $f \in \Cinf_c(\Cn)$ and $I\in \mathcal{J}_{q,n}$. Since $q \neq q_0$, there exists $i \in \{1,\cdots , n\}$ such that at least one of the following two cases holds:
    \begin{itemize}
        \item $i \notin I$ and  $\lambda_i<0$;
        \item $i \in I$ and  $\lambda_i>0$.
    \end{itemize}If the first case holds, \begin{align}\label{4.4 lem 1 (2)}
 \| \left(\pb+(\pb \tphi_{(k)})\wedge\right)u\|^2_{\omega_0} & \geq \int_{\Cn} \mid \dfrac{\p f}{\p \zb^i}+\dfrac{\p \tphi_{(k)}}{\p \zb^i}f \mid ^2 dm  = \int_{\Cn} (\dfrac{\p f}{\p \zb^i}+\dfrac{\p \tphi_{(k)}}{\p \zb^i}f)(\dfrac{\p \Bar{f}}{\p z^i}+\dfrac{\p \Bar{\tphi}_{(k)}}{\p z^i}\Bar{f}) dm \notag \\
 &=\int_{\Cn}|\dfrac{\p f}{\p \zb^i}|^2+\Bar{f}\dfrac{\p f}{\p \zb^i}\dfrac{\p \Bar{\tphi}_{(k)}}{\p z^i}+f\dfrac{\p \Bar{f}}{\p z^i}\dfrac{\p \tphi_{(k)}}{\p \zb^i}+|\dfrac{\p \tphi_{(k)}}{\p \zb^i}|^2|f|^2dm\notag. 
\end{align}By integration by part, we compute that $\int_{\Cn}|\dfrac{\p f}{\p \zb^i}|^2dm=\int_{\Cn}|\dfrac{\p f}{\p z^i}|^2dm$ and
\begin{align*}
    \int_{\Cn}\Bar{f}\dfrac{\p f}{\p \zb^i}\dfrac{\p \Bar{\tphi}_{(k)}}{\p z^i}+f\dfrac{\p \Bar{f}}{\p z^i}\dfrac{\p \tphi_{(k)}}{\p \zb^i}dm = \int_{\Cn}-2|f|^2\dfrac{\p^2 \tphi_{(k)}}{\p z^i \p \zb^i}-f\dfrac{\p \Bar{f}}{\p \zb^i}\dfrac{\p\Bar{\tphi}_{(k)}}{\p z^i}-\Bar{f}\dfrac{\p f}{\p z^i}\dfrac{{\p\tphi}_{(k)}}{\p \zb^i}dm.
\end{align*}Applying these two equations and  
$|\dfrac{\p f}{\p z^i}|^2+|\dfrac{\p \tphi_{(k)}}{\p \zb^i}|^2|f|^2-2|f||\dfrac{\p f}{\p z^i}||\dfrac{{\p\tphi}_{(k)}}{\p \zb^i}| \geq 0$ , we have 
 \begin{equation}\label{4.4 lem 1 (3)}
  \| \left(\pb+(\pb \tphi_{(k)})\wedge\right)u\|^2_{\omega_0} \geq -2\int_{\Cn}|f|^2\dfrac{\p^2 \tphi_{(k)}}{\p z^i \p \zb^i}dm  \gtrsim -\inf\left(\dfrac{\p^2 \tphi_{(k)}}{\p z^i \p \zb^i}\right)\|f\|^2_{\tomega_{(k)}} .  
 \end{equation}On the other hand, if the second case holds,    
\begin{align*}
    \|\left(\pb^*_{\omega_0}+(\pb \tphi_{(k)})\wedge^*_{\omega_0}\right)u\|^2_{\omega_0} &\geq \int_{\Cn} (-\dfrac{\p f}{\p z^i}+\dfrac{\p \tphi_{(k)}}{\p z^i}f)(-\dfrac{\p \Bar{f}}{\p \zb^i}+\dfrac{\p \Bar{\tphi}_{(k)}}{\p \zb^i}\Bar{f}) dm \notag\\
 &=\int_{\Cn}|\dfrac{\p f}{\p z^i}|^2-\Bar{f}\dfrac{\p f}{\p z^i}\dfrac{\p \Bar{\tphi}_{(k)}}{\p \zb^i}-f\dfrac{\p \Bar{f}}{\p \zb^i}\dfrac{\p \tphi_{(k)}}{\p z^i}+|\dfrac{\p \tphi_{(k)}}{\p z^i}|^2|f|^2dm. \notag
\end{align*}By integration by part again, we have  $\int_{\Cn}|\dfrac{\p f}{\p z^i}|^2dm=\int_{\Cn}|\dfrac{\p f}{\p \zb^i}|^2dm$ and 
\begin{equation*}
 \int_{\Cn}-\Bar{f}\dfrac{\p f}{\p z^i}\dfrac{\p \Bar{\tphi}_{(k)}}{\p \zb^i}-f\dfrac{\p \Bar{f}}{\p \zb^i}\dfrac{\p \tphi_{(k)}}{\p z^i}dm = \int_{\Cn}2|f|^2\dfrac{\p^2 \tphi_{(k)}}{\p z^i \p \zb^i}+f\dfrac{\p \Bar{f}}{\p z^i}\dfrac{\p\Bar{\tphi}_{(k)}}{\p \zb^i}+\Bar{f}\dfrac{\p f}{\p \zb^i}\dfrac{{\p\tphi}_{(k)}}{\p z^i}dm.   
\end{equation*}Combining equations above and  $|\dfrac{\p f}{\p \zb^i}|^2+|\dfrac{\p \tphi_{(k)}}{\p z^i}|^2|f|^2-2|f||\dfrac{\p f}{\p \zb^i}||\dfrac{{\p\tphi}_{(k)}}{\p z^i}| \geq 0$, 
\begin{equation}\label{4.4 lem 1 (4)}
 \|\left(\pb^*_{\omega_0}+(\pb \tphi_{(k)})\wedge^*_{\tomega_{(k)}}\right)u\|^2_{\omega_0} \geq 
 2 \int_{\Cn}|f|^2 \dfrac{\p^2 \tphi_{(k)}}{\p z^i \p \zb^i} dm \gtrsim \inf\left(\dfrac{\p^2 \tphi_{(k)}}{\p z^i \p \zb^i}\right)\|f\|^2_{\tomega_{(k)}} .  
\end{equation}By (\ref{4.4 lem 1 (1)}),(\ref{4.4 lem 1 (3)}) and (\ref{4.4 lem 1 (4)}), we have completed the proof for the case $u\in \Omega^{(0,q)}_c(\Cn)$. Next, we are able to prove the lemma by density argument. The density argument here is somehow technical and based on the \textbf{Friedrich's Lemma} (cf.\cite[Chapter 7, Lemma 3.3]{Dem}). For the details of approximation, readers may consult \cite[Lemma 5]{Hou}. 
\end{proof}

\begin{cor}\label{4.4 cor N}
 For $q \neq q_0$, the extended Laplacians $\Box^{(q)\sim}_{(k),k}$ is bijective and has  inverses \[N^{q}_{k}:L^2_{\tomega_{(k)}}(\Cn,\T) \rightarrow \Dom \Box^{(q)\sim}_{(k),s}\] which is a $k$-uniformly bounded operator.    
\end{cor}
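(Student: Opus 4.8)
The plan is to read off the corollary from the uniform spectral lower bound of Lemma~\ref{4.4 lem 1} together with the self-adjointness and semi-positivity of the Gaffney extension. First I would recall that $\Box^{(q)\sim}_{(k),s}$, being a Gaffney extension, is a self-adjoint, semi-positive operator on the Hilbert space $L^2_{\tomega_{(k)}}(\Cn,\T)$ (cf.\ \cite[proposition 3.1.2]{book01}). Since $q\neq q_0$, Lemma~\ref{4.4 lem 1} supplies a constant $c>0$, independent of $k$, with $\left(\Box^{(q)\sim}_{(k),s}u\mid u\right)_{\tomega_{(k)}}\geq c\|u\|^2_{\tomega_{(k)}}$ for every $u\in\Dom\Box^{(q)\sim}_{(k),s}$. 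By the spectral theorem (Theorem~\ref{2.5 spectrum theorem for continuous case}) this means $\mathrm{Spec}\,\Box^{(q)\sim}_{(k),s}\subset[c,\infty)$, so $0$ belongs to the resolvent set.

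Consequently $\Box^{(q)\sim}_{(k),s}\colon\Dom\Box^{(q)\sim}_{(k),s}\To L^2_{\tomega_{(k)}}(\Cn,\T)$ is a bijection, and I would set $N^q_k:=\left(\Box^{(q)\sim}_{(k),s}\right)^{-1}$. Concretely, the lower bound $\|\Box^{(q)\sim}_{(k),s}u\|_{\tomega_{(k)}}\geq c\|u\|_{\tomega_{(k)}}$ (the last display of Lemma~\ref{4.4 lem 1}) forces injectivity and closedness of the range, while self-adjointness gives $\left(\Rang\Box^{(q)\sim}_{(k),s}\right)^{\perp}=\Ker\Box^{(q)\sim}_{(k),s}=\{0\}$; hence $\Rang\Box^{(q)\sim}_{(k),s}=L^2_{\tomega_{(k)}}(\Cn,\T)$. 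Applying the same lower bound to $N^q_k u$ yields $\|N^q_k u\|_{\tomega_{(k)}}\leq c^{-1}\|u\|_{\tomega_{(k)}}$, that is $\|N^q_k\|\leq 1/c$.

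It then remains to make sense of ``$k$-uniformly bounded''. Since $\tomega_{(k)}\To\omega_0$ in $\Ct$, the pointwise inner products and volume forms of $\tomega_{(k)}$ and $\omega_0$ are comparable with constants independent of $k$, so the norms $\|\cdot\|_{\tomega_{(k)}}$ and $\|\cdot\|_{\omega_0}$ on $L^2(\Cn,\T)$ are uniformly equivalent in $k$, in the spirit of (\ref{3.3 compatibility 1}). Because the constant $c$ from Lemma~\ref{4.4 lem 1} does not depend on $k$, the estimate $\|N^q_k\|\leq 1/c$ carries over to a $k$-independent bound when the operators are all viewed on the fixed space $L^2_{\omega_0}(\Cn,\T)$. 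I do not foresee a real obstacle here: everything is a soft consequence of Lemma~\ref{4.4 lem 1}, and the only mild subtlety is making precise the uniform boundedness while the Hilbert-space inner product varies slightly with $k$, which is exactly what the uniform comparability of $\tomega_{(k)}$ and $\omega_0$ resolves.
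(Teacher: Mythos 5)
Your proof is correct, and it reaches the same conclusion by a genuinely different route from the paper. The paper establishes surjectivity of $\Box^{(q)\sim}_{(k),s}$ by a Lax--Milgram-style argument: given $v$, it uses the coercivity bound of Lemma~\ref{4.4 lem 1} to show that the functional $\Box^{(q)\sim}_{(k),s}u\mapsto (u\mid v)_{\tomega_{(k)}}$ is bounded on the range, extends it by Hahn--Banach, and then applies the Riesz representation theorem together with self-adjointness to produce a preimage of $v$. You instead argue directly from operator theory: the lower bound $(\Box^{(q)\sim}_{(k),s}u\mid u)\geq c\|u\|^2$ together with self-adjointness gives $\mathrm{Spec}\,\Box^{(q)\sim}_{(k),s}\subset[c,\infty)$, so $0$ is in the resolvent set and the inverse is bounded by $1/c$; your second, more hands-on version (injectivity plus closed range from $\|\Box^{(q)\sim}_{(k),s}u\|\geq c\|u\|$ and closedness, plus triviality of the orthocomplement of the range from $\Ker(\Box^{(q)\sim}_{(k),s})^*=\{0\}$) is the same idea unpacked. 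Both approaches buy the same thing: bijectivity and the $k$-uniform operator norm bound $\|N^q_k\|\leq 1/c$, where the uniformity is exactly the $k$-independence of the constant in Lemma~\ref{4.4 lem 1}. Your concluding remarks about the uniform comparability of $\|\cdot\|_{\tomega_{(k)}}$ and $\|\cdot\|_{\omega_0}$ are a correct clarification of what ``$k$-uniformly bounded'' should mean, and are consistent with (\ref{3.3 compatibility 1}) though the paper does not belabor this point.
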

\begin{proof}
    According to Lemma \ref{4.4 lem 1}, $\Box^{(q)\sim}_{(k),s}$ is injective. To show the surjectivity, we choose an arbitrary $v\in L^2_{\tomega_{(k)}}(\Cn,\T)$ and consider the linear functional $\Tl_{v}$ on $\Rang \, \Box^{(q)\sim}_{(k),s}$ given by 
    \[
    \Tl_{v}(\Box^{(q)\sim}_{(k),s}\,u)=\left(u \mid v\right)_{\tomega_{(k)}} \quad \forall u\in \Dom \, \Box^{(q)\sim}_{(k),s}.
    \]Lemma \ref{4.4 lem 1} implies that $\|\Tl_{v}\|_{\tomega_{(k)}} \leq \dfrac{\|v\|_{\tomega_{(k)}}}{c}$ for a constant $c$ independent of $v$ and $k$. By the Hahn-Banach Theorem, the functional $\Tl_{v}$ can be extended to a bounded linear functional on $L^2_{\tomega_{(k)}}(\Cn,\T)$ with the same norm. By Riesz representation theorem, there exists a representative $\Tilde{v} \in L^2_{\tomega_{(k)}}(\Cn,\T)$ such that 
    \[
    \left(u \mid v \right)_{\tomega_{(k)}}=\Tl_{v}(\Box^{(q)\sim}_{(k),s}u)=\left(\Box^{(q)\sim}_{(k),s}u\mid\Tilde{v}\right)_{\tomega_{(k)}} \quad \forall u\in \Dom \, \Box^{(q)\sim}_{(k),s}.
    \]This means $\Box^{(q)\sim}_{(k),s}\Tilde{v}=v$ which proves the surjectivity. Define $N^{q}_{k}$ such that $N^{q}_{k}v=\Tilde{v}$.  Lemma \ref{4.4 lem 1} implies $\|N^{q}_{k}\|_{\tomega_{(k)}} \leq C$ for a constant $C$ independent of $k$.
\end{proof}
We have shown that when $q\neq q_0$, the extended Laplacian $\Box^{(q)\sim}_{(k),s}$ has a uniform spectral gap $\text{ spec }\Box^{(q)\sim}_{(k),s}\subset [c,\infty)$ for a positive constant $c$ independent of $k$. Next, in the case $q=q_0$, we should prove that the uniform spectral gap also holds in the sense that $\text{\rm spec }\Box^{(q)\sim}_{(k),s}\subset \{0\}\cup [c,\infty)$. Define \[
\tBlks:L^2_{\tomega_{(k)}}(\Cn,\T) \rightarrow \Ker \, \Box^{(q)\sim}_{(k),s} \subset L^2_{\tomega_{(k)}}(\Cn,\T)
\]to be the Bergman projection and $\tBlksk$ to be the Bergman kernel. The following representation of $\tBlksk$ is standard.
\begin{thm}[Hodge decomposition]\label{4.4 thm B=I-} We have the expression
\begin{equation}\label{4.4 thm B (1)}
\Tilde{\mathcal{B}}^{(q_0)}_{(k),s}=\Id-\tpb^{(q_0-1)}_{(k),s}N^{q_0-1}_{k}\tpb^{*,(q_0)}_{(k),s}-\tpb^{*,(q_0+1)}_{(k),s}N^{q_0+1}_{(k)}\tpb^{(q_0)}_{(k),s} \quad \textit{on}\,\, \Omega^{(0,q)}_c(\Cn).    
\end{equation}
Here, $N^{q}_{k}$ is the inverse of the Laplacian $\Box^{(q)\sim}_{k,s}$ established in Corollary \ref{4.4 cor N}.
\end{thm}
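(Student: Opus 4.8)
The plan is to put
$A:=\Id-\tpb^{(q_0-1)}_{(k),s}N^{q_0-1}_{k}\tpb^{*,(q_0)}_{(k),s}-\tpb^{*,(q_0+1)}_{(k),s}N^{q_0+1}_{k}\tpb^{(q_0)}_{(k),s}$ and to show that, for every $u\in\Omega^{(0,q_0)}_c(\Cn)$, the splitting $u=Au+(u-Au)$ is exactly the orthogonal decomposition of $u$ relative to $L^2_{\tomega_{(k)}}(\Cn,\T)=\Ker\Box^{(q_0)\sim}_{(k),s}\oplus(\Ker\Box^{(q_0)\sim}_{(k),s})^{\perp}$; since that decomposition is unique, this forces $Au=\tBlkso u$, which is \eqref{4.4 thm B (1)}. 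Two ingredients will be used throughout: first, $\tpb^{(q+1)}_{(k),s}\tpb^{(q)}_{(k),s}=0$ and $\tpb^{*,(q-1)}_{(k),s}\tpb^{*,(q)}_{(k),s}=0$, together with the intertwining identities $\tpb^{(q)}_{(k),s}\Box^{(q)\sim}_{(k),s}=\Box^{(q+1)\sim}_{(k),s}\tpb^{(q)}_{(k),s}$ and $\tpb^{*,(q)}_{(k),s}\Box^{(q)\sim}_{(k),s}=\Box^{(q-1)\sim}_{(k),s}\tpb^{*,(q)}_{(k),s}$; second, by Lemma \ref{4.4 lem 1} and Corollary \ref{4.4 cor N}, the operator $\Box^{(q)\sim}_{(k),s}$ is injective (indeed bijective with bounded inverse $N^{q}_{k}$) for every $q\neq q_0$. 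At the endpoints the formula degenerates harmlessly: for $q_0=0$ the first correction term is absent because $\tpb^{*,(0)}_{(k),s}\equiv 0$, for $q_0=n$ the last one is absent, and whenever the degree of a form occurring below falls outside $\{0,\dots,n\}$ the relevant identity is trivial.

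The orthogonality $(u-Au)\perp\Ker\Box^{(q_0)\sim}_{(k),s}$ is the easy half: if $h\in\Ker\Box^{(q_0)\sim}_{(k),s}$ then $\tpb^{(q_0)}_{(k),s}h=0=\tpb^{*,(q_0)}_{(k),s}h$, so, moving $\tpb^{(q_0-1)}_{(k),s}$ and $\tpb^{*,(q_0+1)}_{(k),s}$ across the $L^2_{\tomega_{(k)}}$-pairing (the very definition of the adjoint in the Gaffney extension),
\[
(\tpb^{(q_0-1)}_{(k),s}N^{q_0-1}_{k}\tpb^{*,(q_0)}_{(k),s}u\mid h)_{\tomega_{(k)}}=(N^{q_0-1}_{k}\tpb^{*,(q_0)}_{(k),s}u\mid\tpb^{*,(q_0)}_{(k),s}h)_{\tomega_{(k)}}=0,
\]
and likewise $(\tpb^{*,(q_0+1)}_{(k),s}N^{q_0+1}_{k}\tpb^{(q_0)}_{(k),s}u\mid h)_{\tomega_{(k)}}=0$. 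For the substantive half I show $\tpb^{(q_0)}_{(k),s}Au=0$ and $\tpb^{*,(q_0)}_{(k),s}Au=0$, which (once one checks $Au$ lies in the appropriate domains) gives $Au\in\Ker\Box^{(q_0)\sim}_{(k),s}$. Applying $\tpb^{(q_0)}_{(k),s}$ to $Au$ kills the first correction term since $\tpb\tpb=0$, and, after writing $\tpb^{(q_0)}_{(k),s}u=\Box^{(q_0+1)\sim}_{(k),s}N^{q_0+1}_{k}\tpb^{(q_0)}_{(k),s}u$ and expanding $\Box^{(q_0+1)\sim}_{(k),s}=\tpb^{*,(q_0+2)}_{(k),s}\tpb^{(q_0+1)}_{(k),s}+\tpb^{(q_0)}_{(k),s}\tpb^{*,(q_0+1)}_{(k),s}$, one collapses everything to $\tpb^{(q_0)}_{(k),s}Au=\tpb^{*,(q_0+2)}_{(k),s}\bigl(\tpb^{(q_0+1)}_{(k),s}N^{q_0+1}_{k}\tpb^{(q_0)}_{(k),s}u\bigr)$. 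The inner form vanishes: with $w:=N^{q_0+1}_{k}\tpb^{(q_0)}_{(k),s}u$ one has $\Box^{(q_0+1)\sim}_{(k),s}w=\tpb^{(q_0)}_{(k),s}u$, hence $\Box^{(q_0+2)\sim}_{(k),s}\tpb^{(q_0+1)}_{(k),s}w=\tpb^{(q_0+1)}_{(k),s}\Box^{(q_0+1)\sim}_{(k),s}w=\tpb^{(q_0+1)}_{(k),s}\tpb^{(q_0)}_{(k),s}u=0$, and injectivity of $\Box^{(q_0+2)\sim}_{(k),s}$ forces $\tpb^{(q_0+1)}_{(k),s}w=0$. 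The identity $\tpb^{*,(q_0)}_{(k),s}Au=0$ is the mirror image: $\tpb^{*,(q_0)}_{(k),s}$ kills the second correction term, $\tpb^{*,(q_0)}_{(k),s}u=\Box^{(q_0-1)\sim}_{(k),s}N^{q_0-1}_{k}\tpb^{*,(q_0)}_{(k),s}u$ reduces matters to $\tpb^{*,(q_0)}_{(k),s}Au=\tpb^{(q_0-2)}_{(k),s}\bigl(\tpb^{*,(q_0-1)}_{(k),s}N^{q_0-1}_{k}\tpb^{*,(q_0)}_{(k),s}u\bigr)$, and the inner form vanishes by injectivity of $\Box^{(q_0-2)\sim}_{(k),s}$ by the same argument. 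Since $\tpb^{(q_0)}_{(k),s}Au=\tpb^{*,(q_0)}_{(k),s}Au=0$, indeed $Au\in\Ker\Box^{(q_0)\sim}_{(k),s}$, and combining this with the first half and uniqueness of the orthogonal splitting yields $Au=\tBlkso u$.

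I expect the only real difficulty to be the domain bookkeeping behind these formal manipulations — justifying the intertwining relations $\tpb\Box=\Box\tpb$, the integrations by parts, and the step "$w=N\tpb u\Rightarrow\tpb w=0$" for the Gaffney extensions on the noncompact $\Cn$, where smoothness of a form does not by itself place it in an operator domain. The standard remedy is to exploit the hypoellipticity of $\Box^{(q)\sim}_{(k),s}$ (so that $N^{q}_{k}$ applied to a compactly supported smooth form is smooth) together with a Friedrichs-mollifier/cutoff approximation of the kind already used in the proof of Lemma \ref{4.4 lem 1}, cf.\ \cite[Chapter 7, Lemma 3.3]{Dem}, to upgrade the identities from the formal to the $L^2$ level; note that for this theorem only the \emph{existence} of the bounded inverses $N^{q}_{k}$ is needed, not their $k$-uniform bounds. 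Once this point is settled, the two computations of $\tpb^{(q_0)}_{(k),s}Au$ and $\tpb^{*,(q_0)}_{(k),s}Au$ are short and the theorem follows.
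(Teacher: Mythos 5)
Your proposal is correct and follows the same overall plan as the paper: show that $A$ maps into $\Ker\Box^{(q_0)\sim}_{(k),s}$ and that $u-Au$ is orthogonal to that kernel, then invoke uniqueness of the orthogonal decomposition; your argument for the orthogonality half is verbatim the paper's. The only substantive difference is in the first half. The paper verifies $\Box^{(q_0)\sim}_{(k),s}A=0$ in one pass, expanding the Laplacian on each correction term and using $\tpb\tpb=0$, $\tpb^*\tpb^*=0$, and $N^{q_0\pm1}_k=(\Box^{(q_0\pm1)\sim}_{(k),s})^{-1}$, so that the two correction terms reproduce $\tpb\tpb^*$ and $\tpb^*\tpb$ exactly. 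You instead prove the stronger statements $\tpb^{(q_0)}_{(k),s}Au=0$ and $\tpb^{*,(q_0)}_{(k),s}Au=0$ separately, which requires the extra ingredients of the intertwining relation $\Box\tpb=\tpb\Box$ and the injectivity of $\Box^{(q_0\pm2)\sim}_{(k),s}$ from Corollary~\ref{4.4 cor N}. The paper's computation is marginally more economical because it never looks two degrees away from $q_0$; yours makes explicit that $Au$ is simultaneously $\tpb$-closed and $\tpb^*$-closed, which is precisely what $\Ker\Box$ means for the Gaffney extension on $\Cn$, so it sidesteps any worry about passing from $\Box Au=0$ to harmonicity. Your closing remarks about hypoellipticity and the Friedrichs-mollifier approximation address a domain-bookkeeping issue that the paper leaves tacit, and are a sound way to upgrade the formal manipulations.
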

\begin{proof}Note that
    \begin{align*}
&\Box^{(q_0)\sim}_{(k),s}\left(\Id-\tpb^{(q_0-1)}_{(k),s}N^{q_0-1}_{k}\tpb^{*,(q_0)}_{(k),s}-\tpb^{*,(q_0+1)}_{(k),s}N^{q_0+1}_{k}\tpb^{(q_0)}_{(k),s}\right)\\&=\tpb_{(k),s}\tpb^*_{(k),s}+\tpb_{(k),s}^*\tpb_{(k),s}-\tpb_{(k),s}\tpb^*_{(k),s}\tpb_{(k),s} N^{q_0-1}_{k}\tpb^*_{(k),s}-\tpb^{*}_{(k),s}\tpb_{(k),s}\tpb_{(k),s}^{*}N^{q_0+1}\tpb_{(k),s} \\
       &=\tpb_{(k),s}\tpb^*_{(k),s}+\tpb^*_{(k),s}\tpb_{(k),s}-\tpb_{(k),s} \Box^{(q_0-1)\sim}_{(k),s} N^{q_0-1}_k\tpb^*_{(k),s}-\tpb^{*}_{(k),s}\Box^{(q_0+1)\sim}_{(k),s}N^{q_0+1}_k\tpb_{(k),s}\\&=\tpb_{(k),s}\tpb^*_{(k),s}+\tpb^*_{(k),s}\tpb_{(k),s}-(\tpb_{(k),s}\tpb^*_{(k),s}+\tpb^*_{(k),s}\tpb_{(k),s})=0.
    \end{align*}So the right-hand side of (\ref{4.4 thm B (1)}) has its image in $\Ker \Box^{(q)\sim}_{(k),s}$. It remains to show that $\Rang \left(\tpb^{(q_0-1)}_{(k),s}N^{q_0-1}\tpb^{*,(q_0)}_{(k),s}-\tpb^{*,(q_0+1)}_{(k),s}N^{q_0+1}\tpb^{(q_0)}_{(k),s}\right) \perp \Ker \Box^{(q_0)\sim}_{(k),s}$. Given $u\in \Omega^{(0,q)}_c(\Cn)$ and $v\in \Ker \Box^{(q_0)\sim}_{(k),s}$, since $\tpb^*_{(k),s}v=\tpb_{(k),s}v=0$,
    \begin{multline*}
       \left((\tpb_{(k),s} N^{q_0-1}_k\tpb^*_{(k),s}-\pb^{*}N^{q_0+1}_k\pb)u \mid v\right)_{\tomega_{(k)}}\\ =\left(N^{q_0-1}_k\tpb_{(k),s} u \mid \tpb^*_{(k),s} v\right)_{\tomega_{(k)}}+\left(N^{q_0+1}_k\tpb_{(k),s} u \mid \tpb_{(k),s}v\right)_{\tomega_{(k)}}=0. 
    \end{multline*}
\end{proof}
We now deduce some identities which will be frequently utilized. Compute that 
\begin{align*}
\|\tpb_{(k),s}\tpb^*_{(k),s}N^{q_0-1}_{k}\tpb^{*}_{(k),s}u\|^2_{\tomega_{(k)}}&= \left(\tpb^*_{(k),s}\tpb_{(k),s}\tpb^*_{(k),s}N^{q_0-1}_{k}\tpb^{*}_{(k),s}u \mid \tpb^*_{(k),s}N^{q_0-1}_{k,s}\tpb^{*}_{(k),s}u \right)_{\tomega_{(k)}}\\
     &=\left(\tpb^*_{(k),s}\Box^{(q_0-1)\sim}_{(k),s}N^{q_0-1}_{k}\tpb^{*}_{(k),s}u \mid \tpb^*_{(k),s}N^{q_0-1}_{k}\tpb^{*}_{(k),s}u \right)_{\tomega_{(k)}}\\&=\left(\tpb^*_{(k),s}\tpb^{*}_{(k),s}u \mid \tpb^*_{(k),s}N^{q_0-1}_{k}\tpb^{*}_{(k),s}u \right)_{\tomega_{(k)}}=0,    
\end{align*}for all $u\in\Omega^{(0,q_0)}_c(\Cn)$. Similarly, we can compute that $\|\tpb^{*}_{(k),s}\tpb_{(k),s}N^{q_0+1}_{k}\tpb_{(k),s}u\|^2_{\tomega_{(k)}}=0$ for all $u\in\Omega^{(0,q_0)}_c(\Cn)$. Hence, we have 
\begin{equation}\label{4.4 eq 1}
\tpb_{(k),s}\tpb^{*}_{(k),s}N^{q_0-1}_{k}\tpb^{*}_{(k),s}=0\quad;\quad \tpb^{*}_{(k),s}\tpb_{(k),s}N^{q_0+1}_{k}\tpb_{(k),s}=0 \quad \text{on }\, \Omega^{(0,q_0)}_c(\Cn).  
\end{equation}
Moreover, we can apply the two equations above to see that 
\begin{equation}\label{4.4 eq 2}
\tpb^*_{(k),s}\tpb_{(k),s}N^{q_0-1}_{k}\tpb^*_{(k),s}=\tpb^*_{(k),s}\quad ;\quad \tpb_{(k),s}\tpb^*_{(k),s}N^{q_0-1}_{k}\tpb_{(k),s}=\tpb_{(k),s}\quad\text{on }\, \Omega^{(0,q_0)}_c(\Cn).\end{equation}

\begin{thm}[uniform spectral gap for $\Box^{(q_0)\sim}_{(k),s}$]\label{4.4 thm spectral gap}
There exists a constant $c$ independent of $k$ such that
 \[\|\tBlkso u-u\|^2_{\tomega_{(k)}} \leq c\left(\|\tpb^{*,(q_0)}_{(k),s}u\|^2_{\tomega_{(k)}}+\|\tpb^{(q_0)}_{(k),s}u\|^2_{\tomega_{(k)}}\right)  \quad \text{\rm on }\, \Omega^{(0,q_0)}_c(\Cn).\]  
\end{thm}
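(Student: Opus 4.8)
The plan is to use the Hodge decomposition from Theorem \ref{4.4 thm B=I-} to express $\tBlkso u - u$ in terms of the operators $\tpb_{(k),s}$, $\tpb^*_{(k),s}$ and the resolvents $N^{q_0\pm1}_k$, and then estimate each piece using the uniform boundedness of $N^{q_0\pm1}_k$ (Corollary \ref{4.4 cor N}) together with the basic commutation identities (\ref{4.4 eq 1}) and (\ref{4.4 eq 2}). Concretely, from (\ref{4.4 thm B (1)}) we have on $\Omega^{(0,q_0)}_c(\Cn)$
\[
u - \tBlkso u = \tpb^{(q_0-1)}_{(k),s}N^{q_0-1}_{k}\tpb^{*,(q_0)}_{(k),s}u + \tpb^{*,(q_0+1)}_{(k),s}N^{q_0+1}_{k}\tpb^{(q_0)}_{(k),s}u,
\]
and the two summands land in orthogonal subspaces (the image of $\tpb_{(k),s}$ and the image of $\tpb^*_{(k),s}$ respectively), so it suffices to bound each term separately by $\|\tpb^{*,(q_0)}_{(k),s}u\|_{\tomega_{(k)}}$ and $\|\tpb^{(q_0)}_{(k),s}u\|_{\tomega_{(k)}}$.

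First I would handle $\|\tpb_{(k),s}N^{q_0-1}_{k}\tpb^{*}_{(k),s}u\|^2_{\tomega_{(k)}}$. Writing $w := N^{q_0-1}_{k}\tpb^{*}_{(k),s}u \in \Dom\Box^{(q_0-1)\sim}_{(k),s}$, I would use that $\tpb_{(k),s}\tpb_{(k),s}=0$, so $\tpb^*_{(k),s}\tpb_{(k),s}w = \Box^{(q_0-1)\sim}_{(k),s}w - \tpb_{(k),s}\tpb^*_{(k),s}w$ — but more cleanly, by (\ref{4.4 eq 1}) the term $\tpb_{(k),s}\tpb^*_{(k),s}N^{q_0-1}_{k}\tpb^*_{(k),s}u$ vanishes, hence
\[
\|\tpb_{(k),s}w\|^2_{\tomega_{(k)}} = \left(\Box^{(q_0-1)\sim}_{(k),s}w \mid w\right)_{\tomega_{(k)}} = \left(\tpb^{*}_{(k),s}u \mid w\right)_{\tomega_{(k)}} \leq \|\tpb^{*}_{(k),s}u\|_{\tomega_{(k)}}\|w\|_{\tomega_{(k)}},
\]
and since $\|w\|_{\tomega_{(k)}} = \|N^{q_0-1}_{k}\tpb^*_{(k),s}u\|_{\tomega_{(k)}} \leq C\|\tpb^*_{(k),s}u\|_{\tomega_{(k)}}$ by the $k$-uniform bound on $N^{q_0-1}_k$, we conclude $\|\tpb_{(k),s}N^{q_0-1}_{k}\tpb^*_{(k),s}u\|^2_{\tomega_{(k)}} \leq C\|\tpb^{*,(q_0)}_{(k),s}u\|^2_{\tomega_{(k)}}$. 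The symmetric argument applied to $\tpb^{*}_{(k),s}N^{q_0+1}_{k}\tpb_{(k),s}u$, using the second identity in (\ref{4.4 eq 1}) and self-adjointness, gives $\|\tpb^{*}_{(k),s}N^{q_0+1}_{k}\tpb_{(k),s}u\|^2_{\tomega_{(k)}} \leq C\|\tpb^{(q_0)}_{(k),s}u\|^2_{\tomega_{(k)}}$.

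Then I would add the two estimates, invoking orthogonality of the ranges of $\tpb_{(k),s}$ and $\tpb^*_{(k),s}$ (which follows from $\tpb_{(k),s}\tpb_{(k),s}=0$, $\tpb^*_{(k),s}\tpb^*_{(k),s}=0$ after an integration by parts) so that $\|u - \tBlkso u\|^2_{\tomega_{(k)}}$ equals the sum of the squared norms of the two pieces, yielding the claimed inequality with a uniform constant $c$. Finally I would remove the compact-support restriction by the same density/Friedrichs-lemma argument cited in the proof of Lemma \ref{4.4 lem 1}, noting that both sides are continuous in the graph norm of $\Box^{(q_0)\sim}_{(k),s}$. I expect the only genuinely delicate point to be the bookkeeping in (\ref{4.4 eq 1})–(\ref{4.4 eq 2}) — making sure each $N^{q}_k$ is applied in the degree where Corollary \ref{4.4 cor N} guarantees invertibility (i.e. $q\neq q_0$, which holds for $q_0-1$ and $q_0+1$) and that the vanishing identities are used in the right order; the analytic content is otherwise a routine resolvent estimate once Theorem \ref{4.4 thm B=I-} is in hand.
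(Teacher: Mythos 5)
Your argument is correct and takes essentially the same route as the paper: decompose $u-\tBlkso u$ via the Hodge formula, then bound each piece by combining the $k$-uniform resolvent estimate from Corollary~\ref{4.4 cor N} with the vanishing/simplification identities (\ref{4.4 eq 1})--(\ref{4.4 eq 2}) and Cauchy--Schwarz. The paper simply bounds the two terms by the triangle inequality without invoking the orthogonality of $\Rang\tpb_{(k),s}$ and $\Rang\tpb^*_{(k),s}$, but that refinement is correct and changes nothing substantive.
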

\begin{proof}
  By Lemma \ref{4.4 thm B=I-}, \[\tBlkso-I=-\tpb^{(q_0-1)}_{(k),s}N^{q_0-1}_{k}\tpb^{*,(q_0)}_{(k),s}-\tpb^{*,(q_0+1)}_{(k),s}N^{q_0+1}_{k}\tpb^{(q_0)}_{(k),s} \quad \text{\rm on } \, \Omega^{(0,q_0)}_c(\Cn).\]Given $u \in \Omega^{(0,q_0)}_c(\Cn)$, 
  \begin{align*}
   \|\tpb^{(q_0-1)}_{(k),s}N^{q_0-1}_{k}\tpb^{*,(q_0)}_{(k),s}u\|^2_{\tomega_{(k)}} &= \left(N^{q_0-1}_{k}\tpb^{*}_{(k),s}u\mid \tpb^{*}_{(k),s} \tpb_{(k),s}N^{q_0-1}_{k}\tpb^{*}_{(k),s}u\right)_{\tomega_{(k)}}\\&\leq \|N^{q_0-1}_{k}\tpb^{*}_{(k),s}u\|_{\tomega_{(k)}} \|\tpb^{*}_{(k),s}\tpb_{(k),s}N^{q_0-1}_{k}\tpb^{*}_{(k),s}u\|_{\tomega_{(k)}}\\&\lesssim \|\tpb^{*}_{(k),s}u\|^2_{\tomega_{(k)}}.
  \end{align*}The last inequality is from Corollary \ref{4.4 cor N} and (\ref{4.4 eq 2}). Symmetrically, we can show that
  \begin{equation*}
\|\tpb^{*,(q_0+1)}_{(k),s}N^{q_0+1}_k\tpb^{(q_0)}_{(k),s}u\|^2_{\tomega_{(k)}}\leq c\|\tpb^{(q_0)}_{(k),s}u\|^2_{\tomega_{(k)}}.
  \end{equation*}The two estimates above imply the theorem.
\end{proof}

\subsection{Asymptotics of the general $(0,q)$-forms cases}\label{section 4.5}

In this section, we adopt Assumption \ref{4.2 assumption} in Section \ref{section 4.2} and consider the case $p\in M(q)$ where $q\in \{1,\cdots,n\}$. We strengthen the condition of $c_k$ by imposing $\limsup_{k\To\infty}\dfrac{c_k}{k}=0$ and  
\[
\exists  d\in\ \N\quad \text{\rm such that}\quad \liminf_{k\To\infty}{k^dc_k}>0.
\]The goal of this section is to show Statement \ref{4.2 statement 2} in Section \ref{4.2 statement 2}.
 
By rearrangement, we let $\lambda_i <0$ for all $i=1,\cdots,q$ and $\lambda_i>0$ for all $i=q+1,\cdots,n$ for simplicity. Recall \[z^{\alpha}_q:=(\zb^1)^{\alpha_1}\cdots(\zb^q)^{\alpha_q}(z^{q+1})^{\alpha_{q+1}}\cdots (z^{n})^{\alpha_{n}} \quad;\quad I:=(1,\cdots,q)\in \mathcal{J}_{q,n}.\]
We now adopt the settings in Section \ref{section 4.4}. It is important to note that in the construction of $\tomega_{(k)}$ and $\tphi_{(k)}$, we impose the condition that $0<\epsilon<1/6$. Now, we require 
\[
0<\epsilon<\min\{\dfrac{1}{2n+1},\dfrac{1}{6}\}.
\] The reason is in the proof of Theorem \ref{4.5 thm 1}.\par 
We establish the notations of cut-off functions. Recall that $\chi \in \Cinf_c(\Cn)$ is the cut-off function which is fixed at the beginning of Section \ref{section 4.4}. Choose $\rho \in \Cinf_c(\Cn)$ as another cut-off function such that $\overline{\supp \rho} \subset\{z\in\Cs; 2/7<|z|<1\}$ and $\rho\equiv 1$ on $\{z\in\Cs; 3/7<|z|<6/7\}$. Construct a sequence of cut-off functions by
\begin{equation}
    \chi_k(z):=\chi(\dfrac{7z}{k^{\epsilon}})\quad;\quad\Tilde{\chi}_k(z):=\chi(\dfrac{7z}{3k^{\epsilon}})\quad;\quad\rho_k(z):=\rho(\dfrac{z}{k^{\epsilon}}).
\end{equation}Observe that $\supp \chi_k \subset \{z\in\Cs;\,|z|<(2/7)k^{\epsilon}\}$ and $\supp \Tilde{\chi}_k \subset \{z\in\Cs;\,|z|<(6/7)k^{\epsilon}\}$. Moreover, the derivatives of $\Tilde{\chi}_k$ are supported in the annuli $\{z\in\Cs;\,(3/7)k^{\epsilon}<|z|<(6/7)k^{\epsilon}\}$ and the support of $\rho_k$ are in the annuli $\{z\in\Cs;\,(2/7)k^{\epsilon}<|z|<k^{\epsilon}\}$. 
 Next, we define the following convention.
\begin{defin}
 For any $u\in L^2_{\omega_0}(\Cn,\T)$, define 
 \begin{equation*}
u_{(k)}:=\Tilde{\chi}_k \ \tBlks \ \chi_k u.
\end{equation*}
\end{defin}
Now, the stage is set to demonstrate Statement \ref{4.2 statement 2}. The strategy is similar to Section \ref{section 4.3} except for the different constructions of $u_{(k)}$ resulting in the different estimates. First, our objective is to show the convergence $u_{(k)}\To u$ in $L^2_{\tomega}(\Cn,\T)$ as $k\To\infty$ if $u\in\Ker \Box^{(q)}_0$. 

\begin{lem}\label{4.5 thm 1 lem 1}
    Let $u=z^{\alpha}_q e^{-\sum_{i=1}^{n}|\lambda_i||z^i|^2}d\zb^I$ for some $\alpha\in \N_0^{n}$. There exists a constant $C$ such that for large enough $k$,
    \begin{equation}
        |\tpb^{(q)}_{(k),s}u|_{\omega_0}+|\tpb^{*,(q)}_{(k),s}u|_{\omega_0}\leq \dfrac{C}{\sqrt{k}}
    \end{equation}for all $|z|< k^{\epsilon}$.
\end{lem}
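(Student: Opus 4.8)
The plan is to estimate $\tpb^{(q)}_{(k),s}u$ and $\tpb^{*,(q)}_{(k),s}u$ pointwise on $B(k^{\epsilon})$ by exploiting two facts: first, that $u$ lies in $\Ker\Box^{(q)}_{0,s}$, so that $\pb^{(q)}u + (\pb\phi_0)\wedge u = 0$ and $\pb^{*,(q)}_{\omega_0}u + (\pb\phi_0)\wedge^*_{\omega_0}u = 0$ by the explicit description preceding Theorem \ref{4.1 Model theorem}; second, that on $B(k^{\epsilon})$ the extended data $\tphi_{(k)}$, $\tomega_{(k)}$ coincide with $\phi_{(k)}$, $\omega_{(k)}$, which by the observations (\ref{3.1 ob phi}) and (\ref{3.1 ob omega}) differ from $\phi_0$, $\omega_0$ only by terms of size $O((|z|^3+1)/\sqrt k)$ in $\Ct$-norm (respectively $O((|z|+1)/\sqrt k)$).

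First I would write, using (\ref{3.2 pbs eq}),
\[
\tpb^{(q)}_{(k),s}u = \pb^{(q)}u + (\pb\tphi_{(k)})\wedge u = \bigl(\pb^{(q)}u + (\pb\phi_0)\wedge u\bigr) + \bigl(\pb(\tphi_{(k)}-\phi_0)\bigr)\wedge u = \bigl(\pb(\tphi_{(k)}-\phi_0)\bigr)\wedge u
\]
on $B(k^{\epsilon})$, since the parenthesized term vanishes because $u\in\Ker\Box^{(q)}_{0,s}$. Hence pointwise $|\tpb^{(q)}_{(k),s}u|_{\omega_0}\lesssim |\pb(\tphi_{(k)}-\phi_0)|_{\omega_0}\,|u|_{\omega_0}$. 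The $\Co$-part of the bound (\ref{3.1 ob phi}) gives $|\pb(\tphi_{(k)}-\phi_0)|(z)\lesssim (|z|^3+1)/\sqrt k$, and $|u|_{\omega_0}(z) = |z^\alpha_q|\,e^{-\sum|\lambda_i||z^i|^2}$ decays super-exponentially, so the product $(|z|^3+1)|z^\alpha_q|e^{-\sum|\lambda_i||z^i|^2}$ is bounded uniformly in $z\in\Cn$ by a constant depending only on $\alpha$ and the $\lambda_i$. This yields $|\tpb^{(q)}_{(k),s}u|_{\omega_0}(z)\le C/\sqrt k$ for all $|z|<k^{\epsilon}$. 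The term $\tpb^{*,(q)}_{(k),s}u$ is handled the same way using (\ref{3.2 pbs* eq}): on $B(k^{\epsilon})$,
\[
\tpb^{*,(q)}_{(k),s}u = \pb^{*,(q)}_{\omega_{(k)}}u + (\pb\tphi_{(k)})\wedge^*_{\omega_{(k)}}u,
\]
and I would compare this with $\pb^{*,(q)}_{\omega_0}u + (\pb\phi_0)\wedge^*_{\omega_0}u = 0$, splitting the difference into a weight-difference piece $(\pb(\tphi_{(k)}-\phi_0))\wedge^*_{\omega_0}u$, a metric-difference piece in the wedge-adjoint (controlled by $|\omega_{(k)}-\omega_0|$ via (\ref{3.1 ob omega})), and a metric-difference piece in $\pb^*$ itself, which by the formula (\ref{3.2 Lemma Lap pb* eq}) involves $\p\tvarphi_{(k)}$ and the connection forms $\theta$ of $\omega_{(k)}$ — all of which are $O((|z|+1)/\sqrt k)$ close to their $\omega_0$-counterparts on $B(k^\epsilon)$. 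Again multiplying by the rapidly decaying $|u|_{\omega_0}$ absorbs the polynomial growth, leaving a bound $C/\sqrt k$.

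The main obstacle I expect is bookkeeping the $\pb^*$-comparison cleanly: $\pb^{*,(q)}_{\omega_{(k)}}$ depends on $\omega_{(k)}$ both through the volume factor $\tvarphi_{(k)}$ (where $\omega_{(k)}^n/n! = e^{\tvarphi_{(k)}}dm$) and through the Christoffel-type terms $\overline\theta^*_{\p/\p\zb^i}$, and one must check that $\C^1$-closeness of $\omega_{(k)}$ to $\omega_0$ (which is flat, with $\tvarphi_0\equiv$ const and $\theta\equiv 0$) propagates to $\C^0$-closeness of these derived quantities with the quantitative rate $1/\sqrt k$ — this is where the $\Ct$ in (\ref{3.1 ob omega}) rather than merely $\Co$ is used. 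Everything else is the elementary observation that a fixed polynomial times the Gaussian $e^{-\sum|\lambda_i||z^i|^2}$ is globally bounded, so the super-exponential decay of $u$ overwhelms any polynomial-in-$|z|$ growth coming from the error estimates, and in particular the restriction $|z|<k^\epsilon$ is not even needed for this lemma (it will matter for the later $L^2$-type estimates where the bound must be integrated against the volume on the annulus). Assembling the pointwise bounds on $\tpb^{(q)}_{(k),s}u$ and $\tpb^{*,(q)}_{(k),s}u$ gives the claimed inequality with a constant $C = C(\alpha,\lambda_1,\dots,\lambda_n)$.
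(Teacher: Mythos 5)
Your proof is correct and follows essentially the same route as the paper's: both exploit that $u\in\Ker\Box^{(q)}_{0,s}$ so $\pb_{0,s}u=\pb^*_{0,s}u=0$, reduce to estimating the coefficients of the first-order difference operators $\tpb_{(k),s}-\pb_{0,s}$ and $\tpb^*_{(k),s}-\pb^*_{0,s}$ via the $\Ct$-bounds (\ref{3.1 ob phi}), (\ref{3.1 ob omega}), and then absorb the resulting polynomial growth in $|z|$ into the Gaussian decay of $u$ and its derivatives. Your observation that the $\pb^*$-comparison requires the $\Ct$ (not merely $\Co$) control on $\omega_{(k)}$ to handle the volume factor and connection-form terms matches the paper's use of formula (\ref{3.2 Lemma Lap pb* eq}).
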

\begin{proof}
 Denote $u=:fd\zb^I$ where $f=z^{\alpha}_{q}e^{-\sum_{i=1}^n|\lambda_i||z^i|^2}$. By the formulas (\ref{3.2 pbs eq}),(\ref{3.2 pbs* eq}) and (\ref{3.2 Lemma Lap pb* eq}), we can write down the expression of $(\tpb_{(k),s}-\pb_{(k),s})u$ and $(\tpb^*_{(k),s}-\pb^*_{(k),s})u$ as 
 \begin{align*}
  (\tpb_{(k),s}-\pb_{0,s})u&=\left(\pb+(\pb \tphi_{(k)})\wedge\right)u-\left(\pb-(\pb \phi_0)\wedge\right)u=\left(\pb(\tphi_{(k)}-\phi_0)\right)\wedge u;
  \\(\tpb^*_{(k),s}-\pb^*_{0,s})u&=\left(\pb^*_{\tomega_{(k)}}+(\pb \tphi_{(k)})\wedge^*_{\tomega_{(k)}}\right)u-\left(\pb^*_{\omega_0}+(\pb \phi_0)\wedge^*_{\omega_{0}}\cdot\right)u\\ =&\left(-\dfrac{\p f}{\p z^i}-f\dfrac{\p \tvarphi_{(k)}}{\p z^i}\right)(d\zb^i)\wedge^*_{\tomega_{(k)}}d\zb^I-
  \left(-\dfrac{\p f}{\p z^i}-f\dfrac{\p \varphi_{0}}{\p z^i}\right) (d\zb^i)\wedge^*_{\omega_0}d\zb^I \\
 -f&\left((d\zb^i\wedge^*_{\tomega_{(k)}})\overline{{\theta}}_{\p/\p \zb^i,\tomega_{(k)}}^*-(d\zb^i\wedge^*_{\omega_0})\overline{{\theta}}_{\p/\p \zb^i,\omega_0}^*\right)d\zb^I+f\left((\pb \tphi_{(k)})\wedge^*_{\tomega_{(k)}}-(\pb\phi_0)\wedge^*_{\omega_0}\right)d\zb^I.
 \end{align*}Denote $a_1(z)$ and $a_2(z)$ as the absolute maximum of the coefficients of the differential operators $\tpb_{(k),s}-\pb_{0,s}$ and $\tpb^*_{(k),s}-\pb^*_{0,s}$ at a point $z\in\Cn$, respectively. By (\ref{3.1 ob phi}) and (\ref{3.1 ob omega}), 
    \begin{eqnarray*}
      |\tphi_{(k)}-\phi_0|_{\Ct}(z) \lesssim \dfrac{|z|^3+1}{\sqrt{k}}\quad;\quad
      |\tomega_{(k)}-\omega_0|_{\Ct}(z) \lesssim \frac{|z|+1}{\sqrt{k}}\quad \text{for all }\, |z|<2k^{\epsilon}.
      \end{eqnarray*}The coefficients of the differential operators $\tpb_{(k),s}-\pb_{0,s}$ and $\tpb^*_{(k),s}-\pb^*_{0,s}$ consist of the zero and first derivatives of $\phi_0-\tphi_{(k)}$. Moreover, the matrix of connection forms $\theta$ and the operator $\wedge^*_{\cdot}$ are smoothly depend on the zero and first derivatives of components of Hermitian forms. We can see that \begin{equation*}
        |a_i(z)|\lesssim \dfrac{|z|^3+1}{\sqrt{k}}\quad \forall\, |z|<2k^{\epsilon} \quad \text{and}\quad |a_i(z)|=0\quad\forall\,|z|>2k^{\epsilon}. \end{equation*}Because any derivatives of $u$ decay exponentially as $|z|$ goes to infinity, there is a constant $c > 0$ such that 
     \begin{equation*}
    |(\tpb_{(k),s}-\tpb_{0,s})u(z)|_{\omega_0} \lesssim \dfrac{|z|^3+1}{\sqrt{k}}e^{-c|z|^2} \quad \text{\rm and} \quad |(\tpb^*_{(k),s}-\tpb^*_{0,s})u(z)|_{\omega_0} \lesssim \dfrac{|z|^3+1}{\sqrt{k}}e^{-c|z|^2} 
     \end{equation*} for all $z\in\Cn$. Since $|z|^3 e^{-c|z|^2}$ is a bounded function, we have completed the proof.
\end{proof}
We can apply Lemma \ref{4.5 thm 1 lem 1} to establish the following theorem which claims that $u_{(k)}\To u$.
\begin{thm}\label{4.5 thm 1}If $u=z^{\alpha}_q e^{-\sum_{i=1}^{n}|\lambda_i||z^i|^2}d\zb^I$ for some $\alpha\in \N_0^{n}$, then
  \[\|u_{(k)}-u\|_{\tomega_{(k)}} \rightarrow 0\quad \text{as }\,k\To\infty.\]
\end{thm}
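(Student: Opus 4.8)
The plan is to show that $u_{(k)} = \tilde\chi_k\,\tBlks\,\chi_k u$ converges to $u$ in $L^2_{\tomega_{(k)}}(\Cn,\T)$ by comparing $\tBlks(\chi_k u)$ with $\chi_k u$ and then removing the cut-offs. The key estimate is provided by the uniform spectral gap for $\Box^{(q_0)\sim}_{(k),s}$ (Theorem \ref{4.4 thm spectral gap}): since $q = q_0$ here, we have $\|\tBlks v - v\|^2_{\tomega_{(k)}} \lesssim \|\tpb^{(q)}_{(k),s}v\|^2_{\tomega_{(k)}} + \|\tpb^{*,(q)}_{(k),s}v\|^2_{\tomega_{(k)}}$ for $v \in \Omega^{(0,q)}_c(\Cn)$, with constant independent of $k$. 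Applying this to $v = \chi_k u$, the task reduces to bounding $\|\tpb^{(q)}_{(k),s}(\chi_k u)\|_{\tomega_{(k)}}$ and $\|\tpb^{*,(q)}_{(k),s}(\chi_k u)\|_{\tomega_{(k)}}$.

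First I would expand $\tpb^{(q)}_{(k),s}(\chi_k u) = (\pb\chi_k)\wedge u + \chi_k \tpb^{(q)}_{(k),s}u$ and similarly for the adjoint, $\tpb^{*,(q)}_{(k),s}(\chi_k u) = (\pb\chi_k)\wedge^*_{\tomega_{(k)}} u + \chi_k \tpb^{*,(q)}_{(k),s}u$ (the commutator of a cutoff with a first-order operator is the symbol applied to $d\chi_k$). For the first piece, $\pb\chi_k$ is supported in the annulus $\{(3/7)k^\epsilon \lesssim |z| \lesssim (2/7)k^\epsilon\}$ — wait, more precisely in $\supp(\pb\chi_k)$ where $|z| \sim k^\epsilon$ — and there $|\pb\chi_k| \lesssim k^{-\epsilon}$ while $|u| = |z^\alpha_q| e^{-\sum|\lambda_i||z^i|^2}$ decays like $e^{-ck^{2\epsilon}}$ up to polynomial factors; integrating over a region of volume $O(k^{2n\epsilon})$ gives a bound of the form $k^{N\epsilon}e^{-ck^{2\epsilon}} \to 0$. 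For the second piece, I would use Lemma \ref{4.5 thm 1 lem 1}, which gives $|\tpb^{(q)}_{(k),s}u|_{\omega_0} + |\tpb^{*,(q)}_{(k),s}u|_{\omega_0} \leq C/\sqrt{k}$ on $\{|z| < k^\epsilon\} \supset \supp\chi_k$; integrating $\chi_k^2 \cdot (C/\sqrt k)^2$ over $\supp\chi_k$ (volume $O(k^{2n\epsilon})$) yields $O(k^{2n\epsilon - 1})$, which tends to $0$ precisely because we chose $\epsilon < 1/(2n+1)$. Combining, $\|\tBlks(\chi_k u) - \chi_k u\|_{\tomega_{(k)}} \to 0$.

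It then remains to handle the two cut-offs. For $\chi_k$: since $u \in \Ker\Box^{(q)}_0 \subset L^2_{\omega_0}$ has exponential decay, $\|\chi_k u - u\|_{\omega_0} \to 0$, and since $\tomega_{(k)} \to \omega_0$ locally uniformly with the uniform equivalence (\ref{3.3 compatibility 1}), also $\|\chi_k u - u\|_{\tomega_{(k)}} \to 0$. For $\tilde\chi_k$: write $u_{(k)} - \tBlks(\chi_k u) = (\tilde\chi_k - 1)\tBlks(\chi_k u)$; on $\{|z| > (3/7)k^\epsilon\}$ I would show $\tBlks(\chi_k u)$ is small. Here I expect to use that $\tBlks(\chi_k u)$ is close in $L^2_{\tomega_{(k)}}$ to $\chi_k u$ (just proved) which is itself close to $u$, and $u$ has exponential decay so $\|(\tilde\chi_k-1)u\|_{\tomega_{(k)}} \to 0$; hence $\|(\tilde\chi_k - 1)\tBlks(\chi_k u)\|_{\tomega_{(k)}} \leq \|(\tilde\chi_k-1)(\tBlks(\chi_k u) - u)\|_{\tomega_{(k)}} + \|(\tilde\chi_k - 1)u\|_{\tomega_{(k)}} \to 0$. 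Assembling the three pieces via the triangle inequality gives $\|u_{(k)} - u\|_{\tomega_{(k)}} \to 0$.

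The main obstacle is bookkeeping the interplay between the cut-off scales $k^\epsilon$, the polynomial volume growth $k^{2n\epsilon}$, and the two competing decay mechanisms (the $k^{-1/2}$ from Lemma \ref{4.5 thm 1 lem 1} on the bulk $\{|z|<k^\epsilon\}$, versus the Gaussian decay $e^{-ck^{2\epsilon}}$ on the annular support of $\pb\chi_k$); one must verify that the constraint $\epsilon < \min\{1/(2n+1), 1/6\}$ indeed makes $k^{2n\epsilon - 1} \to 0$ while simultaneously keeping the extended-Laplacian construction of Section \ref{section 4.4} valid. A secondary subtlety is that the uniform spectral gap estimate in Theorem \ref{4.4 thm spectral gap} is stated on $\Omega^{(0,q)}_c(\Cn)$, so one should note $\chi_k u$ is compactly supported and smooth, so the estimate applies directly without a density argument.
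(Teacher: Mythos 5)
Your proposal is correct and follows essentially the same strategy as the paper's own proof: apply the $k$-uniform spectral gap (Theorem \ref{4.4 thm spectral gap}) to $\chi_k u$, split $\tpb_{(k),s}(\chi_k u)$ and $\tpb^*_{(k),s}(\chi_k u)$ into the cut-off commutator term (controlled on the annulus by the Gaussian decay of $u$) plus $\chi_k\,\tpb_{(k),s}u$ (controlled by Lemma \ref{4.5 thm 1 lem 1} together with the volume bound $k^{2n\epsilon}/k\to 0$, which is where $\epsilon<1/(2n+1)<1/(2n)$ enters), and then remove both cut-offs $\chi_k,\tilde\chi_k$ using the exponential decay of $u$. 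The bookkeeping around the final triangle inequality is organized slightly differently (the paper uses $u_{(k)}-\tilde\chi_k u=\tilde\chi_k(\tBlks\chi_k u-u)$ and $|\tilde\chi_k|\le 1$ directly, while you add and subtract $u$ inside $(\tilde\chi_k-1)\tBlks\chi_k u$), but the underlying estimates are identical.
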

\begin{proof}
     Note that
     $
     \|u_{(k)}-u\|_{\tomega_{(k)}} \leq \|u_{(k)}-\tchi_k u\|_{\tomega_{(k)}}+\|\tchi_{k}u-u\|_{\tomega_{(k)}}
     $. Clearly, the second term tends to zero by the decreasing of $u$ as $z\To \infty$. For the first term,
     \begin{align*}
     \|u_{(k)}-\tchi_ku\|_{\tomega_{(k)}}= \|\tchi_k (\tBlks \chi_k u-u)\|_{\tomega_{(k)}}&\leq \|\tBlks \chi_k u-u\|_{\tomega_{(k)}}\\ &\leq \|\tBlks \chi_k u-\chi_k u\|_{\tomega_{(k)}}+\|\chi_k u-u\|_{\tomega_{(k)}}.    \end{align*}
      Since the second term of the right-hand side tends to zero, we only need to estimate $\|\tBlks \chi_k u-\chi_k u\|_{\tomega_{(k)}}$. By Theorem \ref{4.4 thm spectral gap},
     \[
     \|\tBlks \chi_k u-\chi_k u\|^2_{\tomega_{(k)}} \lesssim \|\tpb^*_{(k),s}\chi_ku\|^2_{\tomega_{(k)}}+\|\tpb_{(k),s}\chi_ku\|^2_{\tomega_{(k)}}.
     \]It remains to claim $\|\tpb^*_{(k),s}\chi_ku\|^2_{\tomega_{(k)}} \To 0$ and $\|\tpb_{(k),s}\chi_ku\|^2_{\tomega_{(k)}} \To 0$.\par 
     For $\|\tpb_{(k),s}\chi_ku\|^2_{\tomega_{(k)}}$, we compute that $\tpb_{(k),s}\chi_ku=(\pb \chi_k)\wedge u+\chi \tpb_{k,s}u$ and then 
\begin{align*}\|\tpb_{(k),s}\chi_ku\|^2_{\tomega_{(k)}} &=\int_{\{|z|<k^{\epsilon}/7\}}|\tpb_{k,s}u|_{\tomega_{(k)}}^2dV_{\tomega_{(k)}}+\int_{\{k^{\epsilon}/7<|z|<2k^{\epsilon}/7\}}|(\pb \chi_k)\wedge u+\chi \tpb_{k,s}u|^2_{\tomega_{(k)}}dV_{\tomega_{(k)}} \\
     &\lesssim
    \int_{\{|z|<2k^{\epsilon}/7\}}|\tpb_{k,s}u|_{\omega_{0}}^2dm+\int_{\{k^{\epsilon}/7<|z|<2k^{\epsilon}/7\}}|u|^2_{\omega_{0}}dm.
     \end{align*}Clearly, the second term $\int_{\{k^{\epsilon}/7<|z|<2k^{\epsilon}/7\}}|u|^2_{\omega_{0}}dm$ tends to zero  by the decreasing of $u$ as $z\To\infty$. By Lemma \ref{4.5 thm 1 lem 1}  and the setting $\epsilon<1/(2n)$, the first term can be dominated by
   \[
   \int_{\{|z|<2k^{\epsilon}/7\}}|\tpb_{(k),s}u|_{\omega_{0}}^2dm\lesssim \dfrac{(k^{\epsilon})^{2n}}{k}\To 0.
   \] We have proven that $\|\tpb_{(k),s}\chi_ku\|^2_{\tomega_{(k)}} \To 0$. 
   Next, we will show $\|\tpb^*_{(k),s}\chi_ku\|_{\tomega_{(k)}}\To 0$ in a similar way. Compute $\tpb^*_{(k),s}\chi_ku_k=\sum_{i=1}^{n}\dfrac{\p \chi_k}{\p z^i}(d\zb^i)\wedge^*_{\tomega_{(k)}}u+\chi_k \ \tpb^*_{(k),s}u$ and repeat the above process to get
   \begin{align*}\|\tpb^*_{(k),s}\chi_ku\|^2_{\tomega_{(k)}}\lesssim
    \int_{\{|z|<2k^{\epsilon}/7\}}|\tpb^*_{(k),s}u|_{\omega_{0}}^2dm+\int_{\{k^{\epsilon}/7<|z|<2k^{\epsilon}/7\}}|u|^2_{\omega_{0}}dm.
     \end{align*}The second term clearly tends to zero and the first term also tends to zero by the fact that $\epsilon<1/(2n)$ and Lemma \ref{4.5 thm 1 lem 1}. We also have $\|\tpb^*_{k,s}\chi_ku\|_{\tomega_{(k)}} \to 0$.
\end{proof}

In the next step, we will display $\Plks u_{(k)} -u_{(k)}\To 0$ in $L^2_{\tomega_{(k)}}(\Cn,\T)$. To do this, we need to estimate the decreasing rate of $\|\Box^{(q)}_{(k),s}u_{(k)}\|_{\omega_{(k)}}$ as (\ref{123456789}) in the proof of Theorem \ref{4.3 thm 2}. Since $\left(\Box^{(q)}_{(k),s}u_{(k)}\right)(z)=0$ for all $|z|<(1/7)k^{\epsilon}$, we only need to analyze $u_{(k)}$ on the annuli $\{(2/7)k^{\epsilon}<|z|<k^{\epsilon}\}$. The following lemma tells us that $u_{(k)}$ are \textbf{small} on the annuli. Notably, the proof effectively utilizes the property that $\supp \rho_k\cap\supp \tchi_k=\emptyset$. 

\begin{lem}\label{4.5 thm2 lem1}
Consider the functional
\[
\rho_k \tBlks \chi_k: L^2_{\tomega_{(k)}}(\Cn,\T) \rightarrow L^2_{\tomega_{(k)}}(\Cn,\T).
\]For any $d\in \N$, there exists a constant $C$ and $n_0\in \N$ such that the operator norm
\[
\|\rho_k \tBlks \chi_k\|_{\tomega_{(k)}} \leq \dfrac{C}{k^{d}} \quad \text{\rm for all }\, k\geq n_0.
\]
\end{lem}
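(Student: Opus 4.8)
The plan is to exploit the disjointness of the supports $\supp\rho_k$ and $\supp\tchi_k$ together with the Hodge decomposition of Theorem \ref{4.4 thm B=I-} and the uniform invertibility of the extended Laplacians off degree $q_0$ (Corollary \ref{4.4 cor N}). Since $\rho_k\tchi_k\equiv0$, applying $\rho_k$ to the identity $\tBlks=\Id-\tpb^{(q-1)}_{(k),s}N^{q-1}_{k}\tpb^{*,(q)}_{(k),s}-\tpb^{*,(q+1)}_{(k),s}N^{q+1}_{k}\tpb^{(q)}_{(k),s}$ and composing with $\chi_k$ on the right kills the $\Id$ term on $\supp\rho_k$, so that
\[
\rho_k\tBlks\chi_k=-\rho_k\tpb^{(q-1)}_{(k),s}N^{q-1}_{k}\tpb^{*,(q)}_{(k),s}\chi_k-\rho_k\tpb^{*,(q+1)}_{(k),s}N^{q+1}_{k}\tpb^{(q)}_{(k),s}\chi_k.
\]
Here one uses implicitly that $q\neq q_0$ so that $N^{q\pm1}_k$ may fail to exist only when $q\pm1=q_0$; but in that case the corresponding term is handled using the identities (\ref{4.4 eq 1}), (\ref{4.4 eq 2}) exactly as in the proof of Theorem \ref{4.4 thm spectral gap}, and in all cases the relevant composite operators are $k$-uniformly bounded on $L^2_{\tomega_{(k)}}$. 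The strategy is therefore to gain a factor $k^{-d}$ not from a single cancellation but from iterating: one commutes $\rho_k$ (or an intermediate cut-off) past the differential operators $\tpb_{(k),s},\tpb^*_{(k),s}$, producing each time a commutator supported on an annulus of inner radius comparable to $k^{\epsilon}$, on which $u$ (being of the special exponentially decaying form $z^\alpha_q e^{-\sum|\lambda_i||z^i|^2}d\zb^I$) is exponentially small.

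Concretely, I would introduce a finite chain of nested cut-offs $\rho_k=\rho_k^{(0)},\rho_k^{(1)},\dots,\rho_k^{(2d)}$ with $\supp\rho_k^{(j+1)}$ strictly inside the region where $\rho_k^{(j)}\equiv1$, all still disjoint from $\supp\tchi_k$ and all supported in annuli of radius $\sim k^{\epsilon}$. Writing $\rho_k\tBlks\chi_k=\rho_k(\tBlks-\Id)\chi_k$ and inserting the Hodge formula, one repeatedly uses the algebraic relations $\tpb_{(k),s}\tpb^*_{(k),s}N^{q_0-1}_k\tpb^*_{(k),s}=0$ etc.\ (the analogues of (\ref{4.4 eq 1})) and the boundedness of $N^{q\pm1}_k$, pushing cut-offs through: each step replaces a term of the form $\rho_k^{(j)}(\text{bounded op})\rho_k^{(j+1)}(\cdots)$ by a commutator term $[\,\tpb^{\sharp}_{(k),s},\rho_k^{(j)}\,](\text{bounded})(\cdots)$ plus a term with $\rho_k^{(j+1)}$ advanced. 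Since the commutators $[\tpb^{\sharp}_{(k),s},\rho_k^{(j)}]$ are multiplication by $\pb\rho_k^{(j)}$, which is $O(k^{-\epsilon})$ in sup-norm and supported on an annulus where $\chi_k u$ — more precisely the kernel object being hit — is exponentially small, after $2d$ iterations one is left with an operator whose norm is bounded by $C k^{-d}$ (the exponential smallness on the annulus $|z|\sim k^{\epsilon}$ beats any polynomial power of $k$, so even the crude bound suffices, and $\epsilon$ enters only through the number of differentiations one can afford, which is why $\epsilon<1/(2n+1)$ was imposed).

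The honest way to organize this without drowning in indices is: first establish by the commutator-and-Hodge argument that $\rho_k\tBlks\chi_k$ maps $L^2_{\tomega_{(k)}}$ into itself with norm $\lesssim \|\,(\text{some fixed-order derivatives of })\,\rho_k,\chi_k\text{-commutators}\,\|\cdot\|N^{q\pm1}_k\|^{O(1)}$, hence $\lesssim k^{-N\epsilon}$ for an $N$ that grows linearly with the number of iterations; then, since one is free to iterate as many times as desired (the operators stay uniformly bounded), choose the number of iterations so that $N\epsilon\geq d$. I expect the main obstacle to be bookkeeping: making precise that at each iteration the newly produced cut-off commutator is genuinely supported where the previous one vanishes and where the factor $\chi_k$ forces localization into the region $|z|\lesssim k^{\epsilon}$, so that the gained factors are powers of $\|\pb\rho_k^{(j)}\|_{\infty}\lesssim k^{-\epsilon}$ rather than $O(1)$ — and checking that the degenerate cases $q\pm1=q_0$, where $N^{q_0}_k$ does not exist, are absorbed using (\ref{4.4 eq 1})–(\ref{4.4 eq 2}) exactly as in Theorem \ref{4.4 thm spectral gap}. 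Once the operator-norm decay $\|\rho_k\tBlks\chi_k\|_{\tomega_{(k)}}\leq Ck^{-d}$ is in hand for the given $d$, the statement follows, and the hypothesis $\liminf k^d c_k>0$ will later be what matches this decay rate in the proof of Statement \ref{4.2 statement 2}.
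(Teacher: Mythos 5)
Your overall strategy matches the paper's: kill the $\Id$ term in the Hodge decomposition of Theorem \ref{4.4 thm B=I-} using $\supp\rho_k\cap\supp\chi_k=\emptyset$, then iterate commutator insertions through a chain of disjoint-support cut-offs, harvesting a factor $\|\pb\rho^{(j)}_k\|_\infty\lesssim k^{-\epsilon}$ at each step and closing the loop with the $k$-uniform operator bounds $\|N^{q\pm1}_k\|\lesssim 1$ and $\|\tpb^{*}_{(k),s}N^{q+1}_k\tpb_{(k),s}\|\leq 1$. That is precisely the paper's argument (it uses the cancellations (\ref{4.4 eq 1})--(\ref{4.4 eq 2}) so that each commuted expression reduces to a pure cut-off commutator term, then iterates a claim to accumulate $k^{-N\epsilon}$). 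However, several of the auxiliary points you lean on are incorrect, and one would genuinely derail the argument if you relied on it.

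The most important issue is the appeal to exponential smallness of $u$. This lemma is an operator-norm estimate on $L^2_{\tomega_{(k)}}(\Cn,\T)$, so $u$ is an arbitrary $L^2$-section, not of the special form $z^\alpha_q e^{-\sum|\lambda_i||z^i|^2}d\zb^I$; that special form is relevant in Lemma \ref{4.5 thm 1 lem 1} and Theorem \ref{4.5 thm 1}, not here. Moreover, even if $u$ were special, the resolvent $N^{q\pm1}_k$ is a nonlocal operator, so $N^{q+1}_k\tpb_{(k),s}\chi_k u$ has no useful localization on the annulus $\supp\rho_k$; there is simply nothing exponentially small there for $\rho_k$ to hit. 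The only source of smallness you can legitimately use is the $k^{-\epsilon}$ per commutator, and you must accept gaining the fixed power $k^{-d}$ by iterating at least $\lceil d/\epsilon\rceil$ times. Dropping the exponential-decay ``fallback'' is not optional. Secondly, the worry that ``$q\pm1=q_0$'' might force you to bypass $N^{q\pm1}_k$ is moot: throughout Section \ref{section 4.5} we are at a point $p\in M(q)$, so $q_0=q$ and both $N^{q-1}_k$ and $N^{q+1}_k$ exist by Corollary \ref{4.4 cor N}; the identities (\ref{4.4 eq 1})--(\ref{4.4 eq 2}) are used to produce cancellations, not to replace a missing inverse. Thirdly, the constraint $\epsilon<1/(2n+1)$ is imposed to make the volume factor $(k^\epsilon)^{2n}/k\to 0$ in the proof of Theorem \ref{4.5 thm 1}; it is not the bound on how many commutations you can perform here. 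In this lemma any $\epsilon>0$ suffices, since the composite operators remain uniformly bounded and you are free to iterate as many times as $d$ requires. With these corrections, your outline coincides with the paper's proof.
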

\begin{proof}
For any $u\in \Omega^{(0,q)}_c(\Cn)$, by Theorem \ref{4.4 thm B=I-},
\begin{align}\label{4.5 lem1 (0)}
\rho_k\tBlks\chi_ku&=\rho_k\left(\Id-\tpb^*_{(k),s}N^{q+1}_k\tpb_{(k),s}-\tpb_{(k),s}N^{q-1}_k\tpb^*_{(k),s}\right)\chi_ku \\ &= -\rho_k\tpb^*_{(k),s}N^{q+1}_k\tpb_{(k),s}\chi_ku-\rho_k\tpb_{(k),s}N^{q+1}_k\tpb^*_{(k),s}\chi_k u.\notag 
 \end{align}Now, we aim to estimate $\|\rho_k\tpb^{*}_{(k),s}N^{q+1}_k\tpb_{(k),s}\chi_ku\|_{\tomega_{(k)}}$. Observe that
 \begin{align*}
\|\rho_k\tpb^{*}_{(k),s}N^{q+1}_k\tpb_{(k),s}\chi_ku\|^2_{\tomega_{(k)}}&=\left(\rho_k\tpb^{*}_{(k),s}N^{q+1}_k\tpb_{(k),s}\chi_ku\mid \rho_k\tpb^{*}_{(k),s}N^{q+1}_k\tpb_{(k),s}\chi_ku\right)_{\tomega_{(k)}}\\&
=\left(N^{q+1}_k\tpb_{(k),s}\chi_ku\mid \tpb_{(k),s}\rho^2_k\tpb^{*}_{(k),s}N^{q+1}_k\tpb_{(k),s}\chi_ku\right)_{\tomega_{(k)}}\notag\\&=\left(\Tilde{\rho}_kN^{q+1}_k\tpb_{(k),s}\chi_ku\mid \tpb_{(k),s}\rho^2_k\tpb^{*}_{(k),s}N^{q+1}_k\tpb_{(k),s}\chi_ku\right)_{\tomega_{(k)}}\notag\\&\leq\|\Tilde{\rho}_k N^{q+1}_k\tpb_{(k),s}\chi_ku\|_{\tomega_{(k)}}\|\tpb_{(k),s}\rho^2_k\tpb^{*}_{(k),s}N^{q+1}_k\tpb_{(k),s}\chi_ku\|_{\tomega_{(k)}}, \notag
 \end{align*}
where $\Tilde{\rho}_k\in \Cinf_c(\Cn)$ is another cut-off function such that $\supp \Tilde{\rho}_k\supset \supp \rho_k$ and $\supp \Tilde{\rho}_k \cap \supp\chi_k=\emptyset$. By direct computation,
\begin{align*}
\tpb_{(k),s}\rho^2_k\tpb^{*}_{(k),s}N^{q+1}_k\tpb_{(k),s}\chi_k u&=(\pb \rho^2_k)\wedge \tpb^{*}_{(k),s}N^{q+1}_k\tpb_{(k),s}\chi_ku+ \rho^2_k\tpb_{(k),s}\tpb^{*}_{(k),s}N^{q+1}_k\tpb_{(k),s}\chi_k u\\&=(\pb \rho^2_k)\wedge \tpb^{*}_{(k),s}N^{q+1}_k\tpb_{(k),s}\chi_ku+ \rho^2_k\tpb_{(k),s}\chi_k u \notag\\&= (\pb \rho^2_k)\wedge \tpb^{*}_{(k),s}N^{q+1}_k\tpb_{(k),s}\chi_ku,\notag\end{align*}
where the second equality is from (\ref{4.4 eq 2}) and the third is by the fact that $\supp\rho_k \cap \supp \chi_k=\emptyset$.
We apply this computation to continue the previous estimate and get
\begin{align}\label{4.5 lem1 (3)}
\|\rho_k\tpb^{*}_{(k),s}N^{q+1}_k\tpb_{(k),s}\chi_k u\|^2_{\tomega_{(k)}}&\leq\|\Tilde{\rho}_k N^{q+1}_k\tpb_{(k),s}\chi_ku\|_{\tomega_{(k)}}\|(\pb \rho^2_k)\wedge \tpb^{*}_{(k),s}N^{q+1}_k\tpb_{(k),s}\chi_ku\|_{\tomega_{(k)}}\\&\lesssim k^{-\epsilon}\|\Tilde{\rho}_kN^{q+1}_k\tpb_{(k),s}\chi_k u\|_{\tomega_{(k)}}\|\Tilde{\rho}_k\tpb^*_{(k),s}N^{q+1}_{k}\tpb_{(k),s}\chi_k u\|_{\tomega_{(k)}},\notag 
\end{align} 
where the term $k^{-\epsilon}$ arises during the computation of $\pb\rho_k$ since $\sup_{|\alpha|=1} |\p^{\alpha}\rho_k|\lesssim k^{-\epsilon}$. Moreover, the sequence $\Tilde{\rho}_k$ can be taken to satisfy the condition $\sup_{|\alpha|=1} |\p^{\alpha}\Tilde{\rho}_k|\lesssim k^{-\epsilon}$.  To iterate the preceding process , we show the following claim:

\begin{claim}
   There exists $\Tilde{\Tilde{\rho}}_k\in \Cinf_c(\Cn)$ with $\supp \Tilde{\Tilde{\rho}}_k\supset \supp \Tilde{\rho}_k$ and $\supp \Tilde{\Tilde{\rho}}_k \cap \supp\chi_k=\emptyset$ such that $\sup_{|\alpha|=1} |\p^{\alpha}\Tilde{\Tilde{\rho}}_k|\lesssim k^{-\epsilon}$ and \begin{multline*}
   \|\Tilde{\rho}_kN^{q+1}_{k}\tpb_{(k),s}\chi_k u\|_{\tomega_{(k)}}\|\Tilde{\rho}_k\tpb^*_{(k),s}N^{q+1}_{k}\tpb_{(k),s}\chi_k u\|_{\tomega_{(k)}} \\ \lesssim k^{-\epsilon}\|\Tilde{\Tilde{\rho}}_kN^{q+1}_k\tpb_{(k),s}\chi_k u\|_{\tomega_{(k)}}\|\Tilde{\Tilde{\rho}}_k\tpb^*_{(k),s}N^{q+1}_{k}\tpb_{(k),s}\chi_k u\|_{\tomega_{(k)}}.    
   \end{multline*}
    
\end{claim}
To show the claim, by Lemma \ref{4.4 lem 1}, we get 
\[\|\Tilde{\rho}_kN^{q+1}_k\tpb_{(k),s}\chi_k u\|_{\tomega_{(k)}}\lesssim \|\tpb_{(k),s}\Tilde{\rho}_kN^{q+1}_{k}\tpb_{(k),s}\chi_k u\|_{\tomega_{(k)}}+\|\tpb^*_{(k),s}\Tilde{\rho}_kN^{q+1}_{k}\tpb_{(k),s}\chi_k u\|_{\tomega_{(k)}}.\]
Moreover, we compute directly that
\begin{align*}
\tpb_{(k),s}\Tilde{\rho}_kN^{q+1}_k\tpb_{(k),s}\chi_k u&=(\pb \Tilde{\rho}_k)\wedge N^{q+1}_k\tpb_{(k),s}\chi_k u+\Tilde{\rho}_k\tpb_{(k),s}N^{q+1}_k\tpb_{(k),s}\chi_k u =(\pb \Tilde{\rho}_k)\wedge N^{q+1}_k\tpb_{(k),s}\chi_k u;\\
\tpb^*_{(k),s}\Tilde{\rho}_kN^{q+1}_k\tpb_{(k),s}\chi_k u&=-\sum_{i=1}^n\dfrac{\p \Tilde{\rho}_k}{\p z^i}d\zb^i\wedge^*_{\tomega_{(k)}} N^{q+1}_k\tpb_{(k),s}\chi_k u+\Tilde{\rho}_k\tpb^*_{(k),s}N^{q+1}_k\tpb_{(k),s}\chi_k u.
\end{align*}Substitute these equations into the estimate and then dominate $\|\Tilde{\rho}_kN^{q+1}_k\tpb_{(k),s}\chi_k u\|_{\tomega_{(k)}}$ by   \begin{align*}
 \|(\pb \Tilde{\rho}_k)\wedge N^{q+1}_k\tpb_{(k),s}\chi_k u\|_{\tomega_{(k)}}&+ \|\sum_{i=1}^n\dfrac{\p \Tilde{\rho}_k}{\p z^i}d\zb^i\wedge^*_{\tomega_{(k)}} N^{q+1}_k\tpb_{(k),s}\chi_k u\|_{\tomega_{(k)}}+\|\Tilde{\rho}_k\tpb^*_{(k),s}N^{q+1}_k\tpb_{(k),s}\chi_k u\|_{\tomega_{(k)}}\\&\lesssim k^{-\epsilon}\|\Tilde{\Tilde{\rho}}_k N^{q+1}_k\tpb_{(k),s}\chi_k u\|_{\tomega_{(k)}}+\|\Tilde{\Tilde{\rho}}_k\tpb^*_{(k),s}N^{q+1}_k\tpb_{(k),s}\chi_k u\|_{\tomega_{(k)}}     
\end{align*}for some $\Tilde{\Tilde{\rho}}_k$ as described above. So, \begin{multline*}
\|\Tilde{\rho}_kN^{q+1}_{k}\tpb_{(k),s}\chi_k u\|_{\tomega_{(k)}}\|\Tilde{\rho}_k\tpb^*_{(k),s}N^{q+1}_{k}\tpb_{(k),s}\chi_k u\|_{\tomega_{(k)}}\\ \lesssim k^{-\epsilon}\|\Tilde{\Tilde{\rho}}_k N^{q+1}_k\tpb_{(k),s}\chi_k u\|_{\tomega_{(k)}}\|\Tilde{\Tilde{\rho}}_k\tpb^*_{(k),s}N^{q+1}_{k}\tpb_{(k),s}\chi_k u\|_{\tomega_{(k)}} +\|\Tilde{\Tilde{\rho}}_k\tpb^*_{(k),s}N^{q+1}_k\tpb_{(k),s}\chi_k u\|^2_{\tomega_{(k)}}.
\end{multline*}
For the last term of the right-hand side, we replace the $\rho_k$ by $\Tilde{\Tilde{\rho}}_k$ in (\ref{4.5 lem1 (3)}) and get
 \begin{equation*}
\|\Tilde{\Tilde{\rho}}_k\tpb^*_{(k),s}N^{q+1}_k\tpb_{(k),s}\chi_k u\|^2_{\tomega_{(k)}} \lesssim k^{-\epsilon}\|\Tilde{\Tilde{\Tilde{\rho}}}_kN^{q+1}_k\tpb_{(k),s}\chi_k u\|_{\tomega_{(k)}}\|\Tilde{\Tilde{\Tilde{\rho}}}_k\tpb^*_{(k),s}N^{q+1}_{k}\tpb_{(k),s}\chi_k u\|_{\tomega_{(k)}}.
 \end{equation*}Combining the above estimates, we have completed the claim. Next, by (\ref{4.5 lem1 (3)}) and  iterating the claim , we can conclude that for any integer $N\in\N$, there exists a constant $C$ and $\Tilde{\rho}_k\in \Cinf_c(\Cn)$ with $\supp \Tilde{\rho}_k\supset \supp \rho_k$ and $\supp \Tilde{\rho}_k \cap \supp\chi_k=\emptyset$ such that
 \[
\|\rho_k\tpb^{*}_{(k),s}N^{q+1}_k\tpb_{(k),s}\chi_k u\|^2_{\tomega_{(k)}}\leq Ck^{-N}\|\Tilde{\rho}_kN^{q+1}_{k}\tpb_{(k),s}\chi_k u\|_{\tomega_{(k)}}\|\Tilde{\rho}_k\tpb^*_{(k),s}N^{q+1}_{k}\tpb_{(k),s}\chi_k u\|_{\tomega_{(k)}}.\]
 Finally, we need to show the following fact:
 \begin{claim}
For all $v\in \Omega^{(0,q)}_c(\Cn)$
\begin{equation*}
\|\tpb^*_{(k),s}N_k^{q+1}\tpb_{(k),s}v\|_{\tomega_{(k)}}\leq \|v\|_{\tomega_{(k)}} \quad;\quad \|N_k^{q+1}\tpb_{(k),s}v\|_{\tomega_{(k)}}\lesssim \|v\|_{\tomega_{(k)}}.   
\end{equation*}
  \end{claim}For the first term, by (\ref{4.4 eq 2}), we compute that
 \begin{align*}
\|\tpb^*_{(k),s}N_k^{q+1}\tpb_{(k),s}v\|^2_{\tomega_{(k)}}&=\left(N_k^{q+1}\tpb_{(k),s}v\mid \tpb_{(k),s}\tpb^*_{(k),s}N_k^{q+1}\tpb_{(k),s}v\right)_{\tomega_{(k)}}=\left(N_k^{q+1}\tpb_{(k),s}v\mid \tpb_{(k),s}v\right)_{\tomega_{(k)}}\\ &=\left(\tpb^*_{(k),s}N_k^{q+1}\tpb_{(k),s}v\mid v\right)_{\tomega_{(k)}}\leq\|\tpb^*_{(k),s}N_k^{q+1}\tpb_{(k),s}v\|_{\tomega_{(k)}}\|v\|_{\tomega_{(k)}}.\end{align*} We get $\|\tpb^*_{(k),s}N_k^{q+1}\tpb_{(k),s}v\|_{\tomega_{(k)}}\leq \|v\|_{\tomega_{(k)}}$.  
 The second term follows by Lemma \ref{4.4 lem 1} that
 \begin{align*}
\|N_k^{q+1}\tpb_{(k),s}v\|_{\tomega_{(k)}}&\lesssim\|\tpb_{(k),s}N_k^{q+1}\tpb_{(k),s}v\|_{\tomega_{(k)}}+\|\tpb^*_{(k),s}N_k^{q+1}\tpb_{(k),s}v\|_{\tomega_{(k)}}\\ &=\|\tpb^*_{(k),s}N_k^{q+1}\tpb_{(k),s}v\|_{\tomega_{(k)}}\leq\|v\|_{\tomega_{(k)}},    
 \end{align*}since $\tpb_{(k),s}N_k^{q+1}\tpb_{(k),s}=0$. We completed the proof of the second claim. After combining all the above results, we know that for any integer $N\in\N$, there exists a constant $C$ such that
 \[
\|\rho_k\tpb^{*}_{(k),s}N^{q+1}_k\tpb_{(k),s}\chi_k u\|_{\tomega_{(k)}}\leq Ck^{-N}\|u\|_{\tomega_{(k)}}.
 \]Symmetrically, we can literally repeat the process to show the analogous statement:
\[
\|\rho_k\tpb_{(k),s}N^{q+1}_k\tpb^{*}_{(k),s}\chi_ku\|_{\tomega_{(k)}} \lesssim Ck^{-N}\|u\|_{\tomega_{(k)}}.
\]Then the lemma follows by (\ref{4.5 lem1 (0)}) and a density argument. 
 \end{proof}
\begin{cor}\label{4.5 thm2 cor}
    For any $u\in L^2_{\omega_0}(\Cn,\T)$ and $d\in \N$, \[k^{d}\left(\|\tpb^{*}_{(k),s}u_{(k)}\|_{\tomega_{(k)}}^2+\|\tpb_{(k),s}u_{(k)}\|_{\tomega_{(k)}}^2\right) \rightarrow 0. 
    \]
\end{cor}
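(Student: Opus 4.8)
The plan is to exploit that $\tBlks\chi_k u$ is $\Box^{(q)\sim}_{(k),s}$-harmonic, hence annihilated by both $\tpb_{(k),s}$ and $\tpb^{*}_{(k),s}$, so that when these operators are applied to $u_{(k)}=\tchi_k\,\tBlks\chi_k u$ the Leibniz rule leaves only the terms in which the derivative falls on the cut-off $\tchi_k$. Writing $v:=\tBlks\chi_k u\in\Ker\Box^{(q)\sim}_{(k),s}\subset\Dom\tpb_{(k),s}\cap\Dom\tpb^{*}_{(k),s}$, the identity $\tpb_{(k),s}=\pb+(\pb\tphi_{(k)})\wedge\cdot$ gives $\tpb_{(k),s}(\tchi_k v)=(\pb\tchi_k)\wedge v+\tchi_k\,\tpb_{(k),s}v=(\pb\tchi_k)\wedge v$, and, using $\tpb^{*}_{(k),s}=\pb^{*}_{\tomega_{(k)}}+(\pb\tphi_{(k)})\wedge^{*}_{\tomega_{(k)}}\cdot$ with (\ref{3.2 Lemma Lap pb* eq}), $\tpb^{*}_{(k),s}(\tchi_k v)=\tchi_k\,\tpb^{*}_{(k),s}v+\sum_i\frac{\p\tchi_k}{\p z^i}\,(d\zb^i)\wedge^{*}_{\tomega_{(k)}}v=\sum_i\frac{\p\tchi_k}{\p z^i}\,(d\zb^i)\wedge^{*}_{\tomega_{(k)}}v$ (up to sign).

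I would then relocalize onto the region where $\rho_k\equiv1$. The first derivatives of $\tchi_k$ are supported in the annulus $\{(3/7)k^{\epsilon}<|z|<(6/7)k^{\epsilon}\}$, on which $\rho_k\equiv1$, so $v$ may be replaced by $\rho_k v$ in both of the above formulas. Since $\sup_{|\alpha|=1}|\p^{\alpha}\tchi_k|\lesssim k^{-\epsilon}$ and the operators $\eta\wedge\cdot$ and $\eta\wedge^{*}_{\tomega_{(k)}}\cdot$ are bounded pointwise by constants depending only on the uniformly bounded coefficients of $\tomega_{(k)}$, this yields $\|\tpb_{(k),s}u_{(k)}\|_{\tomega_{(k)}}+\|\tpb^{*}_{(k),s}u_{(k)}\|_{\tomega_{(k)}}\lesssim k^{-\epsilon}\,\|\rho_k\,\tBlks\chi_k u\|_{\tomega_{(k)}}$.

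The decay is supplied by Lemma \ref{4.5 thm2 lem1}: for every $N\in\N$ there is $C_N$ with $\|\rho_k\,\tBlks\chi_k u\|_{\tomega_{(k)}}\leq C_N k^{-N}\|u\|_{\tomega_{(k)}}$ for all large $k$. Since $\tomega_{(k)}\to\omega_0$ in $\Ct$ locally uniformly and both are positive Hermitian forms, $\|\cdot\|_{\tomega_{(k)}}$ and $\|\cdot\|_{\omega_0}$ are equivalent with constants independent of $k$, so $\|u\|_{\tomega_{(k)}}\lesssim\|u\|_{\omega_0}$. Combining, $\|\tpb_{(k),s}u_{(k)}\|_{\tomega_{(k)}}^{2}+\|\tpb^{*}_{(k),s}u_{(k)}\|_{\tomega_{(k)}}^{2}\lesssim k^{-2N}\|u\|_{\omega_0}^{2}$; choosing $N>d/2$ gives $k^{d}\bigl(\|\tpb^{*}_{(k),s}u_{(k)}\|_{\tomega_{(k)}}^{2}+\|\tpb_{(k),s}u_{(k)}\|_{\tomega_{(k)}}^{2}\bigr)\lesssim k^{d-2N}\|u\|_{\omega_0}^{2}\to 0$, which is the assertion. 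I do not expect a serious obstacle here: the real work — the iteration producing the fast decay of $\rho_k\tBlks\chi_k$, which rests on the uniform spectral gap of $\Box^{(q)\sim}_{(k),s}$ (Theorem \ref{4.4 thm spectral gap}, Corollary \ref{4.4 cor N}) and on the disjointness $\supp\rho_k\cap\supp\chi_k=\emptyset$ — is already contained in Lemma \ref{4.5 thm2 lem1}; the only genuine points in the corollary are the Leibniz computation that kills the bulk term by harmonicity and the support bookkeeping that moves the cut-off derivative into the zone $\{\rho_k=1\}$.
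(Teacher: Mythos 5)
Your proposal is correct and follows essentially the same route as the paper: both kill the bulk term using $\tpb_{(k),s}\tBlks=\tpb^{*}_{(k),s}\tBlks=0$, leaving only the cut-off-derivative terms, which are supported in the annulus where $\rho_k\equiv1$, and then invoke Lemma \ref{4.5 thm2 lem1} for the superpolynomial decay of $\|\rho_k\tBlks\chi_k u\|_{\tomega_{(k)}}$. The extra factor $k^{-\epsilon}$ you extract from $|\p\tchi_k|$ is harmless but unnecessary, since the lemma already provides $O(k^{-N})$ for arbitrary $N$.
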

\begin{proof}
    Recall the fact that $\tpb_{(k),s}\tBlks=0$ and $\tpb^*_{(k),s}\tBlks=0$. 
    \begin{align*}
    \tpb_{(k),s}u_{(k)}&=(\pb \Tilde{\chi}_k)\wedge\tBlks \chi_ku+\Tilde{\chi}_k\tpb_{(k),s}\tBlks \chi_ku=(\pb \Tilde{\chi}_k)\wedge\tBlks \chi_ku;\\
\tpb^*_{(k),s}u_{(k)}&=-\sum_{i=1}^{n}\dfrac{\p \Tilde{\chi}_k}{\p z^i}(d\zb^i)\wedge^*_{\tomega_{(k)}}\tBlks \chi_ku+\Tilde{\chi}_k \ \tpb^*_{(k),s}\tBlks \chi_ku=-\sum_{i=1}^{n}\dfrac{\p \Tilde{\chi}_k}{\p z^i}(d\zb^i)\wedge^*_{\tomega_{(k)}}\tBlks \chi_ku.   
    \end{align*}Observe that derivatives of $\Tilde{\chi}_k$ are supported in the annuli $\{3k^{\epsilon}/7<|z|<6k^{\epsilon}/7\}$ and $\rho_k\equiv 1$ on the annuli. We can see \begin{align*}
\|\tpb_{(k),s}u_{(k)}\|^2_{\tomega_{(k)}} \lesssim \|\rho_k\tBlks\chi_k u\|^2_{\tomega_{(k)}} \quad;\quad 
\|\tpb^*_{(k),s}u_{(k)}\|^2_{\tomega_{(k)}} \lesssim \|\rho_k\tBlks\chi_k u\|^2_{\tomega_{(k)}}.
\end{align*} By Lemma \ref{4.5 thm2 lem1}, we can immediately derive the corollary.
\end{proof}
Now, we show the theorem claiming that $\Plks u_{(k)}- u_{(k)}\To 0$  which is similar to Lemma \ref{4.3 thm 2} in Section \ref{section 4.3}.
\begin{thm}\label{4.5 thm2}
If there exists $d \in \R$ such that $\liminf_{k\To\infty} k^{d}c_k>0$, then we have
\begin{equation*}
\|\Plks u_{(k)}-u_{(k)}\|_{\omega_{(k)}} \rightarrow 0 \quad \text{as }\,k\To\infty.
\end{equation*}
In the case $c_k=0$ for all $k$, the convergence holds under the local small spectral gap condition of polynomial rate in $U$ (cf. Def. \ref{1.2 spectral gap 1}).
\end{thm}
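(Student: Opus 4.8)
The plan is to adapt the proof of Theorem \ref{4.3 thm 2} to the present situation, with Corollary \ref{4.5 thm2 cor} playing the role that the crude bound (\ref{123456789}) played there. Set $u_k(z):=k^{n/2}u_{(k)}(\sqrt{k}z)$. Since $\supp u_{(k)}\subset\supp\tchi_k\subset\{z;\,|z|<(6/7)k^{\epsilon}\}$ and $\epsilon<1/2$, for all large $k$ the form $u_k$ lies in $\Omega^{(0,q)}_c(B(1))\subset\Omega^{(0,q)}_c(M)$, and $u_k$ is smooth because $\tBlks\chi_ku$ is a section in the kernel of the elliptic operator $\Box^{(q)\sim}_{(k),s}$. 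Using the unitary identifications (\ref{3.1 id 1}) and (\ref{3.1 id 4}) together with the rescaling relations (\ref{3.1 scaled projection relation}) and (\ref{3.2 rescale relation 3}), I would first pass to the unscaled picture and obtain
\[
\|\Plks u_{(k)}-u_{(k)}\|_{\omega_{(k)}}\le\big\|\Pk(u_ke^{k\phi}\otimes s^k)-u_ke^{k\phi}\otimes s^k\big\|_{\omega,k\phi}.
\]

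Next I would apply the spectral projection estimate $\|(I-\Pk)w\|^2_{\omega,k\phi}\le c_k^{-1}(\Box^{(q)}_{k}w\mid w)_{\omega,k\phi}$ to $w=u_ke^{k\phi}\otimes s^k$. As $(\Box^{(q)}_{k}w\mid w)_{\omega,k\phi}=\|\pb^{(q)}_{k,s}u_k\|^2_{\omega,B(1)}+\|\pb^{*,(q)}_{k,s}u_k\|^2_{\omega,B(1)}$, the rescaling identities (\ref{3.2 rescale relation}) and a change of variables (which accounts for the scaling and produces exactly one extra power of $k$) give
\[
\|\Plks u_{(k)}-u_{(k)}\|^2_{\omega_{(k)}}\le\frac{k}{c_k}\Big(\|\pb^{(q)}_{(k),s}u_{(k)}\|^2_{\omega_{(k)}}+\|\pb^{*,(q)}_{(k),s}u_{(k)}\|^2_{\omega_{(k)}}\Big).
\]
Because $\supp u_{(k)}\subset B(k^{\epsilon})$ and the operators $\pb_{(k),s},\pb^{*}_{(k),s}$ and the metric $\omega_{(k)}$ agree on $B(k^{\epsilon})$ with their extensions $\tpb_{(k),s},\tpb^{*}_{(k),s},\tomega_{(k)}$, the right-hand side equals $\dfrac{k}{c_k}\big(\|\tpb^{(q)}_{(k),s}u_{(k)}\|^2_{\tomega_{(k)}}+\|\tpb^{*,(q)}_{(k),s}u_{(k)}\|^2_{\tomega_{(k)}}\big)$. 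The hypothesis $\liminf_{k\To\infty}k^dc_k>0$ yields $k/c_k\lesssim k^{d+1}$, so Corollary \ref{4.5 thm2 cor} applied with exponent $d+1$ forces this quantity to $0$. This proves the spectral case.

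For the Bergman case $c_k=0$, I would repeat the identical computation but replace the spectral projection estimate by the small local spectral gap condition of polynomial rate (Def.\ \ref{1.2 spectral gap 1}), which provides $d'\in\N$ and $C>0$ with $\|(I-\Bk)w\|^2_{\omega,k\phi}\le Ck^{d'}(\Box^{(q)}_{k}w\mid w)_{\omega,k\phi}$ for all $w\in\Omega^{(0,q)}_c(U,L^k)$; one only needs $u_ke^{k\phi}\otimes s^k\in\Omega^{(0,q)}_c(U,L^k)$ for large $k$, which holds because $B(1)\subset\subset U$ and $u_k$ is smooth. This gives the bound $Ck^{d'+1}\big(\|\tpb^{(q)}_{(k),s}u_{(k)}\|^2_{\tomega_{(k)}}+\|\tpb^{*,(q)}_{(k),s}u_{(k)}\|^2_{\tomega_{(k)}}\big)$, which tends to $0$ by Corollary \ref{4.5 thm2 cor} with exponent $d'+1$. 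I expect the genuine difficulty to lie entirely in Corollary \ref{4.5 thm2 cor} and Lemma \ref{4.5 thm2 lem1} (the super-polynomial smallness of the $\pb$- and $\pb^{*}$-defect of $u_{(k)}$); within the present proof the only delicate points are tracking the single power of $k$ produced by the change of variables under scaling, and checking that $\supp u_{(k)}\subset B(k^{\epsilon})$ so that the extended operators may legitimately replace the localized ones before Corollary \ref{4.5 thm2 cor} is invoked.
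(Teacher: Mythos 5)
Your proof is correct and follows essentially the same route as the paper: rescale to the unscaled picture, invoke the spectral‐projection (resp.\ polynomial spectral‐gap) estimate, rescale back to pick up the factor $k/c_k$, observe the localized and extended operators agree on $\supp u_{(k)}\subset B(k^{\epsilon})$, and finish with Corollary~\ref{4.5 thm2 cor}. You are if anything a bit more explicit than the paper about the support bookkeeping and about the passage from $\pb_{(k),s}$ to $\tpb_{(k),s}$, and you correctly cite Def.~\ref{1.2 spectral gap 1} for the Bergman case (the paper's proof contains a typo there).
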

\begin{proof}
   Define $
    u_{k}(z):=k^{n/2}u_{(k)}(\sqrt{k}z)$ which is a section with compact support in $U$. Then we have
    \[
    \|\Plks u_{(k)}-u_{(k)}\|_{\omega_{(k)},B(\sqrt{k})}=\|\Pks u_k-u_k\|_{\omega,B(1)}.
    \]By the property of spectral kernel,
    \begin{equation*}
     \|\Pks u_k-u_k\|^2_{\omega,B(1)} \leq \dfrac{1}{c_k}\left(\Box^{(q)}_{k,s}u_k \mid u_k\right)_{\omega}=\dfrac{1}{c_k}\left(\|\pb^*_{k,s}u_k\|^2_{\omega}+\|\pb_{k,s}u_k\|^2_{\omega}\right).
    \end{equation*}
    Moreover, note that \[\left(\|\pb^*_{k,s}u_k\|^2_{\omega}+\|\pb_{k,s}u_k\|^2_{\omega}\right)=k\left(\|\tpb^*_{(k),s}u_{(k)}\|^2_{\tomega_{(k)}}+\|\tpb_{(k),s}u_{(k)}\|^2_{\tomega_{(k)}}\right)\] by the relation (\ref{3.2 rescale relation}). Combine them and get
    \begin{align*}
    \|\Plks u_{(k)}-u_{(k)}\|_{\omega_{(k)},B(\sqrt{k})}&\leq \dfrac{k}{c_k}\left(\|\tpb^*_{(k),s}u_{(k)}\|^2_{\tomega_{(k)}}+\|\tpb_{(k),s}u_{(k)}\|^2_{\tomega_{(k)}}\right).
    \end{align*}By the assumption that $\liminf_{k\To\infty} k^{N}c_k>0$ for some $N\in\N$ and Corollary \ref{4.5 thm2 cor}, the right-hand sides of equations above must tend to zero.\par
    In the Bergman kernel case $c_k=0$, we apply the spectral gap condition \ref{1.2 spectral gap 2} and get
    \begin{multline*}
        \|\Blks u_{(k)}-u_{(k)}\|_{\omega_{(k)}}=\|\Bks u_k-u_k\|^2_{\omega,B(1)}\\ \lesssim k^d\left(\Box^{(q)}_{k,s}u_k \mid u_k\right)_{\omega}=k^{d+1}\left(\|\tpb^*_{(k),s}u_k\|^2_{\tomega_{(k)}}+\|\tpb_{(k),s}u_k\|^2_{\tomega_{(k)}}\right).
    \end{multline*} 
   We apply Corollary \ref{4.5 thm2 cor} to complete the proof.
   \end{proof}
Now, we are ready to overcome Statement \ref{4.2 statement 2} for the general cases of $(0,q)$-forms.
\begin{thm}\label{4.5 thm Bu=u}
If there exists $d \in \R$ such that $\liminf_{k \rightarrow \infty} k^{d}c_k>0$,
\begin{equation*}
    \Bs u=u \quad \text{\rm for all }\, u \in \Ker \Box^{(q)}_{0,s}.
\end{equation*}As for the Bergman kernel case $c_k=0$, it also holds under the local small spectral gap condition of polynomial rate in $U$ (cf. Def. \ref{1.2 spectral gap 1}).
\end{thm}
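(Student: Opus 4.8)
The plan is to deduce the statement from the machinery built in Section~\ref{section 4.5} together with Lemma~\ref{4.3 lem 3}, with the four-term decomposition below as the only new manipulation. First I would reduce to a single basis element. By Theorem~\ref{4.1 Model theorem}, $\Ker\Box^{(q)}_{0,s}$ is the closed span of the orthonormal family $\{\Psi_\alpha\}_{\alpha\in\N_0^n}$, each $\Psi_\alpha$ a scalar multiple of $u_\alpha:=z^{\alpha}_{q}e^{-\sum_i|\lambda_i||z^i|^2}d\zb^I$; since $\Bs$ is a bounded linear operator (Lemma~\ref{4.2 Lem B op}) it suffices to prove $\Bs u=u$ for $u=u_\alpha$ with $\alpha$ fixed. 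As $\Bs u-u\in L^2_{\omega_0}(\Cn,\T)$ and $\Omega^{(0,q)}_c(\Cn)$ is dense there, it then suffices to show $(v\mid\Bs u-u)_{\omega_0}=0$ for an arbitrary test form $v\in\Omega^{(0,q)}_c(\Cn)$; fix such a $v$.

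Second, with $u_{(k)}=\tchi_k\tBlks\chi_k u$ the sequence introduced in Section~\ref{section 4.5}, I would insert it between $\Bs u$ and $u$. For all large $k$ (so that $\supp v$, $\supp u_{(k)}$ and $\supp(\chi_k u)$ all lie in $B(\sqrt k)$) write
\[\Bs u-u=\big(\Bs u-\Plks\chi_k u\big)+\Plks\big(\chi_k u-u_{(k)}\big)+\big(\Plks u_{(k)}-u_{(k)}\big)+\big(u_{(k)}-u\big),\]
pair with $v$ in $(\cdot\mid\cdot)_{\omega_0}$, and show all four terms tend to $0$ as $k\to\infty$. The first term is exactly Lemma~\ref{4.3 lem 3}, whose proof uses only the $k$-uniform boundedness of $\Plks\chi_k$ (Lemma~\ref{3.3 thm pf 1.5}), the boundedness of $\Bs$, and the local uniform convergence $\Plksk\to\Bsk$ of Assumption~\ref{4.2 assumption}, hence applies verbatim to the cut-offs $\chi_k$ of Section~\ref{section 4.5}. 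The fourth term tends to $0$ because $|(v\mid u_{(k)}-u)_{\omega_0}|\lesssim\|v\|_{\omega_0}\|u_{(k)}-u\|_{\omega_0,\,\supp v}\lesssim\|v\|_{\omega_0}\|u_{(k)}-u\|_{\tomega_{(k)}}$, which goes to $0$ by Theorem~\ref{4.5 thm 1} (using the $k$-uniform comparability of $\tomega_{(k)}$ with $\omega_0$, immediate from (\ref{3.3 compatibility 1}), (\ref{3.3 compatibility 2}) and the interpolated construction of $\tomega_{(k)}$). The third term tends to $0$ by Theorem~\ref{4.5 thm2}, which under the hypothesis $\liminf_k k^d c_k>0$ gives $\|\Plks u_{(k)}-u_{(k)}\|_{\omega_{(k)}}\to0$; pairing with $v$ and using norm comparability finishes it, and the case $c_k=0$ uses the corresponding clause of Theorem~\ref{4.5 thm2} verbatim.

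The second term is the routine-but-necessary bridging step between the truncation $\chi_k u$ and the corrected sequence $u_{(k)}$: the difference $\chi_k u-u_{(k)}$ is smooth with compact support in $B(\sqrt k)$ for large $k$, and $\|\chi_k u-u_{(k)}\|_{\tomega_{(k)}}\le\|\chi_k u-u\|_{\tomega_{(k)}}+\|u-u_{(k)}\|_{\tomega_{(k)}}\to0$ — the first summand by dominated convergence (the integrand is controlled by a fixed multiple of $|u|^2$ and supported in $\{|z|\ge k^\epsilon/7\}$), the second by Theorem~\ref{4.5 thm 1}. Hence by Lemma~\ref{3.3 thm pf 1.5} and comparability, $\|\Plks(\chi_k u-u_{(k)})\|_{\omega_{(k)}}\le\|\chi_k u-u_{(k)}\|_{\omega_{(k)}}\lesssim\|\chi_k u-u_{(k)}\|_{\omega_0}\to0$, and pairing with the fixed-support $v$ gives $0$ in the limit. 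Combining the four estimates yields $(v\mid\Bs u-u)_{\omega_0}\to0$; since the left-hand side is independent of $k$ it must vanish, and as $v$ is arbitrary we get $\Bs u=u$ on $\Ker\Box^{(q)}_{0,s}$; together with Theorem~\ref{4.2 thm bd op BoxB=0} this identifies $\Bs$ with the model Bergman projection, from which Remark~\ref{4.2 rmk} concludes the main theorems. The genuinely substantial input is the third term — Theorem~\ref{4.5 thm2}, resting on Corollary~\ref{4.5 thm2 cor} and Lemma~\ref{4.5 thm2 lem1} — which is where the polynomial spectral-gap hypothesis on $c_k$ is consumed and where the specific choice $u_{(k)}=\tchi_k\tBlks\chi_ku$, projecting $\chi_k u$ onto the kernel of the \emph{extended} Laplacian $\Box^{(q)\sim}_{(k),s}$ whose spectral gap is under control, is indispensable; the present argument is otherwise an assembly of already-established pieces, and the only genuinely delicate bookkeeping is the bridging estimate for the second term.
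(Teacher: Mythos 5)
Your proof is correct and follows essentially the same route as the paper's: the identical four-term decomposition via $\Plks\chi_k u$ and $\Plks u_{(k)}$, with the first term handled by Lemma~\ref{4.3 lem 3}, the third by Theorem~\ref{4.5 thm2}, the fourth by Theorem~\ref{4.5 thm 1}, and the bridging second term by $\|\Plks(\chi_k u-u_{(k)})\|\lesssim\|\chi_k u-u\|+\|u-u_{(k)}\|\to0$ via Lemma~\ref{3.3 thm pf 1.5}. The only cosmetic difference is that you explicitly flag that Lemma~\ref{4.3 lem 3} transfers verbatim to the Section~\ref{section 4.5} cut-offs, a point the paper leaves implicit.
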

\begin{proof}
   By Theorem \ref{4.1 Model theorem}, we may assume that $u$ is of the form $u=z^{\alpha}_qe^{-\sum|\lambda_i||z^i|^2}d\zb^I$ for some $\alpha\in\N_0^n$ by density argument. By Lemma \ref{3.3 thm pf 1.5}, Lemma \ref{4.5 thm 1} and the decreasing of $u$,  \begin{align}\label{4.5 thm Bu=u (1)}
       \|\Plks (\chi_k u-u_{(k)})\|_{\omega_0} \lesssim \|\chi_k u-u_{(k)}\|_{\omega_{0}}\leq\|\chi_k u-u\|_{\omega_0}+\|u-u_{(k)}\|_{\omega_{0}}\To 0.
   \end{align}
   To show $\Bs u=u$, let $v\in\Omega^{(0,q)}_{c}(\Cn)$ and observe that
    \begin{multline*}
       \left(v\mid\Bs u-u\right)_{\omega_0} =\left(v\mid\Bs u-\Plks \chi_ku\right)_{\omega_0}+\left(v\mid \Plks(\chi_ku-u_{(k)})\right)_{\omega_0}\\+ \left(v\mid\Plks u_{(k)}-u_{(k)}\right)_{\omega_0}+\left(v\mid u_{(k)}-u\right)_{\omega_0}.  
    \end{multline*}
    By Lemma \ref{4.3 lem 3} , Theorem \ref{4.5 thm 1}, Theorem \ref{4.5 thm2} and (\ref{4.5 thm Bu=u (1)}), the right-hand side of the above equation must tend to zero.  
\end{proof}
Eventually, we are able to complete the proof main theorem for the case $p\in M(q)$ by Remark \ref{4.2 rmk}.
\begin{thm}\label{4.5 main thm}
 Suppose $c_k$ is a sequence such that \[\limsup_{k\To\infty}\dfrac{c_k}{k}=0.\] If $p\in M(q)$ and $\liminf_{k\To\infty} k^dc_k>0$ for some $d\in \N$, then  
    \begin{equation*}
        P^{(q),s}_{(k),c_k}(z,w) \To \dfrac{|\lambda_1 \cdots \lambda_n|}{\pi^n}\,e^{2(\sum_{i=1}^{q}|\lambda_i|\zb^i w^i+\sum_{i=q+1}^{n}|\lambda_i|z^i\wb^i-\sum_{i=1}^{n}|\lambda_i||w^i|^2)}d\zb^I \otimes (\pwb)^I 
       \end{equation*}locally uniformly
       in $\Cinf$ on $\Cn$.
    In the case $c_k=0$ for all $k\in\N$, the convergence also holds if $\Box^{(q)}_{k}$ satisfies the local small spectral gap condition of polynomial rate in $U$ (cf. Def. \ref{1.2 spectral gap 1}).
\end{thm}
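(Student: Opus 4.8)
The proof is the assembly step anticipated in Remark \ref{4.2 rmk}, so the plan is to check that the two hypotheses on $c_k$ in the statement are exactly what the earlier results consume, and then to chain those results together. First I would observe that $\limsup_{k\To\infty} c_k/k = 0$ in particular gives $\limsup_{k\To\infty} c_k/k < \infty$, so Corollary \ref{3.3 loc conv cor} applies: every subsequence of $\Plksk(z,w)$ has a further subsequence converging locally uniformly in $\Cinf$ on $\Cn$. Since a sequence converges precisely when every subsequence has a subsequence converging to one common limit, it suffices to show that an arbitrary such subsequential limit is the model Bergman kernel. Passing to such a subsequence, I adopt Assumption \ref{4.2 assumption}: $\Plksk \To \Bsk$ with associated approximated integral operator $\Bs$.

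Next I would pin down $\Bs$. By Lemma \ref{4.2 Lem B op}, $\Bs : L^2_{\omega_0}(\Cn,\T) \To L^2_{\omega_0}(\Cn,\T)$ is bounded with $\|\Bs\| \leq 1$. Because $\limsup_{k\To\infty} c_k/k = 0$, Theorem \ref{4.2 thm bd op BoxB=0} gives $\Rang \Bs \subset \Ker \Box^{(q)}_{0,s}$. Because $\liminf_{k\To\infty} k^{d} c_k > 0$ for some $d \in \N$ (or, in the case $c_k \equiv 0$, because $\Box^{(q)}_{\omega,k\phi}$ satisfies the polynomial-rate spectral gap of Def.\ref{1.2 spectral gap 1}), Theorem \ref{4.5 thm Bu=u} gives $\Bs u = u$ for every $u \in \Ker \Box^{(q)}_{0,s}$. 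Thus $\Bs$ is an idempotent of operator norm $\leq 1$ with range the closed subspace $\Ker\Box^{(q)}_{0,s}$, hence it is the orthogonal projection $\Bso$ (cf. the characterization in \cite[theorem 3.1 in section 3.1]{Yos}). By uniqueness of Schwartz kernels (Theorem \ref{2.3 kernel theorem}) we get $\Bsk(z,w) \equiv \Bsko(z,w)$, which Theorem \ref{4.1 Model theorem} computes in closed form on $M(q)$.

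Finally I would transfer to the holomorphic-frame kernel. Having shown that every subsequential limit of $\Plksk(z,w)$ equals $\Bsko(z,w)$, the whole sequence converges to it locally uniformly in $\Cinf$. Applying the relation (\ref{kernel relation 2}), $P^{(q),s}_{(k),c_k}(z,w) = e^{\phi_{(k)}(z)}\Plksk(z,w)e^{-\phi_{(k)}(w)}$, together with $\phi_{(k)} \To \phi_0$ locally uniformly in $\Cinf$, yields $P^{(q),s}_{(k),c_k}(z,w) \To e^{\phi_0(z)}\Bsko(z,w)e^{-\phi_0(w)}$; substituting the formula of Theorem \ref{4.1 Model theorem} and simplifying the Gaussian exponent, using $\phi_0 = \sum_i \lambda_i|z^i|^2$ with $\lambda_i < 0$ for $i \leq q$, produces the displayed limit. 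The Bergman case $c_k = 0$ runs verbatim, with the Bergman clause of Theorem \ref{4.5 thm Bu=u} supplying the identity $\Bs u = u$ under the polynomial-rate gap.

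The genuine obstacle has already been dispatched in Sections \ref{section 4.4}--\ref{section 4.5}: building, via the extended Laplacian $\Box^{(q)\sim}_{(k),s}$ and its uniform spectral gap, a trial sequence $u_{(k)} \To u$ with $\Plks u_{(k)} - u_{(k)} \To 0$ for $u \in \Ker\Box^{(q)}_{0,s}$ is the analytic heart of the matter and is what makes Theorem \ref{4.5 thm Bu=u} available. At the level of the present theorem nothing new is required beyond bookkeeping: matching the hypotheses on $c_k$ to the precise inputs of Theorems \ref{4.2 thm bd op BoxB=0} and \ref{4.5 thm Bu=u}, invoking the subsequence principle to upgrade subsequential convergence to convergence, and carrying out the final exponential conjugation so that the normalization of $P^{(q),s}_{(k),c_k}$ matches the stated Gaussian.
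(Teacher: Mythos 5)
Your proposal is correct and follows the same route as the paper: reduce via Corollary~\ref{3.3 loc conv cor} and the subsequence principle to Assumption~\ref{4.2 assumption}, identify $\Bs$ as the orthogonal projection onto $\Ker\Box^{(q)}_{0,s}$ by combining Lemma~\ref{4.2 Lem B op}, Theorem~\ref{4.2 thm bd op BoxB=0}, and Theorem~\ref{4.5 thm Bu=u} (with the norm $\leq 1$ idempotent characterization from Yosida), invoke uniqueness of the Schwartz kernel, and finish with Theorem~\ref{4.1 Model theorem} and the conjugation in~(\ref{kernel relation 2}). This is exactly the assembly sketched in Remark~\ref{4.2 rmk}, with the hypotheses on $c_k$ routed to the correct earlier results.
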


\newpage


\begin{thebibliography}{99}

\bibitem{Be01}
Robert Berman. Bergman kernels and local holomorphic Morse inequalities,
  Math. Z. \textbf{248} (2004), no. 2, 325--344.
\bibitem{Be02}
Robert Berman, Bo Berndtsson, Johannes Sjöstrand. A direct approach to Bergman kernel asymptotics for positive
line bundles, Ark. Math. 46 (2008), no. 2,197–217.

\bibitem{Be03}
Robert Berman, Johannes Sjöstrand. Asymptotics for Bergman-Hodge kernels for
high powers of complex line bundles, Ann. Fac. Sci. Toulouse Math. (6), 16 (2007),
No.4, 719-771.

\bibitem{app8}
Martin Bordemann, Eckhard Meinrenken, Martin Schlichenmaier. Toeplitz quantization of Kähler manifolds and $gl(N), N \To 1$ limits, Comm. Math. Phys.,
165(1994), No.2, 281-296.

\bibitem{app2}
Thierry Bouche. Convergence de la métrique de Fubini-Study d'un fibré linéaire positif. Annales de l'Institut Fourier, Volume 40 (1990) no. 1, pp. 117-130. doi : 10.5802/aif.1206. http://www.numdam.org/articles/10.5802/aif.1206/

\bibitem{catlin}
David Catlin. The Bergman kernel and a theorem of Tian. In Analysis and geometry
in several complex variables (Katata, 1997), Trends Math., pages 1-23. Birkhäuser
Boston, Boston, MA, 1999.

\bibitem{app3}
 Xiuxiong Chen, Simon Donaldson, Song Sun. Kähler-Einstein metrics on Fano
manifolds. I: Approximation of metrics with cone singularities, J. Amer. Math. Soc.,
28 (2015), No.1, 183-197.

\bibitem{app4}
 Xiuxiong Chen, Simon Donaldson, Song Sun. Kähler-Einstein metrics on Fano
manifolds. III: Limits as cone angle approaches 2$\pi$ and completion of the main proof,
J. Amer. Math. Soc., 28 (2015), No.1, 235-278.

\bibitem{DaiLiuMa01}
Xianzhe Dai, Kefeng Liu, Xiaonan Ma. On the asymptotic expansion of Bergman kernel, C. R. Math. Acad. Sci. Paris
339 (2004), no. 3, 193–198.

\bibitem{DaiLiuMa02}
Xianzhe Dai, Kefeng Liu, Xiaonan Ma. On the asymptotic expansion of Bergman kernel, J. Differential Geom., 72,
(2006), no. 1, 1–41.
\bibitem{Davies}
Edward Davies. Spectral theory and differential operators, volume 42 of Cambridge Studies in Advanced Mathematics. Cambridge University Press, Cambridge, 1995.

\bibitem{Dem}
Jean-Pierre Demailly. Complex analytic and algebraic geometry. https://www-fourier.ujfgrenoble.
fr/ demailly/manuscripts/agbook.pdf, 2012.

\bibitem{app5}
Simon Donaldson. Scalar curvature and projective embeddings. I, J. Differential Geom.,
59(2001), No.3, 479-522.




\bibitem{app6}
Simon Donaldson, Song Sun. Gromov-Hausdorff limits of Kähler manifolds and
algebraic geometry, Acta Math., 213 (2014), No.1, 63-106.
\bibitem{Gaffney}
Matthew Gaffney. Hilbert space methods in the theory of harmonic integrals. Trans. Amer.
Math. Soc., 78:426–444, 1955.
\bibitem{Gil}
Peter Gilkey. Invariance Theory, the Heat Equation and the Atiyah-Singer Index Theorem, Publish or Perish.

\bibitem{Griff}
Phillip Griffiths, Joseph Harris. Principles of algebraic geometry. Pure and Applied Mathematics. Wiley-Interscience [JohnWiley and Sons], New York, 1978.

\bibitem{Hormander}
Lars Hörmander. The analysis of linear partial differential operators. I. Classics in
Mathematics. Springer-Verlag, Berlin, 2003. Distribution theory and Fourier analysis,
Reprint of the second (1990) edition [Springer, Berlin; MR1065993 (91m:35001a)].

\bibitem{Hou}
 Yu-Chi Hou. Asymptotic expansion of the Bergman kernel via semiclassical symbolic calculus. Bull. Inst. Math. Acad. Sin. (N.S.) 17(1), 1–51
(2022).

\bibitem{Hsia01}
Chin-Yu Hsiao. Bergman kernel asymptotics and a pure analytic proof of the Kodaira
embedding theorem, In Complex analysis and geometry, volume 144 of Springer Proc.
Math. Stat., pages 161-173. Springer, Tokyo, 2015.

\bibitem{Hsia02}
Chin-Yu Hsiao, George Marinescu. Asymptotics of spectral function of lower energy
forms and Bergman kernel of semi-positive and big line bundles, Comm. Anal. Geom.,
22 (2014), No.1, 1-108.

\bibitem{app10}
Chin-Yu Hsiao, George Marinescu. Berezin–Toeplitz quantization
for lower energy forms, Communications in Partial Differential Equations (2017), 42:6, 895-942.

\bibitem{app7}
Xiaonan Ma, George Marinescu. Berezin-Toeplitz quantization on Kähler manifolds,
J. Reine Angew. Math., 662 (2012), 1-56.

\bibitem{Ma01}
Xiaonan Ma, George Marinescu. The first coefficients of the asymptotic expansion
of the Bergman kernel of the $\text{spin}^{c}$ Dirac operator, Internat. J. Math., 17 (2006),
No.6, 737-759.

\bibitem{book01}
Xiaonan Ma, George Marinescu. Holomorphic Morse inequalities and Bergman
kernels, volume 254 of Progress in Mathematics, Birkhäuser Verlag, Basel, 2007.
\bibitem{for coordinate}
Raymond O. Wells Jr.. Differential Analysis on Complex Manifolds, third edition,
Graduate Texts in Mathematics, 65, Springer, New York, 2008.

\bibitem{app9}
Martin Schlichenmaier. Berezin-Toeplitz quantization for compact Kähler manifolds.
A review of results. Adv. Math. Phys., pages Art. ID 927280, 38, 2010.

\bibitem{app1}
Gang Tian. On a set of polarized Kähler metrics on algebraic manifolds, J. Differential
Geom., 32 (1990), No.1, 99-130.

\bibitem{Witten}
Edward Witten. Supersymmetry and Morse theory. J. Differential Geom. 17 (1982), no.
4, 661-692.


\bibitem{Yos}
Kosaku Yosida. Functional analysis. Classics in Mathematics, Springer-Verlag, Berlin, 1995.
Reprint of the sixth (1980) edition.



\end{thebibliography}
\end{document}